\documentclass[a4paper,12pt]{amsart}
\usepackage{amsmath}
\usepackage{array}
\usepackage{amsthm}
\usepackage{amssymb}
\addtolength{\textheight}{3cm}
\addtolength{\voffset}{-1cm}
\addtolength{\textwidth}{3cm}
\addtolength{\hoffset}{-2cm}

\usepackage[all]{xy}

\usepackage{hyperref}

\theoremstyle{plain}
\newtheorem*{thm*}{Theorem}
\newtheorem*{prop*}{Proposition}
\newtheorem*{rem*}{Remark}

\newtheorem{thm}{Theorem}[section]
\newtheorem{cor}[thm]{Corollary}
\newtheorem{defi}[thm]{Definition}
\newtheorem{prop}[thm]{Proposition}
\newtheorem{lm}[thm]{Lemma}

\newtheorem{claim*}{Claim}
\newtheorem{exple}[thm]{Example}
\newtheorem{nota}[thm]{Notation}

\numberwithin{equation}{thm}

\theoremstyle{remark}
\newtheorem{rem}[thm]{Remark}

\newcommand{\Y}{\mathcal Y}
\newcommand{\R}{\mathcal R}
\newcommand{\PP}{\mathcal P}

\newcommand{\Z}{\mathbb Z}

\newcommand{\ol}{\overline}


\newcommand{\C}{{\mathcal{C}}}
\newcommand{\D}{{\mathcal{D}}}

\newcommand{\T}{{\mathcal{T}}}

\newcommand{\ti}{{\text{-}}}

\def\co{\colon \thinspace}

\begin{document}

	 \title{Quadratic functors on pointed categories}

		       \author{Manfred Hartl \& Christine Vespa}

\address{Universit\'e de Valenciennes, Laboratoire de Math\'ematiques et de leurs Applications, Valenciennes and FR CNRS 2956, France.}
\email{manfred.hartl@univ-valenciennes.fr}

\address{ Universit\'e Louis Pasteur, Institut de Recherche
  Math\'ematique Avanc\'ee, Strasbourg, France. }                        
\email{vespa@math.u-strasbg.fr}    

\date{\today}

\begin{abstract}
We study polynomial functors of degree $2$, called quadratic, with values in the category of abelian groups $Ab$, and whose source category is  an arbitrary category $\C$ with null object such that all objects are colimits of copies of a generating object $E$ which is small and regular projective; this includes all pointed algebraic varieties. More specifically, we are interested in such
quadratic functors $F$ from $\C$ to $Ab$ which preserve filtered colimits and suitable coequalizers; one may take reflexive ones if $\C$ is Mal'cev and Barr exact. 

A functorial equivalence is established between such functors $F:\C\to Ab$ and
certain minimal algebraic data which we call quadratic $\C$-modules:  these
involve the values on $E$ of the cross-effects of $F$ and  certain structure maps generalizing the second Hopf invariant and 
the Whitehead product.


Applying this general result to the case where $E$ is a cogroup these data take a particularly simple form. This application extends results of Baues and Pirashvili obtained for $\C$ being the category of groups or of modules over some ring; here quadratic $\C$-modules are equivalent with abelian square groups or quadratic $R$-modules, respectively.
\vspace{.3cm}

\textit{Mathematics Subject Classification:} 18D; 18A25; 55U
\vspace{.3cm}

\textit{Keywords}: polynomial functors; quadratic functors; algebraic theory
\end{abstract}
\begin{footnote}{ 
The first author is grateful to the Institut de Recherche Math\'ematique Avanc\'ee for its hospitality during the preparation of this paper, and also to the Institut de G\'eom\'etrie, Alg\`ebre et Topologie de l'\'Ecole Polytechnique F\'ed\'erale de Lausanne for an invitation where this project was started.}
\end{footnote}

\maketitle

In their fundamental work on homology of spaces thereafter linked to their names \cite{EML} Eilenberg and MacLane introduced cross-effects and polynomial functors (see section 1 for definitions). Since then, these functors proved to play a crucial role in unstable homotopy theory; and during the last decade, homological algebra of polynomial functors turned out to be a powerful tool at the crossroad of various fields, such as algebraic K-theory, generic representation theory or cohomology of general linear groups.

In this paper  we determine polynomial functors of degree 2, called quadratic, $F:\T \to Ab$ where $Ab$ is the category of abelian groups and $\T$ is a pointed algebraic theory, i.e.\ a category $\T$ with null object such that all objects are finite coproducts of a generating object $E$. We then extend our results to determine ``good'' quadratic functors on an arbitrary pointed category $\C$ with sums and with a small regular projective generating object $E$, see section 6.5 for definitions; here for brevity we say that a functor is good if it preserves filtered colimits and suitable coequalizers. We point out that one may take $\C$ to be any algebraic variety, with $E$ being the free object of rank $1$.
%



To give a topological example,  the homotopy category of  finite one-point unions $\bigvee_{i=1}^n X$ of copies of  a given space $X$ is a pointed algebraic theory, and metastable homotopy groups on such a category are examples of  quadratic functors if $X$ is a suspension. This was one of the motivations for
Baues and Pirashvili \cite{Baues}, \cite{Baues-Pira}
to study quadratic functors on 
several particular types of algebraic theories. 

In the cited papers, and also in work of the same authors with Dreckmann and Franjou on polynomial functors of higher degree \cite{BDFP}, a functorial equivalence is established   between polynomial  functors on $\T$ and certain minimal algebraic data: these consist of the values of the cross-effects of the corresponding functor on the generating object $E$, and of certain maps relating them. For example,  quadratic  functors from the category of free groups of finite rank to the category of groups correspond to diagrams
$$ (M_e \xrightarrow{H} M_{ee} \xrightarrow{P} M_{e}) $$ 
called \textit{square groups}, where $M_{e}$ is a  group, $M_{ee}$   an abelian group, $H$ a quadratic map and $P$  a homomorphism (satisfying certain relations), see \cite{Baues-Pira}. Those functors taking values in $Ab$ hereby correspond to square groups for which $H$ is linear, i.e. a homomorphism. Similarly, given a ring $R$,  
quadratic functors from the category of finitely generated free $R$-modules to $Ab$ correspond to diagrams of the same type called \textit{quadratic $R$-modules}, but where $M_{e}$ is an abelian  group endowed with a quadratic action of $R$, $M_{ee}$ is an $R\otimes R$-module, and $H$ and $P$  are homomorphisms compatible with the actions of $R$, see \cite{Baues}. As a last example, quadratic functors from the category of finite pointed sets to $Ab$ correspond to diagrams 
$$ (M_{ee}\xrightarrow{T} M_{ee}\xrightarrow{P} M_{e}) $$ 
containing no operator $H$ but an involution $T$ of $M_{ee}$ instead, see \cite{Pira-Dold}.

In this paper (Theorem \ref{thm}) we show that quadratic functors from an \textit{arbitrary} pointed theory $\T$ to $Ab$ (and good quadratic functors on a category $\C$ as above) are functorially equivalent with diagrams
$$M=(T_{11}(cr_2U)(E,E)\otimes_{\Lambda}M_e  \hspace{1mm}\xrightarrow{\hat{H}}  \hspace{1mm}M_{ee}  \hspace{1mm}\xrightarrow{T} \hspace{1mm} M_{ee}   \hspace{1mm}\xrightarrow{P}  \hspace{1mm}M_e)$$
which we call \textit{quadratic $\T$-modules} (resp.\ $\C$-modules), where $T_{11}(cr_2U)$ is the bilinearization of the second cross-effect $cr_2$ of the reduced standard projective functor $U:\T\to Ab$ associated with $E$,
$\Lambda$ is the reduced monoid ring of $\T(E,E)$, $M_e$ and $M_{ee}$ are modules over $\Lambda$ and $\Lambda\otimes \Lambda$, resp., the map $T$ is an involution of $M_{ee}$, and $\hat{H}$ and $P$ are homomorphisms compatible with these structures,  see Definition \ref{quad-C} for details. 
Just as the maps $H$ and $P$ above, the maps $\hat{H}$ and $P$ can be viewed as algebraic generalizations of the second Hopf invariant and the Whitehead product, cf.\ \cite{Baues}. In the quadratic $\C$-module associated with a quadratic functor $F:\T\to Ab$   we have 
 $M_e=F(E)$ and $M_{ee}=(cr_2F)(E,E)$.

Quadratic $\T$-modules can be described as modules over a certain ringoid $\mathcal R$ with two objects; this follows from an alternative approach to our original one which was suggested to us by T.\ Pirashvili: we determine a pair of projective generators of the abelian category $Quad(\T,Ab)$ of quadratic functors from $\T$ to $Ab$, compute the maps between them by using a Yoneda lemma for polynomial functors, thus providing the ringoid $\mathcal R$, and deduce an equivalence of $Quad(\T,Ab)$ with the category of $\mathcal R$-modules from the Gabriel-Popescu theorem. 

We then extend the correspondance between quadratic functors and quadratic $\T$-modules to categories $\C$ with sums and a small regular projective generator, in particular to the algebraic variety $\C=Model(\T)$ of models (or algebras) of the theory $\T$; here $\T$ identifies with the full sub-category of free objects of finite rank in $\C$. For example, the category of groups and the category of algebras over a reduced operad are  algebraic varieties.
 
Such an extension of quadratic functor theory 
 from an algebraic theory to its category of models was established in \cite{Baues-Pira}, in the case where $Model(\T)$ is the category $Gr$ of groups. This is achieved by introducing a \textit{quadratic tensor product} $G\otimes M\in Gr$ for a group $G$ and a square group $M$. We generalize this device to arbitrary categories $\C$ as above, by constructing a quadratic tensor product $-\otimes M \co \C\to Ab$ for any quadratic $\C$-module $M$,
%
%
 and by studying its properties: we compute its effect on $E$ (in fact, $E\otimes M \cong M_e$) and its cross-effect (Theorem \ref{cr-of-otM-Cgen}), and show that it preserves filtered colimits and suitable coequalizers (Theorem \ref{otMpreserves}).


A particularly interesting case arises when
 $E$ has a cogroup structure in $\T$; note that this holds when  $\T$ is the category of finitely generated free algebras over an operad, or the homotopy category of finite one-point unions of a suspension.
In this  cogroup case our above data simplify considerably; notably, the map
$\hat{H}$ splits into two maps
$$ M_e  \hspace{1mm}\xrightarrow{H_1}  \hspace{1mm}M_{ee}  \hspace{1mm}\xleftarrow{H_2}  \hspace{1mm}T_{11}(cr_2\T(E,-))(E,E)\otimes_{\Lambda} coker(P) $$
the first of which generalizes the map $H$ in the cited examples above, while the second one was not visible in these special cases where it is either trivial or determined by the remaining structure. For example, when $\T$ is the category of finitely generated free groups, $H_2$ is equivalent to the map $\Delta=H_1PH_1-2H_1$ in \cite{Baues-Pira}. Moreover,
the involution $T$ here is determined by the remaining structure as $T=H_1P-1$.

Summarizing we may say that in the general cogroup case  a quadratic $\T$-module is a square group $(H_1,P)$ enriched by suitable actions of $\Lambda$ and $\Lambda\otimes \Lambda$ and by an additional structure map $H_2$.

Our result shows that in order to model polynomial functors $F:\T\to Ab$, it is \textit{not} sufficient to just add structure maps of the type $H$ and $P$ between the various cross-effects of $F$ as is suggested by the special cases treated in the literature: the more complicated domain of the map $\hat{H}$ (and its decomposition into  {two} maps in the cogroup case) destroys this ideal picture. On  the other hand, the  map $\hat{H}$ has the interesting structure of a morphism of symmetric $\Lambda\otimes \Lambda$-modules which had not become apparent so far.

We finally note that along our way, we need to provide plenty of auxiliary material which might be of independent interest: we consider and use bilinearization of bifunctors, give more explicit descriptions of the linearization and quadratization of a functor, compute them in a number of cases, in particular for diagonalizable functors, and introduce a notion of quadratic map from a morphism set of a pointed category with finite sums to an abelian group which generalizes the notion of quadratic map from a group to an abelian group in the sense of Passi \cite{Pa} or the first author \cite{QMaps}, and allows to characterize quadratic functors in terms of their effect on morphism sets.

In subsequent work we plan to extend our results to quadratic functors with values in the category of \textit{all} groups where the situation is much more intricate for several reasons. We also expect that our approach generalizes to polynomial functors of higher degree.


\tableofcontents

\section{Polynomial functors}
\subsection{Generalities on polynomial functors and bifunctors}
Throughout this paper, $\C$ denotes a pointed category (i.e. having a null object denoted by $0$) with finite   coproducts denoted by $\vee$. Let $Gr$ and $Ab$ denote the categories of groups and abelian groups, resp.
We begin by giving a definition and basic properties of the cross-effect and of polynomial functors from $\C$ to $Gr$, generalizing those given in \cite{Baues-Pira} for linear and quadratic functors and those given by Eilenberg and Mac Lane in the case of functors from an  abelian category to $Ab$ \cite{EML}. \vskip 1cm

In the sequel, ${\mathcal{D}}$ denotes one of the categories $Gr$ or $Ab$. We consider functors from $\C$ to $\D$. In particular, for $E$ a fixed object of $\C$ we define the universal functor $U_E: \C \to Ab$ as follows. For a set $S$,  let $\mathbb{Z}[S]$ denote the free abelian group with basis $S$. 
Since for all $X \in\C$, $\C(E,X)$ is pointed with basepoint the zero map, we can define a subfunctor $\mathbb{Z}[0]$ of $\mathbb{Z}[\C(E,-)]$ by $\mathbb{Z}[0](X)=\mathbb{Z}[\{E \xrightarrow{0} X \}]$ for $X \in \C$. 
\begin{defi}
The universal functor $U_E: \C \to Ab$ relative to $E$ is the quotient of $\mathbb{Z}[\C(E,-)]$ by the subfunctor $\mathbb{Z}[0]$.
\end{defi}
Note that $U_E$ is the reduced standard projective functor associated with $E$.

To keep notation simple we write $f$ also for the equivalence class in $U_E(X)$ of an element $f$ of $\C(E,X)$, and we often omit the subscript $E$ in $U_E$.

Let $F: \C \to \D$ be a functor. We often note $f_* =F(f)$ for a morphism $f$ in $\C$. For objects $X_1,\ldots,X_n$ of $\C$ and $1\le k\le n$ let
$$ X_k \xrightarrow{i^n_k} X_1 \vee \ldots \vee X_n \xrightarrow{r^n_k} X_k$$
be the canonical injection and retraction, resp., the latter being defined by $r^n_k i^n_k = 1_{X_k}$ and $r^n_k i^n_p = 0$ if $p\neq k$.

\begin{defi}

The $n$-th cross-effect of $F$ is a functor $cr_nF: \C^{\times n} \to \D$ (or a multi-functor) defined inductively by
$$cr_1F(X) = ker(F(0) : F(X) \to F(0))$$
$$cr_2F(X,Y) = ker((F(r_1^2),F(r_2^2))^t : F(X\vee Y) \to F(X)\times F(Y))$$
and, for $n \geq 3$, by
$$ cr_nF(X_1, \ldots, X_n) = cr_2(cr_{n-1}(-,X_3,\ldots,X_n))(X_1,X_2) .$$

\end{defi}

In other words, to define the $n$-th cross-effect of $F$ we consider the $(n-1)$-st cross-effect, we fix the $n-2$ last variables and we consider the second cross-effect of this functor. One often writes $F(X_1 \mid \ldots\mid X_n) =cr_nF(X_1, \ldots, X_n)$.

Note that $F(X) \simeq cr_1F(X) \rtimes F(0)$ as $F(0) : F(0) \to F(X)$ is a natural section of $F(0) : F(X) \to F(0)$.
Moreover, one easily checks by induction that $cr_nF(X_1, \ldots, X_n)$ is a subgroup of $F(X_1 \vee \ldots\vee X_n)$.

In this paper we are mainly interested in \textit{reduced} functors $F: \C \to \D$, that is satisfying $F(0)=0$. We denote by $Func_*(\C, {\mathcal{D}})$ the category of reduced functors $F: \C \to {\mathcal{D}}$. 

There is an alternative description of cross-effects for reduced functors. To state this let  $r^n_{12\dots (k-1)(k+1) \ldots n}  \colon X_1 \vee \ldots \vee X_n \to 
X_1 \vee \ldots \vee \hat{X_k} \vee \ldots \vee X_n$ be the map whose restriction to $X_i$ is its canonical injection for $i\neq k$ and is the zero map if $i=k$.


\begin{prop}
Let $F:\C \to \D$ be a functor. Then the  $n$-th cross-effect \linebreak $cr_nF(X_1, \ldots, X_n)$ is equal to the kernel of the natural homomorphism
$$ \prod_{k=1}^n F(r^n_{12\dots (k-1)(k+1) \ldots n})\hspace{1mm} : \hspace{1mm} F(X_1 \vee \ldots \vee X_n) \hspace{1mm} \xrightarrow{\phantom{aaaa}} \hspace{1mm} \prod_{k=1}
^n F(
X_1 \vee \ldots \vee \hat{X_k} \vee \ldots \vee X_n). $$
\end{prop}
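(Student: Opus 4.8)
The plan is to rewrite the target. Since the kernel of a morphism into a product equals the intersection of the kernels of its components, the proposition is equivalent to the assertion
$$ cr_nF(X_1,\ldots,X_n)=\bigcap_{k=1}^n\ker F(\rho_k), \qquad \rho_k \co X_1\vee\cdots\vee X_n \to X_1\vee\cdots\vee\hat{X_k}\vee\cdots\vee X_n, $$
where I abbreviate $\rho_k=r^n_{12\dots(k-1)(k+1)\ldots n}$ and all kernels are taken as subgroups of $F(W)$ with $W=X_1\vee\cdots\vee X_n$. I would prove this by induction on $n$. The cases $n=1$ and $n=2$ are immediate from the two defining formulas: for $n=1$ the single map $\rho_1$ is the unique morphism $X_1\to 0$, so $\ker F(\rho_1)$ is exactly $cr_1F(X_1)$; for $n=2$ the maps $\rho_1,\rho_2$ are precisely the retractions $r^2_2,r^2_1$ occurring in the definition of $cr_2F$, so the intersection of their kernels is $cr_2F(X_1,X_2)$.

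For the inductive step I fix $n\ge 3$ and put $G=cr_{n-1}F(-,X_3,\ldots,X_n)$, so that by definition $cr_nF(X_1,\ldots,X_n)=cr_2G(X_1,X_2)$. The (definitional) case $n=2$ applied to $G$ gives $cr_2G(X_1,X_2)=\ker G(r^2_1)\cap\ker G(r^2_2)$ inside $G(X_1\vee X_2)$. The core of the argument is to realise all three pieces as subgroups of $F(W)$. On the one hand, the inductive hypothesis applied to the $n-1$ objects $X_1\vee X_2,X_3,\ldots,X_n$ yields $G(X_1\vee X_2)=\ker F(\sigma)\cap\bigcap_{k=3}^n\ker F(\rho_k)$, where $\sigma\co W\to X_3\vee\cdots\vee X_n$ is the morphism collapsing \emph{both} $X_1$ and $X_2$. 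On the other hand, since $G(r^2_i)$ is $cr_{n-1}F(-,X_3,\ldots,X_n)$ applied to a retraction $X_1\vee X_2\to X_i$, functoriality identifies $G(r^2_1)$ and $G(r^2_2)$ with the restrictions to $G(X_1\vee X_2)$ of $F(\rho_2)$ and $F(\rho_1)$ respectively. Assembling these identifications gives $cr_nF(X_1,\ldots,X_n)=\ker F(\sigma)\cap\ker F(\rho_1)\cap\ker F(\rho_2)\cap\bigcap_{k=3}^n\ker F(\rho_k)$.

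It then remains only to delete the extra term $\ker F(\sigma)$, and this is where the argument closes cleanly. The map $\sigma$ factors as $\sigma=\tau\circ\rho_1$, where $\tau$ collapses $X_2$ on the coproduct $X_2\vee\cdots\vee X_n$; hence $F(\sigma)=F(\tau)F(\rho_1)$ and therefore $\ker F(\rho_1)\subseteq\ker F(\sigma)$, so that $\ker F(\sigma)$ is redundant and the intersection collapses to $\bigcap_{k=1}^n\ker F(\rho_k)$, as required. I expect the real obstacle to be not this final factorization but the bookkeeping of the middle paragraph: one must carefully match the two retractions of $X_1\vee X_2$ used to build $cr_2G$ with the correct collapse maps $\rho_k$ on the full coproduct $W$, and confirm that feeding the grouped object $X_1\vee X_2$ into the inductive hypothesis produces exactly the double-collapse map $\sigma$ alongside the expected $\rho_3,\ldots,\rho_n$. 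Once the indexing is pinned down, the factorization $\sigma=\tau\circ\rho_1$ finishes the induction; note that reducedness of $F$ is not needed anywhere in this computation.
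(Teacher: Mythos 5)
The paper states this proposition without proof, so there is nothing to compare against; your argument is correct and is the natural one. The two points that need care --- identifying $G(r^2_1)$ and $G(r^2_2)$ with the restrictions of $F(\rho_2)$ and $F(\rho_1)$ via the functoriality of $cr_{n-1}F$ in its first variable, and discarding $\ker F(\sigma)$ through the factorization $\sigma=\tau\circ\rho_1$ --- are both handled correctly, and you are right that reducedness of $F$ plays no role here.
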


As a consequence, we see that $cr_nF(X_1, \ldots, X_n)$ actually is a \textit{normal} subgroup of $F(X_1 \vee \ldots\vee X_n)$.
Moreover, it follows that $cr_nF(X_1, \ldots, X_n)$ is symmetric in $X_1, \ldots, X_n$. Finally, we see that the functor $cr_n$ is \textit{multi-reduced}, i.e., $cr_nF(X_1, \ldots, X_n)$ vanishes if one of the $X_k$ is the zero object since then $F(r^n_{12\dots (k-1)(k+1) \ldots n})$ is an isomorphism.   

The importance of  cross-effects comes from the following property of functors with values in $Ab$.

\begin{prop} \label{ce-prop} Let $F: \C \to Ab$ be a reduced functor.
Then there is a natural decomposition 
$$F(X_1 \vee \ldots \vee X_n) \simeq \bigoplus_{k=1}^{n} \bigoplus_{1 \leq i_1 < \ldots < i_k \leq n} cr_kF(X_{i_1}, \ldots, X_{i_k}) .$$
\end{prop}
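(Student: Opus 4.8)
The plan is to argue by induction on the number $n$ of variables, peeling off the last one and using crucially that $F$ takes values in $Ab$ to split the relevant short exact sequences. I would first dispose of the base cases. For $n=1$, reducedness gives $cr_1F(X_1)=ker(F(X_1)\to F(0))=F(X_1)$, so the claim is an identity. For $n=2$ I would exhibit the isomorphism explicitly: the natural map $\rho=(F(r^2_1),F(r^2_2))^t\co F(X_1\vee X_2)\to F(X_1)\times F(X_2)$ admits the natural map $\sigma(a,b)=F(i^2_1)(a)+F(i^2_2)(b)$ as a section. Indeed the identities $r^2_ki^2_k=1_{X_k}$, $r^2_ki^2_p=0$ for $p\neq k$, together with $F(0)=0$, give $\rho\sigma=\mathrm{id}$ after using additivity of the homomorphisms $F(r^2_k)$. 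Since we work in $Ab$, a split epimorphism yields a direct sum decomposition
\[ F(X_1\vee X_2)\;\cong\;ker(\rho)\oplus\bigl(F(X_1)\oplus F(X_2)\bigr)\;=\;cr_2F(X_1,X_2)\oplus cr_1F(X_1)\oplus cr_1F(X_2), \]
which is the $n=2$ case. This is the one point where the hypothesis $\D=Ab$ is indispensable: in $Gr$ the sequence only splits as a semidirect product, and no such clean decomposition survives.

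For the inductive step I would put $A=X_1\vee\dots\vee X_{n-1}$ and $B=X_n$ and apply the binary decomposition just obtained, getting the natural splitting
\[ F(A\vee B)\;\cong\;F(A)\oplus cr_2F(A,B)\oplus cr_1F(B). \]
The term $F(A)$ is handled by the induction hypothesis in $n-1$ variables, contributing exactly the summands $cr_kF(X_{i_1},\dots,X_{i_k})$ indexed by the subsets of $\{1,\dots,n-1\}$, while $cr_1F(B)=cr_1F(X_n)$ is the summand attached to the subset $\{n\}$. It remains to decompose $cr_2F(A,B)$. I would regard $H:=cr_2F(-,X_n)\co\C\to Ab$ as a functor of the first variable alone; it is reduced, since $cr_2$ is multi-reduced (as observed just after the preceding Proposition). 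Applying the induction hypothesis to $H$ in the variables $X_1,\dots,X_{n-1}$ decomposes $cr_2F(A,B)=H(X_1\vee\dots\vee X_{n-1})$ into the summands $cr_kH(X_{i_1},\dots,X_{i_k})$ over subsets of $\{1,\dots,n-1\}$.

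The last ingredient, which I would isolate as a short lemma, is the composition identity
\[ cr_kH(Y_1,\dots,Y_k)\;=\;cr_{k+1}F(Y_1,\dots,Y_k,Z),\qquad H:=cr_2F(-,Z), \]
proved by a straightforward induction on $k$ directly from the recursive definition $cr_mF(X_1,\dots,X_m)=cr_2\bigl(cr_{m-1}F(-,X_3,\dots,X_m)\bigr)(X_1,X_2)$ (the cases $k=1,2$ matching the definition verbatim, the inductive step feeding the inner hypothesis into the outer $cr_2$). Through this identity the summands coming from $cr_2F(A,B)$ become precisely the $cr_{k+1}F(X_{i_1},\dots,X_{i_k},X_n)$, that is, the summands indexed by the subsets of $\{1,\dots,n\}$ that contain $n$ and have at least two elements. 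Together with the subsets avoiding $n$ (from $F(A)$) and the singleton $\{n\}$ (from $cr_1F(B)$), every nonempty subset of $\{1,\dots,n\}$ then occurs exactly once, which is the asserted decomposition; naturality is inherited from the naturality of $\rho$ and $\sigma$ at each stage. I expect the main obstacle to be purely the bookkeeping, namely verifying that the iterated binary splitting reassembles into exactly the indexed direct sum over all subsets with each cross-effect appearing once and in the correct variables; the composition identity for iterated cross-effects is the technical heart of this, whereas the existence of the splittings themselves is immediate once one is in $Ab$.
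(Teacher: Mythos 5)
Your proof is correct: the paper states this proposition without giving a proof (it is the classical Eilenberg--MacLane decomposition), and your argument --- splitting the binary case in $Ab$ via the natural section $(a,b)\mapsto F(i^2_1)(a)+F(i^2_2)(b)$ of $(F(r^2_1),F(r^2_2))^t$, then inducting on $n$ using the identity $cr_k\bigl(cr_2F(-,Z)\bigr)(Y_1,\ldots,Y_k)=cr_{k+1}F(Y_1,\ldots,Y_k,Z)$ --- is exactly the standard route the authors implicitly rely on. You correctly isolate the two essential points, namely that the splitting exists only because the target is $Ab$ and that the composition identity for iterated cross-effects (verified by a routine induction from the recursive definition) is what turns the iterated binary splittings into the indexed sum over nonempty subsets.
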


The cross-effects have the following crucial property.
\begin{prop} \label{exact}
The functor $cr_n: Func(\mathcal{C}, \D) \to Func(\mathcal{C}^{\times n} , \D)$ is exact for all $n\ge 1$.
\end{prop}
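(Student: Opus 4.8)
The plan is to reduce the statement to a pointwise claim and then to the base cross-effects $cr_1F$ and $cr_2F$. Since short exact sequences in the functor categories $Func(\C,\D)$ and $Func(\C^{\times n},\D)$ are detected objectwise, it suffices to show that for each fixed tuple $(X_1,\ldots,X_n)$ the functor $F\mapsto cr_nF(X_1,\ldots,X_n)$ from $Func(\C,\D)$ to $\D$ carries every short exact sequence of functors $0\to F'\to F\to F''\to 0$ (that is, one which is short exact at each object of $\C$) to a short exact sequence in $\D$.

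For $\D=Ab$ this is immediate from Proposition \ref{ce-prop}. That decomposition is natural in $F$, and the projection of $F(X_1\vee\cdots\vee X_n)$ onto its top summand $cr_nF(X_1,\ldots,X_n)$ is induced by a fixed idempotent $\Z$-combination of the endomorphisms $F(\varphi)$, $\varphi\in\End(X_1\vee\cdots\vee X_n)$, hence is itself natural in $F$. Thus $cr_n(-)(X_1,\ldots,X_n)$ is a natural direct factor of the evaluation functor $F\mapsto F(X_1\vee\cdots\vee X_n)$. Evaluation at an object is exact, and a natural direct factor of an exact functor is again exact (a direct sum of complexes is acyclic precisely when each summand is); hence $cr_n(-)(X_1,\ldots,X_n)$ is exact.

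For the general case, which must also cover $\D=Gr$ where cross-effects are only normal subgroups and no such splitting is available, I would argue by induction on $n$, reducing to $n\le 2$. For $n\ge 3$ the inductive definition gives $cr_nF(X_1,\ldots,X_n)=cr_2\big(cr_{n-1}F(-,X_3,\ldots,X_n)\big)(X_1,X_2)$; the assignment $F\mapsto cr_{n-1}F(-,X_3,\ldots,X_n)$ is an exact endofunctor of $Func(\C,\D)$ by the induction hypothesis (applied objectwise), and composing it with the exact functor $cr_2(-)(X_1,X_2)$ gives an exact functor, since composites of exact functors are exact. It therefore remains to treat $n=1,2$. Here one uses the natural short exact sequences of functors
$$ 0\to cr_1F(X)\to F(X)\to F(0)\to 0, \qquad 0\to cr_2F(X,Y)\to F(X\vee Y)\xrightarrow{\big(F(r_1^2),F(r_2^2)\big)} F(X)\times F(Y)\to 0, $$
the first split by the natural section $F(0)\co F(0)\to F(X)$, the surjectivity in the second being witnessed (for reduced $F$) by $(a,b)\mapsto F(i_1^2)(a)\cdot F(i_2^2)(b)$ via $r_1^2i_2^2=0=r_2^2i_1^2$. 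Applying these sequences to $0\to F'\to F\to F''\to 0$ produces a $3\times 3$ diagram whose rows are the three instances of the cross-effect sequence and whose middle and right columns are short exact (evaluation at an object, and finite products thereof, preserve short exactness). The nine lemma — the ordinary one for $\D=Ab$, and its counterpart in the semi-abelian category $\D=Gr$ — then forces the remaining column $0\to cr_nF'\to cr_nF\to cr_nF''\to 0$ to be short exact.

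The delicate point throughout is right-exactness: that $cr_n$ carries an epimorphism of functors to an epimorphism, i.e.\ that a cross-effect class in $cr_nF''$ lifts to one in $cr_nF$. For $\D=Ab$ this is handed to us by the natural splitting of Proposition \ref{ce-prop}, so the main obstacle is the group-valued case, where the absence of a splitting means the lifting must instead be extracted from the nine lemma. What then requires care is verifying the hypotheses of that lemma in $Gr$ — in particular the naturality in $F$ of the two short exact sequences above and the surjectivity of $\big(F(r_1^2),F(r_2^2)\big)$ for reduced $F$.
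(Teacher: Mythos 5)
Your proof is correct and follows essentially the same route as the paper's (which is only a three-line sketch): the split sequence $0\to cr_1F(X)\to F(X)\to F(0)\to 0$ for $n=1$, a snake/nine-lemma argument on the defining kernel sequence for $n=2$, and induction via the iterative definition for $n\ge 3$. Your extra observation that for $\D=Ab$ everything already follows from the natural splitting of Proposition \ref{ce-prop} is a pleasant shortcut but not a genuinely different argument.
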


\begin{proof} For $n=1$ it is a consequence of the natural decomposition $F(X) \simeq cr_1F(X) \rtimes F(0)$. For $n=2$ this follows from the snake-lemma. For higher $n$ use induction.
\end{proof}

\begin{defi} \label{poly} 
A functor $F: \C \to \D$  is said to be \textit{polynomial of degree lower or equal to $n$} if $cr_{n+1}F=0$. Such a functor is called \textit{linear} if $n=1$ and is called \textit{quadratic} if $n=2$. We denote by $Func(\mathcal{C}, \mathcal{D})_{\leq n}$ the full subcategory of $Func(\mathcal{C}, \mathcal{D})$ consisting of polynomial functors of degree lower or equal to $n$.
\end{defi}

The category $Func(\mathcal{C}, \mathcal{D})_{\leq n}$ has the following fundamental property which is an immediate consequence of Proposition \ref{exact}.

\begin{prop} \label{stable}
The category $Func(\mathcal{C}, \D)_{\leq n}$ is  thick i.e. closed under quotients, subobjects and extensions.
\end{prop}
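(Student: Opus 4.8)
The plan is to reduce the entire statement to the single input that the cross-effect functor $cr_{n+1}$ is exact, which is Proposition \ref{exact}. Recall that by Definition \ref{poly} a functor lies in $Func(\mathcal{C},\mathcal{D})_{\leq n}$ precisely when $cr_{n+1}F = 0$. So the three closure conditions defining thickness will each translate into an elementary diagram-chase using that $cr_{n+1}$, being exact, preserves monomorphisms, epimorphisms, and short exact sequences.

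First I would handle subobjects and quotients. Given a monomorphism $G \hookrightarrow F$ with $F$ polynomial of degree $\leq n$, exactness of $cr_{n+1}$ yields a monomorphism $cr_{n+1}G \hookrightarrow cr_{n+1}F = 0$, forcing $cr_{n+1}G = 0$; hence $G$ is polynomial of degree $\leq n$. Dually, given an epimorphism $F \twoheadrightarrow G$, exactness gives an epimorphism $cr_{n+1}F = 0 \twoheadrightarrow cr_{n+1}G$, so again $cr_{n+1}G = 0$. Thus $Func(\mathcal{C},\mathcal{D})_{\leq n}$ is closed under subobjects and quotients.

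For extensions, I would take a short exact sequence
$$0 \to F' \to F \to F'' \to 0$$
with $F'$ and $F''$ both polynomial of degree $\leq n$, and apply the exact functor $cr_{n+1}$ to obtain the short exact sequence
$$0 \to cr_{n+1}F' \to cr_{n+1}F \to cr_{n+1}F'' \to 0.$$
Since the outer terms vanish by hypothesis, the middle term $cr_{n+1}F$ is squeezed between two zero objects and so vanishes as well; therefore $F$ is polynomial of degree $\leq n$, proving closure under extensions.

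There is essentially no obstacle once Proposition \ref{exact} is available: the genuine content sits entirely in the exactness of $cr_{n+1}$, and the present statement is a formal corollary. The only point requiring a little care is that exactness is being used in all three of its guises (preservation of kernels, cokernels, and the full short exact sequence), but in an abelian functor category these are the standard consequences of $cr_{n+1}$ being an exact additive functor, so no further work is needed.
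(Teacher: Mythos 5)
Your proof is correct and follows exactly the route the paper intends: the paper simply states that the proposition is an immediate consequence of Proposition \ref{exact}, and your argument spells out precisely how exactness of $cr_{n+1}$ handles subobjects, quotients, and extensions.
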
 

This is an immediate consequence of Proposition \ref{exact}.

Throughout this paper we denote by $\Delta^n_{\C}: \C \to \C^{\times n}$ the diagonal functor. For $n=2$ we write $\Delta_{\C}$ instead of $\Delta^2_{\C}$.

\begin{defi} \label{+}
For $F \in Func(\mathcal{C},\D)$ and $X \in \mathcal{C}$, we denote by $S_{n}^F$ the natural transformation $S_{n}^F: (cr_{n}F) \Delta^{n}_{\C} \to F$ given by the composition
$$cr_{n}F (X , \ldots , X) \xrightarrow{inc} F( \vee_{i=1}^{n} X) \xrightarrow{F(\nabla^n)} F(X)$$
where $\nabla^n: \vee_{i=1}^{n} X \to X$ is the folding map.

\end{defi}

Note that the image of $S_{n}^F$ is normal in $F(X)$; in fact, $cr_{n}F (X , \ldots , X)$ is normal in $F( \vee_{i=1}^{n} X)$, and $F(\nabla^n)$ is surjective admitting $F(i_1^n)$ as a section. This fact is used in the following:

\begin{defi} \label{Tn-coker}
For $F \in Func(\mathcal{C}, \D)$ the $n-$Taylorisation functor $T_n :  Func(\mathcal{C}, \D) \to  Func(\mathcal{C}, \D)_{\leq n}$ is defined by: $T_nF=coker((cr_{n+1}F) \Delta^{n+1}_{\C}  \xrightarrow{S_{n+1}^F} F).$   We call $T_1$ the linearization functor and   $T_2$   the quadratization functor.
\end{defi}

Let $U_n: Func(\mathcal{C}, \D)_{\leq n} \to Func(\mathcal{C}, \D)$ denote the forgetful (i.e.\ inclusion)      functor.

\begin{prop} \label{Tn-prop}
The $n-$Taylorisation functor $T_n :  Func(\mathcal{C}, \D) \to  (Func(\mathcal{C}, \D))_{\leq n}$ is a left adjoint to $U_n$. The unit of the adjunction is the natural epimorphism $t_n: F \to T_nF$ which is an isomorphism if $F$ is polynomial of degree $\le n$.

\end{prop}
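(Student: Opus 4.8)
The plan is to obtain the adjunction purely from the universal property of the cokernel defining $T_nF$. Once we view $t_n\colon F\to T_nF$ as the canonical projection onto $coker(S^F_{n+1})$, an adjunction isomorphism $\Hom_{Func(\C,\D)_{\le n}}(T_nF,G)\cong \Hom_{Func(\C,\D)}(F,U_nG)$ amounts to showing that every morphism $F\to U_nG$ with $G$ of degree $\le n$ factors uniquely through $t_n$, together with the separate assertion that $T_nF$ genuinely lies in $Func(\C,\D)_{\le n}$. The one preliminary I would record first is that the transformations $S_{n+1}$ are natural in the functor variable: for $\phi\colon F\to G$ the square with sides $S^F_{n+1}$, $S^G_{n+1}$ and $(cr_{n+1}\phi)\Delta^{n+1}_{\C}$ commutes. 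This is immediate from Definition \ref{+}, since the inclusion $cr_{n+1}F(X,\dots,X)\hookrightarrow F(\vee^{n+1}X)$ is the restriction of $\phi_{\vee^{n+1}X}$ to cross-effects and $F(\nabla^{n+1})$ is natural with respect to $\phi$.

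The factorization is then the formal heart of the argument. If $G\in Func(\C,\D)_{\le n}$ then $cr_{n+1}G=0$, so $S^G_{n+1}=0$; hence for any $\phi\colon F\to U_nG$ the naturality square yields
$$\phi\circ S^F_{n+1}=S^G_{n+1}\circ(cr_{n+1}\phi)\Delta^{n+1}_{\C}=0.$$
Since $T_nF=coker(S^F_{n+1})$ and $t_n$ is the projection whose kernel is $\mathrm{im}(S^F_{n+1})$ (normal by the remark preceding Definition \ref{Tn-coker}), the morphism $\phi$ factors as $\phi=\bar\phi\circ t_n$ for a unique $\bar\phi\colon T_nF\to G$, uniqueness holding because $t_n$ is an epimorphism. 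Thus precomposition with $t_n$ is a bijection $\Hom_{Func(\C,\D)_{\le n}}(T_nF,G)\xrightarrow{\ \sim\ }\Hom_{Func(\C,\D)}(F,U_nG)$, and naturality in $F$ and $G$ follows formally from the naturality of $t_n$ and of $S_{n+1}$. This exhibits $T_n$ as left adjoint to $U_n$ with unit $t_n$.

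The step I expect to be the main obstacle is verifying that $T_nF$ really has degree $\le n$, i.e.\ $cr_{n+1}(T_nF)=0$, so that the codomain in Definition \ref{Tn-coker} and the adjunction above are well-posed. I would deduce this from exactness of $cr_{n+1}$ (Proposition \ref{exact}) applied to the right-exact sequence $(cr_{n+1}F)\Delta^{n+1}_{\C}\xrightarrow{S^F_{n+1}}F\xrightarrow{t_n}T_nF\to 0$, which identifies $cr_{n+1}(T_nF)$ with $coker(cr_{n+1}(S^F_{n+1}))$; the task thus reduces to proving $cr_{n+1}(S^F_{n+1})$ surjective. Here I would use Proposition \ref{ce-prop}: inside $cr_{n+1}\big((cr_{n+1}F)\Delta^{n+1}_{\C}\big)(X_1,\dots,X_{n+1})\subseteq cr_{n+1}F(\vee_j X_j,\dots,\vee_j X_j)$ sits the multi-reduced summand $cr_{n+1}F(X_1,\dots,X_{n+1})$ obtained by placing the injection $\iota_k\colon X_k\hookrightarrow\vee_j X_j$ in the $k$-th slot, and a direct check shows that $F(\nabla^{n+1})$ restricts to the identity on this summand. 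Hence $cr_{n+1}(S^F_{n+1})$ already surjects onto $cr_{n+1}F(X_1,\dots,X_{n+1})$ through a single permutation summand, with no troublesome integer multiplicities, giving $cr_{n+1}(T_nF)=0$. Finally, when $F$ is itself of degree $\le n$ we have $cr_{n+1}F=0$, so the domain of $S^F_{n+1}$ is the zero functor and $T_nF=coker(0\to F)=F$ with $t_n$ the identity; thus $t_n$ is an isomorphism, completing the proof.
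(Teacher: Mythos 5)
The paper states this proposition without proof, treating it as routine (it is a standard adjunction between a reflective subcategory and its localization, cf.\ \cite{Baues-Pira}), so there is no argument of the authors' to compare yours against; your write-up is a correct and complete substitute. The formal part (naturality of $S_{n+1}^{(-)}$ in the functor variable, hence $\phi\circ S^F_{n+1}=S^G_{n+1}\circ(cr_{n+1}\phi)\Delta^{n+1}_{\C}=0$ whenever $cr_{n+1}G=0$, and unique factorization through the cokernel) is exactly right, and you correctly isolate the only substantive point, namely that $cr_{n+1}(T_nF)=0$. Your argument for that — exactness of $cr_{n+1}$ (Proposition \ref{exact}) reduces it to surjectivity of $cr_{n+1}(S^F_{n+1})$, and the element $cr_{n+1}F(i_1,\dots,i_{n+1})(z)$ lies in $cr_{n+1}\bigl((cr_{n+1}F)\Delta^{n+1}_{\C}\bigr)(X_1,\dots,X_{n+1})$ by multi-reducedness and maps back to $z$ because $\nabla^{n+1}\circ(i_1\vee\cdots\vee i_{n+1})=\mathrm{id}$ — is sound. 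Two small remarks: you should phrase the last step as "the image of $cr_{n+1}(S^F_{n+1})$ contains a subgroup mapping onto $cr_{n+1}F(X_1,\dots,X_{n+1})$" rather than invoking the direct-sum decomposition of Proposition \ref{ce-prop}, which the paper only states for $Ab$-valued functors, whereas the surjectivity argument as such needs no decomposition and works for $\D=Gr$ too; and in the $Gr$ case the identification of $cr_{n+1}(T_nF)$ with the quotient by the image uses the normality of $\mathrm{im}(S^F_{n+1})$, which you do cite from the remark preceding Definition \ref{Tn-coker}. The final observation that $T_nF=F$ when $cr_{n+1}F=0$ is immediate, as you say.
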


Thus, we obtain the diagram:

$$\xymatrix{
     &&F  \ar[dl]_{t_{n+1}}   \ar[d]_{t_n}  \ar[dr]^{t_{n-1}} &&& \\
    \ldots   \ar[r] & T_{n+1}F \ar[r]^{q_{n+1}}&  T_{n}F \ar[r]^{q_{n}}  &T_{n-1}F \ar[r]^{q_{n-1}}& \ldots \ar[r]&T_1F \ar[r] & T_0F=0.
      }$$

Since the cross-effect $cr_2F$ of a functor  is a bifunctor we need some  general definitions and facts about bifunctors.

\begin{defi}
A bifunctor $B: \C \times \C \to \D$ is said to be bireduced if for all $X \in \C$, $B(X,0)=B(0,X)=0$. We denote by $BiFunc_{*,*}(\mathcal{C} \times \mathcal{C}, \D)$ the category of bireduced bifunctors from $ \C \times \C$ to $\D$.

A bireduced bifunctor $B: \C \times \C \to \D$  is said to be \textit{bipolynomial of bidegree $\leq (n,m)$} if  for all $X \in \C$  the functors $B(-,X), B(X,-): \C \to \D$ are polynomial of degree $\leq n$ and $\leq m$ respectively. We denote by  $BiFunc_{*,*}(\mathcal{C} \times \mathcal{C}, \D)_{\leq (n,m)}$ the category of bipolynomial bifunctors of bidegree $\leq (n,m)$.
\end{defi}

\begin{prop} \label{bi-stable}
The category $BiFunc_{*,*}(\mathcal{C} \times \mathcal{C}, \D)_{\leq (n,m)}$ is thick.
\end{prop}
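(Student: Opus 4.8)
The plan is to reduce the statement to its single-variable counterpart, Proposition~\ref{stable}, exploiting the fact that bipolynomiality of bidegree $\leq (n,m)$ is, by definition, a condition imposed one variable at a time, and that exactness in a bifunctor category is detected objectwise.

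First I would record the exactness of the two restriction functors. Given an object $X \in \C$, restriction of the second (resp.\ first) variable to $X$ defines functors $BiFunc_{*,*}(\C \times \C, \D) \to Func(\C,\D)$, namely $B \mapsto B(-,X)$ and $B \mapsto B(X,-)$. Since kernels, cokernels and images in a functor category are computed objectwise, a sequence $0 \to B' \to B \to B'' \to 0$ in $BiFunc_{*,*}(\C \times \C, \D)$ is exact precisely when its evaluation at every pair of objects is exact in $\D$; hence each restriction functor carries such an exact sequence to an exact sequence $0 \to B'(-,X) \to B(-,X) \to B''(-,X) \to 0$ (and symmetrically in the first variable) of functors $\C \to \D$. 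In other words, both restriction functors are exact.

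Next I would transport the three closure properties through these restrictions by invoking Proposition~\ref{stable}. Fix a short exact sequence as above. For closure under subobjects, suppose $B$ is bipolynomial of bidegree $\leq (n,m)$; then for every $X$ the functor $B'(-,X)$ is a subobject of the degree-$\leq n$ functor $B(-,X)$, so it has degree $\leq n$ by thickness of $Func(\C,\D)_{\leq n}$, and likewise $B'(X,-)$ has degree $\leq m$, whence $B'$ is bipolynomial of bidegree $\leq (n,m)$. The case of quotients is identical with $B''$ in place of $B'$. For extensions, if $B'$ and $B''$ are both bipolynomial of bidegree $\leq (n,m)$, then for each $X$ the functor $B(-,X)$ is an extension of two functors of degree $\leq n$, hence of degree $\leq n$ again by Proposition~\ref{stable}, and symmetrically $B(X,-)$ has degree $\leq m$; thus $B$ is bipolynomial of bidegree $\leq (n,m)$. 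No separate check of bireducedness is needed, since all bifunctors occurring in a short exact sequence in $BiFunc_{*,*}(\C\times\C,\D)$ are bireduced by the very definition of that category.

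The proof carries essentially no serious obstacle: its entire content is the bookkeeping observation that both the defining condition and the notion of exactness pass objectwise, so that Proposition~\ref{bi-stable} is a two-variable echo of Proposition~\ref{stable}. The one point deserving a moment of care is the exactness of restriction to a fixed variable, and this is immediate because the relevant limits and colimits in functor categories are formed objectwise; moreover this argument works uniformly for $\D = Ab$ and for the non-abelian case $\D = Gr$, since Proposition~\ref{stable} is already available in the requisite generality.
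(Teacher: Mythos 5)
Your proof is correct. The paper gives no argument for this proposition at all, treating it as an immediate analogue of Proposition \ref{stable}; your variable-wise reduction --- using that exactness in the bifunctor category is detected objectwise, hence after restricting either variable, and then invoking the thickness of $Func(\mathcal{C},\D)_{\leq n}$ in each variable separately --- is exactly the intended argument.
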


\begin{defi} Let $B: \C \times \C \to \D$ be a bireduced bifunctor and $n,m\ge 1$. Then the bifunctor $T_{n,m}B: \C \times \C \to \D$ is defined by
$ T_{n,m}B(X,Y) = B(X,Y)/N_1N_2$
where
$$ N_1= im(S_{n+1}^{B(-,Y)}: cr_{n+1}B(-,Y)(X,\ldots,X) \to B(X,Y)),$$
$$ N_2= im(S_{m+1}^{B(X,-)}: cr_{m+1}B(X,-)(Y,\ldots,Y) \to B(X,Y)).$$
For $(n,m)=(1,1)$ we call $T_{1,1}B$
 the bilinearization of $B$.
\end{defi}

Let $U_{n,m}: (BiFunc_{*,*}(\mathcal{C} \times \C, \D))_{\leq (n,m)} \to BiFunc_{*,*}(\mathcal{C} \times \C, \D)$ be the forgetful functor.

\begin{prop}
 The $(n,m)-$Taylorisation functor 
 $$T_{n,m} :  BiFunc_{*,*}(\mathcal{C} \times \C, Ab) \to   BiFunc_{*,*}(\mathcal{C}\times \C, Ab )_{\leq (n,m)}$$
  is a left adjoint to $U_{n,m}$. The unit of this adjunction is given by the natural epimorphism $t_{n,m}\colon B\to T_{n,m}B$.\\

\end{prop}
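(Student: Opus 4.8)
The plan is to mirror, in two variables, the argument behind Proposition \ref{Tn-prop}. Since the unit $t_{n,m}\co B\to T_{n,m}B$ is by construction the natural quotient map $B\to B/(N_1+N_2)$ (recall that in $Ab$ the normal product $N_1N_2$ is just the sum $N_1+N_2$), establishing the adjunction amounts to two things: first, that $T_{n,m}B$ genuinely lies in $BiFunc_{*,*}(\C\times\C,Ab)_{\leq(n,m)}$, so that the statement even makes sense; and second, that $t_{n,m}$ is initial among all morphisms from $B$ into a bipolynomial bifunctor of bidegree $\le(n,m)$, i.e.\ that every such morphism factors uniquely through $t_{n,m}$.

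For well-definedness, I would fix $Y\in\C$ and observe that, by Definition \ref{Tn-coker}, the quotient $B(-,Y)/N_1$ is precisely $T_n\big(B(-,Y)\big)=coker\big(S^{B(-,Y)}_{n+1}\big)$, which is polynomial of degree $\le n$ in the first variable. As $T_{n,m}B(-,Y)=B(-,Y)/(N_1+N_2)$ is a further quotient of this (because $N_1\subseteq N_1+N_2$), and the subcategory of functors of degree $\le n$ is thick, hence closed under quotients (Proposition \ref{stable}), the functor $T_{n,m}B(-,Y)$ is again polynomial of degree $\le n$. The symmetric argument in the second variable yields degree $\le m$, so $T_{n,m}B$ is bipolynomial of bidegree $\le(n,m)$.

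For the universal property, let $C$ be bipolynomial of bidegree $\le(n,m)$ and let $\phi\co B\to C$ be a morphism of bifunctors. Fixing $Y$, the component $\phi(-,Y)\co B(-,Y)\to C(-,Y)$ is a morphism into a functor polynomial of degree $\le n$; by the one-variable adjunction $T_n\dashv U_n$ of Proposition \ref{Tn-prop} it factors through the unit $B(-,Y)\to T_n(B(-,Y))=B(-,Y)/N_1$, and therefore annihilates $N_1$. Since this holds for every $Y$, the morphism $\phi$ annihilates $N_1$; the symmetric argument with $T_m\dashv U_m$ in the second variable shows it annihilates $N_2$. Hence $\phi$ kills $N_1+N_2$ and factors as $\phi=\bar{\phi}\,t_{n,m}$ for a unique $\bar{\phi}\co T_{n,m}B\to C$, uniqueness being forced by the surjectivity of $t_{n,m}$. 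This is exactly the desired bijection, and its naturality in $B$ and $C$ follows formally from the naturality of $t_{n,m}$.

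The one point that genuinely requires care --- and which I expect to be the main obstacle --- is checking that $N_1$ and $N_2$ are sub-\emph{bi}functors of $B$, that is, natural in both variables at once rather than only in the variable over which the relevant cross-effect is taken. This is what guarantees that $B/(N_1+N_2)$ is a well-defined bifunctor and that the pointwise factorizations above assemble into an honest morphism of bifunctors. It reduces to the naturality of the cross-effect $cr_{n+1}$ and of the transformation $S_{n+1}$ of Definition \ref{+} in the auxiliary parameter, which is routine but should be verified explicitly; once this is in hand, the remainder is a direct transcription of the one-variable case applied once in each variable.
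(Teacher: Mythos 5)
Your argument is correct: the paper states this proposition without proof (treating it as the evident two-variable analogue of Proposition 1.11), and your reduction to the one-variable adjunction $T_n\dashv U_n$ in each variable separately, together with thickness of $Func(\C,Ab)_{\le n}$ under quotients and the observation that $N_1N_2=N_1+N_2$ in $Ab$, is exactly the intended argument. Your closing caveat --- that $N_1$ and $N_2$ must be checked to be sub-bifunctors, which follows from naturality of $cr_{n+1}$ and $S_{n+1}$ in the fixed variable --- is the right point to flag and is indeed routine.
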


\noindent {\bf Notation.} For brevity we will often write $T_{11}$ and  $t_{11}$ instead of $T_{1,1}$ and  $t_{1,1}$, resp., and  $\bar{x}$ instead of $t_1(x)$ or $t_{11}(x)$ for $x\in F(X)$ or 
$x\in B(X,Y)$, resp.

\begin{exple}\label{TFoG} For reduced functors $F,G: \C\to Ab$ define the bifunctor $F\boxtimes G : \C\times \C\to Ab$ by $(F\boxtimes G)(X,Y) = F(X) \otimes F(Y)$. Then there is a natural isomorphism
$$ T_{n,m}(F\boxtimes G) \simeq T_nF \boxtimes T_mG .$$
\end{exple}

This is immediate from right-exactness of the tensor product. In the following proposition we give another characterization of the quadratization functor which is useful in the sequel. This requires some notations: in the diagram

\begin{equation} \label{r123}
\xymatrix{
F(X_1 \vee  \ldots \vee X_n ) \ar[rrrr]_-{((r_{12 \ldots (k-1)(k+1) \ldots n}^n)_*, (r^n_k)_*)^t}&&&&  F(X_1 \vee \ldots \vee \hat{X_k} \vee \ldots \vee X_n ) \oplus F(X_k)  \ar@(ul,ur)[llll]_-{((i_{12 \ldots (k-1)(k+1) \ldots n}^n)_*, (i^n_k)_*)}
}
\end{equation}
the map $i_{12 \ldots (k-1)(k+1) \ldots n}^n$ is the obvious injection.

Considering the kernel of $((r_{12 \ldots (k-1)(k+1) \ldots n}^n)_*, (r^n_k)_*)^t$ we obtain the maps:
\begin{equation} \label{r1234}
\xymatrix{
F(X_1 \vee \ldots \vee \hat{X_k} \vee \ldots \vee X_n |X_k) \ar@{^(->}[rrr]_-{\iota_{(12 \ldots (k-1)(k+1) \ldots n,k)}^n}&&& F(X_1 \vee  \ldots \vee X_n )  \ar@(ul,ur)[lll]_-{\rho_{(12 \ldots (k-1)(k+1) \ldots n,k)}^n}.
}
\end{equation}
where $\rho_{(12 \ldots (k-1)(k+1) \ldots n,k)}^n$ is the retraction induced by the section $((i_{12 \ldots (k-1)(k+1) \ldots n}^n)_*, (i^n_k)_*)$ of $((r_{12 \ldots (k-1)(k+1) \ldots n}^n)_*, (r^n_k)_*)^t$.

>From now on, we only consider functors on $\C$ with values in $Ab$.

\subsection{Algebraic theories and polynomial functors}
When $\C$ is an algebraic theory, the polynomial functors from $\C$ to $Ab$ of degree $n$ have the crucial property that they are determined by their values on $n$ objects of $\C$.\medskip

\paragraph{\textbf{Recollections on algebraic theories}}
We here recall and discuss the definition of a pointed  algebraic theory used in this paper and many others by Baues, Jibladze and Pirashvili. 

\begin{defi}

A pointed  (algebraic) theory $\mathcal{T}$ is a pointed category $\mathcal{T}$ with an object $E$ such that any object of $\mathcal{T}$ is isomorphic to a finite sum of copies of $E$. In particular, for any object $E$ of $\mathcal{C}$ we denote by $\langle E \rangle_{\mathcal{C}}$ the theory generated by $E$, i.e. the full subcategory of $\mathcal{C}$ consisting of the objects $E^{\vee n}=E\vee \ldots \vee E$ ($n$ times), $n\ge 0$, with $E^{\vee 0}= 0$. 

\end{defi}

Note that this definition of an algebraic theory is dual to the classical one as being a category encoding algebraic operations, see \cite{Bor2}. Thus a model of a theory $\mathcal{T}$ in our sense is a \textit{contravariant} functor from $\mathcal{T}$ to the category of sets transforming coproducts into products. The advantage of this definition is that here $\mathcal{T}$ identifies with a full subcategory of its category of models, namely the category of free models of $\mathcal{T}$ of finite rank \cite{Bor2}. This allows the quadratic functors we construct in section 5, from data depending only on $\mathcal{T}$, to be naturally defined on the whole category of models of $\mathcal{T}$; indeed, on all of $\C$ in the more general case where $\mathcal{T} = \langle E \rangle_{\mathcal{C}}$.\medskip

\paragraph{\textbf{Polynomial functors on algebraic theories}}
The following property of polynomial functors is crucial in the sequel.
\begin{prop} \label{car-poly}
Let $F, G: \C \to Ab$ be two reduced polynomial functors of degree lower or equal to $n$ and $\phi: F \to G$ be a natural transformation of functors. If $\C=\langle E \rangle_{\C}$, the following statements are equivalent:
\begin{enumerate}
\item
$\phi$ is a natural isomorphism;
\item
$\forall k \leq n$, $\phi_{E^{\vee k}}$ is an isomorphism;
\item
$\phi_{E^{\vee n}}$ is an isomorphism;
\item
$\forall k$ such that  $ 1 \leq k \leq n$, $cr_k(\phi)_{E, \ldots, E}$ is an isomorphism.
\end{enumerate}
\end{prop}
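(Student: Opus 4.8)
The plan is to prove the cycle $(1)\Rightarrow(2)\Rightarrow(3)\Rightarrow(4)\Rightarrow(1)$. The first two implications are immediate: $(1)\Rightarrow(2)$ is the restriction of a natural isomorphism to the objects $E^{\vee k}$, and $(2)\Rightarrow(3)$ is the special case $k=n$. All the content lies in $(3)\Leftrightarrow(4)$ and $(4)\Rightarrow(1)$, and the single tool driving both is the natural direct sum decomposition of Proposition \ref{ce-prop} evaluated at $X_1=\cdots=X_m=E$. Since $F$ and $G$ are polynomial of degree $\le n$ we have $cr_kF=cr_kG=0$ for $k>n$, so for every $m\ge 0$ this decomposition takes the form
\[
F(E^{\vee m})\ \simeq\ \bigoplus_{k=1}^{\min(m,n)}\big(cr_kF(E,\ldots,E)\big)^{\oplus\binom{m}{k}},
\]
and similarly for $G$; here the summand indexed by a $k$-element subset $\{i_1<\cdots<i_k\}$ of $\{1,\ldots,m\}$ is $cr_kF(E,\ldots,E)$ independently of the subset, whence the multiplicity $\binom{m}{k}$.

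First I would record that $\phi$ is compatible with this splitting. The decomposition of Proposition \ref{ce-prop} is assembled from the structural maps $F(i^m_j)$ and $F(r^m_j)$ and from the canonical inclusions of the cross-effects as kernels; each of these is natural in the functor, so $\phi$ commutes with them all. Because $cr_k(\phi)$ is by definition the restriction of $\phi$ to these kernels (exactness of $cr_k$, Proposition \ref{exact}), it follows that $\phi$ carries the summand of $F(E^{\vee m})$ indexed by a subset $S$ into the summand of $G(E^{\vee m})$ indexed by the \emph{same} $S$, and does so via $cr_{|S|}(\phi)_{E,\ldots,E}$. Thus, under the decomposition above, $\phi_{E^{\vee m}}$ is the block-diagonal map $\bigoplus_{k}\big(cr_k(\phi)_{E,\ldots,E}\big)^{\oplus\binom{m}{k}}$. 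I would then invoke the elementary fact that a direct sum $\bigoplus_j g_j$ of homomorphisms of abelian groups is an isomorphism if and only if each $g_j$ is one: sufficiency is clear, and necessity follows by testing injectivity and surjectivity on one summand at a time.

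With this in hand the two remaining implications are read off the decomposition. For $(3)\Leftrightarrow(4)$ I specialize to $m=n$: then every $k$ with $1\le k\le n$ occurs (as $\binom{n}{k}\ge 1$), so $\phi_{E^{\vee n}}$ is an isomorphism precisely when each of its blocks $cr_k(\phi)_{E,\ldots,E}$, $1\le k\le n$, is. For $(4)\Rightarrow(1)$, assuming all the $cr_k(\phi)_{E,\ldots,E}$ with $1\le k\le n$ are isomorphisms, the block-diagonal description shows $\phi_{E^{\vee m}}$ to be an isomorphism for every $m\ge 0$. Since $\C=\langle E\rangle_\C$, every object $X$ is isomorphic to some $E^{\vee m}$; naturality of $\phi$ together with the fact that functors preserve isomorphisms then yields that $\phi_X$ is an isomorphism for all $X$, i.e.\ $\phi$ is a natural isomorphism.

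The step deserving the most care, and the only genuine obstacle, is the compatibility of $\phi$ with the decomposition of Proposition \ref{ce-prop}: one must verify that $\phi$ respects the whole splitting and not merely each individual cross-effect, so that $\phi_{E^{\vee n}}$ is truly block-diagonal rather than block-triangular. This is exactly where the exactness of the cross-effect functors (Proposition \ref{exact}) and the naturality of the structural injections and retractions both come into play. Once it is settled, the equivalences are formal, and the hypothesis of degree $\le n$ is used only to truncate the decomposition at $k=n$, ensuring that the finitely many conditions listed in $(4)$ already control $\phi$ on every $E^{\vee m}$.
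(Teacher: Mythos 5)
Your proof is correct and follows essentially the same route as the paper: both arguments rest on the natural decomposition of Proposition \ref{ce-prop}, which exhibits $\phi_{E^{\vee m}}$ as a direct sum of the cross-effect maps $cr_k(\phi)_{E,\ldots,E}$ for $k\le\min(m,n)$, from which all the implications follow. Your explicit verification that $\phi$ is genuinely block-diagonal (not merely block-triangular) with respect to this splitting is a point the paper leaves implicit, but it is the same argument.
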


\begin{proof}
Clearly $(1) \Rightarrow (2) \Rightarrow (3)$. The implication $(3) \Rightarrow (4)$ is a consequence of the natural decomposition given in Proposition \ref{ce-prop}. To prove $(4) \Rightarrow (1)$, let  $p \in \mathbb{N}$ and $m=min(p,n)$.  Proposition \ref{ce-prop} provides a natural decomposition:
\begin{eqnarray*}
F(X_1 \vee \ldots \vee X_p) & \simeq& \bigoplus_{k=1}^{p} \bigoplus_{1 \leq i_1 < \ldots < i_k \leq p} cr_kF(X_{i_1}, \ldots, X_{i_k}) \\
&\simeq& \bigoplus_{k=1}^{m} \bigoplus_{1 \leq i_1 < \ldots < i_k \leq p} cr_kF(X_{i_1}, \ldots, X_{i_k})
\end{eqnarray*}

since $F$ is supposed to be polynomial  of degree $n$. Using the analogous decomposition for $G(X_1 \vee \ldots \vee X_p)$ we have:

$$\phi_{X_1 \vee \ldots \vee X_p}  \simeq \bigoplus_{k=1}^{m} \bigoplus_{1 \leq i_1 < \ldots < i_k \leq p} cr_k (\phi)_{X_{i_1}, \ldots, X_{i_k}}.
$$
For $X_1=\ldots= X_p=E$ we deduce that $\phi_{E^{\vee p}}$ is an isomorphism.
\end{proof}

Proposition \ref{car-poly} implies the following analogue for bipolynomial bifunctors.
\begin{cor} \label{car-bipoly}
If $\C=\langle E \rangle_{\C}$, for $B, D: \C \times \C \to Ab$ two bipolynomial bifunctors of bidegree lower or equal to $(n,m)$ and $\phi: B \to D$ a natural transformation of functors.
Then $\phi$ is an natural equivalence if and only if $\phi_{(E^{\vee k}, E^{\vee l})}$ is an isomorphism for all $k \leq n$ and $l \leq m$.
\end{cor}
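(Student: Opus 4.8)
The forward implication is immediate: a natural equivalence is by definition an isomorphism at every pair of objects, in particular at the pairs $(E^{\vee k}, E^{\vee l})$. The substance is the converse, and my plan is to deduce it from Proposition \ref{car-poly} by a two-step argument that treats the two variables successively. The key mechanism is that Proposition \ref{car-poly} promotes isomorphy at the finitely many objects $E^{\vee k}$, $k\leq n$, to a genuine natural isomorphism, i.e.\ isomorphy at \emph{every} object; applying this once per variable will propagate the hypothesis from the ``corner'' pairs $(E^{\vee k}, E^{\vee l})$ to all pairs $(X,Y)$.

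First I would fix the second variable. For each $l\leq m$, consider the functors $B(-,E^{\vee l}), D(-,E^{\vee l})\colon \C\to Ab$; these are reduced, since $B$ and $D$ are bireduced, and polynomial of degree $\leq n$ by the bidegree hypothesis. The components of $\phi$ assemble into a natural transformation $\phi_{(-,E^{\vee l})}$ between them, and the hypothesis says precisely that $\phi_{(E^{\vee k},E^{\vee l})}$ is an isomorphism for all $k\leq n$. The implication $(2)\Rightarrow(1)$ of Proposition \ref{car-poly} then yields that $\phi_{(-,E^{\vee l})}$ is a natural isomorphism, so that $\phi_{(X,E^{\vee l})}$ is an isomorphism for \emph{every} object $X$ of $\C$, not merely for $X$ of the form $E^{\vee k}$. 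This upgrade is the crucial step.

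With this in hand I would fix the first variable at an arbitrary object $X$ and repeat the argument in the second slot: the functors $B(X,-), D(X,-)\colon\C\to Ab$ are reduced and polynomial of degree $\leq m$, and by the previous step $\phi_{(X,E^{\vee l})}$ is an isomorphism for all $l\leq m$. A second application of $(2)\Rightarrow(1)$ of Proposition \ref{car-poly} shows that $\phi_{(X,-)}$ is a natural isomorphism, whence $\phi_{(X,Y)}$ is an isomorphism for every $Y$; since $X$ was arbitrary, $\phi$ is a natural equivalence. The proof is routine once Proposition \ref{car-poly} is available, and the only point demanding care is the order of the two reductions: the first application must be carried out for each fixed $E^{\vee l}$ so as to produce isomorphisms at all pairs $(X,E^{\vee l})$, which is exactly the input needed to feed the second application with an arbitrary first argument.
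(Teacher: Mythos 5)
Your proof is correct and is precisely the argument the paper intends: the paper states the corollary as an immediate consequence of Proposition \ref{car-poly} without writing out the details, and your two-step reduction (first variable, then second) with the careful ordering is exactly the right way to fill them in.
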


\subsection{Symmetric bifunctors}
In this section we emphasize a supplementary structure on the cross-effect.

\begin{defi} \label{symbifdef} A symmetric bifunctor from $\C$ to $Ab$ is a pair $(B,T)$ where  $B:\C \times \C \to Ab$ is a bifunctor  and  $T:B \to B\circ V$ is a natural isomorphism.
\end{defi}

Cross-effects are natural examples of symmetric bifunctors, as follows. For $X,Y\in \C$ denote by $\tau_{X,Y}:X\vee Y \to Y \vee X$ the canonical switch.

\begin{prop} \label{crissymbif} Let $F:\C\to Ab$ be a functor. Then 
there are symmetric bifunctors $(cr_2F,T^F)$ and $(T_{11}(cr_2F),\bar{T}^F)$ where
$$T_{X,Y}^F : (cr_2F)(X,Y) \to (cr_2F)(Y,X) \quad \mbox{and} \quad
\bar{T}^F_{X,Y} : T_{11}(cr_2F)(X,Y) \to T_{11}(cr_2F)(Y,X) $$
are given by
$$ T_{X,Y}^F = (\iota^2_{(1,2)})^{-1} F(\tau_{X,Y})(\iota^2_{(1,2)}) \quad \mbox{and} \quad
\bar{T}^F_{X,Y} = T_{11}(T^F),$$
noting that $T_{11}((cr_2F)\circ V)=T_{11}(cr_2F)\circ V$.\hfill$\Box$
\end{prop}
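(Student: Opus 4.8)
The plan is to establish two things: first that the given formulas define natural isomorphisms of bifunctors, and second that they satisfy the involution-type condition implicit in the notion of symmetric bifunctor (namely $T_{Y,X}^F \circ T_{X,Y}^F = \mathrm{id}$, so that $T^F$ is an isomorphism with inverse $T^F\circ V$). I would treat the two cases $(cr_2F,T^F)$ and $(T_{11}(cr_2F),\bar T^F)$ in sequence, deriving the second from the first by applying the functor $T_{11}$ to the whole structure.

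First I would verify that $T^F$ is well-defined. Recall from \eqref{r1234} that $\iota^2_{(1,2)} \co (cr_2F)(X,Y) = F(X|Y) \hookrightarrow F(X\vee Y)$ is the canonical inclusion of the second cross-effect. The switch $\tau_{X,Y}\co X\vee Y \to Y\vee X$ induces $F(\tau_{X,Y})\co F(X\vee Y)\to F(Y\vee X)$, and the point is that this carries the subgroup $(cr_2F)(X,Y)$ isomorphically onto $(cr_2F)(Y,X)$. This is where Proposition~\ref{exact} (or rather the defining kernel description of $cr_2$) is used: since $\tau_{X,Y}$ is compatible with the retractions $r^2_k$ up to the evident permutation, $F(\tau_{X,Y})$ commutes with the defining projections of the cross-effects after swapping factors, so it restricts to the cross-effects and $(\iota^2_{(1,2)})^{-1} F(\tau_{X,Y})(\iota^2_{(1,2)})$ makes sense as a map landing in $(cr_2F)(Y,X)$. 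I would then check naturality of $T^F$ in the pair $(X,Y)$, which follows formally from naturality of $\iota^2$, of $F$, and of the switch $\tau$. The involution property $T^F_{Y,X}T^F_{X,Y}=\mathrm{id}$ follows from $\tau_{Y,X}\tau_{X,Y}=1_{X\vee Y}$ together with functoriality of $F$; this simultaneously shows $T^F$ is an isomorphism, giving the symmetric bifunctor $(cr_2F,T^F)$.

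For the second pair, I would apply the bilinearization functor $T_{11}$. The key observation recorded in the statement is that $T_{11}((cr_2F)\circ V)=T_{11}(cr_2F)\circ V$, i.e. bilinearization commutes with the switch of variables; this holds because the two normal subgroups $N_1,N_2$ in the definition of $T_{n,m}$ are interchanged by precomposition with $V$, so the quotient is the same bifunctor with its variables swapped. Granting this, $T^F\co cr_2F\to (cr_2F)\circ V$ is a natural transformation of bireduced bifunctors, and functoriality of $T_{11}$ produces $\bar T^F := T_{11}(T^F)\co T_{11}(cr_2F)\to T_{11}((cr_2F)\circ V)=T_{11}(cr_2F)\circ V$, which is an isomorphism since $T_{11}$ preserves isomorphisms and $T^F$ is one. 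The involution property for $\bar T^F$ is inherited from that of $T^F$ by functoriality of $T_{11}$.

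The main obstacle I anticipate is the bookkeeping in the first step: verifying carefully that $F(\tau_{X,Y})$ restricts to an isomorphism between the two cross-effects, i.e. that it respects the kernel descriptions of Definition of $cr_2$ under the coordinate swap. Everything afterward is formal diagram-chasing and an appeal to functoriality of $T_{11}$, so the only genuine content is this compatibility of the switch with the defining retractions, which is why the authors can dispatch the proof with $\Box$.
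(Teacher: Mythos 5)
Your argument is correct and is precisely the routine verification the paper leaves to the reader (the proposition is stated with $\Box$ and no written proof): the switch $\tau_{X,Y}$ intertwines the defining retractions of $X\vee Y$ and $Y\vee X$ up to permutation, so $F(\tau_{X,Y})$ restricts to the cross-effects, and the rest — naturality, the involution property from $\tau_{Y,X}\tau_{X,Y}=1$, and the passage to $\bar T^F$ via functoriality of $T_{11}$ together with $T_{11}((cr_2F)\circ V)=T_{11}(cr_2F)\circ V$ — is formal. No gaps.
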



\section{Study of the (bi-)linearization and quadratization functors}

\subsection{(Bi)-linearization and the identity functor of the category of groups}
The following calculations are needed in section 7. Recall the following fact:

\begin{lm}[\cite{Baues-Pira} Lemma 1.6] \label{lin-ab}
For $F: \C \to Gr$ a reduced linear functor, $F(X)$ is an abelian group for $X \in \C$.
\end{lm}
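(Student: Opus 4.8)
The plan is to exploit the vanishing of the second cross-effect through a commutator argument carried out in the group $F(X\vee X)$. First I would record the elementary fact that a reduced functor sends every zero morphism to the trivial homomorphism: the zero map $0\co X\to Y$ factors through the null object as $X\to 0\to Y$, so $F(0\co X\to Y)$ factors through $F(0)=0$ and is therefore trivial. In particular, since $r^2_1 i^2_2 = r^2_2 i^2_1 = 0$, we get $F(r^2_1 i^2_2)=F(r^2_2 i^2_1)=0$.

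Next, fix $a,b\in F(X)$ and set $u=F(i^2_1)(a)$ and $v=F(i^2_2)(b)$ in $F(X\vee X)$. The key step is to show that the commutator $[u,v]=uvu^{-1}v^{-1}$ lies in $cr_2F(X,X)=\ker\bigl((F(r^2_1),F(r^2_2))^t\bigr)$. Applying the homomorphism $F(r^2_1)$ and using $r^2_1 i^2_1=1_X$ together with the preceding observation gives $F(r^2_1)(u)=a$ and $F(r^2_1)(v)=e$, whence $F(r^2_1)([u,v])=[a,e]=e$; by symmetry $F(r^2_2)([u,v])=[e,b]=e$. Thus $[u,v]\in cr_2F(X,X)$. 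Since $F$ is linear, $cr_2F=0$, so $[u,v]=e$, that is, $u$ and $v$ commute in $F(X\vee X)$.

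Finally I would transport this commutativity down to $F(X)$ along the folding map $\nabla^2\co X\vee X\to X$. Because $\nabla^2 i^2_1=\nabla^2 i^2_2=1_X$, the homomorphism $F(\nabla^2)$ sends $u\mapsto a$ and $v\mapsto b$; applying it to the relation $uv=vu$ yields $ab=F(\nabla^2)(uv)=F(\nabla^2)(vu)=ba$. As $a,b\in F(X)$ were arbitrary, $F(X)$ is abelian.

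The argument is short, and the only point demanding genuine care is the second step: one must verify that it is the commutator, and not the individual elements $u,v$, that lands in the kernel defining $cr_2F$, and that reducedness is correctly invoked to annihilate the cross terms $F(r^2_1)(v)$ and $F(r^2_2)(u)$. Everything else is a formal consequence of $F$ being group-valued and of the hypothesis $cr_2F=0$.
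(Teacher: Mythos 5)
Your proof is correct and is the standard commutator argument: the paper itself states this lemma without proof, citing \cite{Baues-Pira}, and your reasoning (the commutator $[F(i_1^2)a,F(i_2^2)b]$ lies in $cr_2F(X,X)=0$ because reducedness kills the cross terms under $F(r_1^2)$ and $F(r_2^2)$, and folding via $F(\nabla^2)$ then gives $ab=ba$) is exactly the expected one. No gaps.
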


For a group $G$ and $a,b\in G$ let $[a,b]=aba^{-1}
b^{-1}$ and let
$G^{ab}=G/[G,G]$ denote the abelianization of $G$.

\begin{prop} \label{T1Id} 
Let $Id_{Gr} :Gr \to Gr$ be the identity functor. There is 
a natural isomorphism of functors $Gr \to Gr$
$$\Gamma_1 : T_{1}(Id_{Gr})(G) \xleftarrow{\simeq} G^{ab}$$
such that for $g\in G$, one has $\Gamma_1(\bar{g}) = t_{1}(g)$.
\end{prop}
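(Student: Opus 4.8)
The plan is to compute the cokernel defining $T_1(Id_{Gr})$ directly and recognise it as abelianization. Since the coproduct in $Gr$ is the free product $*$, for $F=Id_{Gr}$ the second cross-effect is $cr_2(Id_{Gr})(X,Y)=ker\big((r^2_1,r^2_2)^t\co X*Y\to X\times Y\big)$, the cartesian subgroup of the free product. By Definition \ref{+}, $S_2^{Id_{Gr}}$ is the restriction of the folding homomorphism $\nabla^2\co X*X\to X$ to $cr_2(Id_{Gr})(X,X)$, so by Definition \ref{Tn-coker} one has $T_1(Id_{Gr})(X)=X/im(S_2^{Id_{Gr}})$ (the image being normal, as noted after Definition \ref{+}). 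Everything then reduces to identifying this image.

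Next I would prove $im(S_2^{Id_{Gr}})=[X,X]$. For $[X,X]\subseteq im(S_2^{Id_{Gr}})$, observe that for $x,x'\in X$ the commutator $[i^2_1(x),i^2_2(x')]$ lies in $cr_2(Id_{Gr})(X,X)$, since it maps to the unit in $X\times X$, and that $\nabla^2$ sends it to $[x,x']$; as such commutators generate $[X,X]$, the inclusion follows. For the reverse inclusion I would use that abelianization, being a left adjoint, carries coproducts to coproducts, so the abelianization $X*X\to (X*X)^{ab}=X^{ab}\times X^{ab}$ factors through $(r^2_1,r^2_2)^t\co X*X\to X\times X$; hence $cr_2(Id_{Gr})(X,X)=ker(r^2_1,r^2_2)^t$ is contained in $[X*X,X*X]$, which the homomorphism $\nabla^2$ maps into $[X,X]$. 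Alternatively, since $T_1(Id_{Gr})$ is linear, Lemma \ref{lin-ab} makes $T_1(Id_{Gr})(X)$ abelian, so $[X,X]\subseteq ker(t_1)=im(S_2^{Id_{Gr}})$ comes for free and only the reverse inclusion needs an argument.

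With $im(S_2^{Id_{Gr}})=[X,X]$ in hand, $T_1(Id_{Gr})(X)=X/[X,X]=X^{ab}$ and the unit $t_1\co X\to T_1(Id_{Gr})(X)$ is precisely the abelianization quotient. I then let $\Gamma_1$ be the inverse of the induced isomorphism $X^{ab}\xrightarrow{\simeq}T_1(Id_{Gr})(X)$; by construction $\Gamma_1(\bar g)=t_1(g)$. Naturality of $\Gamma_1$ follows from that of $t_1\co Id_{Gr}\to T_1(Id_{Gr})$: for a homomorphism $f\co G\to H$ the two composites $G^{ab}\to T_1(Id_{Gr})(H)$ agree after precomposition with the epimorphism $G\twoheadrightarrow G^{ab}$, hence agree.

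I expect the only real obstacle to be the inclusion $im(S_2^{Id_{Gr}})\subseteq[X,X]$, i.e.\ controlling the cartesian subgroup $ker(X*X\to X\times X)$; the factorization of abelianization through $X\times X$ settles it cleanly, and invoking Lemma \ref{lin-ab} removes the opposite inclusion from the workload entirely.
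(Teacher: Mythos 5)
Your proof is correct, but it takes a more computational route than the paper. The paper's argument is purely formal: $\Gamma_1$ is well-defined because $T_1(Id_{Gr})(G)$ is abelian (Lemma \ref{lin-ab}), and the inverse $\overline{ab}$ is obtained from the universal property of $t_1$ applied to the linear functor $G\mapsto G^{ab}$; one then checks the two maps are mutually inverse. You instead identify $im(S_2^{Id_{Gr}})$ explicitly as $[X,X]$, using on one side that the commutators $[i_1^2(x),i_2^2(x')]$ lie in the cartesian subgroup and fold to $[x,x']$, and on the other side that the cartesian subgroup $\ker(X*X\to X\times X)$ sits inside $[X*X,X*X]$ because abelianization sends the free product to the direct sum. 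Both proofs ultimately rest on the same fact --- that $(-)^{ab}$ carries coproducts to products, i.e.\ is linear --- but yours packages it as a direct computation of the cokernel defining $T_1$, which has the advantage of exhibiting $\ker(t_1)=[X,X]$ concretely (and foreshadows the remark after the proposition about $T_nId_{Gr}(G)\cong G/\gamma_{n+1}(G)$), whereas the paper's version is shorter and generalizes more mechanically to any linear quotient functor. Your observation that Lemma \ref{lin-ab} already gives the inclusion $[X,X]\subseteq\ker(t_1)$ for free is exactly how the paper handles well-definedness of $\Gamma_1$, so only your reverse inclusion is genuinely extra work relative to the paper.
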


\begin{proof}
The map $\Gamma_1$ is welldefined by Lemma \ref{lin-ab}.
The natural homomorphism $ab: G \to G^{ab}$ factors through $t_1$ followed by a map $\overline{ab}: T_1(Id_{Gr})(G) \to G^{ab}$ since the abelianization functor $G\mapsto G^{ab}$ is linear. It is straightforward to check that $\overline{ab}$ is the inverse of $\Gamma_1$.
\end{proof}

We note that this result generalizes to a natural isomorphism $T_nId_{Gr}(G) \cong G/\gamma_{n+1}(G)$ where $\gamma_{n}(G)$ is the $n$-th term of the lower central series of $G$; this observation is the starting point of  forthcoming work on nilpotent categories.

\begin{prop}\label{T11crId}
Let $Id_{Gr} :Gr \to Gr$ be the identity functor. There is 
a natural isomorphism of bifunctors $Gr\times Gr \to Ab$
$$\Gamma_{11} : T_{11}cr_2(Id_{Gr})(G,H) \xleftarrow{\simeq} G^{ab}\otimes H^{ab} $$
such that for $g\in G$, $h\in H$ one has $\Gamma_{11}(\bar{g}\otimes\bar{h}) = t_{11}([i_1^2g,i_2^2h])$.
\end{prop}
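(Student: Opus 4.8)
The plan is to identify the bifunctor $cr_2(Id_{Gr})$ explicitly and then show that bilinearizing it collapses the commutator pairing to the tensor product. Recall that in $Gr$ the coproduct $G\vee H$ is the free product $G * H$, so $cr_2(Id_{Gr})(G,H)$ is the kernel $C(G,H) := \ker(G * H \to G\times H)$, the normal subgroup generated by the commutators $[i_1^2 g, i_2^2 h]$. First I would record two structural facts about $T := T_{11}C(G,H)$. It is an \emph{abelian} group: the functor $T_{11}C(-,H)$ is reduced and linear, hence $Ab$-valued by Lemma \ref{lin-ab} (this is precisely why the target bifunctor lands in $Ab$). And since $t_{11}\colon C(G,H)\to T$ is an epimorphism and $C(G,H)$ is generated by conjugates of the commutators $[i_1^2 g, i_2^2 h]$, the group $T$ is generated by the classes $t_{11}([i_1^2 g, i_2^2 h])$ (conjugation becoming invisible in the abelian, linear quotient).

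Next I would verify that $\Gamma_{11}$ is well defined. I claim the assignment $(g,h)\mapsto t_{11}([i_1^2 g, i_2^2 h])$ is biadditive into the abelian group $T$. Using the commutator identity $[ab,c]=a[b,c]a^{-1}\,[a,c]$ one sees that the failure of additivity in the first variable, namely $[i_1^2 g,i_2^2 h]+[i_1^2 g',i_2^2 h]-[i_1^2(gg'),i_2^2 h]$ read in $T$, is the image under the folding map $G\vee G\to G$ of an element of $cr_2\big(C(-,H)\big)(G,G)$, hence lies in the subgroup $N_1$ defining $T_{11}$ and is killed by $t_{11}$; symmetrically for the second variable and $N_2$. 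A biadditive map out of $G\times H$ into an abelian group factors uniquely through $G^{ab}\otimes H^{ab}$, producing the natural homomorphism $\Gamma_{11}$ with the stated effect on simple tensors. By the generation statement of the previous paragraph, $\Gamma_{11}$ is surjective.

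To obtain a natural inverse I would pass to the five-term exact sequence in integral group homology of the extension $1\to C(G,H)\to G * H\to G\times H\to 1$. Since $H_1(G * H)\to H_1(G\times H)$ is the identity of $G^{ab}\oplus H^{ab}$, the coinvariants $C(G,H)/[G * H, C(G,H)]$ are the cokernel of $H_2(G * H)\to H_2(G\times H)$; as $H_2(G * H)=H_2 G\oplus H_2 H$ maps isomorphically onto the corresponding K\"unneth summands of $H_2(G\times H)=H_2 G\oplus H_2 H\oplus(G^{ab}\otimes H^{ab})$, this cokernel is naturally $G^{ab}\otimes H^{ab}$, and the induced natural epimorphism $\beta\colon C(G,H)\to G^{ab}\otimes H^{ab}$ sends $[i_1^2 g, i_2^2 h]$ to $\bar g\otimes\bar h$. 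Because its target has bidegree $\le(1,1)$, the transformation $\beta$ annihilates $N_1$ and $N_2$ and hence factors as $\beta=\bar\beta\circ t_{11}$ for a natural $\bar\beta\colon T\to G^{ab}\otimes H^{ab}$. Now $\bar\beta\circ\Gamma_{11}$ is the identity on every simple tensor $\bar g\otimes\bar h$, so $\bar\beta$ is a left inverse of $\Gamma_{11}$; together with the surjectivity established above this forces $\Gamma_{11}$ to be a natural isomorphism with inverse $\bar\beta$.

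The main obstacle is the \emph{injectivity} of $\Gamma_{11}$, i.e. excluding relations in $T_{11}C(G,H)$ beyond biadditivity; this is exactly what the natural left inverse $\bar\beta$ secures, and the cleanest source for $\bar\beta$ is the homological computation of the coinvariants of $C(G,H)$ via the five-term sequence. For free groups one could instead invoke Corollary \ref{car-bipoly}, reducing to a direct computation of $cr_2(Id_{Gr})(\mathbb{Z},\mathbb{Z})$ on the generator $E=\mathbb{Z}$ and then extending to arbitrary groups by a colimit argument; the homological route has the advantage of treating all groups $G,H$ at once.
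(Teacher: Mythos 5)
Your proof is correct, and it shares two of the paper's ingredients while replacing the third by a genuinely different one. Like the paper, you use (i) the universal property of $t_{11}$ (any natural transformation into the bilinear bifunctor $(G,H)\mapsto G^{ab}\otimes H^{ab}$ kills $N_1N_2$), and (ii) an explicit check that the additivity defect of the commutator pairing lies in the image of $S_2$ --- your folding-map argument for $[i_1^2(gg'),i_2^2h]$ is, transported along the identification of $cr_2(Id_{Gr})(G,H)$ with a quotient of a free group, the same computation as the paper's verification that the map $b(g,h)=t_{11}\pi(g,h)$ is bilinear. Where you diverge is in producing the natural retraction $\bar\beta$: the paper invokes the Magnus--Karrass--Solitar theorem that $cr_2(Id_{Gr})(G,H)$ is \emph{free} on the commutators $[i_1^2g,i_2^2h]$ ($g\neq 1$, $h\neq 1$), so the comparison map to $G^{ab}\otimes H^{ab}$ can simply be defined on a basis; you instead extract it from the five-term exact sequence of $1\to C(G,H)\to G\ast H\to G\times H\to 1$ together with $H_2(G\ast H)\cong H_2G\oplus H_2H$ and the K\"unneth decomposition of $H_2(G\times H)$. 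This buys independence from the freeness theorem at the cost of heavier machinery, and it leaves one nontrivial identification to justify: that the connecting map carries the K\"unneth class $\bar g\otimes\bar h$ to the class of $[i_1^2g,i_2^2h]$ in the coinvariants --- that is exactly where the content ``commutators correspond to tensors'' sits in the homological route, so it deserves a citation or a short verification (a sign ambiguity here is harmless for the isomorphism but must be pinned down to get the stated formula for $\Gamma_{11}$). One smaller point: your generation claim for $T_{11}C(G,H)$ should not rest only on ``conjugation is invisible in an abelian quotient'' --- abelianness kills conjugation by elements of $C(G,H)$, not by arbitrary elements of $G\ast H$; either quote the freeness of the cartesian subgroup (so no conjugates are needed) or observe that invisibility of conjugation by $i_1^2g$ and $i_2^2h$ follows from the biadditivity you establish in the next paragraph.
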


\begin{proof}
Write $G^*=G\backslash\{1\}$, and for a set $E$ let $L(E)$ denote the free group with basis $E$. It is wellknown that there is an isomorphism
$$ \sigma : L(G^*\times H^*) \xrightarrow{\simeq} cr_2(Id_{Gr})(G,H) $$
such that $\sigma(g,h) = [i_1^2g,i_2^2h]$, see \cite{MKS}.
Let $B:Gr\times Gr \to Gr$ denote the bifunctor given by $B(G,H)=L(G\times H)/N\simeq L(G^*\times H^*)$ where $N$ is the normal subgroup generated by  $G\times \{1\} \cup \{1\}\times H$. Let $\pi: L(G \times H) \to B(G , H)$ be the canonical projection. The natural homomorphism $\Gamma'_{11}: 
B(G , H) \to G^{ab}\otimes H^{ab} $ sending $(g,h)$ to $\bar{g}\otimes\bar{h}$ factors through $t_{11}$ followed by a map 
$\bar{\Gamma}'_{11}: 
T_{11}B(G,H) \to G^{ab}\otimes H^{ab} $ since the bifunctor sending $(G,H)$ to $G^{ab}\otimes H^{ab} $ is bilinear.
So it remains to show that $\bar{\Gamma}'_{11}$ is an isomorphism. For this it suffices to check that the map $b:G\times H\to T_{11}B(G,H)$ sending $(g,h)$ to $t_{11}\pi (g,h)$ is bilinear, thus providing an inverse of $\bar{\Gamma}'_{11}$.  To show that $b$ is linear in $h$ consider the map $B(Id_G,\nabla^2): B(G,H\vee H) \to B(G,H)$. One has 
$$ x=\pi((g,h_1h_2) (g,h_2)^{-1} (g,h_1)^{-1})=  
B(Id_G,\nabla^2)(y)$$ with $y=
\pi \left( (g,i_1^2(h_1)i^2_2(h_2)) (g,i^2_2(h_2))^{-1} (g,i^2_1(h_1))^{-1} \right)$. But 
$$B(Id_G,r_1^2)(y) =  \pi((g,h_1)(g,1)^{-1} (g,h_1)^{-1})=1$$
and 
$$B(Id_G,r_2^2)(y) = \pi( (g,h_2)(g,h_2)^{-1}(g,1)^{-1} )=1,$$
 whence $y\in B(G,-)(H\mid H)$. Thus $x \in im(S_2^{B(G,-)})$, whence
$t_{11}(x)=1$. Thus $b$ is linear in $h$. Similarly one shows that $b$ is linear in $g$, as desired.

\end{proof}

\subsection{Linearization and quadratization of diagonalizable functors}
Recall that a reduced functor $F:\C\to \D$ is called \textit{diagonalizable} if there exists a bireduced bifunctor 
$B:\C\times \C\to \D$ such that $F=B\Delta_{\C}$.

\begin{prop} \label{T1BD=0}
The linearization of a diagonalizable functor $F$ is trivial.
\end{prop}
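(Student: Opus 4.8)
The plan is to exhibit an explicit natural section of the map $S_2^F$, which immediately forces its cokernel $T_1F$ to vanish. Write $F=B\Delta_{\C}$ with $B\co \C\times\C\to Ab$ bireduced, so that $F(X)=B(X,X)$, $F(r^2_k)=B(r^2_k,r^2_k)$ and $F(\nabla^2)=B(\nabla^2,\nabla^2)$ for the folding map $\nabla^2\co X\vee X\to X$.

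First I would consider the ``off-diagonal'' morphism $B(i^2_1,i^2_2)\co B(X,X)\to B(X\vee X,X\vee X)$, that is, $B$ applied to the pair of canonical injections $i^2_1,i^2_2\co X\to X\vee X$. Using functoriality of $B$ together with the relations $r^2_ji^2_k=\delta_{jk}$ one computes $F(r^2_1)\,B(i^2_1,i^2_2)=B(r^2_1i^2_1,r^2_1i^2_2)=B(1_X,0)=0$ and $F(r^2_2)\,B(i^2_1,i^2_2)=B(0,1_X)=0$, the last equalities holding because $B$ is bireduced (each restriction $B(X,-)$ and $B(-,X)$ is reduced and hence kills zero morphisms). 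Thus the image of $B(i^2_1,i^2_2)$ lies in $cr_2F(X,X)=\ker\big((F(r^2_1),F(r^2_2))^t\big)$, and corestriction yields a natural transformation $\sigma\co F\to (cr_2F)\Delta_{\C}$; naturality in $X$ follows from naturality of the canonical injections, i.e.\ $(f\vee f)\,i^2_k=i^2_k\,f$.

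Next I would check that $\sigma$ splits $S_2^F$. By Definition \ref{+}, $S_2^F$ is the composite $cr_2F(X,X)\hookrightarrow F(X\vee X)\xrightarrow{F(\nabla^2)}F(X)$, so once more by functoriality of $B$ and the relations $\nabla^2i^2_k=1_X$ one gets $S_2^F\sigma=B(\nabla^2,\nabla^2)\,B(i^2_1,i^2_2)=B(\nabla^2i^2_1,\nabla^2i^2_2)=B(1_X,1_X)=1_{F(X)}$. Hence $S_2^F$ is a (split) epimorphism in the abelian category $Func(\C,Ab)$, and therefore $T_1F=\mathrm{coker}(S_2^F)=0$.

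I do not expect a genuine obstacle: the entire argument is the single observation that the off-diagonal corner $B(i^2_1,i^2_2)$ of $B$ on $X\vee X$ automatically lands in the second cross-effect and is carried back to the identity by the folding map. The only points needing care are verifying that $B(i^2_1,i^2_2)$ factors through the kernel defining $cr_2$ (which is precisely where bireducedness enters) and confirming naturality in $X$, both of which are routine. This section $\sigma$ is the general-$B$ analogue of the isomorphism identifying $F(X)=L(X)\otimes M(X)$ with the off-diagonal summand $L_1\otimes M_2\subseteq cr_2(L\boxtimes M)(X,X)$ that appears in the bilinear model case of Example \ref{TFoG}.
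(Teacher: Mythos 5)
Your proof is correct and follows essentially the same route as the paper: both exhibit $B(i_1^2,i_2^2)$ as a section of $F(\nabla^2)$ landing in $cr_2F(X,X)$ because $B(r_k^2,r_k^2)B(i_1^2,i_2^2)=0$ by bireducedness, whence $S_2^F$ is (split) surjective and $T_1F=0$. Your write-up merely spells out the same computation in more detail.
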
 

\begin{proof}
The section $B(i_1^2,i_2^2)$ of the map $B\Delta_{\C}(\nabla^2):
 B(X\vee X,X\vee X) \to B(X,X)$ takes values in $cr_2(B\Delta_{\C})(X,X)$ since $B(r_k^2,r_k^2)B(i_1^2,i_2^2)=0$ for $k=1,2$. Hence $S_2^F$ is surjective and $T_1F=0$.
\end{proof}

We need to related the quadratization of a functor to the bilinearization of its cross-effect.
 Let $F: \C \to Ab$ be a reduced functor. Consider the natural map of bifunctors:
 $$cr_2(t_2): cr_2(F) \to cr_2(T_2F).$$
 Since $T_{11}$ is the left adjoint of the forgetful functor $U: (BiFunc_*(\C,Ab))_{\leq (1,1)} \to BiFunc_*(\C,Ab)$ we obtain that $cr_2(t_2)$ factors through the unit map $t_{11}: cr_2(F) \to T_{11}(cr_2(F))$, thus providing a canonical morphism of bifunctors:
 \begin{equation} \label{cr2t2-1}
 \overline{cr_2(t_2)}: T_{11}(cr_2(F)) \to cr_2(T_2F).
 \end{equation}
 
The following theorem is special case of a more general result in \cite{Manfred-multicalculus}.
 \begin{thm} \label{cr2t2}
The morphism $\overline{cr_2(t_2)}: T_{11}(cr_2(F)) \to cr_2(T_2F)$ is an isomorphism of bifunctors.
\end{thm}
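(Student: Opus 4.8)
The plan is to prove that $\overline{cr_2(t_2)}: T_{11}(cr_2 F) \to cr_2(T_2 F)$ is an isomorphism by reducing, via Corollary \ref{car-bipoly}, to checking it on the single pair of objects $(E,E)$, and then to identify both sides concretely. First I would observe that both $T_{11}(cr_2 F)$ and $cr_2(T_2 F)$ are bipolynomial bifunctors of bidegree $\le (1,1)$: the source is bilinear by construction of $T_{11}$, and the target is bilinear because $T_2 F$ is quadratic, so its second cross-effect is bilinear (this is the standard fact that the top cross-effect of a degree-$n$ functor is multilinear, which follows from Proposition \ref{exact} applied to the vanishing of higher cross-effects). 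Hence by Corollary \ref{car-bipoly} it suffices to prove that $\overline{cr_2(t_2)}_{(E,E)}$ is an isomorphism; but this still requires real work, so the genuine content is local at $(E,E)$.

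Next I would make the two sides explicit using the natural direct-sum decomposition of Proposition \ref{ce-prop}. Applying that decomposition to $F(E\vee E)$ gives $F(E\vee E)\cong F(E)\oplus F(E)\oplus cr_2 F(E,E)$, and the analogous decomposition for $T_2 F$. The map $t_2: F\to T_2F$ is the cokernel of $S_3^F$ by Definition \ref{Tn-coker}, and since $S_3^F$ is built from $cr_3 F$, which is a summand attached to triples of distinct indices, I expect that on the two-variable object $E\vee E$ the quadratization only modifies $F$ through the image of diagonal contributions. The key computation is to compare the third cross-effect data governing $T_2$ with the bilinearization data governing $T_{11}$. Concretely, $T_{11}(cr_2 F)(E,E)$ is the quotient of $cr_2 F(E,E)$ by the images of $S_2^{cr_2F(-,E)}$ and $S_2^{cr_2F(E,-)}$, while $cr_2(T_2 F)(E,E)$ is the quotient of $cr_2 F(E,E)$ by $cr_2(\ker t_2)(E,E)$ — here I would use exactness of $cr_2$ (Proposition \ref{exact}) applied to the short exact sequence $0\to \ker t_2 \to F \to T_2F\to 0$ to get $cr_2(T_2F) \cong cr_2 F / cr_2(\operatorname{im} S_3^F)$.

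The main obstacle, and the heart of the argument, is to show that these two quotients of $cr_2 F(E,E)$ coincide: that the subgroup $cr_2(\operatorname{im} S_3^F)(E,E)$ killed by quadratization equals the subgroup generated by the images of $S_2$ on the two partial cross-effects that define $T_{11}$. This is a diagram-chasing identification involving the injection/retraction maps $\iota, \rho$ of diagram (\ref{r1234}) and the folding maps $\nabla$. The idea is that $S_3^F$ applied to $cr_3 F(E,E,E)$, when restricted through $cr_2$, decomposes according to which pair of the three diagonal copies is being ``cross-effected,'' and each such piece is exactly a copy of $S_2^{cr_2 F(-,E)}$ or $S_2^{cr_2 F(E,-)}$. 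I would set up the comparison by writing the three-fold folding $\nabla^3$ as a composite of two-fold foldings and tracking how $cr_2$ of the resulting maps lands in the relevant images, using the symmetry of cross-effects (Proposition following Proposition \ref{ce-prop}) to handle the two partial foldings symmetrically. Verifying that the induced map on these quotients is precisely $\overline{cr_2(t_2)}_{(E,E)}$, rather than merely an abstract isomorphism, is the delicate bookkeeping step; since the paper attributes the general statement to \cite{Manfred-multicalculus}, I would either import the multicalculus identity directly or reproduce this quotient-matching by hand in the quadratic case.
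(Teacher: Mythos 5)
First, a point of comparison: the paper does not prove this statement at all --- it is imported from \cite{Manfred-multicalculus} --- so there is no in-paper argument to measure yours against. The closest the paper comes is the proof of Proposition \ref{T2byT11}, which carries out the diagonal case $X=Y$ of exactly the computation you would need, via the factorization $S^F_3=S^F_2\circ S_2^{cr_2F(-,X)}$. Your skeleton is the right one: surjectivity of $\overline{cr_2(t_2)}$ is formal, and by exactness of $cr_2$ (Proposition \ref{exact}) applied to $\mathrm{im}(S^F_3)\hookrightarrow F\twoheadrightarrow T_2F$, the theorem is equivalent to the identity $cr_2(\mathrm{im}\,S^F_3)(X,Y)=N_1+N_2$ inside $cr_2F(X,Y)$, where $N_1,N_2$ are the subgroups defining $T_{1,1}$. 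One preliminary objection: your reduction to the pair $(E,E)$ via Corollary \ref{car-bipoly} is not available, because Theorem \ref{cr2t2} is stated (and used, e.g.\ in Proposition \ref{lm-bifunc}) for an arbitrary pointed category $\C$ with finite sums; no generating object $E$ is part of its hypotheses. As you concede yourself, the reduction buys nothing, so it should simply be deleted rather than repaired.

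The genuine gap is the kernel identification, which is the entire content of the theorem and which you describe but do not carry out. The inclusion $N_1+N_2\subseteq cr_2(\mathrm{im}\,S^F_3)(X,Y)$ is indeed easy: $S_2^{cr_2F(-,Y)}$ on $cr_3F(X,X,Y)=cr_2(cr_2F(-,Y))(X,X)$ is the restriction of $S^F_3$ along $cr_3F(i^2_1,i^2_1,i^2_2)$, and its image already lies in $cr_2F(X,Y)$. For the reverse inclusion you must decompose $cr_3F(X\vee Y,X\vee Y,X\vee Y)$ by iterating Proposition \ref{ce-prop} and track the image of every summand under $\rho^2_{(1,2)}\circ F(\nabla^3)$. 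Your description --- that each piece is ``exactly a copy of $S_2^{cr_2F(-,E)}$ or $S_2^{cr_2F(E,-)}$'' --- accounts only for the summands $cr_3F(A_1,A_2,A_3)$ with $A_i\in\{X,Y\}$ (of which the pure ones $cr_3F(X,X,X)$ and $cr_3F(Y,Y,Y)$ are killed by $\rho^2_{(1,2)}$ and the mixed ones give $N_1$ or $N_2$). But the decomposition also produces summands $cr_kF$ with $4\le k\le 6$, coming from slots of $X\vee Y$ that contribute both letters, and for these one must show separately that the folded images land in $N_1+N_2$; they factor through the maps $S_2^{cr_2F(-,Y)}$ and $S_2^{cr_2F(X,-)}$ rather than being copies of them. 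Until that case analysis is written out, what you have is a correct reduction of the theorem to a combinatorial identity, plus a plausibility argument for the identity --- not a proof. If you want a self-contained argument, the right starting point is to generalize the commutative diagrams in the proof of Proposition \ref{T2byT11} from the diagonal $F(X^{\vee 3})$ to $F((X\vee Y)^{\vee 3})$.
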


\begin{lm} \label{diago}
If $B: \C \times \C \to Ab$ is a bilinear bireduced bifunctor  then $B \Delta_{\C}: \C \to Ab$ is a quadratic functor.
\end{lm}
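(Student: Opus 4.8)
The plan is to prove the assertion in its sharpest form, namely that $cr_3(B\Delta_{\C})=0$; by Definition \ref{poly} this is exactly the statement that $F:=B\Delta_{\C}$ is quadratic. The route is to compute the second cross-effect $cr_2F$ explicitly, observe that as a functor of either variable alone it is linear, and then invoke the recursive definition of $cr_3$ to get the vanishing for free.

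First I would compute $cr_2F(X,Y)$. Since $B$ is bireduced and bilinear, for each fixed object $Z$ the partial functors $B(-,Z)$ and $B(Z,-)$ are reduced and linear, so Proposition \ref{ce-prop} supplies natural isomorphisms $B(X\vee Y,Z)\cong B(X,Z)\oplus B(Y,Z)$ and $B(Z,X\vee Y)\cong B(Z,X)\oplus B(Z,Y)$. Applying these in both variables yields a natural isomorphism
$$F(X\vee Y)=B(X\vee Y,X\vee Y)\cong B(X,X)\oplus B(X,Y)\oplus B(Y,X)\oplus B(Y,Y).$$
Under this identification the maps $F(r_1^2)=B(r_1^2,r_1^2)$ and $F(r_2^2)=B(r_2^2,r_2^2)$ are the projections onto the summands $B(X,X)=F(X)$ and $B(Y,Y)=F(Y)$ respectively: the three remaining summands are killed because in each case one of the two arguments of $B$ becomes a zero map (as $r_1^2i_2^2=0=r_2^2i_1^2$) and $B$ is bireduced. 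Hence the kernel of $(F(r_1^2),F(r_2^2))^t$, which is $cr_2F(X,Y)$ by definition, is naturally isomorphic to the off-diagonal part
$$cr_2F(X,Y)\cong B(X,Y)\oplus B(Y,X).$$

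Next I would read off $cr_3$ from its recursive definition $cr_3F(X_1,X_2,X_3)=cr_2\big(cr_2F(-,X_3)\big)(X_1,X_2)$. Fixing $X_3=Z$, the previous step identifies the functor $W\mapsto cr_2F(W,Z)$ with $B(-,Z)\oplus B(Z,-)$, a direct sum of two reduced linear functors of $W$; such a sum is again linear, so its own second cross-effect vanishes. Therefore $cr_3F=0$, and $F=B\Delta_{\C}$ is quadratic.

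The only point requiring genuine care is the identification of the retractions $F(r_k^2)$ with the coordinate projections under the fourfold splitting, together with the naturality of that splitting in $X$ and $Y$; naturality is precisely what guarantees that $cr_2F(-,Z)$ is identified, \emph{as a functor of the first variable}, with $B(-,Z)\oplus B(Z,-)$, which is the input the last step relies on. Everything else is a formal consequence of bilinearity via Proposition \ref{ce-prop} and the inductive definition of the cross-effects.
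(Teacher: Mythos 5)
Your proposal is correct and follows essentially the same route as the paper: decompose $B\Delta_{\C}(X\vee Y)$ into the four summands $B(X,X)\oplus B(Y,Y)\oplus B(X,Y)\oplus B(Y,X)$ via bilinearity, identify $cr_2(B\Delta_{\C})(X,Y)$ with the off-diagonal part $B(X,Y)\oplus B(Y,X)$, and conclude that this cross-effect is bilinear, whence $cr_3(B\Delta_{\C})=0$. Your extra care about identifying $F(r_k^2)$ with the coordinate projections and about naturality is a correct elaboration of what the paper leaves implicit.
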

\begin{proof}
For $X, Y \in \C$ we have:
$$B  \Delta_{\C}(X \vee Y)=B(X \vee Y, X \vee Y)=B(X,X) \oplus B(Y,Y) \oplus B(X,Y) \oplus B(Y,X)$$
$$=B  \Delta_{\C}(X) \oplus B  \Delta_{\C}(Y) \oplus B(X,Y) \oplus B(Y,X)$$
where the second equality follows from the bilinearity of $B$.
We deduce that $cr_2(B  \Delta_{\C})$ $(X,Y)=B(X,Y) \oplus B(Y,X)$ which is a bilinear functor. So $B  \Delta_{\C}$ is quadratic.
\end{proof}

\begin{prop} \label{lm-bifunc}
For a bireduced bifunctor $B: \C \times \C \to Ab$ we have:
$$T_2(B \Delta)=(T_{11}B) \Delta.$$
\end{prop}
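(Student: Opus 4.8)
The plan is to present both functors as quotients of the single functor $X\mapsto B(X,X)$ and to identify the two subgroups by which we quotient. First I would observe that both sides are quadratic: $T_2(B\Delta_{\C})$ by Definition \ref{Tn-coker}, and $(T_{11}B)\Delta_{\C}$ by Lemma \ref{diago}, since $T_{11}B$ is bilinear and bireduced. The unit $t_{11}\colon B\to T_{11}B$ yields a natural transformation $t_{11}\Delta_{\C}\colon B\Delta_{\C}\to (T_{11}B)\Delta_{\C}$ whose value at $X$ is the quotient map $B(X,X)\to T_{11}B(X,X)=B(X,X)/(N_1+N_2)$, where $N_1=\mathrm{im}(S_2^{B(-,X)})$ and $N_2=\mathrm{im}(S_2^{B(X,-)})$ (here $N_1N_2=N_1+N_2$ because $B$ takes values in $Ab$). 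Since the target is quadratic, Proposition \ref{Tn-prop} produces a unique natural transformation $\psi\colon T_2(B\Delta_{\C})\to (T_{11}B)\Delta_{\C}$ with $\psi\, t_2=t_{11}\Delta_{\C}$. As $T_2(B\Delta_{\C})(X)=B(X,X)/\mathrm{im}(S_3^{B\Delta_{\C}})$ by Definition \ref{Tn-coker}, the map $\psi_X$ is the canonical map between two quotients of $B(X,X)$; hence $\psi$ is an isomorphism if and only if $\mathrm{im}(S_3^{B\Delta_{\C}})=N_1+N_2$ inside $B(X,X)$ for every $X$. Naturality of $\psi$ being automatic, it suffices to prove this subgroup equality pointwise.

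The inclusion $\mathrm{im}(S_3^{B\Delta_{\C}})\subseteq N_1+N_2$ is automatic. Indeed $S_3^{(-)}$ is natural in the functor variable, and $(T_{11}B)\Delta_{\C}$ is quadratic, so $cr_3\big((T_{11}B)\Delta_{\C}\big)=0$; applying naturality to $\phi=t_{11}\Delta_{\C}$ forces the composite $t_{11}|_{(X,X)}\circ S_3^{B\Delta_{\C}}$ to factor through the zero group, whence $\mathrm{im}(S_3^{B\Delta_{\C}})\subseteq \ker\big(t_{11}|_{(X,X)}\big)=N_1+N_2$.

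The reverse inclusion is the substantive point. Writing the three summands of $X^{\vee 3}$ explicitly, I would introduce the homomorphism $j_1=B(i_{12},i_3)$ restricting the subgroup $cr_2\big(B(-,X)\big)(X,X)\subseteq B(X\vee X,X)$ into $B(X^{\vee 3},X^{\vee 3})$, where $i_{12}$ is the first-variable injection of the first two copies and $i_3$ the second-variable injection of the third copy. The claim is that $j_1$ lands in $cr_3(B\Delta_{\C})(X,X,X)$ and that $S_3^{B\Delta_{\C}}\circ j_1=S_2^{B(-,X)}$; this gives $N_1=\mathrm{im}(S_2^{B(-,X)})\subseteq \mathrm{im}(S_3^{B\Delta_{\C}})$, and a symmetric map $j_2$ yields $N_2\subseteq \mathrm{im}(S_3^{B\Delta_{\C}})$. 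The folding identity is immediate from $\nabla^3 i_{12}=\nabla^2$ and $\nabla^3 i_3=\mathrm{id}$, since then $B(\nabla^3,\nabla^3)B(i_{12},i_3)=B(\nabla^2,\mathrm{id})$, which on $cr_2\big(B(-,X)\big)(X,X)$ is exactly $S_2^{B(-,X)}$.

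The \emph{main obstacle} is the verification that $\mathrm{im}(j_1)\subseteq cr_3(B\Delta_{\C})(X,X,X)$, which I would carry out using the description of $cr_3$ as the intersection $\bigcap_k \ker\big(B\Delta_{\C}(r^3_{\hat k})\big)$, with $r^3_{\hat k}\colon X^{\vee 3}\to X^{\vee 2}$ annihilating the $k$-th copy. For $k=3$ one has $r^3_{\hat 3}\,i_3=0$, so the second variable is killed by bireducedness; for $k=1,2$ the composite factors through $B(r_2^2,-)$ respectively $B(r_1^2,-)$, which annihilate $cr_2\big(B(-,X)\big)(X,X)$ by the very definition of the second cross-effect. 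Once these three kernel conditions are checked, the two inclusions combine to give $\mathrm{im}(S_3^{B\Delta_{\C}})=N_1+N_2$, hence $\psi$ is a natural isomorphism and $T_2(B\Delta_{\C})\cong (T_{11}B)\Delta_{\C}$.
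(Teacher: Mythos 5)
Your argument is correct, but it proves injectivity of the comparison map by a genuinely different route from the paper. Both proofs start identically: the universal property of $t_2$ (via Lemma \ref{diago} and Proposition \ref{Tn-prop}) yields the natural epimorphism $\psi\colon T_2(B\Delta_{\C})\to (T_{11}B)\Delta_{\C}$. The paper then establishes injectivity by constructing an explicit map $\alpha\colon (T_{11}B)\Delta_{\C}\to T_2(B\Delta_{\C})$ with $\alpha\psi t_2=t_2$; that construction runs through a map $\beta\colon B(X,X)\to cr_2(B\Delta_{\C})(X,X)$ induced by $B(i_1,i_2)$, its bilinearization, and crucially the isomorphism $\overline{cr_2(t_2)}\colon T_{11}(cr_2F)\to cr_2(T_2F)$ of Theorem \ref{cr2t2}, which the paper imports from elsewhere without proof. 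You instead recognize both sides as quotients of $B(X,X)$ and prove the two defining subgroups coincide: the inclusion $\mathrm{im}(S_3^{B\Delta_{\C}})\subseteq N_1+N_2$ follows from naturality of $S_3$ and quadraticity of $(T_{11}B)\Delta_{\C}$, and the reverse inclusion from the explicit lifts $j_1=B(i_{12},i_3)$ and $j_2$, whose images land in $cr_3(B\Delta_{\C})$ by the kernel description of the cross-effect and bireducedness, and which fold back onto $S_2^{B(-,X)}$ and $S_2^{B(X,-)}$. Your version is more elementary and self-contained, since it bypasses Theorem \ref{cr2t2} entirely; the paper's version is shorter on the page but at the cost of invoking that external black box. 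Note in passing that your lifts $j_1$, $j_2$ are essentially the elementwise content of the first commutative diagram in the paper's proof of Proposition \ref{T2byT11}, so the two arguments are not unrelated in spirit.
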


\begin{proof}
Consider the natural map of functors
$$\Delta^*t_{11}: B \Delta \to (T_{11}B) \Delta$$
where $(T_{11}B) \Delta$ is a quadratic functor by Lemma \ref{diago}. Hence $\Delta^* t_{11}$ factors through the quotient map: $\xymatrix{t_2: B \Delta  \ar@{->>}[r] & T_2B \Delta}$  thus providing a canonical morphism $f: T_2(B \Delta) \to (T_{11}B) \Delta$ making the following diagram commutative:
$$\xymatrix{
B \Delta \ar@{->>}[r]_{t_2}   \ar@{->>}@(ur,ul)[rr]^{\Delta^* t_{11}} & T_2(B \Delta) \ar[r]_-{f} & (T_{11}B) \Delta.
}$$
To prove that $f$ is an isomorphism first note that $f$ is an epimorphism since $\Delta^* t_{11}$ is.
To prove that $f$ is a monomorphism it is sufficient to construct a map: $\alpha: (T_{11} B) \Delta \to T_2B \Delta$ such that $\alpha f t_2=t_2$ since $t_2$ is epimorphic.
For $X \in \C$, the map: $B(i_1, i_2): B(X,X) \to B(X \vee X, X \vee X)=B \Delta (X \vee X)$ induces a map $\beta: B(X,X) \to cr_2(B \Delta)(X,X)$ 
such that $\iota_{(1,2)}^2 \beta = B(i_1,i_2)$ 
since $B(r_1,r_1) B(i_1, i_2)= B(r_2, r_2) B(i_1, i_2)=0$ as $B$ is bireduced.. We consider the map $\alpha_X: T_{11}B(X,X)  \to T_2(B \Delta)(X)$ given by the following composition:
$$\xymatrix{
T_{11}B(X,X) \ar[r]^-{T_{11}(\beta)}& (T_{11} cr_2(B \Delta))(X, X) \ar[r]^-{\overline{cr_2(t_2)}_{(X,X)}}& cr_2(T_2B \Delta)(X,X) \ar[r]^-{\iota^2_{(1,2)}} & T_2(B \Delta)(X \vee X) \ar[d]^{T_2(B \Delta)(\nabla)} \\
 & & &T_2(B \Delta)(X)}$$
 where $\overline{cr_2(t_2)}$ is the canonical morphism of bifunctors given in \ref{cr2t2-1}.
 
 By naturality of $t_{11}, t_2$ and by definition of $\overline{cr_2(t_2)}$ we get the following commutative diagram:
 $$\xymatrix{
 B(X,X) \ar[rr]^{(t_{11})_{(X,X)}} \ar[d]_{\beta}&& T_{11}B(X,X) \ar[d]^{T_{11}(\beta)}\\
 cr_2(B \Delta)(X,X) \ar[rr]^{(t_{11})_{(X,X)}} \ar[d]_{\iota^2_{(1,2)}} \ar[rrd]_-{cr_2(t_2)_{(X,X)}}&& (T_{11}cr_2(B \Delta))(X,X) \ar[d]^{\overline{cr_2(t_2)}_{(X,X)}}\\
 B\Delta(X \vee X) \ar[d]_{B \Delta (\nabla)} \ar[rrd]_-{(t_2)_{X\vee X}}&& cr_2(T_2(B \Delta))(X,X) \ar[d]^{\iota^2_{(1,2)}} \\
 B \Delta(X)=B(X, X) \ar[drr]_-{(t_2)_{X}}&&T_2(B \Delta)(X \vee X) \ar[d]^{T_2(B \Delta)(\nabla)} \\
  & &T_2(B \Delta)(X).
 }$$
 Consequently, we obtain:
 
 \begin{eqnarray*}
(\alpha)_X \circ f_X \circ (t_2)_X &=& (\alpha)_X \circ (\Delta^* t_{11})_X\\
&=&(\alpha)_X \circ (t_{11})_{(X,X)}\\
&=&(t_2)_X \circ B \Delta(\nabla) \iota^2_{(1,2)} \beta \mathrm{\quad by\ the\ previous\ commutative\ diagram}\\
&=& (t_2)_XB(\nabla, \nabla) B(i_1, i_2)\\
&=& (t_2)_X \mathrm{\quad since\ } \nabla i_1=\nabla i_2=Id_X.
\end{eqnarray*}

\end{proof}

\subsection{Generalized quadratic maps} \label{sec-quad-maps}
We introduce the notions of cross-effect and quadraticity of maps from morphisms sets of $\C$ to abelian groups, generalizing Passi's notion of quadratic (more generally polynomial) maps between abelian groups. The quadratization functor $T_2$ turns out to provide universal quadratic maps in this sense, and allows to characterize quadratic functors from $\C$ to $Ab$ as being functors whose restriction to each morphism set is a quadratic map.

We start by recalling some elementary facts on polynomial maps between groups.
Let $f:G\to A$ be a function from a group $G$ to an abelian goup $A$ such that $f(1)=0$. Let 

Then $f$ is said to be polynomial of degree $\le n$ if its $\mathbb{Z}$-linear extension $\bar{f}:\mathbb{Z}[G]\to A$ to the group ring $\mathbb{Z}[G]$ of $G$ annihilates the $n+1$-st power $I^{n+1}(G)$ of the augmentation ideal $I(G)$ of $\mathbb{Z}[G]$, or equivalently, if its restriction to $I(G)$ factors through the natural projection $\xymatrix{I(G) \ar@{->>}[r] & P_n(G) := I(G)/
I^{n+1}(G)}$,
see \cite{Pa}. This property can be characterized by using cross-effects of maps \cite{Q3}; in particular, $f$ is quadratic, i.e.\ polynomial of degree $\le 2$, iff its cross effect $d_f:G\times G \to A$, $d_f(a,b)=f(ab)-f(a)-f(b)$, is a bilinear map, see also \cite{QMaps}.

%

Now we generalize this situation, thereby exchanging the definition of a quadratic map via $P_2(G)$ with its  characterization in terms of the cross-effect, as follows. Let $X,Y \in \C$, $A \in Ab$ and $\varphi: \C(X,Y) \to A$ an arbitrary normalized function,which means that $f(0)=0$.

\begin{defi} \label{crmapdef}
The second cross-effect of $\varphi$ is the homomorphism of groups 
$$cr_2(\varphi): U_X(Y \mid Y) \to A$$
defined as follows. Let $\tilde{cr_2}(\varphi): U_X(Y \vee Y) \to A$ given by $\tilde{cr_2}(\varphi)(\xi)=\varphi(\nabla \xi)-\varphi(r_1 \xi)- \varphi(r_2 \xi)$ for $\xi \in \C(X, Y \vee Y)$. Now let 
$cr_2(\varphi) = \tilde{cr_2}(\varphi)\iota^2_{(1,2)}$.

\end{defi}

\begin{defi}\label{qumapdef}
For a bifunctor $B: \C \times \C \to Ab$ and a homomorphism of groups $\psi: B(Y,Y) \to A$ we say that  $\psi$ is bilinear if $\psi$ factors through $t_{11}: B(Y,Y) \to (T_{11}B)(Y,Y)$. Now $\varphi$ is said to be quadratic if its cross-effect $cr_2(\varphi)$ is bilinear.
\end{defi}
A universal quadratic map is provided by the quadratization functor $T_2$; this fact is needed in Lemma \ref{Lambdabarbarmod} below. To state this, let $\overline{\varphi}: U_X(Y) \to A$ be the $\mathbb{Z} \ti$linear extension of $\varphi$.

\begin{prop} \label{univqumap}
The map $\varphi: \C(X,Y) \to A$ is quadratic if and only if $\overline{\varphi}$ factors through $\xymatrix{t_2: U_X(Y) \ar@{->>}[r] & T_2U_X(Y)}$.
\end{prop}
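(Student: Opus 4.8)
The plan is to rephrase \emph{both} of the conditions in the statement as the vanishing of a single homomorphism on a suitable subgroup, and then to match the two subgroups directly. Write $U=U_X$ throughout. Since $t_{11}\colon cr_2U(Y,Y)\to (T_{11}cr_2U)(Y,Y)$ and $t_2\colon U(Y)\to T_2U(Y)$ are both surjective (they are quotient maps), a homomorphism into $A$ factors through either of them exactly when it vanishes on the corresponding kernel. By the definition of the bilinearization, $\ker t_{11}=N_1+N_2$ (sum taken in the ambient abelian group), where $N_1=\mathrm{im}(S_2^{cr_2U(-,Y)})$ and $N_2=\mathrm{im}(S_2^{cr_2U(Y,-)})$; and by the definition of $T_2$ as the cokernel of $S_3^{U}$, we have $\ker t_2=\mathrm{im}(S_3^{U})$. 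Thus $\varphi$ is quadratic iff $cr_2(\varphi)$ vanishes on $N_1+N_2$, whereas $\overline\varphi$ factors through $t_2$ iff $\overline\varphi$ vanishes on $\mathrm{im}(S_3^{U})$.

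First I would record the identity $cr_2(\varphi)=\overline\varphi\circ S_2^{U}$. Unwinding Definition \ref{crmapdef}, on a generator $\xi\in\C(X,Y\vee Y)$ one has $\tilde{cr_2}(\varphi)(\xi)=\overline\varphi(U(\nabla^2)\xi)-\overline\varphi(U(r_1^2)\xi)-\overline\varphi(U(r_2^2)\xi)$, so by $\mathbb{Z}$-linearity $\tilde{cr_2}(\varphi)=\overline\varphi\circ\big(U(\nabla^2)-U(r_1^2)-U(r_2^2)\big)$. Precomposing with $\iota^2_{(1,2)}$ and using that its image lies in $cr_2U(Y,Y)=\ker\big((U(r_1^2),U(r_2^2))^t\big)$ kills the last two terms, leaving $cr_2(\varphi)=\overline\varphi\circ U(\nabla^2)\circ\iota^2_{(1,2)}=\overline\varphi\circ S_2^{U}$, since $S_2^{U}=U(\nabla^2)\circ\iota^2_{(1,2)}$. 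Consequently, for any subgroup $V\subseteq cr_2U(Y,Y)$, the map $cr_2(\varphi)$ vanishes on $V$ iff $\overline\varphi$ vanishes on $S_2^{U}(V)$.

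The crux is then the purely functorial identity $S_2^{U}(N_1+N_2)=\mathrm{im}(S_3^{U})$. I would prove it by checking that $S_2^{U}\circ S_2^{cr_2U(-,Y)}=S_3^{U}$ as maps out of $cr_2(cr_2U(-,Y))(Y,Y)=cr_3U(Y,Y,Y)$. Using naturality of the cross-effect inclusion for the morphism $(\nabla^2,1_Y)\colon(Y\vee Y,Y)\to(Y,Y)$, this composite equals $U(\nabla^2)\circ U(\nabla^2\vee 1_Y)$ restricted to $cr_3U(Y,Y,Y)\subseteq U(Y\vee Y\vee Y)$, and the folding maps satisfy $\nabla^2\circ(\nabla^2\vee 1_Y)=\nabla^3$, so the restriction is exactly $S_3^{U}$. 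Hence $S_2^{U}(N_1)=\mathrm{im}(S_3^{U})$; the symmetric computation using $\nabla^2\circ(1_Y\vee\nabla^2)=\nabla^3$, together with the symmetry of $cr_3$, gives $S_2^{U}(N_2)=\mathrm{im}(S_3^{U})$ as well, whence the claimed equality.

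Combining the three steps yields the equivalences $\varphi\text{ quadratic}\iff cr_2(\varphi)|_{N_1+N_2}=0\iff\overline\varphi|_{S_2^{U}(N_1+N_2)}=0\iff\overline\varphi|_{\mathrm{im}(S_3^{U})}=0\iff\overline\varphi\text{ factors through }t_2$. The main obstacle I anticipate is the bookkeeping in the crux step: one must identify the iterated cross-effect $cr_2(cr_2U(-,Y))(Y,Y)$ with $cr_3U(Y,Y,Y)$ as the \emph{same} subgroup of $U(Y\vee Y\vee Y)$ (via the kernel-of-forgetful-maps description of the cross-effect, which makes the symmetry manifest) and verify that the cross-effect inclusions and $\iota^2_{(1,2)}$ compose so that the two folding maps genuinely coincide. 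Alternatively, the inclusion $\mathrm{im}(S_3^{U})\subseteq S_2^{U}(\ker t_{11})$ and its reverse can be obtained conceptually from the isomorphism $T_{11}(cr_2U)\cong cr_2(T_2U)$ of Theorem \ref{cr2t2} together with naturality of $S_2$ in its functor variable.
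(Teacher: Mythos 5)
Your proof is correct and follows essentially the same route as the paper: both rest on the identity $cr_2(\varphi)=\overline{\varphi}\circ S_2^{U_X}$ together with the fact that $S_2^{U_X}(\ker t_{11})=\ker t_2$. The only difference is that the paper simply cites Proposition \ref{T2byT11} for the latter, whereas you re-derive it inline via the identity $S_2^{U}\circ S_2^{cr_2U(-,Y)}=S_3^{U}$ on $cr_3U(Y,Y,Y)$ --- which is precisely the computation carried out in the paper's proof of that proposition.
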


\begin{rem} The above definitions and Proposition indeed formally generalizes the situation for quadratic maps between groups recalled at the beginning of the paragraph. To see this, take $\C=Gr$, $X=\mathbb Z$ and $Y=G$. Using the isomorphism of functors $U_{\mathbb Z} \cong I$ in section 8.1 below, and Propositions \ref{lm-bifunc}, \ref{ideal-lin} and
and \ref{T11crId} one gets an isomorphism of functors ${\Xi}': T_2U_{\mathbb Z} \cong P_2=I/I^3$
such that ${\Xi}'(t_2a) = a-1+I^3(G)$ for $a\in G$. This shows that for a map $\xymatrix{G=Gr(\mathbb Z,G) \ar[r]^-{\varphi} &A}$, the map
$\overline{\varphi}$ factors through $t_2$ iff it is polynomial of degree $\le 2$ in Passi's sense. 
On the other hand, it can be deduced  from Proposition \ref{Ypsilonprop} that $cr_2(\varphi)$  is bilinear in the sense of Definition \ref{qumapdef} iff its group theoretic cross-effect $d_{\varphi}$ is a bilinear map in the usual sense. Thus Proposition \ref{univqumap} here is equivalent with the characterization of polynomial maps of degree $\le 2$ by the bilinearity of their cross effect.
\end{rem}

\begin{rem} It is well known that a functor $F\co \mathcal A \to \mathcal B$ between additive categories $\mathcal A$ and $\mathcal B$ is polynomial of degree $\le n$ iff for all objects $A,B\in \mathcal A$ the map $F\co \mathcal A(A,B) \to \mathcal B(FA,FB)$ is polynomial of degree $\le n$ in the sense of Passi \cite{Pa}; we point out that the latter notion can be described in terms of cross-effects of maps \cite{Q3}. In degree $n=2$, our notion of quadratic map above allows to generalize this fact to functors $F\co \C \to \mathcal B$, as follows: let $\overline{\Z}[\C]$ be the ringoid with the same objects as $\C$ and morphisms $\overline{\Z}[\C](X,Y) = \overline{\Z}[\C(X,Y)] = U_X(Y)$; similarly, let $P_n\C$ be the quotient ringoid of $\overline{\Z}[\C]$ where 
$P_n\C(X,Y) = T_nU_X(Y)$; this category is introduced, in a more general context, by Johnson and McCarthy in \cite{JMCahiers1}. They prove that $F$ is polynomial of degree $\le n$ iff the natural extension of $F$ to an additive functor $\bar{F}\co \overline{\Z}[\C] \to \mathcal B$ factors through the natural quotient functor $\xymatrix{t_n\co \overline{\Z}[\C] \ar@{->>}[r] &P_n\C}$. Now for $n=2$ the latter property is equivalent with all maps $F\co \mathcal \C(X,Y) \to \mathcal B(FX,FY)$, $X,Y\in \C$, being quadratic, by Proposition \ref{univqumap}.

\end{rem}

In order to prove Proposition \ref{univqumap} and also for later use, we need the following description of quadratization functor from the bilinearization functor.

\begin{prop} \label{T2byT11}
For  $F \in Func_{*}(\mathcal{C}, \mathcal{D})$  and $X \in \mathcal{C}$ we have: 
$$T_2(F)=coker\Big(ker\Big(cr_2F(X,X) \xrightarrow{t_{11}} (T_{11}cr_2F)(X ,X)\Big) \xrightarrow{S^F_2} F(X) \Big).$$
\end{prop}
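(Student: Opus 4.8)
The plan is to reduce the asserted functor identity to an equality of two subgroups of $F(X)$ and to identify those subgroups by means of partial folding maps. Throughout I work with $Ab$-valued functors.

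First I would unwind the two sides. By Definition \ref{Tn-coker} we have $T_2F = \mathrm{coker}\big((cr_3F)\Delta^3_{\C}\xrightarrow{S_3^F}F\big)$, so $T_2F(X)=F(X)/\mathrm{im}(S_3^F)$ with quotient map $t_2$. Writing $K_X=\ker\big(cr_2F(X,X)\xrightarrow{t_{11}}T_{11}(cr_2F)(X,X)\big)$, the right-hand side of the Proposition is, by definition of the cokernel in $Ab$, the quotient $F(X)/S_2^F(K_X)$. Since $t_{11}$, $S_2^F$ and $S_3^F$ are all natural in $X$, it suffices to prove, for each $X$, the equality of subgroups
\[ \mathrm{im}(S_3^F) \;=\; S_2^F(K_X)\subseteq F(X), \]
after which the two displayed quotients coincide naturally.

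Next I would describe $K_X$ explicitly. By the definition of $T_{1,1}=T_{11}$ one has $T_{11}(cr_2F)(X,X)=cr_2F(X,X)/(N_1+N_2)$ with $t_{11}$ the canonical projection (as the $N_i$ are abelian, $N_1N_2=N_1+N_2$); hence $K_X=N_1+N_2$, where $N_1=\mathrm{im}\,S_2^{cr_2F(-,X)}$ and $N_2=\mathrm{im}\,S_2^{cr_2F(X,-)}$. The key computation is to recognize these two degeneracies as partial folds. Using the inductive identification $cr_2(cr_2F(-,X))(X,X)=cr_3F(X,X,X)$ as a subgroup of $F(X\vee X\vee X)$, the map $S_2^{cr_2F(-,X)}$ is precisely the restriction of $F(\nabla^2\vee 1_X)$ to $cr_3F(X,X,X)$, where $\nabla^2\vee 1_X\co X\vee X\vee X\to X\vee X$ folds the first two summands and keeps the third. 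In particular one must check that $F(\nabla^2\vee 1_X)$ carries $cr_3F(X,X,X)$ into the subgroup $cr_2F(X,X)\subseteq F(X\vee X)$: composing with the retractions $r_1^2,r_2^2$ one finds $r_1^2\circ(\nabla^2\vee 1_X)=\nabla^2\circ r^3_{12}$ and $r_2^2\circ(\nabla^2\vee 1_X)=r^3_{23}$ followed by a projection, so both composites annihilate $\omega\in cr_3F(X,X,X)$ since $\omega\in\ker F(r^3_{12})\cap\ker F(r^3_{23})$. Thus $N_1=F(\nabla^2\vee 1_X)\big(cr_3F(X,X,X)\big)$, and symmetrically, using the symmetry of $cr_3$, $N_2=F(1_X\vee\nabla^2)\big(cr_3F(X,X,X)\big)$.

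Finally I would conclude using the factorizations of the top folding map, $\nabla^3=\nabla^2\circ(\nabla^2\vee 1_X)=\nabla^2\circ(1_X\vee\nabla^2)$. Since $S_2^F=F(\nabla^2)\circ\mathrm{inc}$, applying $S_2^F$ to $N_1$ yields $F(\nabla^2)F(\nabla^2\vee 1_X)\big(cr_3F(X,X,X)\big)=F(\nabla^3)\big(cr_3F(X,X,X)\big)=\mathrm{im}(S_3^F)$, and likewise $S_2^F(N_2)=\mathrm{im}(S_3^F)$. Hence $S_2^F(K_X)=S_2^F(N_1)+S_2^F(N_2)=\mathrm{im}(S_3^F)$, which is the required equality. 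The step I expect to be the main obstacle is the bookkeeping of the middle paragraph: matching the inductive and symmetric structure of the cross-effects with the correct coordinates, that is, verifying that the abstract degeneracies $S_2^{cr_2F(-,X)}$ and $S_2^{cr_2F(X,-)}$ really are the two partial folds of $cr_3F(X,X,X)$ and that their images land inside $cr_2F(X,X)$. Once these identifications are in place, the remainder is a short composition-of-folds argument.
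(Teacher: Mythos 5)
Your proposal is correct and follows essentially the same route as the paper: both arguments identify $\ker(t_{11})=N_1+N_2$ with the images of the two partial degeneracies $S_2^{cr_2F(-,X)}$ and $S_2^{cr_2F(X,-)}$ on $cr_3F(X,X,X)$, and then use the factorizations $\nabla^3=\nabla^2\circ(\nabla^2\vee 1)=\nabla^2\circ(1\vee\nabla^2)$ to see that $S_2^F(N_1)=S_2^F(N_2)=\mathrm{im}(S_3^F)$. The paper phrases this via two commutative diagrams and the resulting cokernel identities (\ref{eqn-T2-12}) and (\ref{eqn-T2-2}), but the content is the same.
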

\begin{proof}
We have the following commutative diagram 
$$\xymatrix{
cr_3F(X,X,X) =cr_2(cr_2F(-,X))(X,X) \ar@{^{(}->}[r]^-{\iota^2_{(1,2)}} \ar[rd]_-{S_2^{F(- \mid X)}}& cr_2F(-,X)(X \vee X) \ar@{^{(}->}[r]^-{\iota^3_{(12,3)}} \ar[d]^{F(\nabla^2 \mid 1)}& F(X \vee X \vee X) \ar[d]^{F(\nabla^3)}\\
  & cr_2F(-,X)(X) \ar[r]_-{S_2^F} \ar@{->>}[rd]_{t_1}& F(X)\\
  & & T_1(cr_2F(-,X))(X)
}$$

where the right hand square commutes by the following commutative diagram
$$\xymatrix{
cr_2F(X \vee X,X) \ar[dd]_{F(\nabla^2 \mid 1)} \ar@{^{(}->}[rr] & & F(X \vee X \vee X) \ar[dl]_{F(\nabla^2 \vee 1)} \ar[dd]^{F(\nabla^3)}\\
 & F(X \vee X) \ar[rd]^{F(\nabla^2)}& \\
  cr_2F(X,X) \ar@{^{(}->}[ru]_{\iota^2_{(1,2)}} \ar[rr]_-{S_2^F}& &F(X).\\
  }$$
By Definition \ref{Tn-coker} 
$$T_2F(X)=coker(cr_3F(X,X,X) \xrightarrow{S^F_{3}} F(X))=coker(cr_3F(X,X,X) \xrightarrow{F(\nabla^3) \iota^3_{(12,3)} \iota^2_{(1,2)} } F(X)).$$
We deduce from the first diagram that
\begin{eqnarray} \label{eqn-T2-1}
{}\hspace{15mm}T_2F(X)&=&coker(cr_3F(X,X,X) \xrightarrow{S^F_{2} S_2^{F(- \mid X)}} F(X))\\
&=&coker\Big(ker\left(cr_2F(- ,X)(X) \xrightarrow{t_{1}} T_{1}cr_2F(- ,X)(X)\right) \xrightarrow{S^F_2} F(X)\Big).\label{eqn-T2-12}
\end{eqnarray}
Considering $cr_2F(X,-)$ instead of $cr_2F(-,X)$, we can write down similar commutative diagrams which imply  
\begin{eqnarray} \label{eqn-T2-2}
{}\hspace{15mm} T_2F(X)&=&coker\Big(ker\left(cr_2F(X ,-)(X) \xrightarrow{t_{1}} T_{1}cr_2F(X,-)(X)\right) \xrightarrow{S^F_2} F(X)\Big).
\end{eqnarray}
Combining \ref{eqn-T2-12} and \ref{eqn-T2-2} we deduce the result.
\end{proof}

\begin{proof}[Proof of Proposition \ref{univqumap}]
 For $\xi \in \C(E, E \vee E)$ we have:
\begin{eqnarray*}
cr_2(\varphi) \rho^2_{(1,2)}(\xi)&=& \varphi(\nabla \xi)-\varphi(r_1 \xi) - \varphi( r_2 \xi) -(\varphi(\nabla i_1r_1\xi) - \varphi(r_1i_1r_1\xi)  - \varphi(r_2i_1r_1\xi))
\\
& & {}-(\varphi(\nabla i_2r_2\xi) - \varphi(r_1i_2r_2\xi)  - \varphi(r_2i_2r_2\xi))\\
&=& \varphi(\nabla \xi)-\varphi(r_1 \xi) - \varphi( r_2 \xi)\\
&=& \overline{\varphi} \nabla (\xi- i_1 r_1 \xi-i_2 r_2 \xi)\\
&=& \overline{\varphi} \nabla  \iota^2_{(1,2)} \rho^2_{(1,2)}(\xi)\\
&=& \overline{\varphi} S_2^{U_X}\rho^2_{(1,2)}(\xi).
\end{eqnarray*}
Hence $cr_2(\varphi)=\overline{\varphi} S_2^{U_X}.$ Now, $\varphi$ is quadratic iff $cr_2(\varphi) Ker(t_{11})=0$. But $cr_2(\varphi) Ker(t_{11})=\overline{\varphi} S_2^{U_X} Ker(t_{11})=\overline{\varphi} Ker(t_{2})$ by Proposition \ref{T2byT11}, whence the assertion.
\end{proof}

\subsection{Explicit description of the (bi-)linearization and quadratization functor}

The general principle of this section is to express the values $T_nF(X)$ of the functor $T_nF$  as a cokernel of a map $F(X^{\vee(n+1)}) \to F(X)$ instead of a map $cr_{n+1}F(X, \ldots, X)  \to F(X)$ since the elements of $cr_{n+1}F(X, \ldots ,X) $ are more difficult to describe than the elements of $F(X^{\vee (n+1)})$. 

As a particular case of diagrams \ref{r123} and \ref{r1234} we have the following split short exact sequence:
\begin{equation} \label{ses2}
\xymatrix{ 
0 \ar[r] & F(X|Y) \ar@{^(->}[r]_-{\iota_{(1,2)}^2}& F(X \vee Y)  \ar@(ul,ur)[l]_-{\rho_{(1,2)}^2}
 \ar[r]_-{(r_{1*}^2, r^2_{2*})}& F(X) \oplus F(Y ) \ar@(ul,ur)[l]_-{(i_{1*}^2, i^2_{2*})} \ar[r]& 0
}
\end{equation}
which implies that:
\begin{equation} \label{ses1}
Id_{F(X \vee Y)}={\iota^2_{(1,2)}} \circ \rho_{(1,2)}^2+ i^2_{1*} \circ r^2_{1*}+ i^2_{2*} \circ r^2_{2*}
\end{equation}
so
\begin{equation} \label{Imiota}
Im(\iota^2_{(1,2)})= Im(Id_{F(X \vee Y)}-i^2_{1*} \circ r^2_{1*}- i^2_{2*} \circ r^2_{2*}).
\end{equation} 
Furthermore, we obtain a natural isomorphism of bifunctors
\begin{equation} \label{cr2asquot}
F(X|Y) \simeq F(X \vee Y)/i_{1*}^2F(X) + i_{2*}^2F(Y).
\end{equation}

\subsubsection{Linearization and bilinearization functors}

In the following Proposition we give an explicit description of the linearization functor $T_1$.
\begin{prop} \label{explicit-lin}
For  $F \in Func_{*}(\mathcal{C}, Ab)$  and $X \in \mathcal{C}$ we have: 
$$T_1(F)(X)=coker(F(X \vee X) \xrightarrow{S^F_2 \circ \rho^2_{(1,2)}} F(X) )$$
so
$$T_1F(X)=F(X)/ Im(S^F_2  \circ \rho^2_{(1,2)})=F(X)/\{ \nabla^2
_*(x)-r_{1*}^2(x)-r_{2*}^2(x)|\  x\in F(X \vee X)\}$$
$$=F(X)/\{((1,1)_*-(1,0)_*-(0,1)_*)(x)|\  x\in F(X \vee X)\}$$
\end{prop}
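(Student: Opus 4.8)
The plan is to unwind the definition $T_1F(X)=coker\big(cr_2F(X,X)\xrightarrow{S_2^F}F(X)\big)$ coming from Definition \ref{Tn-coker} in the case $n=1$, and then to replace the source $cr_2F(X,X)$ by the more tractable $F(X\vee X)$ using the split short exact sequence \eqref{ses2} and the identity \eqref{ses1} it induces.

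First I would note that the retraction $\rho^2_{(1,2)}\co F(X\vee X)\to cr_2F(X,X)=F(X|X)$ appearing in \eqref{ses2} is surjective, being a retraction of the inclusion $\iota^2_{(1,2)}$. Hence $Im(S_2^F\circ\rho^2_{(1,2)})=S_2^F(Im(\rho^2_{(1,2)}))=S_2^F(cr_2F(X,X))=Im(S_2^F)$, so the cokernels of $S_2^F$ and of $S_2^F\circ\rho^2_{(1,2)}$ agree; this gives the first displayed equality.

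Next I would compute $S_2^F\circ\rho^2_{(1,2)}$ explicitly. Since $S_2^F=F(\nabla^2)\circ\iota^2_{(1,2)}$ by Definition \ref{+}, we have $S_2^F\circ\rho^2_{(1,2)}=F(\nabla^2)\circ\iota^2_{(1,2)}\circ\rho^2_{(1,2)}$. Substituting the splitting identity \eqref{ses1}, i.e.\ $\iota^2_{(1,2)}\circ\rho^2_{(1,2)}=Id-i^2_{1*}r^2_{1*}-i^2_{2*}r^2_{2*}$, and using $\nabla^2 i^2_1=\nabla^2 i^2_2=Id_X$ (so that $F(\nabla^2)\,i^2_{k*}=Id$), I obtain
$$S_2^F\circ\rho^2_{(1,2)}=\nabla^2_*-r^2_{1*}-r^2_{2*}.$$
Evaluating on $x\in F(X\vee X)$ yields the second displayed equality, and expressing $\nabla^2=(1,1)$, $r^2_1=(1,0)$, $r^2_2=(0,1)$ componentwise gives the last one.

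I do not anticipate a genuine obstacle: the statement is a direct consequence of the definition of $T_1$ together with the splitting \eqref{ses2}--\eqref{ses1}. The only points demanding a little care are using the surjectivity of $\rho^2_{(1,2)}$ to pass from $cr_2F(X,X)$ to $F(X\vee X)$ without altering the image, and checking the cancellation $F(\nabla^2)\,i^2_{k*}=Id$ that eliminates the two injection--retraction terms.
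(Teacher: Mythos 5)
Your proposal is correct and follows exactly the paper's own argument: surjectivity of $\rho^2_{(1,2)}$ to replace $cr_2F(X,X)$ by $F(X\vee X)$ without changing the image, then the splitting identity $\iota^2_{(1,2)}\rho^2_{(1,2)}=Id-i^2_{1*}r^2_{1*}-i^2_{2*}r^2_{2*}$ together with $F(\nabla^2)i^2_{k*}=Id$ to compute $S_2^F\circ\rho^2_{(1,2)}=\nabla^2_*-r^2_{1*}-r^2_{2*}$. No gaps.
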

\begin{proof}
The map $\rho^2_{(1,2)}$ is surjective by the short exact sequence \ref{ses2}, hence
$$T_1(F)(X)=coker(F(X \mid X) \xrightarrow{S^F_2} F(X) )=coker(F(X \vee X) \xrightarrow{S^F_2 \circ \rho^2_{(1,2)}} F(X) )$$
and  we have by \ref{ses1}:
$$S^F_2  \circ \rho^2_{(1,2)} =F(\nabla^2) \iota^2_{(1,2)} \rho^2_{(1,2)} =F(\nabla^2)(Id- i^2_{1*} \circ r^2_{1*}-i^2_{2*} \circ r^2_{2*}) $$
$$=\nabla^2_* -r^2_{1*} -r^2_{2*} .$$
\end{proof}

Similarly we obtain:

\begin{prop} \label{bilin-cross1}
For $B \in BiFunc_{*,*}(\mathcal{C} \times \mathcal{C}, Ab)$ and $X, Y \in \C$ we have:
$$T_{11}B(X,Y)=coker\Big(B(X \vee X,Y) \oplus B(X, Y \vee Y) \xrightarrow{(S^{cr_2B(-,Y)}_2 \circ (\rho^2_{(1,2)})^1, S^{cr_2B(X,-)}_2 \circ (\rho^2_{(1,2)})^2)} B(X,Y)\Big)$$
where $(\rho^2_{(1,2)})^1: B(-,Y) (X \vee X) \rightarrow cr_2B(-,Y)(X,X)$ and $(\rho^2_{(1,2)})^2: B(X,-) (Y \vee Y) \rightarrow cr_2B(X,-)(Y,Y)$, so
$$T_{11}B(X,Y)=B(X,Y)/Im((S^F_2 \circ (\rho^2_{(1,2)})^X, S^F_2 \circ (\rho^2_{(1,2)})^Y)$$
$$=B(X,Y)/\{K(x,y)\ | \ x \in B(X \vee X,Y), y \in B(X, Y \vee Y) \}$$
where $K(x,y)= B(\nabla^2, Id)(x)-B(r^2_1,Id)(x)-B(r^2_2,Id)(x)+ B(Id,\nabla^2)(y)-B(Id, r^2_1)(y)-B(Id, r^2_2)(y)$.
\end{prop}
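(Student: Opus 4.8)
The plan is to mirror the proof of Proposition~\ref{explicit-lin}, running the linearization argument in each of the two variables at once. By the definition of the $(1,1)$-Taylorisation, $T_{11}B(X,Y)=B(X,Y)/N_1N_2$; since we now work in $Ab$ this product of subgroups is just the sum $N_1+N_2$, where $N_1=im(S_2^{B(-,Y)})$ and $N_2=im(S_2^{B(X,-)})$ are the images of the structure maps of Definition~\ref{+} applied to the one-variable functors $B(-,Y)$ and $B(X,-)$. The image of the direct-sum map
$$cr_2B(-,Y)(X,X)\ \oplus\ cr_2B(X,-)(Y,Y)\ \xrightarrow{\ (S_2^{B(-,Y)},\, S_2^{B(X,-)})\ }\ B(X,Y)$$
is precisely $N_1+N_2$, so its cokernel is $T_{11}B(X,Y)$.

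First I would replace the cross-effect sources by the full bifunctor values. Specialising the split short exact sequence~\ref{ses2} to the functor $B(-,Y)$ in the first variable and to $B(X,-)$ in the second shows that the retractions $(\rho^2_{(1,2)})^1\co B(X\vee X,Y)\to cr_2B(-,Y)(X,X)$ and $(\rho^2_{(1,2)})^2\co B(X,Y\vee Y)\to cr_2B(X,-)(Y,Y)$ are surjective. Precomposing a homomorphism with a surjection changes neither its image nor its cokernel, so the cokernel above is unchanged if we precompose each component with the corresponding retraction; this yields the first displayed formula of the Proposition.

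Next I would compute the two composites explicitly, exactly as in the proof of Proposition~\ref{explicit-lin}. Applying identity~\ref{ses1} to the functor $B(-,Y)$ gives $S_2^{B(-,Y)}\circ(\rho^2_{(1,2)})^1 = B(\nabla^2,Id)-B(r_1^2,Id)-B(r_2^2,Id)$ as a map $B(X\vee X,Y)\to B(X,Y)$, and applying it to $B(X,-)$ gives $S_2^{B(X,-)}\circ(\rho^2_{(1,2)})^2 = B(Id,\nabla^2)-B(Id,r_1^2)-B(Id,r_2^2)$ on $B(X,Y\vee Y)$. Summing the two contributions produces exactly the assignment $(x,y)\mapsto K(x,y)$, which is the second formula.

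The argument is routine once Proposition~\ref{explicit-lin} is in hand, and I do not expect any genuine obstacle; the only care needed is bookkeeping. One should check that $N_1N_2$ is correctly read as $N_1+N_2$ in the abelian setting, that $(\rho^2_{(1,2)})^1$ and $(\rho^2_{(1,2)})^2$ are indeed the splittings associated with the two distinct one-variable functors (so that the construction is natural in the remaining variable and the resulting map is a morphism of bifunctors), and that the cokernel of a map out of a direct sum is the quotient by the sum of the images of its two components.
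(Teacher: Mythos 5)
Your proof is correct and follows exactly the route the paper intends: the paper states this proposition with only the remark ``Similarly we obtain'' after Proposition \ref{explicit-lin}, and your argument is precisely the two-variable version of that proof (identify $N_1N_2$ with $N_1+N_2$ in $Ab$, use surjectivity of the retractions $\rho^2_{(1,2)}$ to rewrite the cokernel, and expand via the splitting identity \ref{ses1}). The bookkeeping points you flag are the right ones and all check out.
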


Recall that for $F \in Func_*(\C, Ab)$ we have $cr_2F \in BiFunc_{*,*}(\mathcal{C} \times \mathcal{C}, Ab)$, so we can consider the bilinearization of the bifunctor $cr_2F$.

Applying the previous proposition to $cr_2F$ gives us a description of $T_{11}cr_2F(X,Y)$ as a quotient of $cr_2F(X,Y)$ where the relations are obtained from elements in $cr_2F(X \vee X,Y)$ and $cr_2F(X, Y \vee Y)$. A more manageable description of $T_{11}cr_2F(X,Y)$, as a quotient of $F(X\vee Y)$, is  given as follows: 

\begin{prop} \label{bilin-cross}
For $F \in Func_{*}( \mathcal{C}, Ab)$ and $X, Y \in \C$ we have:
$$T_{11}cr_2F(X|Y)=F(X \vee Y)/ \{ i^2_{1*}(x) + i^2_{2*}(y) + A(z_1) + B(z_2),  |\ x \in F(X), y \in F(Y),$$
$$ z_1 \in F(X \vee X \vee Y), z_2 \in F(X \vee Y \vee Y) \}$$
where 
$$A=F(\nabla^2 \vee Id) -F(r_1^2 \vee Id) - F(r_2^2 \vee Id)$$
and
$$B=F(Id \vee \nabla^2)-F( Id \vee r^2_1)- F(Id \vee r^2_2 ).$$
\end{prop}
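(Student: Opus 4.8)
The plan is to combine the two quotient descriptions that are already available: the natural identification $cr_2F(X,Y)\cong F(X\vee Y)/\big(i_{1*}^2F(X)+i_{2*}^2F(Y)\big)$ from \eqref{cr2asquot}, and the presentation of $T_{11}(cr_2F)(X,Y)$ as a quotient of $cr_2F(X,Y)$ supplied by Proposition \ref{bilin-cross1} applied to the bifunctor $B=cr_2F$. Write $\pi\co F(X\vee Y)\twoheadrightarrow cr_2F(X,Y)$ for the retraction $\rho^2_{(1,2)}$, whose kernel is exactly $i_{1*}^2F(X)+i_{2*}^2F(Y)$ by the split sequence \eqref{ses2} and which admits $\iota^2_{(1,2)}$ as a section. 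Since $\pi$ splits, it suffices to show that the preimage under $\pi$ of the relation subgroup $\langle K(x,y)\rangle$ of Proposition \ref{bilin-cross1} coincides with the subgroup $i_{1*}^2F(X)+i_{2*}^2F(Y)+\{A(z_1)\}+\{B(z_2)\}$ of $F(X\vee Y)$; the claimed presentation then follows by taking the quotient.

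First I would reduce to matching generators. As $A$ and $B$ are homomorphisms and $F(X),F(Y)$ are subgroups, the target subgroup consists precisely of the elements $i_{1*}^2(x)+i_{2*}^2(y)+A(z_1)+B(z_2)$; moreover $\langle K(x,y)\rangle$ is generated by the \emph{pure first parts} $cr_2F(\nabla^2,Id)(x)-cr_2F(r_1^2,Id)(x)-cr_2F(r_2^2,Id)(x)$ (take $y=0$) together with the symmetric \emph{pure second parts} (take $x=0$). So it is enough to match, modulo $i_{1*}^2F(X)+i_{2*}^2F(Y)$, the first parts with the $A(z_1)$ and the second parts with the $B(z_2)$.

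The computational heart is the following. Given $z_1\in F(X\vee X\vee Y)$, decompose it along the split sequence \eqref{r1234} for the partition $(\{1,2\},\{3\})$ as $z_1=\iota^3_{(12,3)}(x)+i_{12*}(u)+i_{3*}(v)$, where $x=\rho^3_{(12,3)}(z_1)\in cr_2F(X\vee X,Y)$, $u\in F(X\vee X)$, $v\in F(Y)$, and $i_{12},i_3$ are the evident injections. Because cross-effects are natural subfunctors, the maps $F(\nabla^2\vee Id),F(r_1^2\vee Id),F(r_2^2\vee Id)$ restrict on $cr_2F(X\vee X,Y)$ to $cr_2F(\nabla^2,Id),cr_2F(r_1^2,Id),cr_2F(r_2^2,Id)$ compatibly with the inclusion $\iota^2_{(1,2)}$; hence $A(\iota^3_{(12,3)}x)=\iota^2_{(1,2)}$ applied to the first part of $K(x,0)$. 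A short composition-of-morphisms check then shows $A(i_{3*}v)=-i_{2*}^2(v)\in i_{2*}^2F(Y)$ and $A(i_{12*}u)\in i_{1*}^2F(X)$, using $(\nabla^2\vee Id)i_3=(r_j^2\vee Id)i_3=i_2^2$, $(\nabla^2\vee Id)i_{12}=i_1^2\nabla^2$ and $(r_j^2\vee Id)i_{12}=i_1^2 r_j^2$. Thus $A(z_1)\equiv\iota^2_{(1,2)}\big[\text{first part of }K(\rho^3_{(12,3)}z_1,0)\big]\pmod{i_{1*}^2F(X)+i_{2*}^2F(Y)}$, while taking $z_1=\iota^3_{(12,3)}(x)$ (for which $u=v=0$) realizes every first part exactly as an $A(z_1)$. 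The statement for $B$ and the second variable is identical after swapping $X$ and $Y$. Combining both matchings yields equality of the two subgroups and hence the claim.

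The main obstacle is the bookkeeping in the two decomposition steps: verifying carefully that the degenerate summands $i_{12*}(u)$ and $i_{3*}(v)$ are absorbed into $i_{1*}^2F(X)$ and $i_{2*}^2F(Y)$, and that on the cross-effect summand the ambient map $F(\nabla^2\vee Id)$ genuinely coincides with the bifunctorial map $cr_2F(\nabla^2,Id)$ appearing in Proposition \ref{bilin-cross1}. Both points reduce to elementary compositions of the structural maps $\nabla^2,r_k^2,i_k,i_{12},i_3$ together with naturality of the cross-effect inclusions, so no tools beyond \eqref{ses2}, \eqref{cr2asquot} and Proposition \ref{bilin-cross1} are needed.
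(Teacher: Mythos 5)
Your proof is correct and takes essentially the same route as the paper: both pass through the identification \eqref{cr2asquot} of the cross-effect as the quotient $F(X\vee Y)/\bigl(i^2_{1*}F(X)+i^2_{2*}F(Y)\bigr)$ and transport the (bi)linearization relations of Propositions \ref{explicit-lin} and \ref{bilin-cross1} along it. Your explicit verification that the degenerate summands $i_{12*}(u)$ and $i_{3*}(v)$ are absorbed into $i^2_{1*}F(X)+i^2_{2*}F(Y)$, and that $F(\nabla^2\vee Id)$ restricts to $cr_2F(\nabla^2,Id)$ on the cross-effect subgroup, is precisely the content of the commutative diagram of quotients that the paper displays without further comment.
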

\begin{proof}
By Proposition \ref{explicit-lin} we have 
$$T_1(cr_2F(-,Y))(X)=coker(F(\nabla^2 \mid Id) -F(r_1^2 \mid Id)- F(r_2^2 \mid Id)).$$
We obtain the term $A$ from the following commutative diagram where the vertical arrows are isomorphisms of bifunctors by (\ref{cr2asquot}):
$$\xymatrix{
cr_2F(-,Y)(X \vee X) \ar[d]_-{\simeq}   \ar[rrrr]^{F(\nabla^2 \mid Id)-F(r_1^2 \mid Id)- F(r_2^2 \mid Id)} &&&&cr_2F(-,Y)(X) \ar[d]^{\simeq} \\
F((X \vee X) \vee Y ) /(i^3_{12*}F(X \vee X)+i^3_{3*}F(Y))  \ar[rrrr]_-{\overline{F(\nabla^2 \vee Id)}-\overline{F(r_1^2 \vee Id)}-\overline{ F(r_2^2 \vee Id)}} && & &F(X \vee Y)/(i^2_{1*}F(X)+ i^2_{2*}F(Y)) .
}
$$
Similarly, considering $T_1(cr_2F(X,-))(Y)$ we obtain the term $B$.
\end{proof}

\subsubsection{Quadratization functor}
\begin{prop} \label{quadratization}
For  $F \in Func_{*}(\mathcal{C}, Ab)$  and $X \in \mathcal{C}$ we have: 
$$T_2F(X)=F(X)/\{ (\nabla^3_*-(\nabla^2 r^3_{12})_*- (\nabla^2 r^3_{13})_*-(\nabla^2 r^3_{23})_*+r_{1*}^3+r_{2*}^3+r_{3*}^3)(x)   $$
$$   |\  x\in F(X \vee X \vee X) \}$$
$$=F(X)/\{ ((1,1,1)_*-(1,1,0)_*- (1,0,1)_*-(0,1,1)_*+(1,0,0)_*+(0,1,0)_*+(0,0,1)_*)(x) $$
$$      | \ x\in F(X \vee X \vee X) \}.$$
\end{prop}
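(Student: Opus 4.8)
The plan is to follow the pattern of the proof of Proposition~\ref{explicit-lin}, replacing the two--variable idempotent splitting by its three--variable analogue. By Definition~\ref{Tn-coker} one has $T_2F(X)=\mathrm{coker}(S^F_3)$, where $S^F_3$ is the composite
\[ cr_3F(X,X,X)\xrightarrow{\ inc\ }F(X\vee X\vee X)\xrightarrow{\ F(\nabla^3)\ }F(X). \]
First I would use the natural decomposition of Proposition~\ref{ce-prop}, which endows the inclusion $inc$ with a natural retraction $\rho\colon F(X\vee X\vee X)\twoheadrightarrow cr_3F(X,X,X)$. As $\rho$ is surjective, $\mathrm{im}(S^F_3)=\mathrm{im}(S^F_3\rho)$, and therefore
\[ T_2F(X)=\mathrm{coker}\big(F(\nabla^3)\circ e_3\big),\qquad e_3:=inc\circ\rho, \]
with $e_3$ the idempotent of $F(X\vee X\vee X)$ projecting onto its top summand $cr_3F(X,X,X)$. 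Everything then comes down to writing $e_3$ out explicitly and simplifying $F(\nabla^3)\circ e_3$ term by term.

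The heart of the argument is the inclusion--exclusion formula for $e_3$, the three--variable counterpart of the splitting~\ref{ses1}. Here I would set $p_1=(i^3_{23}r^3_{23})_*$, $p_2=(i^3_{13}r^3_{13})_*$ and $p_3=(i^3_{12}r^3_{12})_*$, where $r^3_{jl}$ retracts onto the two named summands and $i^3_{jl}$ is the corresponding injection. Under the decomposition of Proposition~\ref{ce-prop}, $p_k$ is the projection onto the sub-sum of those summands $cr_SF$ with $k\notin S$; in particular the $p_k$ are commuting idempotents, and since $F(i^3_{jl})$ is a section, the description of $cr_3F$ as the kernel of the product of the one--variable deletion retractions gives $cr_3F(X,X,X)=\bigcap_k\ker p_k$. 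Consequently $e_3=(Id-p_1)(Id-p_2)(Id-p_3)$. Expanding this product, identifying each pairwise composite $p_kp_l$ with the one--variable projector $(i^3_m r^3_m)_*$ (where $\{k,l,m\}=\{1,2,3\}$), and using that $p_1p_2p_3=0$ because $F$ is reduced, yields
\[ e_3=Id-(i^3_{23}r^3_{23})_*-(i^3_{13}r^3_{13})_*-(i^3_{12}r^3_{12})_*+(i^3_1r^3_1)_*+(i^3_2r^3_2)_*+(i^3_3r^3_3)_*. \]

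It then remains to commute $F(\nabla^3)$ past this sum. Since $\nabla^3 i^3_{jl}=\nabla^2$ we get $F(\nabla^3)(i^3_{jl}r^3_{jl})_*=(\nabla^2r^3_{jl})_*$, and since $\nabla^3 i^3_k=Id_X$ we get $F(\nabla^3)(i^3_kr^3_k)_*=r^3_{k*}$; substituting these reproduces the first displayed map in the statement. The second display is then just its transcription in the tuple notation, where $(\varepsilon_1,\varepsilon_2,\varepsilon_3)_*$ denotes $F$ applied to the map $X\vee X\vee X\to X$ equal to the identity on the $i$--th summand when $\varepsilon_i=1$ and to the zero map when $\varepsilon_i=0$ (so that $(1,1,1)=\nabla^3$, $(1,1,0)=\nabla^2r^3_{12}$, $(1,0,0)=r^3_1$, and so forth). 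I expect the one genuinely non--formal point to be the middle paragraph: checking that the $p_k$ commute, that their common kernel is exactly $cr_3F$, and that $p_1p_2p_3$ vanishes — the combinatorial bookkeeping behind the inclusion--exclusion formula. Once $e_3$ is established, the rest is a routine simplification.
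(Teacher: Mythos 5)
Your proof is correct, and it reaches the stated formula by a genuinely more direct route than the paper. The paper does not expand the idempotent of $F(X\vee X\vee X)$ in one go: it first rewrites $S^F_3$ as the composite $S^F_2\circ S^{cr_2F(-,X)}_2$ using the identification $cr_3F=cr_2(cr_2F(-,X))$ (this factorization is the content of the commutative diagrams in the proof of Proposition \ref{T2byT11}), and then applies the \emph{binary} splitting (\ref{ses1}) twice in succession --- once through $\rho^2_{(1,2)}$ at the level of $cr_2F(-,X)(X\vee X)$ and once through $\rho^3_{(12,3)}$ at the level of $F(X\vee X\vee X)$ --- collecting the terms at the end. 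Your single ternary inclusion--exclusion $e_3=(Id-p_1)(Id-p_2)(Id-p_3)$ with the commuting idempotents $p_k=(i^3_{\widehat{k}}r^3_{\widehat{k}})_*$ replaces that two-stage bookkeeping by one symmetric computation; the identifications $p_kp_l=(i^3_mr^3_m)_*$, $p_1p_2p_3=0$ and $\bigcap_k\ker p_k=cr_3F(X,X,X)$ (the latter being exactly the unnumbered proposition characterizing $cr_nF$ as a kernel) are all as you say, and the final push through $F(\nabla^3)$ matches the statement. What your approach buys is transparency and an evident generalization to arbitrary degree $n$; what the paper's buys is that the intermediate factorization $S^F_3=S^F_2S^{cr_2F(-,X)}_2$ is itself needed elsewhere (it is the basis of Proposition \ref{T2byT11}). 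One small remark: you do not actually need Proposition \ref{ce-prop} or its retraction $\rho$ at all --- since $e_3$ is an idempotent with image exactly $cr_3F(X,X,X)$, one has $\mathrm{im}(F(\nabla^3)\circ e_3)=F(\nabla^3)(cr_3F(X,X,X))=\mathrm{im}(S^F_3)$ directly, so the question of whether $e_3$ coincides with $inc\circ\rho$ for the particular retraction coming from \ref{ce-prop} never arises.
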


\begin{proof}
By the proof of Proposition \ref{T2byT11} we have
$$T_2F(X)=coker(cr_3F(X,X,X) \xrightarrow{S^F_2 S^{cr_2F(-,X)}_2} F(X)).$$
By  the following commutative diagram:
$$\xymatrix{
cr_2F(-,X)(X \vee X) \ar[drrrr]^{cr_2F(\nabla^2,Id)} &&&&cr_2(cr_2F(-,X))(X) \ar@{_{(}->}[llll]_{\iota^2_{(1,2)}} \ar[d]^{ S^{cr_2F(-, X)}_2}\\
cr_2F(-,X)(X \vee X) \ar[u]^{Id-cr_2F(i_1^2r_1^2,Id)-cr_2F(i_2^2r^2_2, Id)} \ar@{^{(}->}[d]_-{\iota^3_{(12,3)}}   \ar[rrrr]^{F(\nabla^2 \mid Id)-F(r_1^2 \mid Id)- F(r_2^2 \mid Id)} &&&&cr_2F(-,X)(X) \ar@{^{(}->}[d]^{\iota^2_{(1,2)}} \ar[dr]^{S^F_2}\\
F((X \vee X) \vee X ) \ar@{->>}@(ur,u)[u]_-{\rho^3_{(12,3)}}  \ar[rrrr]_{F(\nabla^2 \vee Id)-F(r_1^2 \vee Id)- F(r_2^2 \vee Id)} && & &F(X \vee X) \ar@{->>}[r]_-{F(\nabla^2)}& F(X)
}
$$
we have
$$\begin{array}{lll}
Im(S^F_2 S^{cr_2F(-, X)}_2)\\
=Im(S^F_2 cr_2F(\nabla^2,Id) \iota^2_{(1,2)})\\
=Im(S^F_2 cr_2F(\nabla^2,Id) \iota^2_{(1,2)} \rho^2_{(1,2)}) \mathrm{\ since \ } \rho^2_{(1,2)} \mathrm{is \ surjective}\\
=Im(S^F_2 cr_2F(\nabla^2,Id)(Id-cr_2F(i_1^2r_1^2,Id)-cr_2F(i_2^2r^2_2, Id))) \mathrm{\ by\ } \ref{ses1}\\
=Im(S^F_2 (F(\nabla^2 \mid Id)-F(r_1^2 \mid Id)- F(r_2^2 \mid Id)))\\
=Im(S^F_2 (F(\nabla^2 \mid Id)-F(r_1^2 \mid Id)- F(r_2^2 \mid Id)) \rho^3_{(12,3)}) \mathrm{\ since \ } \rho^3_{(12,3)} \mathrm{is \ surjective}\\
=Im(F(\nabla^2) (F(\nabla^2 \vee Id)-F(r_1^2 \vee Id)- F(r_2^2 \vee Id)) \iota^2_{(1,2)} \rho^3_{(12,3)}).
\end{array}$$
By the following short exact sequence 
$$\xymatrix{ 
0 \ar[r] & F(X \vee X |Y) \ar@{^(->}[r]_-{\iota_{(12,3)}^3}& F(X \vee X \vee Y)  \ar@(ul,ur)[l]_-{\rho_{(12,3)}^3}
 \ar[r]_-{(r_{12*}^3, r^3_{3*})}& F(X \vee X) \oplus F(Y ) \ar@(ul,ur)[l]_-{(i_{12*}^3, i^3_{3*})} \ar[r]& 0
}$$
we obtain:
$$\begin{array}{ll}
(F(\nabla^2 \vee Id) -F(r_1^2 \vee Id)- F(r_2^2 \vee Id)) \iota^3_{(12,3)} \rho^3_{(12,3)}\\
=(F(\nabla^2 \vee Id) -F(r_1^2 \vee Id)- F(r_2^2 \vee Id)) (Id-i^3_{12*}r^3_{12*}-i^3_{3*}r^3_{3*})\\
=F(\nabla^2 \vee Id) -F(r_1^2 \vee Id)- F(r_2^2 \vee Id)-F(\iota^2_1 \nabla^2r^3_{12})+ F(i^2_1r^3_1)+F(i^2_1r^3_2)\\
\quad -F(i^2_2r^3_3)+F(i^2_2r^3_3)+F(i^2_2 r^3_3)\\
=F(\nabla^2 \vee Id) -F(r_1^2 \vee Id)- F(r_2^2 \vee Id)-F(\iota^2_1 \nabla^2r^3_{12})+ F(i^2_1r^3_1)+F(i^2_1r^3_2)+F(i^2_2 r^3_3).
\end{array}$$
Hence
$$\begin{array}{lll}
Im(S^F_2 S^{cr_2F(-, X)}_2)\\
=Im(F(\nabla^2)(F(\nabla^2 \vee Id) -F(r_1^2 \vee Id)- F(r_2^2 \vee Id)-F(\iota^2_1 \nabla^2r^3_{12})\\
\quad + F(i^2_1r^3_1)+F(i^2_1r^3_2)+F(i^2_2 r^3_3))))\\
=Im(F(\nabla^3)-F(\nabla^2 r^3_{13})-F(\nabla^2 r^3_{23})-F(\nabla^2r^3_{12})+ F(r^3_1)+F(r^3_2)+F(r^3_3)))
\end{array}$$
\end{proof}

\section{Equivalence between polynomial functors and modules over suitable rings}
In this section we give a classification of polynomial functors by modules over suitable rings essentially due to Johnson and McCarthy in \cite{JMCahiers1, JMCahiers2}. Although this provides a classification of polynomial functors of all degrees, it is not satisfactory since the rings that appear are very complicated. So this complete classification does not seem to be manageable for functors of degree higher than 1. Therefore our aim is to describe polynomial functors by minimal data, which is achieved for quadratic functors in this paper. 

\subsection{Adjunction between reduced functors and $\Lambda$-modules}
In this section, we give an adjunction between reduced functors and $\Lambda$-modules which is the starting point of the equivalence between polynomial functors and module  categories given in the sequel.
We begin by the following straightforward lemmas.
\begin{lm}
Composition in $\C$ induces a ring structure on $\Lambda:=U(E)$ and a structure of right $\Lambda$-module on $U(X)$ for any $X \in \C$.
\end{lm}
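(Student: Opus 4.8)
The plan is to prove the two assertions separately, in each case constructing the relevant structure map explicitly from composition in $\C$ and then checking the module/ring axioms by functoriality of $U$. Recall that $\Lambda := U(E)$ is the quotient of $\Z[\C(E,E)]$ by $\Z[\{0\}]$, so its underlying abelian group is free on the nonzero morphisms $E\to E$; similarly $U(X)$ is free abelian on the nonzero morphisms $E\to X$. The key observation is that composition $\C(E,X)\times \C(E,E)\to \C(E,X)$, $(g,f)\mapsto g\circ f$, sends the zero morphism in either slot to the zero morphism, hence descends to a pairing $U(X)\times \Lambda \to U(X)$ after passing to $\Z$-linear extensions and quotienting out $\Z[\{0\}]$.

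First I would treat the ring structure on $\Lambda$. I define the multiplication $\Lambda\otimes \Lambda \to \Lambda$ as the $\Z$-linear extension of $(f,f')\mapsto f\circ f'$ for $f,f'\in \C(E,E)$, which is welldefined on the quotient because $f\circ 0 = 0 = 0\circ f'$. Associativity of this multiplication is immediate from associativity of composition in $\C$, and the identity element is the class of $1_E$ (this is the key place where one uses that $1_E\neq 0$, i.e.\ that $E$ is not the zero object, so that $\overline{1_E}$ survives in the quotient $\Lambda$). Thus $\Lambda$ is a unital associative ring. Concretely one may phrase this as $U(E)$ acting on itself by precomposition; it is really just the reduced monoid ring of the monoid $\C(E,E)$ under composition, matching the terminology ``reduced monoid ring of $\T(E,E)$'' used in the introduction.

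Next I would treat the right $\Lambda$-module structure on $U(X)$. I define the action $U(X)\otimes \Lambda \to U(X)$ as the $\Z$-linear extension of $(g,f)\mapsto g\circ f$ for $g\in \C(E,X)$, $f\in \C(E,E)$; again this is welldefined modulo the zero morphisms since composing with $0$ on either side yields $0$. The module axioms all follow mechanically from the category axioms: the compatibility $(g\cdot f)\cdot f' = g\cdot(f f')$ is associativity of composition, the unitality $g\cdot \overline{1_E} = g$ is the identity law $g\circ 1_E = g$, and bilinearity over $\Z$ holds by construction since both structures are defined as $\Z$-linear extensions. I would phrase all of this functorially: the action map is exactly $U$ applied to morphisms, namely for $f\in \C(E,E)$ the endomorphism $U(f)\co U(X)\to U(X)$ induced by post-composition organizes these into the module structure, and naturality in $X$ is automatic.

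There is no real obstacle here --- the statement is labelled a ``straightforward lemma'' and the content is genuinely formal. The only point requiring the slightest care is checking welldefinedness on the quotient by $\Z[\{0\}]$, i.e.\ that the chosen pairings respect the subfunctor used to define $U$; this is handled uniformly by the observation that composition with the zero morphism is zero. I would simply record these verifications and note that unitality of the ring forces $\overline{1_E}\neq 0$, which holds precisely because $E\neq 0$.
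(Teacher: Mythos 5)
Your proof is correct and is exactly the ``straightforward'' verification the paper has in mind (it omits the proof entirely): multiplication and the action are the $\Z$-linear extensions of composition, well-defined on the quotient because composing with $0$ gives $0$, with the axioms following from associativity and the identity law in $\C$. One small slip: the right action of $f\in\C(E,E)$ on $U(X)$ by \emph{pre}composition is not $U_E(f)$ (which would be postcomposition and only makes sense for morphisms out of $X$), but rather contravariant functoriality in the $E$-slot of $U_{(-)}(X)$ --- your explicit formula is right, only the functorial gloss at the end mislabels it.
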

\begin{lm} \label{F(E)}
For $F : \mathcal C \rightarrow Ab $ a reduced functor, $F(E)$ is a left $\Lambda$-module via 
$$\alpha . x :=  F(\alpha)(x)$$
 for $\alpha \in \mathcal C(E, E)$ and $x \in F(E)$. 
\end{lm}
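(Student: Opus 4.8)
The statement to prove is Lemma~\ref{F(E)}: for a reduced functor $F\co \C\to Ab$, the group $F(E)$ carries a left $\Lambda$-module structure via $\alpha.x := F(\alpha)(x)$, where $\Lambda = U(E)$. This is a compatibility check between the ring structure on $\Lambda$ (given by the previous lemma) and the action of $F$ on morphisms. The plan is to verify the module axioms by tracing through how $\Lambda = U(E)$ is built as a quotient of $\mathbb{Z}[\C(E,E)]$ and how functoriality of $F$ translates composition in $\C$ into composition of endomorphisms of $F(E)$.

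\medskip

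First I would recall the ring structure on $\Lambda$. By the preceding lemma, $\Lambda = U(E) = \mathbb{Z}[\C(E,E)]/\mathbb{Z}[0]$, and multiplication is induced by composition in $\C$: for $\alpha,\beta\in\C(E,E)$ one sets $\alpha\cdot\beta := \alpha\circ\beta$ (the class of the composite), extended $\mathbb{Z}$-bilinearly. The zero map is killed in passing to $U(E)$, so this is welldefined, and $1_E$ is the unit. The key point is that $F(E)$ is \emph{a priori} an abelian group (since $\D = Ab$ here, or by Lemma~\ref{lin-ab} in the group-valued case once restricted appropriately), so each $F(\alpha)$ is a group endomorphism of $F(E)$.

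\medskip

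Next I would extend the proposed action $\mathbb{Z}$-linearly: given a general element $\xi = \sum_i n_i\,\alpha_i$ of $\mathbb{Z}[\C(E,E)]$, set $\xi.x := \sum_i n_i\,F(\alpha_i)(x)$. To see this descends to $\Lambda = U(E)$, I must check that the subgroup $\mathbb{Z}[0]$ acts trivially: the generator is the class of the zero map $0\co E\to E$, and since $F$ is \emph{reduced} we have $F(0) = 0$, so $F(0)(x) = 0$ for all $x\in F(E)$. Hence the action factors through the quotient $\Lambda$. The module axioms then reduce to: $1_E.x = F(1_E)(x) = x$ (functoriality preserves identities); additivity in $x$, which holds since each $F(\alpha)$ is a homomorphism; additivity in $\xi$, which is immediate from the $\mathbb{Z}$-linear extension; and associativity $(\alpha\cdot\beta).x = \alpha.(\beta.x)$, which on generators reads $F(\alpha\circ\beta)(x) = F(\alpha)\bigl(F(\beta)(x)\bigr)$, i.e.\ exactly the functoriality equation $F(\alpha\circ\beta) = F(\alpha)\circ F(\beta)$.

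\medskip

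The verification is entirely routine, so there is no real obstacle; the only point demanding any care is the welldefinedness of the action on the quotient $\Lambda$, which is precisely where the reducedness hypothesis $F(0)=0$ is used to kill $\mathbb{Z}[0]$. The compatibility with the ring structure of the previous lemma is guaranteed by the fact that \emph{both} the multiplication on $\Lambda$ and the action on $F(E)$ are defined from the same composition operation in $\C$, so associativity of composition in $\C$ yields associativity of the module action via functoriality. I would conclude by remarking that this action is natural in the evident sense, which is what makes $F\mapsto F(E)$ a functor from reduced functors to left $\Lambda$-modules, setting up the adjunction announced at the start of the subsection.
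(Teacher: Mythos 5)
Your proof is correct and supplies exactly the routine verification that the paper omits (the lemma is stated there without proof as "straightforward"): extend $\mathbb{Z}$-linearly, use reducedness to see that the class of the zero map acts as zero so the action descends to $\Lambda = U(E)$, and obtain the module axioms from functoriality of $F$ and associativity of composition in $\C$. The only point worth polishing is the phrase "$F(0)=0$ so $F(0)(x)=0$": reducedness is the statement that $F$ kills the null \emph{object}, and one should add the one-line remark that the zero \emph{morphism} $E\to E$ factors through that null object, whence $F$ sends it to the zero endomorphism.
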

So, we can give the following definition.
\begin{defi} \label{S}
The functor 
$$\mathbb{S} : Func_* (\mathcal C, Ab) \to \Lambda \ti Mod$$
is defined by $\mathbb{S}(F)=F(E)$ for $F \in Func_* (\mathcal C, Ab)$.
\end{defi}
A left adjoint of $\mathbb{S}$ is provided in the following definition.
\begin{defi} \label{T}
The functor 
$$\mathbb{T} : \Lambda \ti Mod \to Func_* (\mathcal C, Ab)  $$
is defined by $\mathbb{T}(M)(X)=U(X) \otimes_{\Lambda}M$ for $M \in \Lambda \ti Mod$.
\end{defi}
\begin{prop} \label{ST}
The functor $\mathbb{T} $ is a left adjoint of $\mathbb{S}$.

The \textbf{unit} of this adjunction is the canonical isomorphism
$$u_M: M \overset{\cong}{\rightarrow} \Lambda \otimes_ {\Lambda} M = {\mathbb S} {\mathbb T} (M) \mathrm{\qquad for\ } M \in {\Lambda} \ti Mod.$$

The \textbf{co-unit} is
$$(u'_F)_{X} : \mathbb T \mathbb S(F)(X) = U(X)\otimes_{\Lambda} F(E) \rightarrow F(X),$$
$\mathrm{ where\quad} (u'_F)_{X} (f \otimes x) =  F(f)(x)\; for \; f \in \mathcal C(E, X), \mathrm{\ and\ } x\in F(E).$
\end{prop}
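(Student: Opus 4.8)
The plan is to establish the claimed adjunction $\mathbb{T} \dashv \mathbb{S}$ by exhibiting the unit $u_M$ and co-unit $u'_F$ explicitly and verifying the triangle identities, which for this kind of tensor-product/evaluation adjunction reduces to routine but unenlightening checks. First I would verify that $u_M$ and $u'_F$ are welldefined. For the unit, the identification $\Lambda \otimes_{\Lambda} M \cong M$ is the standard canonical isomorphism of left $\Lambda$-modules, using that $\Lambda = U(E)$ acts on itself on the left via the ring structure of Lemma (the composition-induced structure) and on $M$ on the left by hypothesis; naturality in $M$ is immediate. For the co-unit, I would check that $(u'_F)_X(f\otimes x) = F(f)(x)$ descends to the tensor product $U(X)\otimes_{\Lambda} F(E)$, i.e.\ that the assignment $(f,x)\mapsto F(f)(x)$ is $\Lambda$-balanced: for $\alpha\in\Lambda = \C(E,E)$ one needs $F(f\alpha)(x) = F(f)(\alpha\cdot x)$, and this is exactly functoriality of $F$ together with the definition $\alpha\cdot x = F(\alpha)(x)$ from Lemma \ref{F(E)}, since $F(f\alpha) = F(f)F(\alpha)$. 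I would also note $(u'_F)_X$ is a homomorphism of abelian groups and that naturality in both $X$ (as a natural transformation of functors $\C\to Ab$) and $F$ follows from functoriality.

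Next I would verify the two triangle identities. The first requires that the composite
\[
\mathbb{T}(M) \xrightarrow{\mathbb{T}(u_M)} \mathbb{T}\mathbb{S}\mathbb{T}(M) \xrightarrow{u'_{\mathbb{T}(M)}} \mathbb{T}(M)
\]
be the identity natural transformation; evaluated at $X$ and on a generator $f\otimes m$ with $f\in\C(E,X)$, $m\in M$, the map $u_M$ sends $m\mapsto 1_E\otimes m$, so one traces $f\otimes m \mapsto f\otimes(1_E\otimes m)$ and then applies the co-unit of $\mathbb{T}(M) = U(-)\otimes_\Lambda M$, which on $f\otimes \xi$ acts by $U(f)(\xi)$; since $U(f)(1_E\otimes m) = f\otimes m$ (as $U(f)$ is postcomposition with $f$ and $1_E\in U(E)=\Lambda$ maps to $f$), the composite is the identity. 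The second triangle identity requires that
\[
\mathbb{S}(F) \xrightarrow{u_{\mathbb{S}(F)}} \mathbb{S}\mathbb{T}\mathbb{S}(F) \xrightarrow{\mathbb{S}(u'_F)} \mathbb{S}(F)
\]
be the identity on the $\Lambda$-module $F(E)$; here $u_{\mathbb{S}(F)}$ sends $x\mapsto 1_E\otimes x \in U(E)\otimes_\Lambda F(E)$ and $\mathbb{S}(u'_F) = (u'_F)_E$ sends $1_E\otimes x \mapsto F(1_E)(x) = x$, giving the identity.

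I do not anticipate a genuine conceptual obstacle here; the statement is essentially the observation that $U_X(Y) = U_E(Y)$ with its bimodule structure realizes $\mathbb{T}$ as an extension-of-scalars/Yoneda-type functor and $\mathbb{S}$ as evaluation at the generator, for which the adjunction is formal. The one point demanding genuine care, rather than mechanical verification, is confirming that all the module actions are compatibly \emph{on the correct side}: $U(X)$ carries a right $\Lambda$-action (from precomposition, by the first Lemma) while $F(E)$ carries a left $\Lambda$-action (Lemma \ref{F(E)}), so the tensor $U(X)\otimes_\Lambda F(E)$ is formed over the right-module/left-module pairing and the balancing relation $F(f\alpha)(x)=F(f)(\alpha\cdot x)$ matches these sides precisely. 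Keeping this bookkeeping straight is what makes the $\Lambda$-balancedness check in the definition of $u'_F$ go through, and it is the only place where a sign or side error could creep in; everything else is the standard verification that evaluation and free-module induction are adjoint.
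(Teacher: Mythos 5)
The paper states this proposition without proof, treating it as the standard evaluation/extension-of-scalars adjunction; your verification via the unit, co-unit and the two triangle identities is correct and is precisely the routine check being omitted. The only detail you leave implicit is that $(u'_F)_X$ descends not just through the $\Lambda$-balancing but also through the quotient $U(X)=\mathbb{Z}[\C(E,X)]/\mathbb{Z}[0]$, which holds because $F$ is reduced, so the class of the zero map $E\to X$ is sent to $F(0)(x)=0$.
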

We consider $u'_F$ as a first order approximation of $F$;  if $F$ is polynomial of degree $n$ then $u'_F$ may be reduced to a morphism
$$\overline{u'_F} : T_n (\mathbb T \mathbb S (F)) \rightarrow F.$$
This turns out to be an isomorphism for $n=1$ but is not for $n >1$. So our approach to polynomial functors consists of inductively improving the approximation $\overline{u'_F}$ in order to get an isomorphism again, by taking into account higher and higher cross-effects.

\subsection{Classification of linear functors}
Let $Lin(\C, Ab)$ denote the category of linear reduced functors from $\C$ to $Ab$. In this section we show that if $\mathcal{C}$ is a pointed theory then the category $Lin(\C, Ab)$ is equivalent to the category of modules over a suitable ring.

We begin by providing a number of equivalent characterizations of linear functors.

\begin{lm} \label{lem1}  
Let $F :  \mathcal C \rightarrow Ab $ be a reduced functor. Then the following conditions are equivalent
\begin{enumerate}
\item $F$ is linear;
\item $S^F_2 = 0$ where $S^F_2: cr_2(F)  \Delta_{\C} \to F$ is defined in Definition \ref{+};
\item For $X, \, Y \in \mathcal C$ one has $$1_{F(X \vee Y)} = i^2_{1*} r^2_{1*} + i^2_{2*} r^2_{2*};$$
\item For $X, \, Y \in \mathcal C$ and $\xi \in \mathcal C (X, \, Y \vee Y)$ one has
$$F(\nabla^2 \, \xi) = F(r^2_1 \, \xi) + F(r^2_2 \, \xi).$$
\end{enumerate}
\end{lm}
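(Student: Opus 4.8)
The plan is to prove the cycle of implications $(1)\Rightarrow(2)\Rightarrow(3)\Rightarrow(4)\Rightarrow(1)$, which gives the equivalence of all four conditions most economically. Throughout I assume $F$ is reduced, so the natural decomposition of Proposition~\ref{ce-prop} applies.

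\emph{Proof sketch.} First I would establish $(1)\Rightarrow(2)$. By definition (Definition~\ref{poly}), $F$ being linear means $cr_2F=0$. Since $S^F_2\co (cr_2F)\Delta_{\C}\to F$ is the composite of the inclusion $cr_2F(X,X)\hookrightarrow F(X\vee X)$ with $F(\nabla^2)$, its source is the zero functor, so $S^F_2=0$ trivially. Next, for $(2)\Rightarrow(3)$, I would use the split short exact sequence~\eqref{ses2} for the case $X=X$, $Y=Y$, which yields the identity~\eqref{ses1}:
$$ 1_{F(X\vee Y)} = \iota^2_{(1,2)}\circ\rho^2_{(1,2)} + i^2_{1*}\circ r^2_{1*} + i^2_{2*}\circ r^2_{2*}. $$
Here $\iota^2_{(1,2)}$ is the inclusion of $cr_2F(X\vee Y)=F(X\mid Y)$ into $F(X\vee Y)$ and $\rho^2_{(1,2)}$ its retraction. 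Condition~$(2)$ says $S^F_2=0$; but $S^F_2$ factors the inclusion $\iota^2_{(1,2)}$ (up to the folding map), and more directly, $cr_2F=0$ is equivalent to $\iota^2_{(1,2)}\circ\rho^2_{(1,2)}=0$ on $F(X\vee Y)$. I should be careful here: condition~$(2)$ only asserts vanishing of $S^F_2$ on the diagonal $cr_2F(X,X)$, not vanishing of $cr_2F$ on all pairs $(X,Y)$. To bridge this I would invoke that $cr_2F$ is a bireduced bifunctor and use the polarization trick---substituting $X\vee Y$ into the diagonal statement and extracting the off-diagonal component via the retractions $r^2_1,r^2_2$---to deduce $cr_2F(X,Y)=0$ for all $X,Y$, hence $\iota^2_{(1,2)}\circ\rho^2_{(1,2)}=0$ and thus~$(3)$.

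For $(3)\Rightarrow(4)$, I would simply precompose the identity in $(3)$ (written for the object $Y\vee Y$ in the second slot, with the map $\xi\co X\to Y\vee Y$) with $F(\nabla^2)$ and evaluate on $F(\xi)$; applying $\nabla^2 i^2_1 = \nabla^2 i^2_2 = 1_Y$ collapses the two injection-retraction terms to $F(r^2_1\xi)+F(r^2_2\xi)$, giving $F(\nabla^2\xi)=F(r^2_1\xi)+F(r^2_2\xi)$. Finally $(4)\Rightarrow(1)$: condition~$(4)$ is precisely the statement that $S^F_2$ vanishes after evaluation on elements pulled back from arbitrary $\xi$, which forces $cr_2F(X,Y)=0$ by the kernel description of the cross-effect (Definition of $cr_2$ together with the decomposition~\eqref{cr2asquot}); hence $F$ is linear.

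\emph{Main obstacle.} The delicate point is the passage $(2)\Rightarrow(3)$, because $(2)$ is a statement only about the \emph{diagonal} values $cr_2F(X,X)$ whereas $(3)$ concerns arbitrary pairs. The resolution is the standard polarization argument, exploiting that $cr_2F$ is a bifunctor and recovering its value at $(X,Y)$ as a retract of its value at $(X\vee Y, X\vee Y)$ via the naturality of the injections and retractions. I expect this to be where the bireducedness and symmetry of the cross-effect (Proposition~\ref{crissymbif}) are genuinely used; the remaining implications are formal manipulations of the split exact sequence~\eqref{ses2} and the folding map.
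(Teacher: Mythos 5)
Your proposal is correct, but it takes a genuinely different route from the paper on the one non-trivial point. The paper does not prove a cycle; it proves the three equivalences $(1)\Leftrightarrow(2)$, $(1)\Leftrightarrow(3)$, $(1)\Leftrightarrow(4)$ separately, and it disposes of the delicate direction $(2)\Rightarrow(1)$ by invoking the Taylorization machinery: if $S^F_2=0$ then $F\cong T_1F$ by Proposition \ref{Tn-prop}, and $T_1F$ is by construction polynomial of degree $\le 1$, so $cr_2F=0$ at once. (Likewise $(1)\Leftrightarrow(4)$ is read off from the explicit description of $T_1F$ in Proposition \ref{explicit-lin}, and $(1)\Leftrightarrow(3)$ from the split exact sequence (\ref{ses2}).) You instead handle the diagonal-to-off-diagonal passage by hand, embedding $cr_2F(X,Y)$ into $cr_2F(X\vee Y,X\vee Y)$ via the map $g\co X\vee Y\to (X\vee Y)\vee(X\vee Y)$ with $g|_X=j_1i_1$, $g|_Y=j_2i_2$, and using $\nabla^2 g=1_{X\vee Y}$ to conclude $1_{cr_2F(X,Y)}=S^F_2\circ F(g)=0$. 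This polarization argument is entirely elementary (it needs only bifunctoriality of $cr_2F$ and the kernel description of the cross-effect -- the symmetry of Proposition \ref{crissymbif} you mention is not actually required), whereas the paper's argument is shorter but leans on the unproved-in-detail assertion that $T_1F$ lands in $Func(\C,Ab)_{\le 1}$. Two small points to tighten: your $(4)\Rightarrow(1)$ as stated only yields $S^F_2=0$ (take $\xi=1$) and so silently reuses the same polarization step; it is cleaner to note that the specific choice $\xi=g$ above turns $(4)$ directly into $(3)$, whence $cr_2F(X,Y)=0$ by (\ref{ses2}). And in $(3)\Rightarrow(4)$ your computation is exactly the paper's implicit one: apply $F(\nabla^2)$ to $1=i^2_{1*}r^2_{1*}+i^2_{2*}r^2_{2*}$ and precompose with $F(\xi)$.
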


\begin{proof}
By definition, $F$ is linear if $cr_2(F)=0$. Since $S^F_2: cr_2(F) \Delta_{\C} \to F$ we have $S^F_2=0$. Conversely, if $S^F_2=0$, by Proposition \ref{Tn-prop}, $F \simeq T_1F$ so $F$ is linear and we proved that $(1) \Leftrightarrow (2)$.

By the short exact sequence (\ref{ses2}) we have $(1)  \Leftrightarrow (3)$.

By Proposition \ref{Tn-coker}, $F$ is linear if and only if $F \simeq T_1F$, so by Proposition \ref{explicit-lin} this is equivalent to, $\forall y \in F(Y \vee Y)$:
$F(\nabla^2)(y)=F(r_{1}^2)(y)+F(r_{2}^2)(y).$
Applying the last equality to $y=F(\xi)(x)\in F(Y \vee Y)$ where $\xi \in \C(X, Y \vee Y)$ and $x \in F(X)$, we obtain $(1)  \Leftrightarrow (4)$.
\end{proof}

\begin{prop} \label{Lambda-bar}
The abelian group $\overline{\Lambda}:=(T_1U)(E)$ is a ring and $T_1U(X)$ has a right $\overline{\Lambda}$-module structure such that $t_1$ is $\Lambda$-equivariant (i.e. for $\lambda \in \Lambda$ and $x \in U(X)$, $t_1(x \lambda)=t_1(x) t_1(\lambda)$.)
\end{prop}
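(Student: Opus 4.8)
The plan is to build all of the required structure from a single source --- the natural endomorphisms of $U$ given by right multiplication --- and then transport it through the functor $T_1$. For $\alpha\in\C(E,E)$ let $R_\alpha\colon U\to U$ be the natural transformation whose component at $X$ sends the class of $f\in\C(E,X)$ to the class of $f\alpha$, extended $\mathbb Z$-linearly in $\alpha$ to every $\lambda\in\Lambda=U(E)$. One checks immediately that $R_\lambda$ is natural (post- and pre-composition commute), that $\lambda\mapsto R_\lambda$ is additive with $R_{1_E}=\mathrm{id}_U$, and that $R_\mu R_\lambda=R_{\lambda\mu}$; thus $R$ recovers the right $\Lambda$-module structure on each $U(X)$. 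Being a left adjoint between additive categories (Proposition \ref{Tn-prop}), $T_1$ is additive, so applying it yields natural transformations $T_1R_\lambda\colon T_1U\to T_1U$ satisfying the same relations. Setting $\bar x\cdot\lambda:=(T_1R_\lambda)_X(\bar x)$ therefore makes each $T_1U(X)$ a right $\Lambda$-module, and naturality of the unit $t_1$ applied to the morphism $R_\lambda$ gives at once $t_1(x\lambda)=t_1(x)\cdot\lambda$.

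The crux is then to show that this $\Lambda$-action descends along $t_1\colon\Lambda\to\overline{\Lambda}=T_1U(E)$, i.e.\ that $(T_1R_\lambda)_X=0$ for every $\lambda\in\ker(t_1\colon U(E)\to\overline{\Lambda})$ and every $X$. Since $t_1$ is an epimorphism and $T_1R_\lambda\circ t_1=t_1\circ R_\lambda$, it suffices to prove $R_\lambda\big(U(X)\big)\subseteq\ker\big(t_1\colon U(X)\to T_1U(X)\big)$. By additivity of $R_{(-)}$ and of $T_1$, and using the explicit description of $\ker t_1$ in Proposition \ref{explicit-lin}, it is enough to treat a generator $\lambda=\nabla^2\xi-r^2_1\xi-r^2_2\xi$ with $\xi\in\C(E,E\vee E)$. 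For $f\in\C(E,X)$ the element $w=(f\vee f)\xi\in\C(E,X\vee X)$ then satisfies $(\nabla^2_*-r^2_{1*}-r^2_{2*})(w)=R_\lambda(f)$, because $\nabla^2(f\vee f)=f\nabla^2$ and $r^2_k(f\vee f)=f\,r^2_k$; hence $R_\lambda(f)$ lies in $\ker t_1$ by Proposition \ref{explicit-lin}, as required. I expect this explicit choice of $w$ to be the only genuine obstacle; everything else is formal.

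Finally I would assemble the conclusion. The descent shows $\lambda\nu\in\ker t_1$ whenever $\nu\in\ker(t_1)_E$, so $\ker(t_1)_E$ is a left ideal, while $\nu\lambda\in\ker t_1$ follows directly from naturality of $t_1$ (as $t_1(\nu\lambda)=(T_1R_\lambda)_E\,t_1(\nu)=0$); hence $\ker(t_1)_E$ is a two-sided ideal and $\overline{\Lambda}=\Lambda/\ker(t_1)_E$ inherits the quotient ring structure, with $\overline{\mu}\,\overline{\lambda}=\overline{\mu\lambda}$ and $t_1$ a surjective ring homomorphism. The same descent shows the right $\Lambda$-action on $T_1U(X)$ factors through $\overline{\Lambda}$, giving the desired right $\overline{\Lambda}$-module structure; associativity and unitality are inherited from $R_\mu R_\lambda=R_{\lambda\mu}$, $R_{1_E}=\mathrm{id}$ and the functoriality of $T_1$. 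Reinterpreting the identity of the first paragraph over $\overline{\Lambda}$ gives $t_1(x\lambda)=t_1(x)\,t_1(\lambda)$, completing the proof.
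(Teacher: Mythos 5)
Your proof is correct and follows essentially the same route as the paper: the right-ideal property, the $\Lambda$-action on $T_1U(X)$ and the equivariance of $t_1$ all come from naturality of $t_1$ with respect to the precomposition transformations $R_\lambda=\alpha^*$, while your descent step --- the explicit witness $w=(f\vee f)\xi$ together with Proposition \ref{explicit-lin} --- is exactly the paper's appeal to naturality of $S^U_2$ (the relation $(S^U_2)\,cr_2U(f,f)=U(f)(S^U_2)_E$) unwound into the identities $\nabla^2(f\vee f)=f\nabla^2$ and $r^2_k(f\vee f)=f r^2_k$. No gaps.
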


\begin{proof}
For $f \in \C(E,E)$  the relation $(S^U_2)_E cr_2U(f,f)=U(f)(S^U_2)_E$ shows that $Im((S^U_2)_E)$ is a  left ideal of $\Lambda$,  and we deduce from  the  following commutative diagram that  $Ker(t_1)=Im((S^U_2)_E)$ is a  right ideal, too:
$$\xymatrix{
U(E) \ar[r]^{\alpha^*} \ar[d]_{t_1}& U(E)  \ar[d]^{t_1}\\
T_1U(E) \ar[r]_{T_1(\alpha^*)} & T_1U(E)
}$$
where $\alpha:E \to E$. Consequently $\overline{\Lambda}$ is a ring.

For $X \in \C$, $T_1U(X)$ is a right  $\overline{\Lambda}$-module via
$$t_1(f).t_1(\alpha)=t_1(f \alpha)$$
for $f \in \C(E,X)$ and $\alpha \in \C(E,E)$; this is welldefined again by naturality of $S^U_2$.
\end{proof}

\begin{lm} \label{F(E)-lin}
For $F : \mathcal C \rightarrow Ab $ a linear reduced functor, $F(E)$ is a left $\overline{\Lambda}$-module via 
$$\overline{\alpha} . x :=  F(\alpha)(x)$$
 for $\alpha \in \mathcal C(E, E)$ and $x \in F(E)$. 
\end{lm}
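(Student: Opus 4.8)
The plan is to reduce the claim to the fact, already established in Lemma \ref{F(E)}, that $F(E)$ carries a left $\Lambda$-module structure, and then to check that this action descends along the quotient ring map $t_1\colon \Lambda \twoheadrightarrow \overline{\Lambda}$. Concretely, the formula $\overline{\alpha}.x := F(\alpha)(x)$ is a priori only a candidate: I must verify that it does not depend on the choice of representative $\alpha \in \C(E,E)$ of the class $\overline{\alpha} = t_1(\alpha) \in \overline{\Lambda}$. Once well-definedness is secured, the module axioms (additivity in each variable, compatibility of the $\overline{\Lambda}$-multiplication) are inherited immediately from the corresponding axioms for the $\Lambda$-action of Lemma \ref{F(E)}, since $t_1$ is a ring homomorphism by Proposition \ref{Lambda-bar}.

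First I would recall from Proposition \ref{Lambda-bar} that $\overline{\Lambda} = (T_1U)(E) = \Lambda/\mathrm{Ker}(t_1)$ with $\mathrm{Ker}(t_1) = \mathrm{Im}((S^U_2)_E)$. Thus $\overline\alpha = \overline\beta$ in $\overline{\Lambda}$ exactly when $\alpha - \beta \in \mathrm{Im}((S^U_2)_E)$, i.e.\ $\alpha - \beta = (S^U_2)_E(\xi)$ for some $\xi \in cr_2U(E,E)$. So the crux is to show $F(\alpha)(x) = F(\beta)(x)$ for all $x \in F(E)$ whenever $\alpha - \beta$ lies in this image; equivalently, that $F\bigl((S^U_2)_E(\xi)\bigr)$ acts as zero on $F(E)$. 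I expect this to be the main obstacle, though a mild one: it is precisely the point where the hypothesis that $F$ is \emph{linear} is used.

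To carry this out I would exploit the naturality already invoked in Proposition \ref{Lambda-bar}. The element $(S^U_2)_E(\xi) \in U(E) = \Lambda$ acts on $x \in F(E)$ via $F$, and by naturality of $S^U_2$ one has, for the functor $F$ in place of $U$, a commuting relation identifying the action of $(S^U_2)_E(\xi)$ on $F(E)$ with the composite $(S^F_2)_E$ applied to a suitable element of $cr_2F(E,E)$. Since $F$ is linear, Lemma \ref{lem1} gives $S^F_2 = 0$, so this composite vanishes and $F\bigl((S^U_2)_E(\xi)\bigr)(x) = 0$ for all $x$. This yields the required independence of representative, so the map $\overline{\Lambda} \otimes F(E) \to F(E)$ is well defined.

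Finally, the remaining verifications are routine and I would only indicate them. Associativity $\overline{\alpha}\,.(\overline{\beta}\,.x) = (\overline{\alpha}\,\overline{\beta}).x$ follows from $F(\alpha)F(\beta) = F(\alpha\beta)$ together with $t_1(\alpha)t_1(\beta) = t_1(\alpha\beta)$, the latter being the ring structure of $\overline{\Lambda}$ from Proposition \ref{Lambda-bar}. The unit $\overline{1_E}$ acts as the identity because $F(1_E) = 1_{F(E)}$, and additivity in $x$ holds because each $F(\alpha)$ is a group homomorphism; additivity in $\overline{\alpha}$ holds because $F$ is additive on the abelian group $U(E)=\Lambda$ in the sense that $F(\alpha + \beta)(x)$ matches the sum, which is again inherited from the $\Lambda$-structure of Lemma \ref{F(E)}. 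Thus $F(E)$ is a left $\overline{\Lambda}$-module, compatibly with its underlying $\Lambda$-module structure via the surjection $t_1$.
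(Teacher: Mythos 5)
Your proposal is correct and follows essentially the same route as the paper: both identify $\overline{\Lambda}$ as the quotient of $\Lambda$ by the image of $S^U_2$ (equivalently, by the elements $\nabla^2\xi - r_1^2\xi - r_2^2\xi$ of Proposition \ref{explicit-lin}) and then use the linearity of $F$ via Lemma \ref{lem1} to show these elements act trivially on $F(E)$. The only cosmetic difference is that the paper computes the action of $\nabla^2\xi - r_1^2\xi - r_2^2\xi$ directly and invokes Lemma \ref{lem1}(4), whereas you route the vanishing through $S^F_2=0$ (Lemma \ref{lem1}(2)); these are equivalent characterizations appearing in the same lemma.
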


\begin{proof}
By Proposition \ref{explicit-lin} we have
$$\overline{\Lambda}=\Lambda/ \{ \nabla^2\xi-r_1^2\xi-r^2_2\xi\ |\  \xi \in \C (E, E \vee E) \} .$$
But
$$( \nabla^2\xi-r_1^2\xi-r^2_2\xi).x=\nabla^2\xi.x-r_1^2\xi.x-r^2_2\xi.x=F(\nabla^2\xi)(x)-F(r_1^2\xi)(x)-F(r^2_2\xi)(x)$$
$$=(F(\nabla^2\xi)-F(r_1^2\xi)-F(r^2_2\xi))(x)=0$$
by Lemma \ref{lem1} (4).

\end{proof}

This leads to the following definition.
\begin{defi} \label{S1}
The functor 
$$\mathbb{S}_1 : Lin(\mathcal C, Ab) \to \overline{\Lambda}-Mod$$
is defined by $\mathbb{S}_1(F)=F(E)$ for $F \in Lin(\mathcal C, Ab)$.
\end{defi}

\begin{defi} \label{T1}
The functor 
$$\mathbb{T}_1 : \overline{\Lambda}-Mod \to Lin(\mathcal C, Ab)  $$
is defined by $\mathbb{T}_1(M)(X)=T_1U(X) \otimes_{\overline{\Lambda}}M$ for $M \in \overline{\Lambda}-Mod$.
\end{defi}
The following proposition connects the functors $\mathbb{S}_1$ and $\mathbb{T}_1$.
\begin{prop} \label{S1T1}
The functor $\mathbb{T}_1 $ is the left adjoint of $\mathbb{S}_1$.

The \textbf{unit} of this adjunction is the canonical isomorphism
$$u_M: M \overset{\cong}{\rightarrow} \overline{\Lambda} \otimes_ {\overline{\Lambda}} M = {\mathbb S}_1 {\mathbb T}_1 (M) \mathrm{\qquad for\ } M \in {\overline{\Lambda}}-Mod.$$

The \textbf{co-unit} is
$$(u'_F)_{X} : \mathbb{T}_1\mathbb{S}_1(F)(X) = T_1U(X)\otimes_{\overline{\Lambda}} F(E) \rightarrow F(X),$$
$\mathrm{ where\quad} (u'_F)_{X} (t_1(f) \otimes x) =  F(f)(x)\; for \; f \in \mathcal C(E, X), \mathrm{\ and\ } x\in F(E).$
\end{prop}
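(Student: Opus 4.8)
The plan is to deduce the adjunction $\mathbb{T}_1\dashv\mathbb{S}_1$ from the two adjunctions already at our disposal, namely $\mathbb{T}\dashv\mathbb{S}$ (Proposition \ref{ST}) and $T_1\dashv U_1$ (Proposition \ref{Tn-prop} for $n=1$), rather than to verify the triangle identities from scratch. Throughout, any $M\in\overline{\Lambda}\ti Mod$ is also viewed as a $\Lambda$-module by restriction of scalars along the ring epimorphism $t_1\co\Lambda\twoheadrightarrow\overline{\Lambda}$; since $\overline{\Lambda}$ is a quotient ring of $\Lambda$ (Proposition \ref{Lambda-bar}), a $\Lambda$-linear map between two $\overline{\Lambda}$-modules is automatically $\overline{\Lambda}$-linear, so $\overline{\Lambda}\ti Mod$ sits as a full subcategory of $\Lambda\ti Mod$ and $\Hom_{\Lambda}(M,N)=\Hom_{\overline{\Lambda}}(M,N)$ for such $M,N$.

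The key step is a natural isomorphism $T_1(\mathbb{T}(M))\cong\mathbb{T}_1(M)$ for $M\in\overline{\Lambda}\ti Mod$. To obtain it, apply the explicit description of the linearization (Proposition \ref{explicit-lin}) to the functor $G=\mathbb{T}(M)=U(-)\otimes_{\Lambda}M$: there $T_1G(X)=\mathrm{coker}(S^G_2\circ\rho^2_{(1,2)})$, and the proof of that Proposition expresses $S^G_2\circ\rho^2_{(1,2)}$ as $G(\nabla^2)(\mathrm{Id}-i^2_{1*}r^2_{1*}-i^2_{2*}r^2_{2*})$, a combination of $G$ applied to morphisms. Since $G=U(-)\otimes_{\Lambda}M$ and $-\otimes_{\Lambda}M$ is additive, this equals $(S^U_2\circ\rho^2_{(1,2)})\otimes_{\Lambda}\mathrm{id}_M$; by right-exactness of the tensor product its cokernel is $T_1U(X)\otimes_{\Lambda}M$. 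Finally, the right $\Lambda$-action on $T_1U(X)$ factors through $\overline{\Lambda}$ (Proposition \ref{Lambda-bar}) and $M$ is an $\overline{\Lambda}$-module, so $T_1U(X)\otimes_{\Lambda}M=T_1U(X)\otimes_{\overline{\Lambda}}M=\mathbb{T}_1(M)(X)$. Note that $\mathbb{T}_1(M)$ indeed lands in $Lin(\C,Ab)$: since $cr_2$ commutes with $-\otimes_{\overline{\Lambda}}M$ by the natural direct-sum decomposition of Proposition \ref{ce-prop}, one has $cr_2(\mathbb{T}_1M)=cr_2(T_1U)\otimes_{\overline{\Lambda}}M=0$.

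With this isomorphism in hand, the required bijection is the composite, natural in $F\in Lin(\C,Ab)$ and $M\in\overline{\Lambda}\ti Mod$:
$$\Hom_{Lin}(\mathbb{T}_1M,F)\cong\Hom_{Lin}(T_1\mathbb{T}(M),F)\cong\Hom_{Func_*}(\mathbb{T}(M),U_1F)\cong\Hom_{\Lambda}(M,F(E))=\Hom_{\overline{\Lambda}}(M,\mathbb{S}_1F),$$
where the second isomorphism is the adjunction $T_1\dashv U_1$, the third is $\mathbb{T}\dashv\mathbb{S}$, and the last equality uses the full-subcategory remark above together with $\mathbb{S}(U_1F)=F(E)$ and the fact that this $\Lambda$-action factors through $\overline{\Lambda}$ for linear $F$ (Lemma \ref{F(E)-lin}). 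This proves $\mathbb{T}_1\dashv\mathbb{S}_1$. Tracing $\mathrm{id}_M$ and $\mathrm{id}_{\mathbb{S}_1F}$ through the chain then identifies the unit with the canonical isomorphism $u_M\co M\xrightarrow{\cong}\overline{\Lambda}\otimes_{\overline{\Lambda}}M=\mathbb{S}_1\mathbb{T}_1(M)$ and the co-unit with $(u'_F)_X(t_1(f)\otimes x)=F(f)(x)$, as stated.

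I expect the main obstacle to be the key isomorphism $T_1(\mathbb{T}(M))\cong\mathbb{T}_1(M)$, i.e.\ checking that linearization commutes with tensoring by $M$: one must be careful that although $-\otimes_{\Lambda}M$ is only right-exact, the linearization is genuinely a cokernel (so right-exactness suffices) and that the change of rings $\otimes_{\Lambda}=\otimes_{\overline{\Lambda}}$ is legitimate. Alternatively, one may bypass the reduction and verify directly that the stated $u_M$ and $u'_F$ satisfy the triangle identities; there the delicate point is well-definedness of the co-unit, namely that $(f,x)\mapsto F(f)(x)$ descends along both the $\overline{\Lambda}$-balancing (using the definition of the action on $F(E)$) and the linearization quotient $t_1$ (using the linearity of $F$ in the form of Lemma \ref{lem1}(4)).
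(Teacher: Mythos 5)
Your proposal is correct. The paper in fact states Proposition \ref{S1T1} without any proof (as it does for Proposition \ref{ST}), treating it as a routine analogue, so there is no ``paper proof'' to match against; what you supply is a legitimate and arguably the cleanest derivation. Your route --- composing the adjunctions $\mathbb{T}\dashv\mathbb{S}$ and $T_1\dashv U_1$, with the key isomorphism $T_1(\mathbb{T}(M))\cong\mathbb{T}_1(M)$ obtained from Proposition \ref{explicit-lin} plus right-exactness of $-\otimes_\Lambda M$ and the change of rings $\otimes_\Lambda=\otimes_{\overline{\Lambda}}$ --- is sound at every step, and it has the virtue that the factorization of $u'_F$ through $t_1$ comes for free from the universal property of $T_1$ rather than having to be checked by hand. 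The alternative the authors presumably had in mind is the direct verification of the triangle identities in parallel with Proposition \ref{ST}; there the only real content is exactly the two well-definedness points you isolate at the end (descent of $(f,x)\mapsto F(f)(x)$ along $t_1$ via Lemma \ref{lem1}(4), and along the $\overline{\Lambda}$-balancing via Lemma \ref{F(E)-lin}), so your argument loses nothing and makes the logical dependencies more transparent. One small point worth recording explicitly if you write this up: the identification $T_1U(X)\otimes_\Lambda M=T_1U(X)\otimes_{\overline{\Lambda}}M$ uses that $\mathrm{Ker}(t_1\colon\Lambda\to\overline{\Lambda})$ acts trivially on both sides, i.e.\ Proposition \ref{Lambda-bar} for the right action and the $\overline{\Lambda}$-module hypothesis on $M$ for the left --- which you do invoke, so no gap.
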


A classification of linear functors is now given as follows:

\begin{thm} \label{lin}
If $\C=\langle E \rangle_{\C}$ then the functors ${\mathbb S}_1$ and  ${\mathbb T}_1$ form a pair of adjoint equivalences. 
\end{thm}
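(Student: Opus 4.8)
The plan is to show that the adjunction $(\mathbb{T}_1, \mathbb{S}_1)$ from Proposition \ref{S1T1} becomes an equivalence when $\C = \langle E \rangle_{\C}$, which amounts to proving that both the unit and the co-unit are natural isomorphisms. The unit $u_M \co M \xrightarrow{\cong} \overline{\Lambda} \otimes_{\overline{\Lambda}} M$ is already asserted to be the canonical isomorphism in Proposition \ref{S1T1}, so that half is free. Thus the entire content is to prove that the co-unit $u'_F \co \mathbb{T}_1\mathbb{S}_1(F) \to F$ is an isomorphism for every linear reduced functor $F \co \C \to Ab$.

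To prove the co-unit is an isomorphism, I would invoke the characterization of isomorphisms between polynomial functors in Proposition \ref{car-poly}. Both source and target are reduced and linear (the source $\mathbb{T}_1\mathbb{S}_1(F)(X) = T_1U(X) \otimes_{\overline{\Lambda}} F(E)$ lies in $Lin(\C, Ab)$ by construction of $\mathbb{T}_1$, and $F$ is linear by hypothesis), so both are polynomial of degree $\le 1$. By Proposition \ref{car-poly}, applied with $n = 1$, the natural transformation $u'_F$ is an isomorphism if and only if its component $(u'_F)_E$ at the single object $E$ is an isomorphism. So everything reduces to checking the statement on the generating object $E$.

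The evaluation at $E$ is exactly where the module theory pays off: $(u'_F)_E \co T_1U(E) \otimes_{\overline{\Lambda}} F(E) \to F(E)$. By definition $T_1U(E) = \overline{\Lambda}$, so the source is $\overline{\Lambda} \otimes_{\overline{\Lambda}} F(E)$, which is canonically $F(E)$ via the unit isomorphism, and one checks directly from the formula $(u'_F)_E(t_1(f) \otimes x) = F(f)(x)$ that under this identification $(u'_F)_E$ is precisely the identity (or at least a visible isomorphism): for $f \in \C(E,E) = \Lambda$ one has $t_1(f) \otimes x \mapsto F(f)(x) = \overline{f} \cdot x$, which is just the left $\overline{\Lambda}$-module structure on $F(E)$ collapsed by the tensor product. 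This is a short verification that $(u'_F)_E$ agrees with the canonical isomorphism $\overline{\Lambda} \otimes_{\overline{\Lambda}} F(E) \cong F(E)$, using that $F(E)$ is a well-defined $\overline{\Lambda}$-module by Lemma \ref{F(E)-lin}.

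The main obstacle, and the only genuinely non-formal point, is confirming that the reduction in Proposition \ref{car-poly} is legitimately applicable, i.e.\ that $\mathbb{T}_1\mathbb{S}_1(F)$ really is linear; this rests on $T_1U$ being linear and on the tensor product over $\overline{\Lambda}$ preserving linearity, which follows from right-exactness of $- \otimes_{\overline{\Lambda}} M$ together with the fact that $cr_2$ is exact (Proposition \ref{exact}) and commutes with the tensor construction on the $E$-variable. Once linearity of both functors is in hand, the hypothesis $\C = \langle E \rangle_{\C}$ feeds directly into Proposition \ref{car-poly} and the argument closes. I would therefore organize the proof as: first record that both functors are linear, then apply Proposition \ref{car-poly} to reduce to $E$, then perform the one-line identification of $(u'_F)_E$ with the canonical isomorphism, and finally note the unit is an isomorphism by Proposition \ref{S1T1}, concluding that $\mathbb{S}_1$ and $\mathbb{T}_1$ form an adjoint equivalence.
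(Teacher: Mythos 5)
Your proposal is correct and follows essentially the same route as the paper's proof: reduce to the co-unit (the unit being the canonical isomorphism from Proposition \ref{S1T1}), note both source and target of $u'_F$ are linear, invoke Proposition \ref{car-poly} to check only at $E$, and identify $(u'_F)_E$ with the canonical isomorphism $\overline{\Lambda}\otimes_{\overline{\Lambda}}F(E)\cong F(E)$. The extra care you take in justifying linearity of $\mathbb{T}_1\mathbb{S}_1(F)$ is a point the paper leaves implicit, but it does not change the argument.
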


\begin{proof}
It remains to show that the counit is an isomorphism. For $F \in Lin(\mathcal{C}, Ab) $ the source and target functors of $u'_F$ are linear, so it suffices by Proposition \ref{car-poly} to prove that $(u'_F)_E: \overline{\Lambda} \otimes_{\overline{\Lambda} } F(E) \to F(E)$ is an isomorphism. We have $(u'_F)_E(t_1(f) \otimes x)=F(f)(x)=t_1(f).x$ so $(u'_F)_E$ coincides with the canonical isomorphism.
\end{proof}

\subsection{Classification of polynomial functors of arbitrary degree}

Theorem \ref{lin} can be generalized to polynomial functors of arbitrary degree; this can be deduced from a more general result of Johnson and McCarthy on polynomial functors with values in categories of chain complexes \cite{JMCahiers1, JMCahiers2}, by identifying the category $Ab$ with the category of chain complexes concentrated in a given degree. 

\begin{thm} \label{JMcCthm}
Suppose that $\C=\langle E \rangle_{\C}$. Then the group $T_nU_{E^{\vee n}}(E^{\vee n})$ has a ring structure induced by composition in $\C$, and there is an equivalence of categories
$$ {\mathbb S}_n : Func(\C,Ab)_{\le n} \to T_nU_{E^{\vee n}}(E^{\vee n})-Mod$$
given by ${\mathbb S}_n(F)=F(E)$.
\end{thm}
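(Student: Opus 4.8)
The plan is to reduce Theorem \ref{JMcCthm} to the linear case, Theorem \ref{lin}, by exhibiting the category $Func(\C,Ab)_{\le n}$ of polynomial functors of degree $\le n$ as a category of \emph{linear} functors on a suitably enlarged theory. The key observation is that $U_{E^{\vee n}}$ plays, for degree-$n$ functors, the role that $U_E$ played for linear functors: a degree-$n$ functor $F$ is determined by its value on $E^{\vee n}$, and the relevant ring of operations is obtained by applying $T_n$ to the endomorphisms of this universal object. So first I would set up the theory $\T_n := \langle E^{\vee n}\rangle_{\C}$, or rather work with the quotient ringoid $P_n\C$ introduced in the second Remark of \S\ref{sec-quad-maps}, whose morphisms are $P_n\C(X,Y) = T_nU_X(Y)$; the ring $R := T_nU_{E^{\vee n}}(E^{\vee n})$ is exactly $\End_{P_n\C}(E^{\vee n})$, and composition in $\C$ descends to it because, by the cited result of Johnson--McCarthy, $t_n\co \ol{\Z}[\C]\to P_n\C$ is a quotient functor of ringoids. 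This gives the ring structure on $R$ and settles the first assertion.

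Next I would define the candidate inverse $\mathbb{T}_n$ to $\mathbb{S}_n$ in analogy with Definition \ref{T1}, by $\mathbb{T}_n(M)(X) = T_nU_X(E^{\vee n})\otimes_R M$, i.e. using the $P_n\C$-representable functor at $E^{\vee n}$ as the ``free'' degree-$n$ functor. The heart of the matter is that, after passing to $P_n\C$, a degree-$n$ functor $F\co \C\to Ab$ is the same thing as an \emph{additive} (equivalently, linear, since $P_n\C$ is preadditive with the appropriate cross-effect structure) functor $\bar F\co P_n\C\to Ab$: this is precisely the content of the Johnson--McCarthy factorization recalled in the Remark, namely that $F$ is polynomial of degree $\le n$ iff its additive extension $\bar F$ to $\ol{\Z}[\C]$ factors through $t_n\co \ol{\Z}[\C]\to P_n\C$. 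Once this identification $Func(\C,Ab)_{\le n}\simeq \mathrm{Add}(P_n\C,Ab)$ is in place, I would invoke the (preadditive, ringoid-theoretic) Yoneda/Morita argument that the category of additive functors from a preadditive category with a single generator onto $Ab$ is equivalent to modules over the endomorphism ring of that generator. Here $\C=\langle E\rangle_{\C}$ guarantees that $E^{\vee n}$ is a generator of $P_n\C$ in the relevant degree-$n$ sense, by Proposition \ref{car-poly}: a natural transformation of degree-$n$ functors that is an isomorphism on $E^{\vee n}$ is an isomorphism.

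Concretely, I would check that $\mathbb{S}_n$ and $\mathbb{T}_n$ form an adjoint pair whose unit is the canonical isomorphism $M\cong R\otimes_R M$ and whose counit $(u'_F)_X\co T_nU_X(E^{\vee n})\otimes_R F(E^{\vee n})\to F(X)$, $t_n(f)\otimes x\mapsto F(f)(x)$, is to be shown an isomorphism — exactly as in the proof of Theorem \ref{lin}, but with $E$ replaced by $E^{\vee n}$ and $\ol\Lambda$ by $R$. Since both source and target of the counit are polynomial of degree $\le n$ (the source because $T_nU_X(E^{\vee n})$ is, being a quotient of the degree-$n$ part of a representable), Proposition \ref{car-poly} reduces the verification to showing $(u'_F)_{E^{\vee n}}$ is an isomorphism, where it coincides with the canonical isomorphism $R\otimes_R F(E^{\vee n})\xrightarrow{\cong} F(E^{\vee n})$, just as in the linear case.

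The main obstacle I anticipate is \emph{not} the formal adjunction but justifying the identification $Func(\C,Ab)_{\le n}\simeq \mathrm{Add}(P_n\C,Ab)$ cleanly from what is available. The delicate point is that $F$ being degree-$n$ as a functor (defined via vanishing of $cr_{n+1}$) must be matched with $\bar F$ factoring through $P_n\C$ (defined via annihilation of the augmentation-ideal power, i.e. Passi-polynomiality of all maps $F\co\C(X,Y)\to Ab(FX,FY)$); reconciling the two notions of ``degree $\le n$'' is the real content, and it is precisely here that the paper defers to the Johnson--McCarthy results \cite{JMCahiers1, JMCahiers2}. I would therefore state this equivalence of the two degree conditions as the input cited from loc.\ cit., and devote the written proof mainly to transporting the linear-case argument across this identification, emphasizing that the generator condition $\C=\langle E\rangle_{\C}$ feeds into Proposition \ref{car-poly} to make $E^{\vee n}$ a projective generator of the requisite module category, whence the Gabriel--Popescu-type conclusion that $\mathbb{S}_n$ is an equivalence.
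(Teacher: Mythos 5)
The paper does not actually prove Theorem \ref{JMcCthm}: it is stated as a consequence of Johnson--McCarthy \cite{JMCahiers1, JMCahiers2}, with a direct proof deferred to \cite{Manfred-multicalculus}. So there is no internal proof to compare against; judged on its own terms, your outline is sound and is the natural one. It splices together the two ingredients the paper itself provides: the explicit adjunction of the linear case (Theorem \ref{lin}), transported along the identification of degree-$\le n$ functors with additive functors on $P_n\C$, and the projective-generator/Gabriel--Popescu route that the paper carries out for $n=2$ (Theorem \ref{thm-Pira}, Theorem \ref{Freyd}, Proposition \ref{Yoneda-poly}). You also correctly isolate the one genuinely nontrivial input --- that $cr_{n+1}F=0$ is equivalent to the additive extension $\bar F\co\overline{\Z}[\C]\to Ab$ factoring through $P_n\C$ --- and defer it to Johnson--McCarthy, which is exactly what the paper does. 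Note that the well-definedness of the left $R$-action on $F(E^{\vee n})$ (the degree-$n$ analogue of Lemma \ref{F(E)-lin}) is part of that cited input, not a formal consequence of the adjunction, so it should be bundled there rather than with the ``transport of the linear argument''.

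Two points to tighten. First, a variance slip: your free degree-$n$ functor and counit should be built from $T_nU_{E^{\vee n}}(X)$, i.e.\ the covariant representable $P_n\C(E^{\vee n},-)$, not from $T_nU_X(E^{\vee n})=P_n\C(X,E^{\vee n})$, which is contravariant in $X$; your own formula $t_n(f)\otimes x\mapsto F(f)(x)$ with $x\in F(E^{\vee n})$ landing in $F(X)$ forces $f\in\C(E^{\vee n},X)$. (Relatedly, the module underlying $\mathbb{S}_n(F)$ is $F(E^{\vee n})$, correcting the misprint $F(E)$ in the statement.) Second, when you invoke the single-generator Morita argument, Proposition \ref{car-poly} gives only conservativity on $E^{\vee n}$; projectivity and generation come from Theorem \ref{thm-Pira}, which produces the \emph{family} $\{T_nU_{E^{\vee k}}\}_{k\le n}$. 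You should add the observation that each $E^{\vee k}$ with $k\le n$ is a retract of $E^{\vee n}$, so each $T_nU_{E^{\vee k}}$ is a direct summand of $T_nU_{E^{\vee n}}$ (cf.\ Proposition \ref{ce-prop} and Lemma \ref{lm-decompo}), whence the single object $T_nU_{E^{\vee n}}$ already generates and its endomorphism ring is $R$ by Proposition \ref{Yoneda-poly}. With these repairs the argument goes through.
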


This result generalizes a similar one for additive categories $\C$ due to Pirashvili \cite{Pira}.
A simple direct proof of Theorem \ref{JMcCthm} is given in \cite{Manfred-multicalculus}. Further study of polynomial functors could thus be based on this theorem in analyzing the - very complicated  - structure of the rings
$T_nU(E)$ and their representations; instead, we follow the basic idea of the work of Baues and Pirashvili:
according to Proposition \ref{car-poly} a polynomial functor $F$ of degree $n$ is determined by the values of its cross-effects $cr_k(F)$ on $(E, \ldots, E)$, $1\le k\le n$; so we seek for minimal extra structure relating them so as to make the correspondance between 
polynomial functors and these enriched cross-effects into a functorial equivalence.

Inspired by the paper \cite{Pira-russe} we observe:

\begin{thm}\cite{Pira-russe} \label{thm-Pira}
For $\C=\langle E \rangle_{\C}$, the family of functors $\{ T_n U_{E^{\vee k}}, k \in \{1, \ldots, n \} \}$ is a family of small projective generators of the category $Func(\C, Ab)_{\leq n}$.
\end{thm}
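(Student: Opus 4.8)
The plan is to establish Theorem \ref{thm-Pira} by verifying the three defining properties—generation, projectivity, and smallness—for each functor $T_nU_{E^{\vee k}}$ with $1\le k\le n$, exploiting the adjunction machinery developed earlier. The crucial input is Proposition \ref{Tn-prop}, which gives $T_n$ as left adjoint to the inclusion $U_n$, together with the fact that each $U_{E^{\vee k}}$ is a \emph{representable-type} functor (it is the reduced standard projective functor associated with $E^{\vee k}$). I would begin by recording the key adjunction isomorphism: for any $G\in Func(\C,Ab)_{\le n}$,
\begin{equation*}
Hom_{Func(\C,Ab)_{\le n}}(T_nU_{E^{\vee k}},G)\cong Hom_{Func(\C,Ab)}(U_{E^{\vee k}},U_nG)\cong (U_nG)(E^{\vee k})=G(E^{\vee k}),
\end{equation*}
where the first isomorphism is the $(T_n,U_n)$-adjunction of Proposition \ref{Tn-prop} and the second is the Yoneda-type identification $Hom(U_{E^{\vee k}},H)\cong H(E^{\vee k})$ valid for reduced functors $H$ (this is the universal property of the reduced standard projective functor). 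This single computation does most of the work.

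First I would deduce \textbf{projectivity}: since the functor $G\mapsto G(E^{\vee k})$ is exact (evaluation at an object is exact in a functor category, and restricting to the thick subcategory $Func(\C,Ab)_{\le n}$ preserves exactness by Proposition \ref{stable}), the composite $Hom(T_nU_{E^{\vee k}},-)$ is exact, so $T_nU_{E^{\vee k}}$ is projective in $Func(\C,Ab)_{\le n}$. Next I would deduce \textbf{smallness}: the functor $G\mapsto G(E^{\vee k})$ commutes with arbitrary direct sums (coproducts are computed objectwise in a functor category), hence $Hom(T_nU_{E^{\vee k}},-)$ commutes with direct sums, which is precisely smallness (compactness). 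Both of these are immediate formal consequences of the displayed isomorphism once one checks it is natural in $G$.

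The main obstacle is \textbf{generation}: I must show that the family $\{T_nU_{E^{\vee k}}\}_{k=1}^n$ jointly detects nonzero morphisms, equivalently that a map $\phi\co F\to G$ in $Func(\C,Ab)_{\le n}$ is an isomorphism (or that $F=0$) as soon as the induced maps on all $Hom(T_nU_{E^{\vee k}},-)$ agree. By the displayed isomorphism, the induced map on $Hom(T_nU_{E^{\vee k}},-)$ is exactly $\phi_{E^{\vee k}}\co F(E^{\vee k})\to G(E^{\vee k})$. Thus generation reduces to the assertion that a polynomial functor of degree $\le n$ on $\C=\langle E\rangle_\C$ is determined by its values $F(E^{\vee k})$ for $1\le k\le n$, together with the fact that the $T_nU_{E^{\vee k}}$ are nonzero—and both follow from Proposition \ref{car-poly}, whose equivalence $(2)\Leftrightarrow(1)$ states precisely that $\phi$ is an isomorphism iff each $\phi_{E^{\vee k}}$ is. Concretely, to see the family generates I would show that for any nonzero $F$ there is some $k\le n$ with $F(E^{\vee k})\ne 0$ (if all these vanished then by Proposition \ref{car-poly}(2)$\Rightarrow$(1) applied to $0\to F$ we would get $F=0$), and then the identity element of $F(E^{\vee k})\cong Hom(T_nU_{E^{\vee k}},F)$ furnishes a nonzero map out of a member of the family. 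I expect the only genuine care needed is in justifying the Yoneda identification $Hom(U_{E^{\vee k}},H)\cong H(E^{\vee k})$ in the \emph{reduced} setting and confirming that the restriction of evaluation to the thick subcategory remains exact and coproduct-preserving; the rest is a direct assembly of Propositions \ref{stable}, \ref{Tn-prop}, and \ref{car-poly}.
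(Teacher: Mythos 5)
Your proposal is correct and follows essentially the same route as the paper: the displayed isomorphism is exactly the paper's Yoneda lemma for polynomial functors (Proposition \ref{Yoneda-poly}, itself obtained from the $(T_n,U_n)$-adjunction and the Yoneda identification for $U_{E^{\vee k}}$), from which projectivity and smallness are formal, and the generation step via Proposition \ref{car-poly} is just the paper's appeal to Proposition \ref{ce-prop} in a lightly repackaged form.
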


This follows from the following proposition together with Proposition \ref{ce-prop}.

\begin{prop}[Yoneda lemma for polynomial functors] \label{Yoneda-poly}
Let $\C$ be a pointed category, 
$F: \C \to Ab$ a  polynomial functor of degree lower or equal to $n$. Then for $X \in Ob(\C)$ we have an isomorphism:
$$\Y: Hom_{Func(\C, Ab)_{\leq n}}(T_nU_X,F) \xrightarrow{\simeq} F(X)$$
natural in $F$ and $X$, 
defined by $\Y(\varphi) = \varphi_X(1_X)$.
\end{prop}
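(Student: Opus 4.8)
The plan is to build the inverse map explicitly and verify that the two composites are the identity, reducing everything to the universal property of $T_n$ established in Proposition \ref{Tn-prop}. First I would observe that the ordinary (enriched) Yoneda lemma already gives a natural isomorphism
$$
\Hom_{Func(\C,Ab)}(U_X, F) \xrightarrow{\simeq} F(X), \qquad \psi \mapsto \psi_X(1_X),
$$
where $U_X = U_X$ is the reduced standard projective functor relative to $X$; indeed $U_X$ is the quotient of $\Z[\C(X,-)]$ by $\Z[0]$, and an $Ab$-natural transformation out of it is determined by the image of the generator $1_X \in U_X(X)$, with no constraint since $F$ is reduced. So the heart of the matter is to promote this to the polynomial setting, i.e.\ to identify $\Hom_{Func(\C,Ab)_{\le n}}(T_nU_X, F)$ with $\Hom_{Func(\C,Ab)}(U_X, F)$ when $F$ has degree $\le n$.

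The key step is the adjunction $T_n \dashv U_n$ from Proposition \ref{Tn-prop}. For $F$ polynomial of degree $\le n$ we have $U_nF = F$, and the adjunction furnishes a natural bijection
$$
\Hom_{Func(\C,Ab)_{\le n}}(T_nU_X, F) \;\cong\; \Hom_{Func(\C,Ab)}(U_X, U_nF) \;=\; \Hom_{Func(\C,Ab)}(U_X, F),
$$
realized concretely by precomposition with the unit $t_n\co U_X \to T_nU_X$: a map $\varphi\co T_nU_X \to F$ corresponds to $\varphi\circ t_n\co U_X \to F$. Composing this bijection with the ordinary Yoneda isomorphism above sends $\varphi$ to $(\varphi\circ t_n)_X(1_X) = \varphi_X\big((t_n)_X(1_X)\big)$. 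Thus to see that this composite agrees with the stated formula $\Y(\varphi)=\varphi_X(1_X)$, I would check that $(t_n)_X(1_X) = 1_X$ in $T_nU_X(X)$ under the identification where $1_X$ denotes the image of the generating element; this is just the statement that $t_n$ is the unit, sending the class of $1_X$ to its image, so the formula is consistent. Naturality of $\Y$ in both $F$ and $X$ then follows from naturality of the two isomorphisms being composed (the Yoneda map and the adjunction bijection are each natural), so no separate verification is needed.

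The main obstacle, and the only genuinely non-formal point, is confirming that the ordinary Yoneda lemma applies in the \emph{reduced, $Ab$-valued} setting with $U_X$ in place of the representable $\Z[\C(X,-)]$. Concretely I would argue that a natural transformation $\psi\co U_X \to F$ into a reduced functor $F$ is uniquely determined by $\psi_X(1_X)$: given any $f\in\C(X,Y)$, naturality against $f_*\co U_X(X)\to U_X(Y)$ forces $\psi_Y(f) = \psi_Y(f_*(1_X)) = F(f)(\psi_X(1_X))$, which both pins down $\psi$ and shows that any choice of element $x=\psi_X(1_X)\in F(X)$ extends to a well-defined natural transformation by the formula $\psi_Y(f)=F(f)(x)$ on generators (extended $\Z$-linearly); the passage to the quotient by $\Z[0]$ is harmless precisely because $F$ is reduced, so that the zero morphism maps to $0$. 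This is exactly the content underlying the co-unit formula $(u'_F)_X(f\otimes x)=F(f)(x)$ appearing in Proposition \ref{ST}, and I would invoke that same mechanism here. Once this reduced Yoneda statement is in hand, everything else is a formal consequence of the $T_n\dashv U_n$ adjunction.
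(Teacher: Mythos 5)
Your proposal is correct and follows exactly the paper's (one-line) argument: the paper proves this by combining the adjunction $T_n \dashv U_n$ of Proposition \ref{Tn-prop} with the additive Yoneda lemma for the reduced projective $U_X$, which is precisely the two-step reduction you carry out. Your write-up merely makes explicit the details the paper leaves implicit.
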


This is an immediate consequence of Proposition \ref{Tn-prop} and the additive Yoneda lemma.

\begin{rem} Pirashvili's Yoneda lemma  for polynomial functors in \cite{Pira-russe} treats the case where $\C$ is a category of modules over some ring $R$; instead of the term $T_nU_X$ in Proposition \ref{Yoneda-poly} it contains the term $P_n({\rm Hom}_R(X,-))$, cf.\ section 2.3 for the definition of Passi's functor $P_n$. In fact, a generalization of   results in section 8 below shows that if $X$ has a cogroup structure in $\C$, then the functors $T_nU_X$ and $P_n\circ \C(X,-)$ are isomorphic.

\end{rem}

\subsection{The case of bifunctors}
Since cross-effects of quadratic functors are bilinear we need an analogue of Theorem \ref{lin} for bifunctors which goes as follows. Let Bilin $(\mathcal C \times \mathcal C, Ab )$ denote the category of bilinear bireduced bifunctors from $\mathcal C \times \mathcal C$ to $Ab $.

We begin by the following lemma.

\begin{lm} \label{B(E,E)-lin}
For $B : \mathcal C \times \C \rightarrow Ab $ a bilinear bireduced bifunctor, $B(E,E)$ is a left $\overline{\Lambda} \otimes \overline{\Lambda} $-module via 
$$(\overline{\alpha} \otimes \overline{\beta}). x :=  B(\alpha, \beta)(x)$$
 for $\alpha , \beta \in \mathcal C(E, E)$ and $x \in B(E,E)$. 
\end{lm}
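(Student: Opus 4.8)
The plan is to verify directly that the proposed formula $(\ol{\alpha}\otimes\ol{\beta}).x := B(\alpha,\beta)(x)$ endows $B(E,E)$ with a well-defined left $\ol{\Lambda}\otimes\ol{\Lambda}$-module structure. Since $\ol{\Lambda}\otimes\ol{\Lambda}$ is generated as a ring by the elementary tensors $\ol{\alpha}\otimes\ol{\beta}$, the main content is checking that the action of each such generator is independent of the chosen representatives $\alpha,\beta\in\C(E,E)$ of the classes $\ol{\alpha},\ol{\beta}\in\ol{\Lambda}=T_1U(E)$, and that the resulting assignment respects addition and multiplication in $\ol{\Lambda}\otimes\ol{\Lambda}$. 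Compatibility with the ring structure and with addition will be automatic from the bifunctoriality of $B$ (functoriality in each variable gives $B(\alpha\alpha',\beta\beta') = B(\alpha,\beta)B(\alpha',\beta')$, matching multiplication in the tensor product ring) together with the additivity coming from the linearity of $B$ in each variable; these are routine and I would treat them briefly.

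The crux is well-definedness, and here I would lean on the explicit description of $\ol{\Lambda}$ from Proposition \ref{explicit-lin}, exactly as in the proof of Lemma \ref{F(E)-lin}. Recall that $\ol{\Lambda} = \Lambda/\{\nabla^2\xi - r_1^2\xi - r_2^2\xi \mid \xi\in\C(E,E\vee E)\}$. I must show that if $\alpha$ is replaced by a representative differing by such a relator, the operator $B(\alpha,\beta)$ on $B(E,E)$ is unchanged, and symmetrically in the second variable. Fixing $\beta$ and varying the first slot, this amounts to showing that for every $\xi\in\C(E,E\vee E)$ one has
\begin{equation*}
B(\nabla^2\xi,\beta) - B(r_1^2\xi,\beta) - B(r_2^2\xi,\beta) = 0
\end{equation*}
as maps $B(E,E)\to B(E,E)$. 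But this is precisely the statement that the functor $B(-,E)\co\C\to Ab$ is linear, applied via Lemma \ref{lem1}(4) to the morphism $\xi$: since $B$ is bilinear, $B(-,E)$ is a linear reduced functor, so condition (4) of Lemma \ref{lem1} gives $B(\nabla^2\xi,E) = B(r_1^2\xi,E) + B(r_2^2\xi,E)$, and then applying functoriality in the second variable at $\beta$ yields the displayed vanishing. The symmetric computation with $B(\alpha,-)$ linear handles the second slot.

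I expect the only genuine obstacle, if any, to be purely bookkeeping: one must be careful that the relators defining $\ol{\Lambda}$ in the first factor and in the second factor both act trivially, and that passing from the separate well-definedness in each slot to well-definedness of the tensor-product action is legitimate. This follows because a change of representative in either slot alters $B(\alpha,\beta)$ by a map that is already shown to vanish on $B(E,E)$, and bilinearity lets one treat the two slots independently. Hence the action descends to $\ol{\Lambda}\otimes\ol{\Lambda}$, and combined with the (routine) additivity and multiplicativity this gives the asserted left $\ol{\Lambda}\otimes\ol{\Lambda}$-module structure. The whole argument is essentially a two-variable reprise of Lemma \ref{F(E)-lin}, so I would keep it short and cite Lemma \ref{lem1} and Proposition \ref{explicit-lin} for the key vanishing.
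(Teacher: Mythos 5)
Your proof is correct and is exactly the argument the paper intends: the lemma is stated without proof, but its one-variable prototype (Lemma \ref{F(E)-lin}) is proved in the paper by precisely your method, namely describing $\overline{\Lambda}$ via Proposition \ref{explicit-lin} and killing the relators using Lemma \ref{lem1}(4) applied to the linear functors $B(-,E)$ and $B(E,-)$. Your handling of the passage from slot-wise well-definedness to the tensor-product action, and of multiplicativity via bifunctoriality, is the routine completion the authors leave implicit.
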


\begin{rem}
Note that $T_1U(X) \otimes T_1U(Y) $ is a right $\Lambda \otimes \Lambda$-module via $(t_1(x) \otimes t_1(y))(\alpha \otimes \beta)=t_1(x \alpha) \otimes t_1(y \beta)$; this induces a structure of right $\overline{\Lambda} \otimes \overline{\Lambda}$-module.
\end{rem}

\begin{defi} \label{S11}
The functor 
$$\mathbb{S}_{11} : Bilin(\mathcal C \times \mathcal C, Ab) \to \overline{\Lambda} \otimes \overline{\Lambda}-Mod$$
is defined by $\mathbb{S}_{11}(B)=B(E,E)$ for $B \in Bilin(\mathcal C \times \mathcal C, Ab) $.
\end{defi}

\begin{defi} \label{T11}
The functor 
$$\mathbb{T}_{11} :   \overline{\Lambda} \otimes \overline{\Lambda}-Mod \to Bilin(\mathcal C \times\mathcal C, Ab)  $$
is defined by $\mathbb{T}_{11}(M)(X,Y)=(T_1U(X) \otimes T_1U(Y) )\otimes_{\Lambda \otimes \Lambda}M$ for $M \in \overline{\Lambda} \otimes \overline{\Lambda}-Mod$.
\end{defi}
The following proposition connects  the functors $\mathbb{T}_{11}$ and $\mathbb{S}_{11}$.

\begin{prop} \label{S11T11}
The functor $\mathbb{T}_{11} $ is the left adjoint of $\mathbb{S}_{11}$.

The \textbf{unit} of this adjunction is  the canonical isomorphism
$$u_M: M \to (T_1U(E) \otimes T_1U(E)) \otimes_{\overline{\Lambda} \otimes\overline{\Lambda}}M$$
defined by $u_M(m)=(t_1(1) \otimes t_1(1)) \otimes m$ where $M \in \overline{\Lambda} \otimes \overline{\Lambda}-Mod$.

The \textbf{co-unit} is
$$(u'_B)_{X,Y} : \mathbb{T}_{11}\mathbb{S}_{11}(B)(X,Y) =(T_1U(X) \otimes T_1U(Y) )\otimes_{\overline{\Lambda} \otimes \overline{\Lambda}} B(E,E) \rightarrow B(X,Y),$$
$\mathrm{ where\quad} (u'_B)_{X,Y} (t_1(f) \otimes t_1(g)\otimes x) =  B(f,g)(x)\; for \; f \in \mathcal C(E, X),  g \in \mathcal C(E, Y)\mathrm{\ and\ } x\in B(E,E).$
\end{prop}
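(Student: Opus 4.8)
The plan is to follow the pattern of the two preceding adjunctions, Propositions \ref{ST} and \ref{S1T1}: the unit $u_M$ and co-unit $u'_B$ have already been written down, so it suffices to check that they are well-defined natural transformations and that the two triangle identities hold. Before this I would record that $\mathbb{T}_{11}(M)$ indeed takes values in $Bilin(\C\times\C,Ab)$. The bifunctor $(X,Y)\mapsto T_1U(X)\otimes T_1U(Y)$ is bilinear and bireduced because $T_1U$ is linear and reduced, so $T_1U(0)=0$ and $T_1U(X\vee X')\cong T_1U(X)\oplus T_1U(X')$ as right $\overline{\Lambda}$-modules (Proposition \ref{ce-prop}); applying $-\otimes_{\Lambda\otimes\Lambda}M$ levelwise preserves these direct-sum decompositions, since the tensor product is additive, so $\mathbb{T}_{11}(M)$ is again bilinear and bireduced. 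As the right $\Lambda\otimes\Lambda$-action on $T_1U(X)\otimes T_1U(Y)$ factors through $\overline{\Lambda}\otimes\overline{\Lambda}$ (Proposition \ref{Lambda-bar}), tensoring over $\Lambda\otimes\Lambda$ coincides with tensoring over $\overline{\Lambda}\otimes\overline{\Lambda}$.

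The unit is the easy half. Since $T_1U(E)=\overline{\Lambda}$ with ring unit $t_1(1_E)$ by Proposition \ref{Lambda-bar}, the formula $u_M(m)=(t_1(1)\otimes t_1(1))\otimes m$ is precisely the canonical isomorphism $M\cong(\overline{\Lambda}\otimes\overline{\Lambda})\otimes_{\overline{\Lambda}\otimes\overline{\Lambda}}M=\mathbb{S}_{11}\mathbb{T}_{11}(M)$, which is visibly natural in $M$. The co-unit carries the real content. To construct $(u'_B)_{X,Y}$ I would begin with the map $\C(E,X)\times\C(E,Y)\times B(E,E)\to B(X,Y)$ sending $(f,g,x)$ to $B(f,g)(x)$, extend it $\Z$-bilinearly in $f$ and $g$, and note that it vanishes as soon as $f$ or $g$ is the zero morphism because $B$ is bireduced; hence it descends to $U(X)\otimes U(Y)$. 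For fixed $g$ and $x$, the assignment $f\mapsto B(f,g)(x)$ is a natural transformation $U(-)\to B(-,Y)$, and as $B(-,Y)$ is linear the universal property of $T_1$ (Proposition \ref{Tn-prop}) forces it to factor through $t_1$; arguing symmetrically in the second variable shows the map factors through $t_1\otimes t_1$. Finally, functoriality of $B$ and the module structure of Lemma \ref{B(E,E)-lin} give $B(f\alpha,g\beta)(x)=B(f,g)\big(B(\alpha,\beta)(x)\big)=B(f,g)\big((\overline{\alpha}\otimes\overline{\beta})\cdot x\big)$, which is exactly the balancing relation required for the map to descend to $(T_1U(X)\otimes T_1U(Y))\otimes_{\overline{\Lambda}\otimes\overline{\Lambda}}B(E,E)$; this yields $(u'_B)_{X,Y}$, whose naturality in $(X,Y)$ and in $B$ is immediate from functoriality of $B$.

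It then remains to verify the triangle identities. The first, $\mathbb{S}_{11}(u'_B)\circ u_{\mathbb{S}_{11}(B)}=\mathrm{id}$, is immediate, since $x\mapsto(t_1(1)\otimes t_1(1))\otimes x\mapsto B(1_E,1_E)(x)=x$. For the second, $u'_{\mathbb{T}_{11}(M)}\circ\mathbb{T}_{11}(u_M)=\mathrm{id}$, I would evaluate at $(X,Y)$ on a generator $t_1(f)\otimes t_1(g)\otimes m$: applying $\mathbb{T}_{11}(u_M)$ replaces $m$ by $(t_1(1)\otimes t_1(1))\otimes m\in\mathbb{T}_{11}(M)(E,E)$, and the co-unit then sends the result to $\mathbb{T}_{11}(M)(f,g)\big((t_1(1)\otimes t_1(1))\otimes m\big)$. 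Because $\mathbb{T}_{11}(M)(f,g)$ acts as $T_1U(f)\otimes T_1U(g)\otimes\mathrm{id}_M$ and $T_1U(f)(t_1(1_E))=t_1(f)$, this equals $(t_1(f)\otimes t_1(g))\otimes m$, recovering the original generator.

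I expect the only genuine obstacle to be the well-definedness of the co-unit, namely the simultaneous check that the prescription $B(f,g)(x)$ descends both through the bilinearization $t_1\otimes t_1$ and through the tensor product over $\overline{\Lambda}\otimes\overline{\Lambda}$; everything else, including the triangle identities, is then formal.
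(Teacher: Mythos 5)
Your proof is correct and follows the approach the paper intends: the paper states the unit and co-unit and leaves the verification as routine (it is the direct analogue of Propositions \ref{ST} and \ref{S1T1}, and the text explicitly says the arguments for the linear case adapt). Your careful check that the co-unit descends through $t_1\otimes t_1$ (via the universal property of $T_1$ applied in each variable) and through the tensor product over $\overline{\Lambda}\otimes\overline{\Lambda}$, followed by the two triangle identities, is exactly the omitted standard argument.
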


The arguments in the proof of Theorem \ref{lin} are easily adapted to obtain: 

\begin{thm} \label{bilin}
If $\C=\langle E \rangle_{\C}$ then the functors ${\mathbb S}_{11}$ and  ${\mathbb T}_{11}$ form a pair of adjoint equivalences. 
\end{thm}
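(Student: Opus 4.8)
The plan is to follow verbatim the strategy of the proof of Theorem~\ref{lin}, replacing the use of Proposition~\ref{car-poly} by its bifunctor analogue, Corollary~\ref{car-bipoly}. By Proposition~\ref{S11T11} we already know that $\mathbb{T}_{11}$ is left adjoint to $\mathbb{S}_{11}$ and that the unit $u_M$ is the canonical isomorphism for every $M\in\overline{\Lambda}\otimes\overline{\Lambda}\ti Mod$. Hence the only thing left to establish is that the counit $u'_B$ is a natural isomorphism for every $B\in Bilin(\mathcal{C}\times\mathcal{C},Ab)$; together with the unit being an isomorphism this yields the desired adjoint equivalence.

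First I would observe that both the source and the target of $(u'_B)_{X,Y}$ are bilinear bireduced bifunctors: the target is $B$, bilinear by hypothesis, and the source $\mathbb{T}_{11}\mathbb{S}_{11}(B)=\mathbb{T}_{11}(B(E,E))$ lies in $Bilin(\mathcal{C}\times\mathcal{C},Ab)$ by the very definition of $\mathbb{T}_{11}$ (Definition~\ref{T11}), so it has bidegree $\le(1,1)$. Consequently Corollary~\ref{car-bipoly} applies, and in order to prove that $u'_B$ is a natural isomorphism it suffices to check that the single map $(u'_B)_{E,E}$ is an isomorphism (the evaluations at $(E^{\vee k},E^{\vee l})$ with $k=0$ or $l=0$ being trivially isomorphisms by bireducedness).

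Finally I would identify $(u'_B)_{E,E}$ with a canonical isomorphism. Since $T_1U(E)=\overline{\Lambda}$ by definition of $\overline{\Lambda}$, the source at $(E,E)$ is $(\overline{\Lambda}\otimes\overline{\Lambda})\otimes_{\overline{\Lambda}\otimes\overline{\Lambda}}B(E,E)$, and under this identification the formula $(u'_B)_{E,E}(t_1(f)\otimes t_1(g)\otimes x)=B(f,g)(x)=(\overline{f}\otimes\overline{g}).x$ — where the module structure is the one of Lemma~\ref{B(E,E)-lin} — shows that $(u'_B)_{E,E}$ is precisely the canonical isomorphism $(\overline{\Lambda}\otimes\overline{\Lambda})\otimes_{\overline{\Lambda}\otimes\overline{\Lambda}}B(E,E)\xrightarrow{\simeq}B(E,E)$. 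This completes the verification.

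There is no serious obstacle here: once the adjunction of Proposition~\ref{S11T11} and the recognition principle of Corollary~\ref{car-bipoly} are in hand, the argument is purely formal and parallels the linear case. The only point requiring a little care is confirming that the source functor $\mathbb{T}_{11}\mathbb{S}_{11}(B)$ genuinely has bidegree $\le(1,1)$ so that Corollary~\ref{car-bipoly} is applicable; this is guaranteed by the codomain of $\mathbb{T}_{11}$, which rests implicitly on the linearity of $X\mapsto T_1U(X)$ (Proposition~\ref{Lambda-bar}) together with right-exactness of the tensor product, exactly as encapsulated in Example~\ref{TFoG}.
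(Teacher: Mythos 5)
Your proposal is correct and is essentially the proof the paper intends: the paper states only that "the arguments in the proof of Theorem \ref{lin} are easily adapted," and your adaptation — using Proposition \ref{S11T11} for the unit, Corollary \ref{car-bipoly} in place of Proposition \ref{car-poly}, and identifying the counit at $(E,E)$ with the canonical isomorphism — is exactly that adaptation.
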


By Proposition \ref{crissymbif} cross-effects are not only bifunctors but symmetric bifunctors. In the following we exploit this supplementary structure.
We begin by recalling the definition of a symmetric $R \otimes R$-module.

\begin{defi}
For a ring $R$, a symmetric $R \otimes R$-module is a left $R \otimes R$-module $M$ equipped with a $\Z$-linear involution $T$ (i.e. $T^2=Id$) such that for $r,s \in R$ and $m \in M$
$$T((r \otimes s)m)=(s \otimes r)T(m).$$
A morphism of symmetric $R \otimes R$-modules is a morphism of $R \otimes R$-modules compatible with the respective involutions.
\end{defi}

\begin{rem} A symmetric $R \otimes R$-module is the same as a module over the wreath product
$$(R \otimes R) \wr \mathfrak{S}_2 = (R \otimes R) \oplus (R \otimes R).t$$
whose multiplication is defined by
$$(r_1\otimes r_2+(s_1\otimes s_2).t)(r_1'\otimes r_2'+(s_1'\otimes s_2').t) $$
$$= (r_1r_1' \otimes r_2r_2' +
s_1s_2' \otimes s_2s_1' ) + (r_1s_1' \otimes r_2s_2' + s_1r_2' \otimes s_2r_1' ).t
$$
for $r_i,r_i',s_i,s_i'\in R$ and where $t$ denotes the generator of $ \mathfrak{S}_2$.
\end{rem}

Symmetric $R \otimes R$-modules naturally arise from evaluating symmetric bifunctors. Let $V:\C \times \C\to \C \times \C$ be the canonical interchange functor, $V(X,Y)=(Y,X)$.

\begin{prop}\label{symbifmod} Let $(B,T)$ be a symmetric bifunctor from $\C$ to $Ab$ such that $B$ is bireduced. Then for $E\in \C$, the group $B(E,E)$ is a symmetric $\Lambda\otimes \Lambda$-module where $(f\otimes g)x=B(f,g)(x)$ for $f,g\in \C(E,E)$ and $x\in B(E,E)$, and with involution $T_{E,E}$.\hfill$\Box$
\end{prop}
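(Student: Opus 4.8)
The plan is to check the three ingredients of a symmetric $\Lambda\otimes\Lambda$-module separately: the underlying module action, the involution, and their compatibility. I begin with the module structure, which is the bifunctor analogue of Lemma~\ref{F(E)} and uses only that $B$ is a bireduced bifunctor. The assignment $(f,g)\mapsto B(f,g)$ is $\Z$-bilinear as a map $\C(E,E)\times\C(E,E)\to\End(B(E,E))$, hence induces a $\Z$-linear map $\Z[\C(E,E)]\otimes\Z[\C(E,E)]\to\End(B(E,E))$. Since $B$ is bireduced, $B(0,g)=0=B(f,0)$, so this map annihilates the subgroup generated by the zero morphism in each tensor factor and thus factors through $U(E)\otimes U(E)=\Lambda\otimes\Lambda$. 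Functoriality of $B$ gives $B(ff',gg')=B(f,g)B(f',g')$, so the induced map $\Lambda\otimes\Lambda\to\End(B(E,E))$ is a ring homomorphism, yielding exactly the left action $(f\otimes g)x=B(f,g)(x)$.

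Next, $T_{E,E}\co B(E,E)\to (B\circ V)(E,E)=B(E,E)$ is a $\Z$-linear endomorphism, being a component of a natural transformation of functors into $Ab$; and it is an involution because the symmetry $T$ of a symmetric bifunctor is self-inverse, i.e.\ $T_{Y,X}\circ T_{X,Y}=\mathrm{id}_{B(X,Y)}$ under the identification $B\circ V\circ V=B$ (as is manifest for the cross-effect symmetric bifunctors of Proposition~\ref{crissymbif}, where $T^F$ is built from the switch $\tau$), so specializing to $X=Y=E$ gives $T_{E,E}^2=\mathrm{id}$. The compatibility relation is then immediate from naturality of $T$: applied to the morphism $(f,g)\co(E,E)\to(E,E)$ in $\C\times\C$ it gives $T_{E,E}\circ B(f,g)=(B\circ V)(f,g)\circ T_{E,E}=B(g,f)\circ T_{E,E}$, and evaluating at $x\in B(E,E)$ yields $T_{E,E}((f\otimes g)x)=(g\otimes f)T_{E,E}(x)$. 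Extending $\Z$-bilinearly over $\Lambda\otimes\Lambda$ gives $T_{E,E}((r\otimes s)x)=(s\otimes r)T_{E,E}(x)$ for all $r,s\in\Lambda$, which is precisely the required symmetry axiom.

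The only step demanding genuine care is the well-definedness in the first paragraph: one must check that the naive action by morphisms descends to the tensor product of the \emph{reduced} morphism rings $\Lambda=U(E)$ rather than to $\Z[\C(E,E)]$, and bireducedness of $B$ is exactly what makes the zero morphism act trivially in each slot. Everything else—the ring-homomorphism property, the $\Z$-linearity and involutivity of $T_{E,E}$, and the symmetry relation—is formal, which is why the statement carries no displayed proof.
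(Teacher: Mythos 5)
Your proof is correct and is precisely the routine verification that the authors omit (the proposition is stated with a box and no proof): the bireduced hypothesis kills the zero morphism in each tensor slot so the action descends to $\Lambda\otimes\Lambda$, functoriality gives the ring-homomorphism property, and naturality of $T$ at $(f,g)\co (E,E)\to(E,E)$ gives the symmetry relation. The one point worth flagging is that Definition \ref{symbifdef} does not literally impose $T_{Y,X}\circ T_{X,Y}=\mathrm{id}_{B(X,Y)}$, so the involutivity of $T_{E,E}$ is, as you correctly observe, an implicit assumption on the symmetric bifunctor (satisfied in all the cross-effect examples via $\tau_{Y,X}\tau_{X,Y}=\mathrm{id}$) rather than a consequence of the stated definition — a small imprecision in the paper, not a gap in your argument.
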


We thus obtain the following two examples of symmetric $R \otimes R$-modules for suitable $R$ which are important in section \ref{section-4}. 

\begin{cor} \label{ThochF}
For a reduced functor $F: \C \to Ab$, the group $F(E|E)$ admits the structure of  a symmetric $\Lambda \otimes \Lambda$-module such that for $f,g \in \C(E,E)$ and $\alpha \in F(E|E)$ we have
$$(f \otimes g). \alpha:= F(f | g)(\alpha) \in F(E |E)$$
and with involution:
$$T^F(\alpha)= (\iota^2_{(1,2)})^{-1} F(\tau_{E,E})  \iota^2_{(1,2)}(\alpha)  \in F(E |E).$$
\end{cor}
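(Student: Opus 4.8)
The plan is to derive Corollary \ref{ThochF} as a direct application of Proposition \ref{symbifmod} to the cross-effect bifunctor. First I would recall that Proposition \ref{crissymbif} already provides, for any reduced functor $F\co \C\to Ab$, a symmetric bifunctor $(cr_2F, T^F)$ where $T^F_{X,Y} = (\iota^2_{(1,2)})^{-1}F(\tau_{X,Y})\iota^2_{(1,2)}$. Since $cr_2F$ is bireduced (indeed $cr_2F$ is multi-reduced, as noted after Proposition 1.1.4, so it vanishes whenever one argument is $0$), the hypotheses of Proposition \ref{symbifmod} are met with $B=cr_2F$. Writing $F(E\mid E) = cr_2F(E,E)$, we may therefore invoke Proposition \ref{symbifmod} directly with $E\in\C$ to conclude that $F(E\mid E)$ is a symmetric $\Lambda\otimes\Lambda$-module.

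Having set up this application, I would then simply read off the two structure maps from the statement of Proposition \ref{symbifmod} and check they coincide with the ones claimed in the Corollary. The module structure from Proposition \ref{symbifmod} is $(f\otimes g)x = B(f,g)(x) = cr_2F(f,g)(x)$, which is exactly $F(f\mid g)(\alpha)$ in the notation $F(-\mid-)=cr_2F(-,-)$; and the involution from Proposition \ref{symbifmod} is $T_{E,E} = T^F_{E,E} = (\iota^2_{(1,2)})^{-1}F(\tau_{E,E})\iota^2_{(1,2)}$, which is precisely the displayed formula for $T^F(\alpha)$. Thus both pieces of structure asserted in the Corollary are verbatim the structure produced by Proposition \ref{symbifmod} specialized to $B=cr_2F$ and to the single object $E$.

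There is essentially no hard step here: the Corollary is a specialization rather than a new result, so the proof reduces to verifying the bireducedness hypothesis of Proposition \ref{symbifmod} and matching notation. The only point requiring a word of care is confirming that $T^F$ is a genuine involution on $F(E\mid E)$ — that is, $(T^F_{E,E})^2 = Id$ — but this is already guaranteed by Proposition \ref{crissymbif}, which asserts $T^F$ is a natural isomorphism $cr_2F \to (cr_2F)\circ V$ satisfying the symmetry axiom, combined with the definition of a symmetric $R\otimes R$-module built into Proposition \ref{symbifmod}. Consequently I expect the proof to be a one- or two-sentence citation, and I would write it as: \emph{This is the special case $B=cr_2F$ of Proposition \ref{symbifmod}, using Proposition \ref{crissymbif} to supply the symmetric bifunctor structure $(cr_2F,T^F)$ and the fact that $cr_2F$ is bireduced.} This matches the $\hfill\Box$ marker at the end of the statement, which signals that the authors regard the verification as immediate.
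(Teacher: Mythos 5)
Your proposal is correct and follows exactly the route the paper intends: the Corollary is stated immediately after Proposition \ref{symbifmod} as one of "the following two examples" obtained by specializing it to $B=cr_2F$ with the symmetric structure $T^F$ supplied by Proposition \ref{crissymbif}, and the paper gives no further proof. Your additional checks (bireducedness of $cr_2F$, matching of the two structure maps, and $T^F_{E,E}$ squaring to the identity via naturality of $T^F$ and $\tau_{Y,X}\tau_{X,Y}=1$) are precisely the verifications the authors leave implicit.
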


\begin{cor} \label{T11-sym}
For $F: \C \to Ab$ a reduced functor, the group $T_{11}(cr_2F)(E,E)$ admits the structure of  a symmetric $\bar{\Lambda} \otimes \bar{\Lambda}$-module, with involution denoted by $\bar{T}^F$, such that  the projection $t_{11}: F(E|E) \to T_{11}(cr_2F)(E,E)$ is a morphism of symmetric $\Lambda \otimes \Lambda$-modules.
\end{cor}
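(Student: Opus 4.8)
The plan is to feed the symmetric bifunctor $(T_{11}(cr_2F),\bar{T}^F)$ produced by Proposition \ref{crissymbif} into the general machinery for symmetric bifunctors, and then to refine the coefficient ring from $\Lambda\otimes\Lambda$ to $\bar{\Lambda}\otimes\bar{\Lambda}$ by exploiting bilinearity. First I would note that $T_{11}(cr_2F)$ is a bilinear bireduced bifunctor: it has bidegree $\le(1,1)$ by construction of the Taylorisation functor, and it is bireduced since $cr_2F$ is bireduced and the quotient functor $T_{11}$ preserves this property (one could also invoke Theorem \ref{cr2t2} together with the quadraticity of $T_2F$). As $\bar{T}^F=T_{11}(T^F)$ makes $(T_{11}(cr_2F),\bar{T}^F)$ a symmetric bifunctor, Proposition \ref{symbifmod} then yields that $T_{11}(cr_2F)(E,E)$ is a symmetric $\Lambda\otimes\Lambda$-module, with action $(f\otimes g)x=T_{11}(cr_2F)(f,g)(x)$ and involution $\bar{T}^F_{E,E}$.

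To pass to $\bar{\Lambda}\otimes\bar{\Lambda}$ I would apply Lemma \ref{B(E,E)-lin} to the bilinear bireduced bifunctor $T_{11}(cr_2F)$: this equips $T_{11}(cr_2F)(E,E)$ with a $\bar{\Lambda}\otimes\bar{\Lambda}$-module structure which is precisely the $\Lambda\otimes\Lambda$-structure above, now seen to factor through the projection $\Lambda\otimes\Lambda\twoheadrightarrow\bar{\Lambda}\otimes\bar{\Lambda}$. Because the action factors through this quotient, the symmetry relation $\bar{T}^F_{E,E}((\alpha\otimes\beta)x)=(\beta\otimes\alpha)\bar{T}^F_{E,E}(x)$ holding over $\Lambda\otimes\Lambda$ descends verbatim to $\bar{T}^F_{E,E}((\bar{\alpha}\otimes\bar{\beta})x)=(\bar{\beta}\otimes\bar{\alpha})\bar{T}^F_{E,E}(x)$, so that $(T_{11}(cr_2F)(E,E),\bar{T}^F_{E,E})$ is a symmetric $\bar{\Lambda}\otimes\bar{\Lambda}$-module.

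Finally, the morphism property of $t_{11}\co F(E|E)\to T_{11}(cr_2F)(E,E)$, where the source carries the symmetric $\Lambda\otimes\Lambda$-structure of Corollary \ref{ThochF}, I would read off from naturality of the unit $t_{11}\co cr_2F\to T_{11}(cr_2F)$. Equivariance is the naturality square for the morphism $cr_2F(f,g)$, giving $t_{11}(F(f|g)(\alpha))=T_{11}(cr_2F)(f,g)(t_{11}(\alpha))$, i.e.\ $t_{11}((f\otimes g)\alpha)=(f\otimes g)\,t_{11}(\alpha)$; and compatibility with the involutions is the identity $t_{11}\circ T^F=T_{11}(T^F)\circ t_{11}=\bar{T}^F\circ t_{11}$, a further instance of naturality of $t_{11}$ applied to the natural transformation $T^F\co cr_2F\to(cr_2F)\circ V$ and evaluated at $(E,E)$. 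I expect no essential difficulty, as all ingredients are in place; the two points deserving care are the well-definedness of the descent of the symmetry relation across $\Lambda\twoheadrightarrow\bar{\Lambda}$, and the identification $T_{11}((cr_2F)\circ V)=T_{11}(cr_2F)\circ V$ together with $V(E,E)=(E,E)$, needed to match the two copies of $t_{11}$ at $(E,E)$ in the involution step.
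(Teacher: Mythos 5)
Your proposal is correct and is essentially the argument the paper intends: the corollary is an assembly of Proposition \ref{crissymbif}, Proposition \ref{symbifmod}, Lemma \ref{B(E,E)-lin} applied to the bilinear bireduced bifunctor $T_{11}(cr_2F)$, and naturality of the unit $t_{11}$ (both in the object variables and with respect to the morphism of bifunctors $T^F$), exactly as you assemble them. The two points you flag for care — descent of the symmetry relation along $\Lambda\otimes\Lambda\twoheadrightarrow\bar{\Lambda}\otimes\bar{\Lambda}$ and the identification $T_{11}((cr_2F)\circ V)=T_{11}(cr_2F)\circ V$ at $(E,E)$ — are indeed the only places where anything needs checking, and you handle both correctly.
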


Conversely, symmetric $\Lambda \otimes \Lambda$-modules give rise to symmetric bifunctors, as follows.

\begin{prop}\label{symmodindfct}
Let $M$ be a symmetric $\Lambda \otimes \Lambda$-module with involution $T$. Then 
there are symmetric bifunctors $(\mathbb{T}(M),T^M)$ and $(\mathbb{T}_{11}(M),\bar{T}^M)$ where
$$T^M_{X,Y} : (U(X)\otimes U(Y))\otimes_{\Lambda \otimes \Lambda} M \to   (U(Y)\otimes U(X))\otimes_{\Lambda \otimes \Lambda} M$$
is given by
$ T^M_{X,Y} (f\otimes g\otimes x) = g\otimes f\otimes Tx$ for $f,g\in \C(E,E)$ and $x\in M$, and where $\bar{T}^M$ is given such that 
$t_1\otimes t_1\otimes id : \mathbb{T}(M) \to \mathbb{T}_{11}(M)$ is a morphism of symmetric bifunctors, in the obvious sense.
\hfill$\Box$
\end{prop}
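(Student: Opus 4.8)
The plan is to build $T^M$ directly as the map induced on the tensor products by the naive switch of the two tensor factors, twisted by $T$ on $M$, and then to obtain $\bar T^M$ by passing to the quotient. Throughout I read $\mathbb{T}(M)$ as the bifunctor $(X,Y)\mapsto (U(X)\otimes U(Y))\otimes_{\Lambda\otimes\Lambda}M$, matching the formula displayed in the statement and running parallel to Definition \ref{T11}; recall from the Remark following Lemma \ref{B(E,E)-lin} that $U(X)\otimes U(Y)$ carries the right $\Lambda\otimes\Lambda$-action $(u\otimes v)(\alpha\otimes\beta)=u\alpha\otimes v\beta$.

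First I would define $T^M_{X,Y}$. The ordinary switch $\mathrm{sw}\co U(X)\otimes U(Y)\to U(Y)\otimes U(X)$, $u\otimes v\mapsto v\otimes u$, intertwines the two right actions up to the flip of factors: $\mathrm{sw}(w(\alpha\otimes\beta))=\mathrm{sw}(w)(\beta\otimes\alpha)$. I would then consider the map
$$(U(X)\otimes U(Y))\times M\to (U(Y)\otimes U(X))\otimes_{\Lambda\otimes\Lambda}M,\qquad (w,x)\mapsto \mathrm{sw}(w)\otimes Tx,$$
and check it is $\Lambda\otimes\Lambda$-balanced, so that it factors through the tensor product to yield $T^M_{X,Y}$ with the stated formula $f\otimes g\otimes x\mapsto g\otimes f\otimes Tx$.

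I expect the balancing check to be the only real content, and it is precisely where the symmetric-module axiom enters: for a simple tensor $\alpha\otimes\beta$ one has
$$\mathrm{sw}(w(\alpha\otimes\beta))\otimes Tx=\big(\mathrm{sw}(w)(\beta\otimes\alpha)\big)\otimes Tx=\mathrm{sw}(w)\otimes(\beta\otimes\alpha)Tx,$$
whereas $\mathrm{sw}(w)\otimes T((\alpha\otimes\beta)x)=\mathrm{sw}(w)\otimes(\beta\otimes\alpha)Tx$ by the defining relation $T((\alpha\otimes\beta)x)=(\beta\otimes\alpha)Tx$; additivity extends this to arbitrary elements of $\Lambda\otimes\Lambda$. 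The remaining verifications are formal. Naturality of $T^M$ in $(X,Y)$ follows by tracing $f\otimes g\otimes x$ around the relevant square both ways for morphisms $\phi,\psi$, each route giving $\psi g\otimes\phi f\otimes Tx$; and $T^2=\mathrm{id}$ yields $T^M_{Y,X}T^M_{X,Y}(f\otimes g\otimes x)=f\otimes g\otimes T^2x=f\otimes g\otimes x$, so $T^M$ is a natural isomorphism and $(\mathbb{T}(M),T^M)$ is a symmetric bifunctor in the sense of Definition \ref{symbifdef}.

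Finally I would treat $\mathbb{T}_{11}(M)$. Since $t_1\co\Lambda\to\overline{\Lambda}$ is a ring epimorphism and $t_1$ is $\Lambda$-equivariant (Proposition \ref{Lambda-bar}), the induced map $t_1\otimes t_1\otimes\mathrm{id}$ is a natural epimorphism $\mathbb{T}(M)\to\mathbb{T}_{11}(M)$. Composing $T^M$ with this epimorphism into $\mathbb{T}_{11}(M)$ sends $f\otimes g\otimes x$ to $t_1g\otimes t_1f\otimes Tx$, which depends only on $t_1f,\,t_1g,\,x$; hence this composite factors uniquely through $t_1\otimes t_1\otimes\mathrm{id}$, and the resulting map $\bar T^M_{X,Y}$ is given by $a\otimes b\otimes x\mapsto b\otimes a\otimes Tx$. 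By construction the square expressing compatibility of $T^M$ and $\bar T^M$ with $t_1\otimes t_1\otimes\mathrm{id}$ commutes, so the latter is a morphism of symmetric bifunctors; the verifications that $\bar T^M$ is well defined, natural and involutive are identical to those for $T^M$, so $(\mathbb{T}_{11}(M),\bar T^M)$ is a symmetric bifunctor as well. The one genuine obstacle in the whole argument is thus the well-definedness of the switch on the coequalizer defining the tensor product, which is exactly what the symmetric-module axiom is designed to guarantee.
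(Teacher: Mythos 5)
Your proof is correct and is exactly the verification the paper intends: the proposition is stated with a terminal $\Box$ and no written proof, the whole content being the observation that the twisted switch is $\Lambda\otimes\Lambda$-balanced precisely because of the axiom $T((\alpha\otimes\beta)x)=(\beta\otimes\alpha)Tx$, with the descent to $\mathbb{T}_{11}(M)$ following from right-exactness of the tensor product. Nothing in your argument diverges from what the authors leave implicit.
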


\begin{rem} Similarly assigning  
$(\mathbb{T}_{11}(M),\bar{T}^M)$ to $(M,T)$ defines a functorial equivalence between symmetric 
$\bar{\Lambda} \otimes \bar{\Lambda}$-modules and symmetric bifunctors $(B,T')$ from $\C$ to $Ab$ such that $B$ is bilinear bireduced, but we do not need this here.
\end{rem}

\section{Equivalence between quadratic functors and modules over suitable ringoids}

In this section, we generalize an approach of Pirashvili in \cite{Pira-russe} to obtain an equivalence between the category $Quad(\C, Ab)$ and the category of modules over a particular pre-additive category (or ringoid following the terminology of Baues) with two objects. More explicitely we have:
\begin{defi}
Let $\mathcal{R}$ be the ringoid having two objects $R_e$ and $R_{ee}$ and morphisms:
$$Hom_{\mathcal{R}}(R_e, R_{ee})=T_2U_E(E \mid E)$$
$$Hom_{\mathcal{R}}(R_{ee}, R_{e})=\overline{\Lambda} \otimes \overline{\Lambda}$$
$$End_{\mathcal{R}}(R_e)=\overline{\overline{\Lambda}}:=(T_2U_E)(E)\quad\mbox{(as a ring)}$$
$$End_{\mathcal{R}}(R_{ee})=(\overline{\Lambda} \otimes \overline{\Lambda})\wr \mathfrak{S}_2 \quad\mbox{(as a ring);}$$
the remaining compositions in $\mathcal{R}$ are given as follows: for $a, b, c, d, \alpha, \beta, \gamma \in \C(E,E)$, $\xi \in \C(E,E \vee E)$:
$$\xymatrix{
R_e \ar@(ul,dl)_{t_2(\gamma)} \ar@(ur,ul)[rr]^{\tilde{\xi}=\rho^2_{(1,2)} t_2(\xi)} && R_{ee} \ar@(dl,dr)[ll]^{\overline{\alpha} \otimes \overline{\beta}} \ar@(ur,dr)^{\overline{a}\otimes \overline{b} +t (\overline{c} \otimes \overline{d})}
}$$
\begin{equation} \label{th1th2}
t_2(\gamma) \circ (\overline{\alpha} \otimes \overline{\beta})=\overline{ \gamma \alpha} \otimes \overline{\gamma \beta }
\end{equation}
\begin{equation} \label{th3th1}
 \rho^2_{(1,2)}t_2(\xi) \circ t_2(\gamma)= \rho^2_{(1,2)}t_2( \xi \gamma) 
\end{equation}
\begin{equation} \label{th3th2}
 \rho^2_{(1,2)}t_2(\xi) \circ (\overline{\alpha} \otimes \overline{\beta})=\overline{r_1 \xi \alpha} \otimes \overline{r_2 \xi \beta}+ (\overline{r_1 \xi \beta} \otimes \overline{r_2 \xi \alpha})t
\end{equation}
\begin{equation}  \label{th2th3}
(\overline{\alpha} \otimes \overline{\beta}) \circ  \rho^2_{(1,2)}t_2(\xi) =t_2(\nabla (\alpha \vee \beta) \xi-\alpha r_1 \xi - \beta r_2 \xi)
\end{equation}
\begin{equation} \label{th2th4}
(\overline{\alpha} \otimes \overline{\beta}) \circ (\overline{a}\otimes \overline{b} +(\overline{c} \otimes \overline{d})t)=\overline{\alpha a}\otimes \overline{\beta b} +\overline{\beta d} \otimes \overline{\alpha c}
\end{equation}
\begin{equation} \label{th4th3}
(\overline{a}\otimes \overline{b} +(\overline{c} \otimes \overline{d})t) \circ \tilde{\xi}= T_2U_E(a|b)(\tilde{\xi})+ T_2U_E(c|d)(T\tilde{\xi})
\end{equation}

where $T$ is the involution of the symmetric $\overline{\Lambda} \otimes \overline{\Lambda}$-module 
$ T_2U_E(E|E)$.
\end{defi}
It follows from Theorem \ref{eq-ringoid} below that $\mathcal{R}$ is a well defined ringoid.

\begin{thm} \label{thm-eq-quad}
If $\C=\langle E \rangle_{\C}$ we have an equivalence of categories:
$$\sigma: Quad(\C, Ab) \xrightarrow{\simeq} Func^{add}(\mathcal{R}, Ab):=\mathcal{R}\mbox{--\,}mod.$$
\end{thm}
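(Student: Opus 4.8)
The plan is to realize $Quad(\C,Ab)=Func(\C,Ab)_{\le 2}$ as a module category over a two-object ringoid by exhibiting an explicit pair of small projective generators, computing the hom-groups between them with the Yoneda lemma for polynomial functors (Proposition \ref{Yoneda-poly}), matching the resulting full subcategory with $\mathcal R$, and finally invoking the Gabriel--Popescu theorem. Concretely, I would take the two generators
$$ P_e := T_2U_E, \qquad P_{ee} := (T_1U\boxtimes T_1U)\Delta_{\C}. $$
By Lemma \ref{diago}, and using Example \ref{TFoG} together with Proposition \ref{lm-bifunc} (which give $P_{ee}\cong T_2((U\boxtimes U)\Delta_{\C})$), the functor $P_{ee}$ is quadratic. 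The Yoneda lemma gives $Hom_{Quad}(T_2U_{E\vee E},F)\cong F(E\vee E)$, and Proposition \ref{ce-prop} splits this naturally as $F(E)\oplus F(E)\oplus cr_2F(E,E)$. Evaluating this splitting against the Yoneda identification $Hom_{Quad}(P_e,F)\cong F(E)$ and against the isomorphism $Hom_{Quad}(P_{ee},F)\cong cr_2F(E,E)$ established below, I obtain a direct-sum decomposition $T_2U_{E\vee E}\cong P_e\oplus P_e\oplus P_{ee}$. Since $\{T_2U_E,T_2U_{E\vee E}\}$ is a family of small projective generators of $Quad(\C,Ab)$ by Theorem \ref{thm-Pira}, and $P_{ee}$ is a direct summand of $T_2U_{E\vee E}$, the pair $\{P_e,P_{ee}\}$ is again a family of small projective generators.

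The core computation is the natural isomorphism $Hom_{Quad}(P_{ee},F)\cong cr_2F(E,E)$. I would prove it by the adjunction $T_2\dashv U_2$, giving $Hom_{Quad}(P_{ee},F)\cong Hom_{Func_*}((U\boxtimes U)\Delta_{\C},F)$, and then observing that a natural transformation $U(-)\otimes U(-)\to F$ is determined by the image $c$ of $1_E\otimes1_E$; naturality with respect to $r^2_1,r^2_2$ forces $c\in\ker(F(r^2_1),F(r^2_2))=cr_2F(E,E)$, and conversely every such $c$ defines a transformation via $f\otimes g\mapsto F((f,g))(c)$. Combined with the Yoneda identifications $End(P_e)\cong(T_2U_E)(E)=\overline{\overline{\Lambda}}$ and $Hom(P_e,P_{ee})\cong P_{ee}(E)=\overline{\Lambda}\otimes\overline{\Lambda}$, this gives $Hom(P_{ee},P_e)\cong cr_2(T_2U_E)(E,E)=T_2U_E(E\mid E)$ (which by Theorem \ref{cr2t2} is also $T_{11}(cr_2U)(E,E)$) and, via the formula $cr_2(B\Delta_{\C})(X,Y)=B(X,Y)\oplus B(Y,X)$ from the proof of Lemma \ref{diago}, $End(P_{ee})\cong cr_2P_{ee}(E,E)\cong(\overline{\Lambda}\otimes\overline{\Lambda})\oplus(\overline{\Lambda}\otimes\overline{\Lambda})$. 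Thus under the assignment $R_e\leftrightarrow P_e$, $R_{ee}\leftrightarrow P_{ee}$ the four hom-groups of $\mathcal R$ agree with those of $\mathcal P^{op}$, where $\mathcal P$ is the full subcategory of $Quad(\C,Ab)$ on $\{P_e,P_{ee}\}$.

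It then remains to check that these identifications of hom-groups are compatible with composition, i.e.\ that the relations \eqref{th1th2}--\eqref{th4th3} — including the wreath-product multiplication on $End(R_{ee})$ governed by the involution $T$ — are exactly the composition laws of $\mathcal P^{op}$. This yields an isomorphism of ringoids $\mathcal R\xrightarrow{\cong}\mathcal P^{op}$ (and in particular shows that $\mathcal R$ is well defined); this is precisely the content of Theorem \ref{eq-ringoid}. Finally I would conclude by Gabriel--Popescu: $Quad(\C,Ab)$ is a cocomplete abelian (indeed Grothendieck) category equipped with the small projective generating set $\{P_e,P_{ee}\}$, hence the functor
$$ \sigma\co F\longmapsto\big(Hom_{Quad}(P_e,F),\,Hom_{Quad}(P_{ee},F)\big), $$
endowed with its natural $\mathcal R$-action through $\mathcal R\cong\mathcal P^{op}$, is an equivalence onto $Func^{add}(\mathcal P^{op},Ab)=Func^{add}(\mathcal R,Ab)=\mathcal R\mbox{--}mod$.

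I expect the main obstacle to be the composition-matching step of the third paragraph: verifying the relations \eqref{th1th2}--\eqref{th4th3}, and in particular pinning down the $\mathfrak{S}_2$-wreath structure on $End(R_{ee})$ and the sign/involution bookkeeping coming from the symmetric-bifunctor structure (Proposition \ref{crissymbif}, Corollary \ref{T11-sym}), is the delicate part; this is exactly what Theorem \ref{eq-ringoid} supplies. By contrast, the identification of generators and the application of Gabriel--Popescu are comparatively formal once the hom-group computations above are in place.
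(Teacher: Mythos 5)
Your proposal follows essentially the same route as the paper: the same pair of small projective generators $\{T_2U_E,\,T_1U_E\otimes T_1U_E\}$ (your $P_{ee}$ is exactly $T_1U_E\otimes T_1U_E$), the identification $Hom(P_{ee},F)\cong cr_2F(E,E)$ and the hom-group computations via the Yoneda lemma for polynomial functors, the ringoid isomorphism $\mathcal P^{op}\cong\mathcal R$, and the Gabriel--Popescu theorem. The only substantive content you defer --- verifying the composition laws \eqref{th1th2}--\eqref{th4th3}, which occupies most of the paper's proof of Theorem \ref{eq-ringoid} via the technical Lemma \ref{Yoncomp} --- you correctly identify as the delicate step, so the plan is sound and matches the paper's argument.
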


The proof of this theorem requires many intermediate results and will only be achieved at the end of section 4.3 below.

\subsection{Projective generators of $Quad(\C, Ab)$}

Applying Theorem \ref{thm-Pira} to quadratic functors we see that the category $Quad(\C, Ab)$ admits $\{ T_2U_E, T_2U_{E \vee E} \}$ as a family of small projective generators. The following proposition gives a refinement of this result.
\begin{prop} \label{gene-quad}
For $\C=\langle E \rangle_{\C}$, the category $Quad(\C, Ab)$ admits $\{ T_2U_E, T_1U_E \otimes T_1U_E \}$ as a family of small projective generators.
\end{prop}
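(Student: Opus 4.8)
# Proof Proposal for Proposition \ref{gene-quad}

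The plan is to leverage the already-established family $\{T_2U_E, T_2U_{E\vee E}\}$ of small projective generators (coming from Theorem \ref{thm-Pira} applied with $n=2$) and to replace the second generator $T_2U_{E\vee E}$ by the more manageable $T_1U_E\otimes T_1U_E$. The key observation is that by Proposition \ref{ce-prop} applied to the functor $U_E$, we have a natural decomposition $U_E(E\vee E)\cong U_E(E)\oplus U_E(E)\oplus cr_2U_E(E,E)$, and after applying the quadratization functor $T_2$ (which is additive on such coproduct decompositions since $cr_2$ is exact by Proposition \ref{exact}), the functor $T_2U_{E\vee E}$ decomposes up to isomorphism as a sum built from copies of $T_2U_E$ together with the cross-effect term. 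More precisely, I would first show that the Yoneda-type representability in Proposition \ref{Yoneda-poly} identifies $T_2U_{E\vee E}$ as representing evaluation at $E\vee E$, and then use the natural splitting to peel off the two copies of $T_2U_E$, leaving a summand whose cross-effect at $(E,E)$ recovers the bilinear part.

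The core step is to identify this remaining summand with $T_1U_E\otimes T_1U_E$. Here I would use Theorem \ref{cr2t2}, which gives $T_{11}(cr_2F)\cong cr_2(T_2F)$, together with Example \ref{TFoG} and the explicit bilinearization formulas. Concretely, the functor $T_1U_E\otimes T_1U_E$ is diagonalizable data: by Lemma \ref{diago} the associated diagonal functor is quadratic, and by Proposition \ref{lm-bifunc} one has $T_2((U_E\boxtimes U_E)\Delta)\cong (T_{11}(U_E\boxtimes U_E))\Delta\cong (T_1U_E\boxtimes T_1U_E)\Delta$ using Example \ref{TFoG}. This shows $T_1U_E\otimes T_1U_E$ (viewed through the diagonal) is quadratic and that its second cross-effect at $(E,E)$ is precisely the bilinear object $T_1U_E(E)\otimes T_1U_E(E)$, matching the cross-effect summand of $T_2U_{E\vee E}$.

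To conclude that $\{T_2U_E,\ T_1U_E\otimes T_1U_E\}$ is still a family of projective generators, I would verify the two defining properties separately. For projectivity and smallness, the functor $T_1U_E\otimes T_1U_E$ is a direct summand (up to the diagonal construction) of $T_2U_{E\vee E}$, and direct summands of small projectives are small projective; alternatively one checks that $\mathrm{Hom}$ out of it into any quadratic $F$ is exact and commutes with coproducts, which follows from the adjunction in Proposition \ref{Tn-prop} and the right-exactness of tensor products. For the generating property, I would show that every quadratic functor $F$ admits an epimorphism from a coproduct of copies of these two objects: using Proposition \ref{car-poly}, it suffices to surject onto the values $F(E)$ and $cr_2F(E,E)$, which are respectively hit by maps out of $T_2U_E$ (via $\mathbb{Y}(\varphi)=\varphi_E(1_E)$) and out of $T_1U_E\otimes T_1U_E$ (via the cross-effect, using Theorem \ref{bilin} to realize the bilinear part).

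The main obstacle I anticipate is making the decomposition of $T_2U_{E\vee E}$ fully natural and precise: one must track how the quadratization functor interacts with the three-fold splitting of $U_E(E\vee E)$, since $T_2$ does not commute with arbitrary colimits, only with the specific coproduct structure governed by the cross-effect decomposition. The delicate point is verifying that after quadratization the cross-term genuinely becomes the \emph{bilinearized} object $T_1U_E\otimes T_1U_E$ rather than the unreduced $cr_2U_E(E,E)$; this is exactly where Theorem \ref{cr2t2} and Proposition \ref{lm-bifunc} must be invoked carefully, and where the equivalence established in Theorem \ref{bilin} guarantees that replacing $cr_2$ by its bilinearization loses no generating power since cross-effects of quadratic functors are already bilinear.
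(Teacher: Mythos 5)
Your overall strategy coincides with the paper's: keep $T_2U_E$, split $T_2U_{E\vee E}$ as $T_2U_E\oplus T_2U_E\oplus(\text{rest})$, identify the rest with $T_1U_E\otimes T_1U_E$, and conclude from the standard facts that direct summands of small projectives are small projective and that generation is preserved. Your identification of the third summand, $T_2(U_E\otimes U_E)\cong (T_{11}(U_E\boxtimes U_E))\Delta\cong (T_1U_E\boxtimes T_1U_E)\Delta = T_1U_E\otimes T_1U_E$ via Proposition \ref{lm-bifunc} and Example \ref{TFoG}, is exactly the computation in Corollary \ref{T2UEE} and is correct.

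The gap is at the foundational step: the decomposition you invoke is the wrong one. Proposition \ref{ce-prop} applied to $U_E$ decomposes the single abelian group $U_E(E\vee E)=\overline{\mathbb{Z}}[\C(E,E\vee E)]$ as $U_E(E)\oplus U_E(E)\oplus cr_2U_E(E,E)$; this is a splitting in the \emph{covariant} variable of $U_E$, its third summand is the cross-effect $cr_2U_E(E,E)$ (not $U_E(E)\otimes U_E(E)$), and it says nothing about the functor $T_2U_{E\vee E}$, whose defining subscript is $E\vee E$. What the argument needs is a natural splitting of the functor $U_{E\vee E}(X)=\overline{\mathbb{Z}}[\C(E\vee E,X)]$ in the \emph{contravariant} variable, namely $U_{E\vee E}\cong U_E\oplus U_E\oplus(U_E\otimes U_E)$, which comes from the coproduct property $\C(E\vee E,X)\cong\C(E,X)\times\C(E,X)$ together with the behaviour of $\overline{\mathbb{Z}}[-]$ on products of pointed sets (this is Lemma \ref{lm-decompo} of the paper, with explicit mutually inverse maps). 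Without it you have no warrant that $U_E\otimes U_E$ — hence, after quadratization, $T_1U_E\otimes T_1U_E$ — is a direct summand of $T_2U_{E\vee E}$, and the replacement of generators is unsupported. Your alternative ``Yoneda'' phrasing, decomposing $\Hom(T_2U_{E\vee E},F)\cong F(E\vee E)\cong F(E)^{\oplus 2}\oplus cr_2F(E,E)$ for quadratic $F$, does produce an abstract complementary summand corepresenting $F\mapsto cr_2F(E,E)$, but you would then still owe an independent proof that $T_1U_E\otimes T_1U_E$ corepresents the same functor — a fact the paper only obtains (Proposition \ref{T1T1-F}) as a consequence of the explicit decomposition, so that route as sketched is circular relative to the paper's development.
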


The proof of this proposition relies on the following lemma.
\begin{lm} \label{lm-decompo}
We have a natural decomposition:
$$U_{E \vee E}= U_E \oplus U_E \oplus U_E \otimes U_E.$$
\end{lm}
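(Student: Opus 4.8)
The plan is to reduce the statement to a purely combinatorial fact about reduced free abelian groups on products of pointed sets, and then to prove that fact by exhibiting an explicit natural comparison map whose invertibility can be read off from a triangular basis computation.

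First I would record that, by the universal property of the coproduct, evaluation on the two injections $i_1^2,i_2^2$ gives an isomorphism of pointed sets
$$\C(E\vee E, X) \xrightarrow{\ \cong\ } \C(E,X)\times\C(E,X),\qquad \phi\mapsto(\phi i_1^2,\phi i_2^2),$$
natural in $X$, where both sides are pointed by the respective zero maps and the right-hand side carries the basepoint $(0,0)$. Writing $\widetilde{\mathbb{Z}}[S] := \mathbb{Z}[S]/\mathbb{Z}[\ast]$ for the reduced free abelian group on a pointed set $(S,\ast)$, so that $U(X)=\widetilde{\mathbb{Z}}[\C(E,X)]$ by definition, applying the functor $\widetilde{\mathbb{Z}}[-]\colon \mathbf{Set}_*\to Ab$ yields a natural isomorphism $U_{E\vee E}\cong\widetilde{\mathbb{Z}}[\C(E,-)\times\C(E,-)]$. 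Thus it suffices to establish, naturally in pointed sets $S$ and $T$, a decomposition
$$\widetilde{\mathbb{Z}}[S\times T]\ \cong\ \widetilde{\mathbb{Z}}[S]\oplus\widetilde{\mathbb{Z}}[T]\oplus\big(\widetilde{\mathbb{Z}}[S]\otimes\widetilde{\mathbb{Z}}[T]\big),$$
and then to substitute $S=T=\C(E,X)$, the joint naturality together with the functoriality of $X\mapsto(\C(E,X),\C(E,X))$ giving naturality in $X$.

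To build this decomposition I would use the two projections $p\colon S\times T\to S$, $q\colon S\times T\to T$ and the smash quotient $\pi\colon S\times T\to S\wedge T$, all of which are natural maps of pointed sets. Combining $\widetilde{\mathbb{Z}}[\pi]$ with the standard natural isomorphism $\widetilde{\mathbb{Z}}[S\wedge T]\cong\widetilde{\mathbb{Z}}[S]\otimes\widetilde{\mathbb{Z}}[T]$ (both sides being free on $(S\setminus\{\ast\})\times(T\setminus\{\ast\})$), I obtain a natural homomorphism
$$\Psi=\big(\widetilde{\mathbb{Z}}[p],\,\widetilde{\mathbb{Z}}[q],\,\widetilde{\mathbb{Z}}[\pi]\big)\colon \widetilde{\mathbb{Z}}[S\times T]\longrightarrow \widetilde{\mathbb{Z}}[S]\oplus\widetilde{\mathbb{Z}}[T]\oplus\big(\widetilde{\mathbb{Z}}[S]\otimes\widetilde{\mathbb{Z}}[T]\big).$$
Evaluating on the basis of $\widetilde{\mathbb{Z}}[S\times T]$ indexed by the elements $(s,t)\neq(\ast,\ast)$ shows that $\Psi$ sends $(s,\ast)\mapsto(s,0,0)$, $(\ast,t)\mapsto(0,t,0)$, and $(s,t)\mapsto(s,t,s\otimes t)$ when both coordinates are non-basepoint; with respect to the evident bases the matrix of $\Psi$ is block upper triangular with identity diagonal blocks, hence $\Psi$ is an isomorphism. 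Being a natural transformation that is a pointwise isomorphism, $\Psi$ is a natural isomorphism.

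Assembling these steps proves the lemma; tracing through the identifications, the first two copies of $U=U_E$ are the images of $U$ under precomposition with the retractions $r_1^2,r_2^2\colon E\vee E\to E$, and the last summand $U\otimes U$ is cut out by the smash projection. The point that requires care — and which I expect to be the only genuine obstacle — is naturality: the naive partition of $\C(E,X)\times\C(E,X)\setminus\{(0,0)\}$ into the three pieces ``only the first coordinate nonzero'', ``only the second nonzero'', and ``both nonzero'' is \emph{not} preserved by post-composition with a morphism $g\colon X\to X'$, since $g$ may kill a nonzero component. This is precisely why the decomposition must be produced through the genuinely natural maps $p,q,\pi$ rather than through the set-level partition, with invertibility then checked pointwise by the triangular argument above.
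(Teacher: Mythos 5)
Your proof is correct and is essentially the paper's own argument in different clothing: under the identification $\C(E\vee E,X)\cong\C(E,X)\times\C(E,X)$ your map $\Psi$ is exactly the paper's $\sigma_X(h)=(hi_1,hi_2,hi_1\otimes hi_2)$, and the inverse produced by your triangular computation is exactly the paper's explicit $\tau_X(f,g,f'\otimes g')=fr_1+gr_2+\nabla^2(f'\vee g')-f'r_1-g'r_2$. The only difference is presentational (reduction to reduced free abelian groups on pointed sets and the smash quotient, plus your correct remark about why naturality must come from $p,q,\pi$ rather than the set-level partition), so no further comparison is needed.
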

\begin{proof}
Let $X \in \C$ and $f \in \C(E \vee E, X)$, we define:
$$\sigma_X: U_{E \vee E}(X) \to U_E(X) \oplus U_E(X) \oplus U_E(X) \otimes U_E(X)$$
by: 
$$\sigma_X(h)=(h \circ i_1, h \circ i_2, h \circ i_1 \otimes h \circ i_2)$$
for $h \in \C(E \vee E, X)$ and 
$$\tau_X: U_E(X) \oplus U_E(X) \oplus U_E(X) \otimes U_E(X) \to U_{E \vee E}(X) $$
by: 
$$\tau_X(f, g, f' \otimes g')=f r_1+ g r_2+ \nabla^2(f' \vee g')-f' r_1- g' r_2$$
for $f, g, f', g' \in \C(E,X)$.
We easily verify that $\tau_X$ is the inverse of $\sigma_X$ and these maps are natural.
\end{proof}
\begin{cor} \label{T2UEE}
There is an isomorphism 
$$T_2U_{E \vee E} \simeq T_2U_E \oplus T_2U_E \oplus T_1U_E \otimes  T_1U_E $$
where the injection and retraction 
$\xymatrix{
T_1U_E \otimes  T_1U_E   \ar@<-.5ex>[r]_-{I}  & T_2U_{E \vee E}  \ar@<-.5ex>[l]_-{R}}$ are given by
$$I_X(\overline{f} \otimes \overline{g})=t_2( \nabla ( f \vee g) - f r_1 - g r_2)$$
$$R_X(t_2(h))= \overline{h i_1} \otimes \overline{h i_2}$$
for $X \in \C$, $f,g \in \C(E,X)$ and $h \in \C(E \vee E, X)$.
\end{cor}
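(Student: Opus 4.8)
The plan is to apply the quadratization functor $T_2$ to the natural decomposition of Lemma \ref{lm-decompo} and to identify each of the three resulting summands. Since $T_2$ is a left adjoint by Proposition \ref{Tn-prop}, it preserves coproducts, hence finite direct sums in this additive functor setting; applying it to the natural isomorphism $U_{E\vee E}\cong U_E\oplus U_E\oplus U_E\otimes U_E$ therefore yields a natural isomorphism
$$ T_2U_{E\vee E}\;\cong\; T_2U_E\oplus T_2U_E\oplus T_2(U_E\otimes U_E). $$
It then remains to compute the third summand and to track the components of the maps $\sigma,\tau$ of Lemma \ref{lm-decompo} through $T_2$.

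For the third summand I would observe that the functor $U_E\otimes U_E\co X\mapsto U_E(X)\otimes U_E(X)$ is the diagonal $(U_E\boxtimes U_E)\Delta_{\C}$ of the bifunctor $U_E\boxtimes U_E$, which is bireduced because $U_E$ is reduced. Proposition \ref{lm-bifunc} then gives $T_2\big((U_E\boxtimes U_E)\Delta_{\C}\big)=\big(T_{11}(U_E\boxtimes U_E)\big)\Delta_{\C}$, and Example \ref{TFoG} identifies $T_{11}(U_E\boxtimes U_E)$ with $T_1U_E\boxtimes T_1U_E$. Taking diagonals, $T_2(U_E\otimes U_E)\cong T_1U_E\otimes T_1U_E$, which establishes the asserted decomposition. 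The point requiring care here — and the main obstacle — is the element-level description of this composite isomorphism: one must check that, under the unit $t_{11}$ followed by $t_1\otimes t_1$, the class $t_2(f'\otimes g')$ is sent to $\ol{f'}\otimes\ol{g'}=t_1(f')\otimes t_1(g')$, so that the identification is compatible with the $t_1$-notation used in the statement.

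Finally, the injection $I$ and retraction $R$ of the third summand are obtained by applying $T_2$ to the third components of $\tau$ and $\sigma$ and composing with the isomorphism just described. The third component of $\tau_X$ sends $f\otimes g$ to $\nabla(f\vee g)-fr_1-gr_2$, so after applying $t_2$ and precomposing with the correspondence $t_2(f\otimes g)\leftrightarrow \ol f\otimes\ol g$ one gets $I_X(\ol f\otimes\ol g)=t_2\big(\nabla(f\vee g)-fr_1-gr_2\big)$; dually, the third component of $\sigma_X$ sends $h$ to $hi_1\otimes hi_2$, which after the identification yields $R_X(t_2(h))=\ol{hi_1}\otimes\ol{hi_2}$. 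Both formulas match the statement, and naturality of all maps involved is inherited from the naturality already recorded in Lemma \ref{lm-decompo}, Proposition \ref{lm-bifunc} and Example \ref{TFoG}.
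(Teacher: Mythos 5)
Your proposal is correct and follows essentially the same route as the paper: apply $T_2$ to the decomposition of Lemma \ref{lm-decompo} (using additivity of $T_2$ on direct sums of functors), then identify $T_2(U_E\otimes U_E)$ with $T_1U_E\otimes T_1U_E$ via Proposition \ref{lm-bifunc} and Example \ref{TFoG}. Your extra care in tracking the element-level formulas for $I$ and $R$ through the identifications is a welcome addition that the paper's proof leaves implicit.
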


\begin{proof}
We deduce from Lemma \ref{lm-decompo} that:
$$T_2U_{E \vee E}= T_2U_E \oplus T_2U_E \oplus T_2(U_E \otimes U_E).$$
But 
\begin{eqnarray*}
T_2(U_E \otimes U_E) &=& T_{11}(U_E \boxtimes U_E) \Delta \mathrm{\ by\ Lemma\ \ref{lm-bifunc}}\\
&=&(T_1U_E \boxtimes T_1U_E) \Delta \mathrm{\ by\ example\  \ref{TFoG}}\\
&=&T_1U_E \otimes T_1U_E.
\end{eqnarray*}
\end{proof}

  Proposition  \ref{gene-quad} now is a straightforward consequence of Theorem \ref{thm-Pira} and Corollary \ref{T2UEE}.

\subsection{Gabriel-Popescu Theorem}

Recall the following fundamental property  of abelian categories:

\begin{thm}[\cite{Popescu} Corollaire 6.4 p 103] \label{Freyd}
For any abelian category $\C$ the following assertions are equivalent.
\begin{enumerate}
\item The category $\C$ has arbitrary direct sums and  $\{ P_i\}_{i \in I}$ is a set of projective small generators of $\C$.
\item The category $\C$ is equivalent to the subcategory  $Func^{add}(\mathcal{P}^{op}, Ab)$ of $Func(\mathcal{P}^{op}, Ab)$ whose objects are additive functors  (i.e. functors satisfying $F(f+g)=F(f)+F(g)$ where $f$ and $g$ are morphisms of $Hom_{\mathcal{P}^{op}}(V,W)$) and $\mathcal{P}$ is the full subcategory of $\C$ whose set of objects is $\{ P_i \mid i \in I \}$.
\end{enumerate}
\end{thm}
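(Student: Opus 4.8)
The plan is to prove the two implications separately, the substantial one being $(1)\Rightarrow(2)$. Throughout write $\mathcal{A}=Func^{add}(\mathcal{P}^{op},Ab)$ and let $h\co \C\to \mathcal{A}$ be the restricted Yoneda functor $h(X)=\C(-,X)|_{\mathcal{P}}$; since $\C(-,X)$ is additive this lands in $\mathcal{A}$. For $P\in\mathcal{P}$ write $h_P=\mathcal{P}(-,P)$ for the corresponding representable, so that $h(P)=h_P$, and recall the additive Yoneda isomorphism $\mathcal{A}(h_P,F)\cong F(P)$, natural in $F$.

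For $(2)\Rightarrow(1)$ it suffices, the properties in $(1)$ being invariant under equivalence, to verify them for $\mathcal{A}$ itself with the representables $\{h_{P_i}\}$ as candidate generators. Direct sums in $\mathcal{A}$ exist and are computed objectwise. By Yoneda, $\mathcal{A}(h_{P_i},-)$ is the evaluation functor $F\mapsto F(P_i)$, which is exact (kernels and cokernels in $\mathcal{A}$ are objectwise) and commutes with direct sums; hence each $h_{P_i}$ is small and projective. Finally they generate: for any $F$ the canonical map $\bigoplus_i\bigoplus_{x\in F(P_i)}h_{P_i}\to F$, adjoint under Yoneda to the family of elements $x$, is objectwise epic, hence an epimorphism.

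For $(1)\Rightarrow(2)$ I would show $h$ is an equivalence by exhibiting a left adjoint and checking that its unit and counit are isomorphisms. Because the $P_i$ are projective, $h$ is exact; because they are small, $h$ preserves arbitrary direct sums. Define $t\co\mathcal{A}\to\C$ as the left Kan extension of the inclusion $\mathcal{P}\hookrightarrow\C$: choosing for $F\in\mathcal{A}$ a presentation $\bigoplus_\beta h_{Q_\beta}\to\bigoplus_\alpha h_{P_\alpha}\to F\to 0$ by representables (which exists by the argument of the previous paragraph applied to $\mathcal{A}$), set $t(F)=\mathrm{coker}\big(\bigoplus_\beta Q_\beta\to\bigoplus_\alpha P_\alpha\big)$, the transition map being the one corresponding under Yoneda to its matrix of morphisms in $\mathcal{P}$. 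Applying $\C(-,X)$ and using $\C(P,X)=h(X)(P)\cong\mathcal{A}(h_P,h(X))$ together with left exactness then yields a natural isomorphism $\C(t(F),X)\cong\mathcal{A}(F,h(X))$, so $t\dashv h$.

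It remains to see that the unit $\eta\co \mathrm{Id}_{\mathcal{A}}\to h t$ and counit $\epsilon\co t h\to \mathrm{Id}_{\C}$ are isomorphisms. For $\epsilon$, pick by $(1)$ a presentation $\bigoplus_\beta Q_\beta\to\bigoplus_\alpha P_\alpha\to X\to 0$ of $X$ by generators; applying $h$ (exact and direct-sum preserving) gives a presentation of $h(X)$ by representables, and feeding this presentation into the construction of $t$ returns $X$, so $\epsilon_X$ is an isomorphism. For $\eta$, note $t$ is right exact and preserves direct sums (as a left adjoint) while $h t(h_P)=h(P)=h_P$, so $\eta$ is an isomorphism on representables; applying the right exact functor $ht$ to a presentation of an arbitrary $F$ and comparing via $\eta$ with the presentation of $F$ itself, the five lemma shows $\eta_F$ is an isomorphism. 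I expect the main obstacle to be exactly this last step: checking that $t$ is well defined independently of the chosen presentation and that $\eta_F$ is iso in general. Here both hypotheses are indispensable — smallness is what lets $h$ and $t$ commute with the infinite direct sums occurring in the presentations, and projectivity is what keeps $h$ exact, so that the comparison diagrams remain exact under all the functors involved.
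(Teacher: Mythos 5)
The paper does not prove this statement at all: it is quoted verbatim as a known result with the citation ``[Popescu, Corollaire 6.4 p.\,103]'' and then used as a black box (to get Theorem \ref{thm-alpha} from Proposition \ref{gene-quad}), so there is no in-paper proof to compare yours against. Judged on its own, your argument is the standard Freyd--Mitchell--Gabriel proof of this equivalence and is essentially correct. The $(2)\Rightarrow(1)$ direction is fine as written (objectwise (co)kernels and sums in $Func^{add}(\mathcal{P}^{op},Ab)$, evaluation exact and sum-preserving, canonical epimorphism from a sum of representables). In $(1)\Rightarrow(2)$ the two points you flag yourself do need to be closed, but both close easily: well-definedness of $t$ follows because the natural isomorphism $\C(t(F),X)\cong\mathcal{A}(F,h(X))$ exhibits $t(F)$ as a representing object for a functor that does not depend on the chosen presentation, so $t$ is determined up to canonical isomorphism (uniqueness of adjoints); and the unit argument via right-exactness of $ht$, the isomorphism on representables, and the five lemma is the standard way to finish. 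One small thing worth making explicit: you use that ``generator'' entails that the canonical map $\bigoplus_i\bigoplus_{f\in\C(P_i,X)}P_i\to X$ is an epimorphism; for an abelian category with arbitrary direct sums this is equivalent to the $\C(P_i,-)$ being jointly faithful, so it is harmless, but it is the point where the existence of presentations (and hence your counit computation) actually rests.
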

Combining Proposition \ref{gene-quad} and Theorem \ref{Freyd} we obtain:
\begin{thm} \label{thm-alpha}
For $\C=\langle E \rangle_{\C}$ and $\mathcal{P}$ the full subcategory of $Quad(\C, Ab)$ having as objects $T_2U_E$ and $T_1U_E \otimes T_1U_E$, we have an equivalence of categories:
$$\alpha: Quad(\C, Ab) \xrightarrow{\simeq} Func^{add}(\mathcal{P}^{op}, Ab):= \mathcal{P}^{op}\mbox{--\,}  mod$$
assigning to a quadratic functor $F: \C \to Ab$ the restriction to $\mathcal{P}$ of the representable functor $Hom_{Quad(\C, Ab)}(-,F)$.
\end{thm}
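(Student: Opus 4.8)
The plan is to read off Theorem \ref{thm-alpha} as a direct application of the Gabriel--Popescu type statement Theorem \ref{Freyd} to the abelian category $Quad(\C, Ab)$, feeding in the family of small projective generators produced in Proposition \ref{gene-quad}. To invoke Theorem \ref{Freyd} I must check that $Quad(\C, Ab)$ satisfies condition (1): that it is abelian, that it admits arbitrary direct sums, and that the proposed family is a set of small projective generators. The last of these is exactly the content of Proposition \ref{gene-quad}, so only the first two are new work, and both are routine once the right ambient category is fixed.

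First I would observe that the functor category $Func_*(\C, Ab)$ is abelian, with kernels, cokernels and all (co)limits computed objectwise, because $Ab$ has these properties. By Proposition \ref{stable} the subcategory $Quad(\C, Ab) = Func_*(\C, Ab)_{\leq 2}$ is thick, hence closed under the kernels and cokernels formed in $Func_*(\C, Ab)$; consequently it is itself an abelian category and the inclusion into $Func_*(\C, Ab)$ is exact. Next I would verify closure under arbitrary direct sums. Since direct sums in $Func_*(\C, Ab)$ are formed objectwise and the direct sum functor on $Ab$ is exact, the kernels defining the cross-effects commute with direct sums, so $cr_3(\bigoplus_i F_i) \cong \bigoplus_i cr_3 F_i$. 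Thus a direct sum of quadratic functors is again quadratic, and $Quad(\C, Ab)$ inherits arbitrary direct sums, computed objectwise.

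With $Quad(\C, Ab)$ now known to be an abelian category with arbitrary direct sums, Proposition \ref{gene-quad} shows that $\{T_2U_E,\, T_1U_E \otimes T_1U_E\}$ is a set of small projective generators, so hypothesis (1) of Theorem \ref{Freyd} holds. Condition (2) of that theorem then delivers an equivalence of $Quad(\C, Ab)$ with $Func^{add}(\mathcal{P}^{op}, Ab)$, where $\mathcal{P}$ is the full subcategory on these two objects; tracing through the equivalence furnished by Theorem \ref{Freyd} identifies it as the functor sending $F$ to the restriction to $\mathcal{P}$ of the representable functor $Hom_{Quad(\C, Ab)}(-, F)$, which is precisely $\alpha$. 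I do not expect any genuine obstacle here: the substantive input, namely the construction of the generators together with their projectivity and smallness, is already carried out in Proposition \ref{gene-quad}, and the deep categorical mechanism is encapsulated in Theorem \ref{Freyd}; the remaining task is only the bookkeeping verification of abelianness and of closure under direct sums sketched above.
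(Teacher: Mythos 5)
Your proposal is correct and follows exactly the paper's own route: the paper obtains Theorem \ref{thm-alpha} by "combining Proposition \ref{gene-quad} and Theorem \ref{Freyd}", which is precisely your argument. Your additional verification that $Quad(\C,Ab)$ is abelian (via Proposition \ref{stable}) and closed under objectwise direct sums only makes explicit the bookkeeping the paper leaves implicit.
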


\subsection{The category $\mathcal{P}^{op}$}
The aim of this section is to prove the following result:
\begin{thm} \label{eq-ringoid}
We have an isomorphism of ringoids:
$$\theta: \mathcal{P}^{op} \xrightarrow{\simeq} \mathcal{R}$$
given on the objects by: $\theta(T_2U_E)=R_e$ and $\theta(T_1U_E \otimes T_1U_E)=R_{ee}$.
\end{thm}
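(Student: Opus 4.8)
The plan is to construct $\theta$ on the four morphism groups of $\mathcal{P}^{op}$ by means of the Yoneda lemma for polynomial functors (Proposition \ref{Yoneda-poly}) together with the splitting of Corollary \ref{T2UEE}, and then to check that it carries the composition of natural transformations in $Quad(\C,Ab)$ onto the relations \eqref{th1th2}--\eqref{th4th3} defining $\mathcal{R}$. Since $\mathcal{P}^{op}$ is automatically a ringoid, being the opposite of a full additive subcategory of $Quad(\C,Ab)$, establishing such an isomorphism will simultaneously confirm that $\mathcal{R}$ is well defined, as claimed after its definition.

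First I would identify the four Hom-groups. Writing $P_e=T_2U_E$ and $P_{ee}=T_1U_E\otimes T_1U_E$, Proposition \ref{Yoneda-poly} applied with $n=2$ and $X=E$ gives $\End_{Quad}(P_e)\cong T_2U_E(E)=\overline{\overline{\Lambda}}$ and $\Hom_{Quad}(P_e,P_{ee})\cong P_{ee}(E)=\overline{\Lambda}\otimes\overline{\Lambda}$, matching $\End_{\mathcal{R}}(R_e)$ and $\Hom_{\mathcal{R}}(R_{ee},R_e)$ respectively. For $\Hom_{Quad}(P_{ee},P_e)$ I would use that, by Corollary \ref{T2UEE}, $P_{ee}$ is a natural retract of $T_2U_{E\vee E}$ through the maps $I,R$; precomposition with $R$ embeds $\Hom_{Quad}(P_{ee},P_e)$ as the corresponding direct summand of $\Hom_{Quad}(T_2U_{E\vee E},P_e)\cong T_2U_E(E\vee E)$, and by the cross-effect decomposition of Proposition \ref{ce-prop} this summand is exactly $cr_2(T_2U_E)(E,E)=T_2U_E(E\mid E)=\Hom_{\mathcal{R}}(R_e,R_{ee})$. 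Naturality of the Yoneda isomorphism in $X$ guarantees that the two splittings are compatible, so that under this identification the class of $\xi\in\C(E,E\vee E)$ is sent to $\tilde\xi=\rho^2_{(1,2)}t_2(\xi)$, as in the diagram defining $\mathcal{R}$.

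The remaining and most delicate group is $\End_{Quad}(P_{ee})$, which I must show is isomorphic to the wreath product $(\overline{\Lambda}\otimes\overline{\Lambda})\wr\mathfrak{S}_2$. Here I would use that $P_{ee}=B\Delta_{\C}$ for the bilinear bireduced bifunctor $B=T_1U_E\boxtimes T_1U_E$, whose value $B(E,E)=\overline{\Lambda}\otimes\overline{\Lambda}$ carries the symmetric $\overline{\Lambda}\otimes\overline{\Lambda}$-module structure of Propositions \ref{symbifmod} and \ref{symmodindfct}, with involution the switch. A natural endomorphism of the quadratic functor $B\Delta_{\C}$ is determined (Proposition \ref{car-poly}, or its bifunctor analogue Corollary \ref{car-bipoly}) by a compatible pair of bifunctor morphisms, one on $B$ and one on $B\circ V$; via the bilinear classification Theorem \ref{bilin} these correspond to the two summands $(\overline{\Lambda}\otimes\overline{\Lambda})\oplus(\overline{\Lambda}\otimes\overline{\Lambda}).t$, the second being the twisted part arising from the symmetry $\tau_{E,E}$. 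Tracking how composition interacts with this switch should yield precisely the twisted multiplication of the wreath product, giving $\End_{Quad}(P_{ee})\cong(\overline{\Lambda}\otimes\overline{\Lambda})\wr\mathfrak{S}_2=\End_{\mathcal{R}}(R_{ee})$.

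Finally I would verify that $\theta$ respects composition, i.e. the relations \eqref{th1th2}--\eqref{th4th3}. Each is a routine but careful trace through the Yoneda isomorphism, using that composition of natural transformations corresponds, under $\varphi\mapsto\varphi_X(1_X)$, to evaluating one transformation on the distinguished element produced by the other, together with the explicit formulas for $I,R$ in Corollary \ref{T2UEE}, for $\iota^2_{(1,2)}$ and $\rho^2_{(1,2)}$, and for the involution $T$ on $T_2U_E(E\mid E)$ induced by $\tau_{E,E}$. I expect the main obstacle to be the endomorphism ring of $R_{ee}$ and, correlated with it, the two relations \eqref{th3th2} and \eqref{th4th3} that involve the involution $T$: these are exactly where the $\mathfrak{S}_2$-twist enters, so the bookkeeping of the symmetric structure must be carried out consistently with the sign and switch conventions fixed in Proposition \ref{crissymbif} and Corollary \ref{ThochF}.
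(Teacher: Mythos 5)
Your proposal follows essentially the same route as the paper: identify the four morphism groups of $\mathcal{P}^{op}$ via the Yoneda lemma for polynomial functors together with the retract of $T_1U_E\otimes T_1U_E$ inside $T_2U_{E\vee E}$ from Corollary \ref{T2UEE} (this is exactly the paper's Proposition \ref{T1T1-F}), and then verify the relations \eqref{th1th2}--\eqref{th4th3} by tracing compositions through the Yoneda isomorphism (the paper packages this bookkeeping into a technical Lemma \ref{Yoncomp} about composing morphisms between direct factors of the $T_nU_{E_k}$, which you would in effect have to reprove).

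The one place you genuinely deviate is $\End_{Quad}(T_1U_E\otimes T_1U_E)$, and there your sketch is weaker than the paper's. The paper simply applies the same identification $\Hom_{Quad}(T_1U_E\otimes T_1U_E,F)\cong F(E\mid E)$ with $F=T_1U_E\otimes T_1U_E$, and then uses linearity of $T_1U_E$ to compute $(T_1U_E\otimes T_1U_E)(E\mid E)\cong(\overline{\Lambda}\otimes\overline{\Lambda})\oplus(\overline{\Lambda}\otimes\overline{\Lambda})$ via the explicit map $\tilde{\iota}$; the wreath-product multiplication then falls out of the composition computation. Your alternative — describing an endomorphism of $B\Delta_{\C}$ by a ``compatible pair of bifunctor morphisms $B\to B$ and $B\to B\circ V$'' — only obviously gives that an endomorphism is \emph{determined} by such a pair (via $cr_2$, Corollary \ref{car-bipoly}, surjectivity of $S_2$ since $T_1(B\Delta_{\C})=0$, and compatibility with the symmetry); that every pair is realized requires an existence argument you do not supply. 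The gap is easily closed by using your own retract identification with $F=P_{ee}$ instead, so I would recommend doing that rather than the bilinear-classification detour.
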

In order to prove this theorem  we need the following proposition.
\begin{prop} \label{T1T1-F}
For $F \in Quad(\C, Ab)$ there is an isomorphism $\tilde{\Y}$ fitting into the commutative diagram:
$$\xymatrix{
Hom_{Quad(\C, Ab)}(T_1U_E \otimes T_1U_E, F) \ar@{.>}[r]^-{\tilde{\Y}} \ar[d]_-{R^*}& F(E \mid E) \ar@{^(->}[d]^-{\iota^2_{(1,2)}}\\
Hom_{Quad(\C, Ab)}(T_2U_{E \vee E}, F) \ar[r]_-{\Y}^-{\simeq}& F(E \vee E).
}$$
\end{prop}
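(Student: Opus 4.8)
The plan is to construct $\tilde{\Y}$ as the corestriction of the composite $\Y\circ R^{*}$ to the subgroup $F(E\mid E)$, and then to prove that it is bijective by matching the direct sum decomposition of $Hom_{Quad(\C,Ab)}(T_2U_{E\vee E},F)$ coming from Corollary \ref{T2UEE} with the canonical decomposition of $F(E\vee E)$ from Proposition \ref{ce-prop}.

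First I would check that $\Y\circ R^{*}$ actually takes values in $\iota^2_{(1,2)}\big(F(E\mid E)\big)=cr_2F(E,E)\subseteq F(E\vee E)$; this is what lets one fill in the dotted arrow and makes the square commute by construction. For $\psi\in Hom_{Quad(\C,Ab)}(T_1U_E\otimes T_1U_E,F)$, the Yoneda formula of Proposition \ref{Yoneda-poly} together with the explicit description of $R$ in Corollary \ref{T2UEE} gives
$$\Y(R^{*}\psi)=(\psi\circ R)_{E\vee E}\big(t_2(1_{E\vee E})\big)=\psi_{E\vee E}\big(t_1(i^2_1)\otimes t_1(i^2_2)\big).$$
Applying $F(r^2_k)$ and using naturality of $\psi$, this equals $\psi_E\big(t_1(r^2_k i^2_1)\otimes t_1(r^2_k i^2_2)\big)$; since one of the two tensor factors is $t_1(0)=0$ for each $k$, the element lies in $\ker\big((F(r^2_1),F(r^2_2))^t\big)=cr_2F(E,E)$. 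Because $R$ is a split epimorphism, $R^{*}$ is injective, and as $\Y$ and $\iota^2_{(1,2)}$ are injective as well, the induced map $\tilde{\Y}$ is injective and satisfies $\iota^2_{(1,2)}\tilde{\Y}=\Y R^{*}$.

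It then remains to prove surjectivity of $\tilde{\Y}$, which I regard as the crux. Here I would use Lemma \ref{lm-decompo} and Corollary \ref{T2UEE} to write the source of $\Y$ as $A_1\oplus A_2\oplus A_3$, where $A_3=im(R^{*})$ and $A_1,A_2$ are the images of precomposition with the two retractions $T_2U_{E\vee E}\to T_2U_E$ (induced, via Lemma \ref{lm-decompo}, by $h\mapsto hi^2_1$ and $h\mapsto hi^2_2$). A naturality computation identical in spirit to the one above identifies $\Y$ on these summands with the canonical injections, namely $\Y(A_j)=F(i^2_j)\big(F(E)\big)$ for $j=1,2$. By Proposition \ref{ce-prop} the three subgroups $F(i^2_1)F(E)$, $F(i^2_2)F(E)$ and $cr_2F(E,E)$ form a direct sum decomposition of $F(E\vee E)$. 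Since $\Y$ is an isomorphism, $\Y(A_3)$ is a complement of $F(i^2_1)F(E)\oplus F(i^2_2)F(E)$; being contained in the complement $cr_2F(E,E)$ by the first step, it must coincide with it. Hence $\Y\circ R^{*}$ is onto $cr_2F(E,E)$ and $\tilde{\Y}$ is an isomorphism.

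The main obstacle is exactly this last step: the computation of the first paragraph only yields the inclusion $im(\Y R^{*})\subseteq cr_2F(E,E)$, and to upgrade it to an equality one must explicitly identify the other two summands under $\Y$ and invoke the elementary fact that a complement contained in another complement of the same subgroup equals it. Everything else reduces to bookkeeping with the splitting of Corollary \ref{T2UEE} and the Yoneda lemma of Proposition \ref{Yoneda-poly}.
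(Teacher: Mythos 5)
Your proof is correct and follows essentially the same route as the paper: both hinge on Corollary \ref{T2UEE}, the Yoneda lemma of Proposition \ref{Yoneda-poly}, and naturality of $\Y$ to compare the splitting of $T_2U_{E\vee E}$ with the splitting of $F(E\vee E)$. The paper merely packages the final bookkeeping differently, as a morphism from the split exact sequence $0\to T_2U_E\oplus T_2U_E\to T_2U_{E\vee E}\to T_1U_E\otimes T_1U_E\to 0$, hommed into $F$, onto the sequence $0\to F(E\mid E)\to F(E\vee E)\to F(E)\oplus F(E)\to 0$, whereas you match the three direct summands explicitly and use the complement-inside-a-complement observation; the two arguments are interchangeable.
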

\begin{proof}
By Corollary \ref{T2UEE} we have a split exact sequence:
$$\xymatrix{
0 \ar[r] & T_2 U_E \oplus T_2 U_E \ar[r]^-{(r_1^*, r_2^*)}& T_2 U_{E \vee E} \ar@<.5ex>[r]^-{R} & T_1 U_E \otimes T_1 U_E \ar@<.5ex>[l]^-I    \ar[r] &0.
}$$

Homming into $F$ and using naturality  of $\Y$ in the first variable provides the following commutative diagram with exact rows which implies the assertion.
$$\xymatrix{
0 \ar[r] & Hom(T_1 U_E \otimes T_1 U_E,F) \ar[r]^-{R^*} \ar@{.>}[d]_{\simeq}^{\tilde{\Y} }& Hom(T_2 U_{E \vee E},F) \ar[r]^-{((r_1^*)^*, (r_2^*)^*)} \ar[d]_{\simeq}^{\Y} & Hom(T_2 U_E,F) \oplus Hom(T_2 U_E,F) \ar[d]_{\simeq}^{\Y \oplus \Y} \ar[r] &0\\
0 \ar[r] & F(E \mid E) \ar[r]_{\iota^2_{(1,2)}} & F(E \vee E) \ar[r]_{(F(r_1), F(r_2))} & F(E) \oplus F(E) \ar[r] & 0
}$$
\end{proof}

\begin{proof}[Proof of Theorem \ref{eq-ringoid}]
We first determine the morphisms groups of the ringoid $\mathcal{P}^{op}$. The Yoneda Lemma \ref{Yoneda-poly} provides isomorphisms of groups:
$$\theta_1=\Y: Hom_{\PP^{op}}(T_2U_E, T_2U_E) \xrightarrow{\simeq} T_2U_E(E)=\overline{\overline{\Lambda}}$$
$$\theta_2=\Y: Hom_{\PP^{op}}(T_1U_E \otimes T_1U_E, T_2U_E) \xrightarrow{\simeq} T_1U_E(E) \otimes T_1U_E(E)=\overline{\Lambda} \otimes \overline{\Lambda}.$$
Proposition \ref{T1T1-F} furnishes isomorphisms of groups:
$$\theta_3= \tilde{\Y}: Hom_{\PP^{op}}(T_2U_E, T_1U_E \otimes T_1U_E) \xrightarrow{\simeq} T_2U_E(E \mid E) $$
$$\tilde{\Y}: Hom_{\PP^{op}}(T_1U_E \otimes T_1U_E, T_1U_E \otimes T_1U_E) \xrightarrow{\simeq} (T_1U_E \otimes T_1U_E)(E \mid E).$$
As $T_1U_E$ is linear, we have an isomorphism $\tilde{\iota}: \overline{\Lambda} \otimes \overline{\Lambda} \oplus \overline{\Lambda} \otimes \overline{\Lambda} \to (T_1U_E \otimes T_1U_E) (E \mid E)$ given by: $\tilde{\iota}(\overline{a} \otimes \overline{b}, \overline{c} \otimes \overline{d})=\overline{i_1a} \otimes \overline{i_2b}+ \overline{i_2d} \otimes \overline{i_1c}$. Thus we get an isomorphism of groups:
$$\theta_4={\tilde{\iota}}^{-1} \tilde{\Y}: Hom_{\PP^{op}}(T_1U_E \otimes T_1U_E, T_1U_E \otimes T_1U_E) \xrightarrow{\simeq} \overline{\Lambda} \otimes \overline{\Lambda} \oplus \overline{\Lambda} \otimes \overline{\Lambda}. $$
Noting that $\theta_1^{-1}(t_2 \alpha)=\alpha^*$ we see that $\theta_1$ is a ring isomorphism. In order to compute the remaining composition laws in $\PP$, we need the following technical lemma.

\begin{lm} \label{Yoncomp}
Let $n \geq 1$, and for $k=1,2$ let $E_k \in \C$ and $S_k: \C \to Ab$ be a direct factor of $T_nU_{E_k}$ with injection and retraction $\xymatrix{
S_k \ar@<-.5ex>[r]_-{I_k}  & T_nU_{E_k}  \ar@<-.5ex>[l]_-{R_k}}$. Furthermore, let $F \in Func_*(\C, Ab)_{\leq n}$ and $S_1 \xrightarrow{\varphi_1} S_2 \xrightarrow{\varphi_2} F$ be natural transformations. Write $x_1=\Y R_1^*(\varphi_1) \in S_2(E_1)$ and $x_2= \Y R_2^*(\varphi_2) \in F(E_2)$ and let $(I_2)_{E_1}(x_1)=\sum_{j} n_j t_n(\alpha_j)$ be a decomposition in $T_nU_{E_2}(E_1)$ with $n_j\in \Z$ and $\alpha_j \in \C(E_2, E_1)$. Then
$$\Y R_1^*(\varphi_2 \varphi_1)=\sum_j n_j F(\alpha_j)(x_2) \in F(E_1).$$
\end{lm}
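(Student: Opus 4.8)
The plan is to unwind the definition of the Yoneda isomorphism $\Y$ from Proposition \ref{Yoneda-poly} and then reduce everything to a single naturality square. Recall that $\Y$ sends a natural transformation to its value at the class of the identity, i.e.\ $\Y(\psi)=\psi_X(t_n(1_X))$ for $\psi\co T_nU_X\to F$. First I would observe that, by definition of $R_1^*$ as precomposition with $R_1$,
$$\Y R_1^*(\varphi_2\varphi_1)=(\varphi_2\varphi_1 R_1)_{E_1}(t_n(1_{E_1}))=(\varphi_2)_{E_1}\big((\varphi_1 R_1)_{E_1}(t_n(1_{E_1}))\big)=(\varphi_2)_{E_1}(x_1),$$
the last equality being exactly the definition of $x_1=\Y R_1^*(\varphi_1)$. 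Here I would note that $S_2$, being a direct factor of the polynomial functor $T_nU_{E_2}$ of degree $\le n$, is itself of degree $\le n$ by thickness (Proposition \ref{stable}), so the Yoneda lemma legitimately applies to define $x_1\in S_2(E_1)$.

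Next I would use the splitting $R_2 I_2=\mathrm{id}_{S_2}$ to rewrite $\varphi_2=(\varphi_2 R_2)I_2$, whence
$$(\varphi_2)_{E_1}(x_1)=(\varphi_2 R_2)_{E_1}\big((I_2)_{E_1}(x_1)\big).$$
Substituting the hypothesized decomposition $(I_2)_{E_1}(x_1)=\sum_j n_j\, t_n(\alpha_j)$ in $T_nU_{E_2}(E_1)$ and setting $\psi:=\varphi_2 R_2\co T_nU_{E_2}\to F$, the problem is reduced, by additivity of $\psi_{E_1}$, to computing $\psi_{E_1}(t_n(\alpha_j))$ for each $\alpha_j\in\C(E_2,E_1)$.

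The key step is the naturality of $\psi$ applied to the morphism $\alpha_j\co E_2\to E_1$. Since the functor $T_nU_{E_2}$ acts on morphisms by postcomposition, one has $T_nU_{E_2}(\alpha_j)(t_n(1_{E_2}))=t_n(\alpha_j 1_{E_2})=t_n(\alpha_j)$; and by definition $\psi_{E_2}(t_n(1_{E_2}))=\Y R_2^*(\varphi_2)=x_2$. Evaluating the naturality identity $\psi_{E_1}\circ T_nU_{E_2}(\alpha_j)=F(\alpha_j)\circ\psi_{E_2}$ at $t_n(1_{E_2})$ therefore gives $\psi_{E_1}(t_n(\alpha_j))=F(\alpha_j)(x_2)$. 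Combining this with the two previous reductions yields
$$\Y R_1^*(\varphi_2\varphi_1)=(\varphi_2)_{E_1}(x_1)=\sum_j n_j\,\psi_{E_1}(t_n(\alpha_j))=\sum_j n_j\,F(\alpha_j)(x_2),$$
as asserted.

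There is no serious obstacle here; the lemma is essentially a careful bookkeeping of the Yoneda correspondence through the splitting. The only point demanding vigilance is that the three occurrences of $\Y$ refer to the Yoneda isomorphism at three distinct pairs (functor, evaluation object)—namely $(S_2,E_1)$, $(F,E_2)$, and $(F,E_1)$—and that one keeps the variance straight, so that $T_nU_{E_2}(\alpha_j)$ carries the class of the identity at $E_2$ to the class of $\alpha_j$ at $E_1$. Once these identifications are pinned down, the computation is forced.
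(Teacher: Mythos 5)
Your proof is correct and follows essentially the same route as the paper's: both hinge on inserting $R_2I_2=\mathrm{id}_{S_2}$, decomposing $(I_2)_{E_1}(x_1)$, and invoking the Yoneda correspondence so that $t_n(\alpha_j)$ acts as precomposition by $\alpha_j$ (your naturality square for $\psi=\varphi_2R_2$ at $\alpha_j$ is exactly the paper's ``naturality of $\Y$ in the first variable''). The only difference is presentational — you unwind $\Y$ element-wise where the paper chases a commutative diagram of Hom-groups — and your remark that $S_2$ is polynomial of degree $\le n$ by thickness is a worthwhile point the paper leaves implicit.
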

\begin{proof}
Let $\tilde{\varphi_1}=I_2 \varphi_1 R_1 \in Hom(T_n U_{E_1}, T_n U_{E_2})$. By naturality of $\Y$ in the second variable we have
$$\Y(\tilde{\varphi_1})=\Y {I_2}_*(\varphi_1 R_1)=(I_2)_{E_1} \Y R_1^*(\varphi_1)=\sum_j n_j t_n(\alpha_j)$$
whence
\begin{equation} \label{phi1tilde}
\tilde{\varphi_1}=\sum_j n_j t_n(\alpha_j)^*
\end{equation}
since $\Y(t_n(\alpha_j)^*)=t_n(\alpha_j)$. Now consider the following diagram:
$$\xymatrix{
Hom(S_2, F) \ar[r]^{\varphi_1^*}  \ar[d]_{R_2^*}& Hom(S_1,F) \ar[d]_{R_1^*}\\
Hom(T_nU_{E_2}, F) \ar[r]^{\tilde{\varphi_1}^*} \ar[d]_{\Y}^{\simeq}& Hom(T_nU_{E_1}, F)  \ar[d]_{\Y}^{\simeq}\\
F(E_2)  \ar[r]^{\sum_j n_j F(\alpha_j)}  & F(E_1).
}$$
The upper square commutes by definition of $\tilde{\varphi_1}$ while the lower square commutes by \ref{phi1tilde} and naturality of $\Y$ in the first variable. The desired formula follows applying commutativity of the exterior rectangle to $\varphi_2 \in Hom(S_2,F)$.
\end{proof}
In a first step we postcompose endomorphisms of $T_1U_E \otimes T_1U_E$ by other maps in $\PP$. By the isomorphisms $\theta_4$ and $\theta_3$ and by \ref{Imiota} it suffices to consider maps:
$$\xymatrix{
 & & T_1U_E \otimes T_1U_E\\
  T_1U_E \otimes T_1U_E \ar[r]^{\varphi_1} & T_1U_E \otimes T_1U_E \ar[ru]^{\varphi_2} \ar[rd]_{\varphi_2'}\\
  & & T_2U_E}
$$
such that:
$$x_k= \Y R^*(\varphi_k)=\tilde{\iota}(\overline{a_k} \otimes \overline{b_k}, \overline{c_k} \otimes \overline{d_k})= \overline{i_1 a_k} \otimes \overline{i_2 b_k}+ \overline{i_2 d_k} \otimes \overline{i_1 c_k}\in T_1U_E(E \vee E) \otimes T_1U_E(E \vee E)$$
for $k \in \{1,2\}$ and $a_k, b_k, c_k, d_k \in \C(E,E)$, and such that:
$$x'_2= \Y R^*(\varphi'_2)=\iota^2_{(1,2)} \rho^2_{(1,2)} t_2 (\xi)=t_2(\xi-i_1r_1 \xi-i_2r_2 \xi) \in T_2U_E(E \vee E)$$
for $\xi \in \C(E, E \vee E)$. Then:
$$I_{E \vee E}(x_1)=t_2(\nabla(i_1 a_1 \vee i_2 b_1)-i_1a_1r_1-i_2b_1r_2)+t_2(\nabla(i_2 d_1 \vee i_1 c_1)-i_2d_1r_1-i_1c_1r_2)$$
$$=t_2( a_1 \vee b_1-i_1a_1r_1-i_2b_1r_2)+t_2((c_1 \vee d_1) \tau -i_2d_1r_1-i_1c_1r_2).$$
Applying Lemma \ref{Yoncomp} for $E_1=E_2=E \vee E$, $S_1=S_2= F=T_1U_E \otimes T_1U_E $, $R_1=R_2=R$ and $I_1=I_2=I$ we get (omitting all terms being trivial since $r_1 i_2=0=r_2i_1)$):
\begin{eqnarray*}
\Y R^*(\varphi_2 \varphi_1)&=&(t_1 \otimes t_1)((a_1 \vee b_1)i_1a_2 \otimes (a_1 \vee b_1)i_2 b_2+(a_1 \vee b_1)i_2d_2 \otimes (a_1 \vee b_1)i_1 c_2\\
&&+(c_1 \vee d_1) \tau i_1a_2 \otimes (c_1 \vee d_1)\tau i_2 b_2+(c_1 \vee d_1)\tau i_2d_2 \otimes (c_1 \vee d_1) \tau i_1 c_2)\\
&=&(t_1 \otimes t_1)(i_1a_1a_2 \otimes i_2 b_1 b_2+i_2 b_1 d_2 \otimes i_1 a_1 c_2
+i_2 d_1a_2 \otimes i_1 c_1 b_2+ i_1 c_1 d_2 \otimes  i_2 d_1 c_2)\\
&=& \tilde{\iota}(t_1 \otimes t_1)(a_1a_2 \otimes b_1 b_2+c_1 d_2 \otimes d_1 c_2, 
 a_1c_2 \otimes b_1 d_2+  c_1 b_2 \otimes  d_1 a_2)\\
 &=&\tilde{\iota}((\overline{a_1} \otimes \overline{b_1}, \overline{c_1} \otimes \overline{d_1}) \star  (\overline{a_2} \otimes \overline{b_2}, \overline{c_2} \otimes \overline{d_2}))
 \end{eqnarray*}
 where $\star$ denotes the wreath product structure on $\overline{\Lambda} \otimes \overline{\Lambda} \oplus \overline{\Lambda} \otimes \overline{\Lambda}= \overline{\Lambda} \otimes \overline{\Lambda} \oplus (\overline{\Lambda} \otimes \overline{\Lambda})t$.
 This shows that $\theta_4$ is a ring isomorphism: $\theta_4: End_{\PP^{op}}(T_1U_E \otimes T_1U_E) \xrightarrow{\simeq} (\overline{\Lambda} \otimes \overline{\Lambda}) \wr  \mathfrak{S}_2. $ Next, again by Lemma \ref{Yoncomp}  for $E_1=E_2=E \vee E$, $S_1=S_2=T_1U_E \otimes T_1U_E $, $F=T_2U_E$, $R_1=R_2=R$ and $I_1=I_2=I$ and omitting all terms being trivial:
 \begin{eqnarray*}
\Y R^*(\varphi'_2 \varphi_1)&=& T_2U_E(a_1 \vee b_1) \iota^2_{(1,2)} \rho^2_{(1,2)} t_2(\xi) + t_2 (-i_1 a_1 r_1 \xi +i_1 a_1 r_1 \xi -i_2 b_1 r_2 \xi +i_2 b_1 r_2 \xi)\\
&&+T_2U_E(c_1 \vee d_1) T_2U_E(\tau)\iota^2_{(1,2)} \rho^2_{(1,2)} t_2(\xi)\\
&& + t_2 (-i_2 d_1 r_1 \xi +i_2 d_1 r_1 \xi -i_1 c_1 r_2 \xi +i_1 c_1 r_2 \xi)\\
&=&  \iota^2_{(1,2)} T_2U_E(a_1 \mid b_1) \rho^2_{(1,2)} t_2(\xi) +\iota^2_{(1,2)} T_2U_E(c_1 \mid d_1) T \rho^2_{(1,2)} t_2(\xi)
 \end{eqnarray*}
whence 
$$\theta_3(\varphi_1^{op} \circ {\varphi'_2}^{op})=\theta_4(\varphi_1^{op}) \circ \theta_3({\varphi'_2}^{op}) \mathrm{\quad by\ \ref{th4th3}}.$$
Next consider maps 
$$\xymatrix{
 & & T_2U_E \\
  T_1U_E \otimes T_1U_E \ar[r]^-{\varphi_1} & T_2U_E \ar[ru]^{\varphi_2} \ar[rd]_{\varphi_2'}\\
  & & T_1U_E \otimes T_1U_E }
$$
such that:
$$x_1= \Y R^*(\varphi_1)= \iota^2_{(1,2)} \rho^2_{(1,2)} t_2 (\xi)=t_2(\xi-i_1r_1 \xi-i_2r_2 \xi)$$ for $\xi \in \C(E, E \vee E)$
$$x_2=\Y(\varphi_2)=t_2(\alpha) \mathrm{\quad and\quad} x_2'=\Y(\varphi'_2)=\overline{\alpha} \otimes \overline{\beta}$$
for $\alpha, \beta \in \C(E,E)$; again this suffices by \ref{Imiota}. Applying Lemma \ref{Yoncomp} for $E_1=E \vee E$, $E_2=E$, $S_1=T_1 U_E \otimes T_1 U_E$, $S_2=F=T_2 U_E$, $R_1=R$, $I_1=I$ and $I_2=R_2=Id$ we get:
 \begin{eqnarray*}
\Y R^*(\varphi_2 \varphi_1)&=& T_2U_E(\xi) (t_2 \alpha)- T_2U_E(i_1 r_1 \xi) (t_2 \alpha)- T_2U_E( i_2 r_2 \xi) (t_2 \alpha)\\
&=& t_2(\xi \alpha- i_1 r_1 \xi \alpha - i_2 r_2 \xi \alpha)\\
&=& \iota^2_{(1,2)} \rho^2_{(1,2)} t_2(\xi \alpha)
\end{eqnarray*}
whence 
$$\theta_3(\varphi_1^{op} \circ {\varphi_2}^{op})=\theta_3(\varphi_1^{op}) \circ \theta_1({\varphi_2}^{op}) \mathrm{\quad by\ \ref{th3th1}}.$$
Next applying  Lemma \ref{Yoncomp} for $E_1=E \vee E$, $E_2=E$, $S_1=F=T_1 U_E \otimes T_1 U_E$, $S_2=T_2 U_E$, $R_1=R$, $I_1=I$ and $I_2=R_2=Id$ we get
 \begin{eqnarray*}
&&\Y R^*(\varphi'_2 \varphi_1)\\
&=& (T_1U_E(\xi) \otimes  T_1U_E(\xi)- T_1U_E(i_1 r_1 \xi) \otimes  T_1U_E(i_1 r_1 \xi)- T_1U_E(i_2 r_2 \xi) \otimes  T_1U_E(i_2 r_2 \xi)) (\overline{\alpha} \otimes \overline{\beta})\\
&=& (t_1 \otimes t_1)(\xi \alpha \otimes \xi \beta -i_1 r_1 \xi \alpha \otimes i_1 r_1 \xi \beta-i_2 r_2 \xi \alpha \otimes i_2 r_2 \xi \beta)\\
&=&(t_1 \otimes t_1)((i_1 r_1+ i_2 r_2)\xi \alpha \otimes (i_1 r_1+ i_2 r_2)\xi \beta -i_1 r_1 \xi \alpha \otimes i_1 r_1 \xi \beta-i_2 r_2 \xi \alpha \otimes i_2 r_2 \xi \beta) \mathrm{\quad by\ Lemma\ \ref{lem1}}\\
&=&(t_1 \otimes t_1)(i_1 r_1 \xi \alpha \otimes i_2 r_2 \xi \beta+i_2 r_2 \xi \alpha \otimes i_1 r_1 \xi \beta)\\
&=& \tilde{\iota}(\overline{r_1 \xi \alpha} \otimes \overline{r_2 \xi \beta} , \overline{r_1 \xi \beta} \otimes \overline{r_2 \xi \alpha} )
\end{eqnarray*}
whence
$$\theta_4(\varphi_1^{op} \circ {\varphi'_2}^{op})=\theta_3(\varphi_1^{op}) \circ \theta_2({\varphi'_2}^{op}) \mathrm{\quad by\ \ref{th3th2}}.$$
Now, consider diagrams
$$\xymatrix{
 & &  T_1U_E \otimes T_1U_E \\
 T_2U_E \ar[r]^-{\varphi_1} &  T_1U_E \otimes T_1U_E \ar[ru]^{\varphi_2} \ar[rd]_{\varphi_2'}\\
  & &  T_2U_E }
$$
such that:
$$x_1= \Y (\varphi_1)= \overline{\alpha} \otimes \overline{\beta} \in  \overline{\Lambda} \otimes \overline{\Lambda}  $$ 
$$x_2=\Y R^*(\varphi_2)= i_1 a \otimes i_2 b+ i_2 d \otimes i_1 c= \tilde{\iota}(a \otimes b, c \otimes d) \in T_1U(E \vee E) \otimes T_1 U(E \vee E)$$
$$ x_2'=\Y R^* (\varphi'_2)= \iota^2_{(1,2)} \rho^2_{(1,2)} t_2(\xi)$$
for $\alpha, \beta, a, b, c, d \in \C(E,E)$ and $\xi \in \C(E, E \vee E)$. We have:
$$I_E(x_1)=t_2(\nabla(\alpha \vee \beta)-\alpha r_1- \beta r_2).$$
Applying Lemma \ref{Yoncomp} for $E_1=E$, $E_2=E  \vee E$, $S_1=T_2 U_E$, $S_2=F=T_1 U_E \otimes T_1 U_E$, $I_1=R_1=Id$,$R_2=R$ and $I_2=I$ we get:
 \begin{eqnarray*}
\Y (\varphi_2 \varphi_1)&=& (T_1U_E(\nabla(\alpha \vee \beta)) \otimes T_1U_E(\nabla(\alpha \vee \beta)))(x_2)- (T_1U_E(\alpha r_1) \otimes T_1U_E(\alpha r_1))(x_2)\\
&&- (T_1U_E(\beta r_2) \otimes T_1U_E(\beta r_2))(x_2)\\
&=&(t_1 \otimes t_1)(\nabla(\alpha \vee \beta) i_1a \otimes \nabla(\alpha \vee \beta) i_2 b + \nabla(\alpha \vee \beta) i_2 d \otimes \nabla(\alpha \vee \beta) i_1 c\\
&=&(t_1 \otimes t_1)(\alpha a \otimes \beta b+ \beta d \otimes \alpha c).
\end{eqnarray*}
Hence
$$\theta_2(\varphi_1^{op} \circ {\varphi_2}^{op})=\theta_2(\varphi_1^{op}) \circ \theta_4({\varphi_2}^{op}) \mathrm{\quad by\ \ref{th2th4}}.$$
Applying Lemma \ref{Yoncomp} for $E_1=E$, $E_2=E  \vee E$, $S_1=F=T_2 U_E$, $S_2=T_1 U_E \otimes T_1 U_E$, $I_1=R_1=Id$,$R_2=R$ and $I_2=I$ we get:
 \begin{eqnarray*}
\Y (\varphi'_2 \varphi_1)&=&T_2U_E(\nabla(\alpha \vee \beta))( t_2 (\xi -i_1 r_1 \xi -i_2 r_2 \xi) )-T_2U_E(\alpha r_1) (t_2 (\xi -i_1 r_1 \xi - i_2 r_2 \xi))\\
&&-T_2U_E(\beta r_2) (t_2 (\xi -i_1 r_1 \xi - i_2 r_2 \xi))\\
&=& t_2(\nabla(\alpha \vee \beta) \xi- \alpha r_1 \xi - \beta r_2 \xi- \alpha r_1 \xi + \alpha r_1 \xi - \beta r_2 \xi+ \beta r_2 \xi).
\end{eqnarray*}
Thus 
$$\theta_1(\varphi_1^{op} \circ {\varphi'_2}^{op})=\theta_2(\varphi_1^{op}) \circ \theta_3({\varphi'_2}^{op}) \mathrm{\quad by\ \ref{th2th3}}.$$
Finally, let
$$\xymatrix{
 T_2U_E \ar[r]^-{\varphi_1} &  T_2U_E \ar[r]^-{\varphi_2} & T_1U_E \otimes T_1U_E
 }$$
such that:
$$x_1= \Y (\varphi_1)=  t_2(\gamma) \mathrm{\quad and \quad} x_2=\Y(\varphi_2)= \overline{\alpha}  \otimes \overline{\beta}$$
for $\alpha, \beta, \gamma \in \C(E,E)$. Then:
$$\Y((\varphi_2 \varphi_1)= (T_1U_E(\gamma) \otimes T_1U_E(\gamma))(\overline{\alpha}  \otimes \overline{\beta})=\overline{ \gamma \alpha}  \otimes \overline{ \gamma \beta}$$
whence
$$\theta_2(\varphi_1^{op} \circ {\varphi_2}^{op})=\theta_1(\varphi_1^{op}) \circ \theta_2({\varphi_2}^{op}) \mathrm{\quad by\ \ref{th1th2}}.$$
\end{proof}

\begin{proof}[Proof of Theorem \ref{thm-eq-quad}]
Combining  Theorem \ref{thm-alpha} and Theorem \ref{eq-ringoid} we obtain the result.
\end{proof}

\begin{rem}
Applying the ideas of this section to linear functors we find again Theorem \ref{lin} since, in this case, the full subcategory $\mathcal{P}$ of $Lin(\C, Ab)$ having the single object $T_1U_E$ is equivalent to the ring $(\overline{\Lambda})^{op}=End_{\mathcal{P}}(T_1U_E)$. 
\end{rem}

\subsection{The category $\R \ti mod$}
In this section, we give a reformulation of the structure of an $\R$-module that motivates the introduction of quadratic $\C \ti$modules in section \ref{section-4}.
\begin{lm} \label{Rmodreform}
An $\R \ti$module $M: \R \to Ab$ is equivalent with the following data:
\begin{enumerate}
\item  a left $\overline{\overline{\Lambda}} \ti$module $M_e$;
\item a symmetric $\overline{\Lambda} \otimes \overline{\Lambda} \ti$module $M_{ee}$ (with involution denoted by $T$);
\item a map of $\overline{\overline{\Lambda}} \ti$modules 
$$p: (\overline{\Lambda} \otimes \overline{\Lambda}) \underset{(\overline{\Lambda} \otimes \overline{\Lambda}) \wr \mathfrak{S}_2}{ \otimes } M_{ee} \to M_e$$
where the structure of right $(\overline{\Lambda} \otimes \overline{\Lambda}) \wr \mathfrak{S}_2 \ti $module on $\overline{\Lambda} \otimes \overline{\Lambda}$ is given by \ref{th2th4}, and the structure of $\overline{\overline{\Lambda}}$-module on $\overline{\Lambda} \otimes \overline{\Lambda}$ is given by the diagonal action;
\item a map of symmetric $\overline{\Lambda} \otimes \overline{\Lambda}$\ti modules 
$$h: T_2U_E(E \mid E) \underset{\overline{\overline{\Lambda}}}{ \otimes } M_e \to M_{ee}$$
\end{enumerate}
such that for $\alpha, \beta \in \C(E,E)$, $\xi \in \C(E, E \vee E)$, $a \in M_e$ and $m \in M_{ee}$ the following relations hold:
\begin{equation*} \label{RM1}
(RM1)\quad t_2(\nabla (\alpha \vee \beta) \xi- \alpha r_1 \xi- \beta r_2 \xi)a=p(\overline{\alpha} \otimes \overline{\beta} \otimes h(\rho^2_{(1,2)} t_2(\xi) \otimes a))
\end{equation*}
\begin{equation*} \label{RM2}
(RM2) \quad h(\rho^2_{(1,2)} t_2(\xi) \otimes p(\overline{\alpha} \otimes \overline{\beta} \otimes m))=(\overline{r_1 \xi \alpha} \otimes \overline{r_2 \xi \beta})m + (r_1 \xi \beta \otimes r_2 \xi \alpha)Tm.
\end{equation*}
\end{lm}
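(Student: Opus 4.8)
The plan is to unwind the definition of an additive functor $M \co \R \to Ab$ and match its data piece by piece against the list (1)--(4) and the relations (RM1), (RM2). First I would set $M_e = M(R_e)$ and $M_{ee} = M(R_{ee})$. Since $M$ is additive, restriction to $\End_{\R}(R_e) = \overline{\overline{\Lambda}}$ makes $M_e$ a left $\overline{\overline{\Lambda}}$-module, which is datum (1), and restriction to $\End_{\R}(R_{ee}) = (\overline{\Lambda} \otimes \overline{\Lambda})\wr \mathfrak{S}_2$ makes $M_{ee}$ a module over the wreath product; by the Remark following the definition of a symmetric $R\otimes R$-module this is the same as a symmetric $\overline{\Lambda} \otimes \overline{\Lambda}$-module with involution $T = M(t)$, which is datum (2). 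The two mixed morphism groups then produce additive pairings $h_0 \co T_2U_E(E\mid E) \otimes_{\Z} M_e \to M_{ee}$, $h_0(\phi\otimes a) = M(\phi)(a)$, and $p_0 \co (\overline{\Lambda}\otimes\overline{\Lambda})\otimes_{\Z} M_{ee} \to M_e$, $p_0(\psi\otimes m) = M(\psi)(m)$.

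Next I would note that, since any morphism between the two objects of $\R$ lies in one of the four Hom-groups, functoriality of $M$ is equivalent to its compatibility with the eight composition laws of $\R$ indexed by triples of objects. The two laws with all three objects equal are exactly the ring multiplications already encoded in (1) and (2). The remaining six are precisely \ref{th1th2}--\ref{th4th3}, and I would translate each by applying $M$ to both sides and using $M(g\circ f) = M(g)M(f)$. Feeding in \ref{th3th1}, resp.\ \ref{th2th4}, shows that $h_0$ and $p_0$ are balanced over $\overline{\overline{\Lambda}}$, resp.\ over $(\overline{\Lambda}\otimes\overline{\Lambda})\wr\mathfrak{S}_2$, hence descend to the maps $h$ and $p$ of (4) and (3). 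Feeding in \ref{th1th2} shows that $p$ is $\overline{\overline{\Lambda}}$-linear for the diagonal action, while feeding in \ref{th4th3} (taking $(c,d)=(0,0)$, and then $(a,b)=(0,0)$ with $(c,d)=(1,1)$) shows that $h$ commutes both with the $\overline{\Lambda}\otimes\overline{\Lambda}$-action and with the involution, i.e.\ is a morphism of symmetric modules. Finally, feeding $a\in M_e$ into \ref{th2th3} and $m\in M_{ee}$ into \ref{th3th2} yields verbatim the relations (RM1) and (RM2).

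Conversely, given data $(M_e, M_{ee}, p, h)$ as in (1)--(4) subject to (RM1) and (RM2), I would reconstruct $M$ by taking $M_e, M_{ee}$ as its values, letting the module structures define $M$ on the two endomorphism rings, and setting $M(\phi)(a) = h(\phi\otimes a)$ and $M(\psi)(m) = p(\psi\otimes m)$ on the mixed Hom-groups. The dictionary above, read backwards, shows that this $M$ respects all eight composition laws and hence is a well-defined additive functor; as every composite in $\R$ reduces to the generators treated above, no further relations need to be checked, the associativity and unit axioms of $\R$ being guaranteed by Theorem \ref{eq-ringoid}. These two assignments are visibly mutually inverse on objects and natural in $M$, giving the asserted equivalence of categories.

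The main obstacle will be the careful bookkeeping in the two relations involving the switch $t$ and the involution $T$. Deriving the symmetric-module property of $h$ from \ref{th4th3} hinges on reading that relation with $(c,d)=(1,1)$ to obtain $t\cdot\tilde{\xi} = T\tilde{\xi}$ (using $T_2U_E(1\mid 1)=\mathrm{id}$ and $T_2U_E(0\mid 0)=0$), and deriving (RM2) from \ref{th3th2} requires expanding the wreath action as $(\overline{u}\otimes\overline{v})t\cdot m = (u\otimes v)Tm$ so that the second summand emerges as $(r_1\xi\beta\otimes r_2\xi\alpha)Tm$ with the tensor factors in the correct order. Beyond keeping the left/right actions, the pre- versus post-composition conventions, and the two balancings ($\otimes_{\overline{\overline{\Lambda}}}$ versus $\otimes_{(\overline{\Lambda}\otimes\overline{\Lambda})\wr\mathfrak{S}_2}$) straight, the argument is a direct transcription.
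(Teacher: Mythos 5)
Your proposal is correct and follows essentially the same route as the paper: set $M_e=M(R_e)$, $M_{ee}=M(R_{ee})$, read off the module structures and the pairings $h$, $p$ from the Hom-groups of $\mathcal R$, and translate functoriality with respect to the composition laws \ref{th1th2}--\ref{th4th3} into the stated structures and the relations $(RM1)$, $(RM2)$. You are in fact somewhat more explicit than the paper, which only writes out the two mixed compositions \ref{th2th3} and \ref{th3th2} giving $(RM1)$ and $(RM2)$ and leaves the remaining translations implicit.
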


\begin{proof}
Given $F \in \R \ti mod$, let $M_e=F(R_e)$, $M_{ee}=F(R_{ee})$, $p(\ol{\alpha} \otimes \ol{\beta} \otimes m)=F(\ol{\alpha} \otimes \ol{\beta})(m)$ and $h(\tilde{\xi} \otimes a)=F(\tilde{\xi})(a)$ where $\tilde{\xi}= \rho^2_{(1,2)} t_2(\xi)$. Then the relation $(RM1)$ corresponds to the relation $F((\ol{\alpha} \otimes \ol{\beta}) \circ \tilde{\xi})= F(\ol{\alpha} \otimes \ol{\beta}) \circ  F(\tilde{\xi})$, in fact:
\begin{eqnarray*}
F((\ol{\alpha} \otimes \ol{\beta}) \tilde{\xi})(a) &=& F(\nabla(\alpha \vee \beta) \xi- \alpha r_1 \xi - \beta r_2 \xi)(a) \mathrm{\quad by\ \ref{th2th3}} \\
&=& t_2(\nabla(\alpha \vee \beta) \xi- \alpha r_1 \xi - \beta r_2 \xi)a
\end{eqnarray*}
while
\begin{eqnarray*}
F(\ol{\alpha} \otimes \ol{\beta}) \circ  F(\tilde{\xi})(a)&=& F(\ol{\alpha} \otimes \ol{\beta}) h(\tilde{\xi} \otimes a)\\
&=& p(\ol{\alpha} \otimes \ol{\beta} \otimes h(\tilde{\xi} \otimes a)).
\end{eqnarray*}
Similarly, the relation $(RM2)$ corresponds to the relation:
$F(\tilde{\xi} \circ (\ol{\alpha} \otimes \ol{\beta}))=  F(\tilde{\xi}) \circ F(\ol{\alpha} \otimes \ol{\beta})$. In fact:
\begin{eqnarray*}
F(\tilde{\xi} \circ (\ol{\alpha} \otimes \ol{\beta}))(m) &=& F( \ol{r_1 \xi \alpha} \otimes \ol{r_2 \xi \beta}+(\ol{r_1 \xi \beta} \otimes \ol{r_2 \xi \alpha})t)(m)\\
&=& ( \ol{r_1 \xi \alpha} \otimes \ol{r_2 \xi \beta})m+(\ol{r_1 \xi \beta} \otimes \ol{r_2 \xi \alpha})Tm
\end{eqnarray*}
while
\begin{eqnarray*}
F(\tilde{\xi}) \circ F(\ol{\alpha} \otimes \ol{\beta})(m)&=&F(\tilde{\xi}) p(\ol{\alpha} \otimes \ol{\beta} \otimes m)\\
&=&h(\tilde{\xi} \otimes p(\ol{\alpha} \otimes \ol{\beta} \otimes m)).
\end{eqnarray*}
\end{proof}

\section{Quadratic $\C$-modules} \label{section-4}
In this section we introduce quadratic $\C$-modules which generalize to any pointed category $\C$ the quadratic $\mathbb{Z}$-modules considered by Baues in \cite{Baues} for $\C=Ab$. We show that quadratic $\C$-modules constitute a minimal description of $\mathcal R$-modules, and thus of reduced quadratic functors from $\C$ to $Ab$ if $\C$ is a theory generated by $E$. We make the functor from quadratic functors to quadratic $\C$-modules explicit; a canonical  inverse  functor, however, is more difficult to exhibit, and is provided in section 6 by the construction of a quadratic tensor product.


\subsection{Quadratic $\mathcal{C}$-modules}

 \begin{defi} \label{proto-quad-C} {\bf (Proto-quadratic $\mathcal{C}$-module)}
A proto-quadratic $\mathcal{C}$-module relative to $E$ is a diagram of group homomorphisms:
$$M=(T_{11}(cr_2U)(E,E)\otimes_{\Lambda}M_e  \hspace{1mm}\xrightarrow{\hat{H}}  \hspace{1mm}M_{ee}  \hspace{1mm}\xrightarrow{T} \hspace{1mm} M_{ee}   \hspace{1mm}\xrightarrow{P}  \hspace{1mm}M_e)$$
where
\begin{itemize}
\item $M_{e}$ is a left $\Lambda$-module;
\item $M_{ee}$ is a symmetric $ \bar{\Lambda} \otimes \bar{\Lambda}$-module with involution $T$;
\item $P: M_{ee} \to M_e$ is a homomorphism of $\Lambda$-modules with respect to the diagonal action of $\Lambda$ on $M_{ee}$, i.e. for $\alpha \in \mathcal{C}(E,E)$ and $m \in M_{ee}$: 
$$P((\bar{\alpha} \otimes \bar{\alpha})m)= \alpha P(m),$$
and satisfies $PT=P$;
\item $\hat{H}$ is a homomorphism of symmetric $ \bar{\Lambda} \otimes \bar{\Lambda}$-modules such that for $\xi \in \C(E, E \vee E)$ and $a \in M_e$ the following relation holds:
$$(QM1) \quad (\nabla^2 \xi)a=(r_1^2 \xi)a+ (r_2^2 \xi)a+P( \hat{H}(\overline{\rho_{12}^2(\xi)} \otimes a)).$$
\end{itemize}
\end{defi}

\begin{rem} \label{rem-proto-quad}
By Proposition \ref{explicit-lin} condition $(QM1)$ implies that $coker(P)$ is a $\overline{\Lambda}$-module. 
\end{rem}

\begin{defi} \label{quad-C}{\bf (Quadratic $\mathcal{C}$-module)}
A quadratic $\mathcal{C}$-module (relative to $E$) is a proto-quadratic $\mathcal{C}$-module (relative to $E$) as above satisfying the additional property that for $\xi \in \mathcal{C}(E, E \vee E)$ and $m \in M_{ee}$
$$(QM2) \quad \hat{H} (\overline{\rho_{12}^2(\xi)} \otimes Pm)=(\overline{r_{1}^2\xi} \otimes  \overline{r_{2}^2\xi}) (m+Tm).$$
\end{defi}

The intermediate notion of proto-quadratic $\mathcal{C}$-module is justified by the fact that it suffices to give rise to quadratic functors via the quadratic tensor product; however, we show in Theorem \ref{cross-pt-quad} below that a proto-quadratic $\mathcal{C}$-module satisfies relation (QM2) iff the cross-effect of the associated quadratic tensor product  is $\mathbb T_{11}(M_{ee})$, see Theorem \ref{bilin}.

\begin{rem} Suppose that $E$ admits a comultiplication $\mu:E \to E\vee E$ in $\C$ admitting the zero map $E\to 0$ as a counit. Write 
$\mu'=(\iota_{(1,2)}^2)^{-1}(\mu -i_1^2-i_2^2)\in cr_2U(E,E)$ and 
$\alpha \bullet \beta = (\alpha,\beta)\mu$ for $\alpha,\beta\in \C(E,E)$. Then taking $\xi=(\alpha\vee \beta)\mu$ relation $(QM1)$ implies that
\begin{eqnarray*}
 (\alpha \bullet \beta)a &=&  \alpha a + \beta a + 
P\big(
\hat{H} \big( t_{11} (\iota_{(1,2)}^2)^{-1}\big((\alpha\vee \beta)\mu -(\alpha\vee \beta)i_1^2-(\alpha\vee \beta)i_2^2\big) \otimes a )\big) \\
&=&\alpha a + \beta a + 
P\big((\bar{\alpha}\otimes\bar{\beta})\hat{H} (  \overline{\mu'}  \otimes a )\big)
\end{eqnarray*} 
 by $\bar{\Lambda}\otimes \bar{\Lambda}$-linearity of $\hat{H}$. This shows that $(QM1)$ is a generalization of the distributivity law $(\alpha + \beta)a = \alpha a + \beta a + 
P\big(({\alpha}\otimes{\beta})H(a)\big)$ in the case where $\C$ is an additive category \cite{Baues}. In particular, $\hat{H}$ and $P$ are generalizations of the second Hopf invariant and the Whitehead product $[id,id]$, respectively.

Moreover, taking $\xi=\mu$ in relation $(QM2)$ shows that under the above assumption $T$ is determined by $\hat{H}$ and $P$, as 
\begin{eqnarray*}
T(m)&=&
\hat{H} (\overline{\rho_{12}^U(\mu)} \otimes Pm)-m \\
&=& \hat{H} (   \overline{\mu'} \otimes Pm)-m.
\end{eqnarray*} 
This generalizes the formula $T=HP-1$ in \cite{Baues}, \cite{Baues-Pira}, cf.\  the case where $E$ is a cogroup considered in section 7. In general, however,
$T$ is not determined by $\hat{H}$ and $P$, as is illustrated by the following ``extreme" example.

\end{rem}

\begin{exple}
Suppose that $\C$ and $E$ are such that $\C(E,E\vee E) = i^2_{1*}\C(E,E) \cup  i^2_{2*}\C(E,E)$. In particular, this holds when $\C$ is the category $\Gamma$ of finite pointed sets and $E=[1]=\{0,1\}$ is its canonical generator. Then $U(E|E)=0$ by
(\ref{cr2asquot}), whence $\Lambda=\bar{\Lambda}$ and the domain of $\hat{H}$ is trivial. Thus a quadratic $\C$-module relative to $E$ is a diagram
\[ M=(M_{ee} \xrightarrow{T} M_{ee} \xrightarrow{P} M_e)\]
satisfying  the properties  in Definition \ref{proto-quad-C} which do not involve $\hat{H}$; in fact,  the relations $(QM1)$ and $(QM2)$ are trivially satisfied. Together with Theorem \ref{thm} this reproduces a description of quadratic functors from $\Gamma$ to $Ab$ which is a particular case of results obtained in \cite{Pira-Dold}.
\end{exple}

\begin{rem}
 In view of the isomorphism in Proposition \ref{bilin-cross} the map $\hat{H}$ in the definition of a (proto-)quadratic $\mathcal{C}$-module can be replaced by a group homomorphism:
$$\tilde{H}: U(E \vee E) \otimes_{\Lambda} M_e \to M_{ee}$$
satisfying the following relations for $\alpha, \beta \in \mathcal{C}(E, E)$, $\xi \in \mathcal{C}(E, E \vee E)$, $\gamma \in \C(E, E \vee E \vee E)$, $a \in M_e$:
$$\begin{array}{lc}
 (H_1) & \quad \tilde{H}( (\alpha \vee \beta) \xi \otimes a)=(\bar{\alpha} \otimes \bar{\beta})\tilde{H}(  \xi \otimes a)\\
 (H_2) &\quad \tilde{H}( \tau \xi \otimes a)=T\tilde{H}(  \xi \otimes a)\\
 (H_3) &\quad \tilde{H}(i_1^2 \alpha \otimes a)= \tilde{H}(i_2^2 \alpha \otimes a)=0\\
 (H_4) &\quad \tilde{H}((\nabla^2 \vee Id -r_1^2 \vee Id- r_2^2 \vee Id) \gamma \otimes a)=0\\
 (H_5) &\quad \tilde{H}((Id \vee \nabla^2- Id \vee r^2_1-Id \vee r^2_2)  \gamma \otimes a)=0.
\end{array}$$
In fact, $(H_1)$ translates the fact that $\hat{H}$ is a morphism of  $ \bar{\Lambda} \otimes \bar{\Lambda}$-modules, $(H_2)$ corresponds to the fact that $\hat{H}$ is a morphism of symmetric modules, and $(H_3)$, $(H_4)$ and $(H_5)$ correspond to the fact that the source of $\hat{H}$ is $(T_{11}(cr_2(U))(E,E)\otimes_{\Lambda}M_e$, see Proposition \ref{bilin-cross}.
\end{rem}

In the following proposition we give a useful equivalent formulation of condition $(QM1)$.

\begin{prop} \label{QM1}
Relation $(QM1)$ means that the following diagram commutes
$$\xymatrix{
M_{ee} \ar[r]^P & M_e\\
U(E|E) \otimes_{\Lambda} M_e \ar[u]^{\hat{H}(t_{11} \otimes 1)} \ar[r]_{S_2^U \otimes 1} & U(E) \otimes_{\Lambda} M_e  \ar[u]^{\mu_e}_{\simeq}
}$$
 where $\mu_e$ is the canonical isomorphism and $S_2^U$ the map given in Definition \ref{+}. 
 \end{prop}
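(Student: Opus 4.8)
The statement is an equivalence, so the plan is to unwind both legs of the square and to show that their agreement on a suitable generating set of the source is literally relation $(QM1)$. Concretely, commutativity of the diagram means that the two additive maps $U(E|E)\otimes_{\Lambda}M_e\to M_e$ given by $P\circ\hat H(t_{11}\otimes 1)$ and $\mu_e\circ(S_2^U\otimes 1)$ coincide. Since both are group homomorphisms, I would reduce the verification to a convenient generating set of $U(E|E)\otimes_{\Lambda}M_e$.

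The key computation I would carry out first is the value of $S_2^U$ on $\rho^2_{(1,2)}(\xi)$ for $\xi\in\C(E,E\vee E)$. Using the splitting identity \ref{ses1}, one has $\iota^2_{(1,2)}\rho^2_{(1,2)}(\xi)=\xi-i^2_{1*}r^2_{1*}(\xi)-i^2_{2*}r^2_{2*}(\xi)$; applying $U(\nabla^2)$ and using $\nabla^2 i^2_1=\nabla^2 i^2_2=1_E$ gives
$$ S_2^U(\rho^2_{(1,2)}(\xi))=U(\nabla^2)\iota^2_{(1,2)}\rho^2_{(1,2)}(\xi)=(\nabla^2\xi)-(r_1^2\xi)-(r_2^2\xi)\in U(E)=\Lambda, $$
which is exactly the computation already made in the proof of Proposition \ref{univqumap} (and underlying Proposition \ref{explicit-lin}). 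Since $\mu_e$ is the canonical isomorphism $\Lambda\otimes_{\Lambda}M_e\cong M_e$, $\lambda\otimes a\mapsto\lambda a$, the lower leg sends $\rho^2_{(1,2)}(\xi)\otimes a$ to $(\nabla^2\xi)a-(r_1^2\xi)a-(r_2^2\xi)a$, while the upper leg sends it to $P(\hat H(\overline{\rho^2_{(1,2)}(\xi)}\otimes a))$, where $\overline{\rho^2_{(1,2)}(\xi)}=t_{11}\rho^2_{(1,2)}(\xi)$. Equating the two values is precisely relation $(QM1)$.

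Finally I would justify the reduction to these generators. By the split short exact sequence \ref{ses2} the retraction $\rho^2_{(1,2)}\co U(E\vee E)\to U(E|E)$ is surjective, and $U(E\vee E)$ is generated as an abelian group by the classes of morphisms $\xi\in\C(E,E\vee E)$, by the very definition of the universal functor; hence $U(E|E)$ is generated by the elements $\rho^2_{(1,2)}(\xi)$, and by biadditivity of $\otimes_{\Lambda}$ the group $U(E|E)\otimes_{\Lambda}M_e$ is generated by the elements $\rho^2_{(1,2)}(\xi)\otimes a$. Two group homomorphisms out of it coincide if and only if they coincide on these generators, which closes the argument in both directions. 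I do not expect a genuine obstacle here; the only points demanding care are the identification $\overline{\rho_{12}^2(\xi)}=t_{11}\rho^2_{(1,2)}(\xi)$ and the check that the $\Lambda$-action transported through $\mu_e$ is exactly the module structure on $M_e$ used to state $(QM1)$.
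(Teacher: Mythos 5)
Your proof is correct and follows essentially the same route as the paper's: compute $\mu_e(S_2^U\otimes 1)$ on the generators $\rho^2_{(1,2)}(\xi)\otimes a$ via the splitting identity \ref{ses1} and observe that equality with $P\hat H(t_{11}\otimes 1)$ on these elements is literally $(QM1)$. The only difference is that you spell out the generating-set reduction (surjectivity of $\rho^2_{(1,2)}$ from \ref{ses2}), which the paper leaves implicit; this is a welcome bit of extra care rather than a divergence.
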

 \begin{proof}
 We have, for $\xi \in \C(E,E \vee E)$
 $$\begin{array}{ll}
\mu_e (S^U_2 \otimes 1)(\rho^2_{(1,2)}(\xi) \otimes a)&=  \mu_e (U(\nabla^2) \iota^2_{(1,2)} \rho^2_{(1,2)}(\xi) \otimes a)\mathrm{\ by\ definition\  of\ } S^U_2 \mathrm{\ in\ } \ref{+}\\
&=  \mu_e (U(\nabla^2)(Id-i^2_{1*} \circ r^2_{1*}- i^2_{2*} \circ r^2_{2*})(\xi) \otimes a) \mathrm{\ by\ \ref{ses1}} \\
&= \mu_e ((U(\nabla^2)(\xi)-r^2_{1*}(\xi)-r^2_{2*}(\xi)) \otimes a)\\
&=\mu_e((\nabla^2\xi-r^2_{1}\xi-r^2_{2}\xi) \otimes a)\\
&=(\nabla^2\xi)a -(r^2_{1}\xi)a-(r^2_{2}\xi)a.
\end{array}$$
 So the diagram commutes if and only if for all $\xi$ and $a$
 $$P( \hat{H}(\overline{\rho_{12}^2(\xi)} \otimes a))=(\nabla^2\xi)a -(r^2_{1}\xi)a-(r^2_{2}\xi)a.$$
 \end{proof}

\begin{defi}[\textbf{Morphisms of (proto)-quadratic $\C$-modules}]
A morphism $\phi: M \to M'$ of (proto)-quadratic $\C$-modules relative to $E$ is a pair $\phi=(\phi_e,\phi_{ee})$ where $\phi_e: M_e \to M'_e$ is a morphism of $\Lambda$-modules and $\phi_{ee}: M_{ee} \to M'_{ee}$ is a morphism of symmetric $\bar{\Lambda} \otimes \bar{\Lambda}$-modules which commute with the structure maps $\hat{H}$, $T$ and $P$.
\end{defi}

Composition of morphisms of (proto)-quadratic $\C$-modules is defined in the obvious way. This allows to give the following definition.
\begin{defi}
The category $PQMod^E_{\C}$ (resp.  $QMod^E_{\C}$)  is the category having as objects the proto-quadratic $\C$-modules (resp. quadratic $\C$-modules) and as maps the morphisms of proto-quadratic $\C$-modules (resp. quadratic $\C$-modules).
\end{defi}

\begin{rem} \label{rem-I1}
There is a fully-faithful functor $I_1: \overline{\Lambda}\text{-}Mod \to QMod^E_{\C}$ given by
$$I_1(M)=(T_{11}(cr_2U)(E,E)\otimes_{\Lambda}M \xrightarrow{\hat{H}} 0 \xrightarrow{T} 0  \xrightarrow{P} M).$$
In fact, $I_1(M)$ satisfies $(QM1)$  as $M$ is a $\overline{\Lambda}$-module, and $(QM2)$ is trivial.
\end{rem}

\subsection{Equivalence between $\mathcal{R}$-modules and quadratic $\C$-modules}
The aim of this section is to prove the equivalence between $\mathcal{R}$-modules and quadratic $\C$-modules. We begin by recalling the following notation.
\begin{nota} 
If $M$ is an abelian group equipped with an involution $t$, we denote by $M_{\mathfrak{S}_2}$ the coinvariants of the action of the symmetric group $\mathfrak{S}_2$ on $M$ given by $t$, i. e. 
$M_{\mathfrak{S}_2}=M/(1-t)M$. Furthermore, we denote by $\pi: M \to M_{\mathfrak{S}_2}$ the canonical projection.
\end{nota}

\begin{lm} \label{LoLoM}
Let $M$ be a symmetric $\overline{\Lambda} \otimes \overline{\Lambda}$-module with involution $T$. There is a natural isomorphism of groups:
$$\chi: (\overline{\Lambda} \otimes \overline{\Lambda}) \underset{(\overline{\Lambda} \otimes \overline{\Lambda}) \wr \mathfrak{S}_2}{ \otimes } M \to M_{\mathfrak{S}_2}$$
defined by:
$$\chi(\overline{\alpha} \otimes \overline{\beta} \otimes m)= \pi ((\overline{\alpha} \otimes \overline{\beta})m).$$
\end{lm}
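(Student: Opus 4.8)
The plan is to realize $\chi$ as an isomorphism by exhibiting an explicit inverse, after first checking that $\chi$ is well defined. Throughout I write $R=\overline{\Lambda}\otimes\overline{\Lambda}$ and regard $M$ as a left module over the wreath product $R\wr\mathfrak{S}_2$, the generator $t$ acting as the involution $T$; recall from the Remark following the definition of symmetric modules that this is exactly the symmetric $R$-module structure, with the compatibility $T((\overline{r}\otimes\overline{s})m)=(\overline{s}\otimes\overline{r})Tm$. On the left-hand tensor factor, $R$ carries the right $R\wr\mathfrak{S}_2$-module structure recorded in Lemma \ref{Rmodreform}, namely the one given by \eqref{th2th4}. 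Since $M_{\mathfrak{S}_2}=M/(1-T)M$, for every $x\in M$ we have $\pi(x)=\pi(Tx)$; this identity is the technical fact that makes everything go through.

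First I would check that $\chi$ is well defined, i.e. that the $\mathbb{Z}$-bilinear map $R\times M\to M_{\mathfrak{S}_2}$, $(\overline{\alpha}\otimes\overline{\beta},m)\mapsto\pi((\overline{\alpha}\otimes\overline{\beta})m)$, descends to the tensor product over $R\wr\mathfrak{S}_2$. This amounts to verifying $\pi((r\cdot s)m)=\pi(r(sm))$ for $r=\overline{\alpha}\otimes\overline{\beta}\in R$, $m\in M$, and $s$ ranging over generators of $R\wr\mathfrak{S}_2$. For $s=\overline{a}\otimes\overline{b}$ in the even part the equality holds on the nose, since both sides equal $\pi((\overline{\alpha a}\otimes\overline{\beta b})m)$ by \eqref{th2th4} and the $R$-module structure. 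The key case is the odd part $s=(\overline{c}\otimes\overline{d})t$: here $r\cdot s=\overline{\beta d}\otimes\overline{\alpha c}$ by \eqref{th2th4}, while $r(sm)=(\overline{\alpha c}\otimes\overline{\beta d})Tm$. Applying the symmetry axiom with $Tm$ in place of $m$ gives $T\big((\overline{\alpha c}\otimes\overline{\beta d})Tm\big)=(\overline{\beta d}\otimes\overline{\alpha c})m$, so that the two elements $(\overline{\beta d}\otimes\overline{\alpha c})m$ and $(\overline{\alpha c}\otimes\overline{\beta d})Tm$ differ by application of $T$ and therefore have the same image under $\pi$. This is the main obstacle, and it is overcome precisely by combining the symmetric-module compatibility with $\pi=\pi\circ T$.

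Next I would construct the inverse $\psi\colon M_{\mathfrak{S}_2}\to R\otimes_{R\wr\mathfrak{S}_2}M$. I consider $\tilde{\psi}\colon M\to R\otimes_{R\wr\mathfrak{S}_2}M$, $m\mapsto (\overline{1}\otimes\overline{1})\otimes m$, and show it factors through $\pi$: since $t$ acts as $T$ one has $(\overline{1}\otimes\overline{1})\otimes Tm=(\overline{1}\otimes\overline{1})\otimes(t\cdot m)=\big((\overline{1}\otimes\overline{1})\cdot t\big)\otimes m$, and \eqref{th2th4} gives $(\overline{1}\otimes\overline{1})\cdot t=\overline{1}\otimes\overline{1}$, whence $(\overline{1}\otimes\overline{1})\otimes(1-T)m=0$ and $\tilde{\psi}$ descends to $\psi$ with $\psi(\pi(m))=(\overline{1}\otimes\overline{1})\otimes m$. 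Then $\chi\psi=\mathrm{id}$ is immediate, and for $\psi\chi$ I use the tensor relation together with $(\overline{1}\otimes\overline{1})\cdot(\overline{\alpha}\otimes\overline{\beta})=\overline{\alpha}\otimes\overline{\beta}$ (again \eqref{th2th4}) to compute
$$\psi\chi(\overline{\alpha}\otimes\overline{\beta}\otimes m)=(\overline{1}\otimes\overline{1})\otimes(\overline{\alpha}\otimes\overline{\beta})m=\big((\overline{1}\otimes\overline{1})\cdot(\overline{\alpha}\otimes\overline{\beta})\big)\otimes m=\overline{\alpha}\otimes\overline{\beta}\otimes m,$$
so $\psi\chi=\mathrm{id}$ as well. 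Finally, naturality in $M$ is routine: a morphism $f\colon M\to M'$ of symmetric modules satisfies $fT=T'f$, hence induces $f_{\mathfrak{S}_2}$ on coinvariants, and the naturality square commutes on generators because $f$ is $R$-linear, giving $\chi'\circ(\mathrm{id}_R\otimes f)=f_{\mathfrak{S}_2}\circ\chi$. All remaining verifications are formal manipulations with \eqref{th2th4}.
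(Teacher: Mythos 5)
Your proposal is correct and follows essentially the same route as the paper: well-definedness is checked via the right action \eqref{th2th4} combined with the symmetry axiom $T((\overline{r}\otimes\overline{s})m)=(\overline{s}\otimes\overline{r})Tm$ and the identity $\pi=\pi\circ T$, and the inverse is the same map $m\mapsto\overline{1}\otimes\overline{1}\otimes m$, shown to factor through coinvariants using $(\overline{1}\otimes\overline{1})\cdot t=\overline{1}\otimes\overline{1}$. Your write-up is slightly more complete in that it spells out $\chi\psi=\mathrm{id}$, $\psi\chi=\mathrm{id}$ and naturality, which the paper leaves implicit.
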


\begin{proof}
$\chi$ is well defined since
\begin{eqnarray*}
\chi ((\overline{\alpha} \otimes \overline{\beta}) (\overline{a} \otimes \overline{b} + (\overline{c} \otimes \overline{d})t) \otimes m)&=& \chi((\overline{\alpha a} \otimes \overline{\beta b} + \overline{\beta d} \otimes \overline{\alpha c}) \otimes m) \mathrm{ \quad by\ \ref{th2th4}}\\
&=& \pi((\overline{\alpha a} \otimes \overline{\beta b})m) + \pi(( \overline{\beta d} \otimes \overline{\alpha c}) m) \\
&=&  \pi((\overline{\alpha a} \otimes \overline{\beta b})m) + \pi(T( \overline{\beta d} \otimes \overline{\alpha c}) m) \\
&=&  \pi((\overline{\alpha a} \otimes \overline{\beta b})m) + \pi(( \overline{\alpha c} \otimes \overline{\beta d}) T m) \\
&=&  \pi((\overline{\alpha a} \otimes \overline{\beta b}+( \overline{\alpha c} \otimes \overline{\beta d}) t) m) \\
&=& \pi((\overline{\alpha} \otimes \overline{\beta})(\overline{a} \otimes \overline{b}+ (\overline{c} \otimes \overline{d})t)m)\mathrm{ \quad by\ \ref{th2th4}}\\
&=& \chi (\overline{\alpha} \otimes \overline{\beta} \otimes  (\overline{a} \otimes \overline{b} + (\overline{c} \otimes \overline{d})t) m).
\end{eqnarray*}
Let $\chi': M \to (\overline{\Lambda} \otimes \overline{\Lambda}) \underset{(\overline{\Lambda} \otimes \overline{\Lambda}) \wr \mathfrak{S}_2}{ \otimes } M $ given by: $\chi'(m)= \overline{1} \otimes \overline{1} \otimes m$. Then:
\begin{eqnarray*}
\chi'(Tm)&=&  \overline{1} \otimes \overline{1} \otimes Tm\\
&=&\overline{1} \otimes \overline{1} \otimes ((\overline{1} \otimes \overline{1}) t m)\\
&=&(\overline{1} \otimes \overline{1}) (\overline{1} \otimes \overline{1}) t \otimes m\\
&=&  (\overline{1} \otimes \overline{1}) \otimes m  \mathrm{ \quad by\ \ref{th2th4}}\\
&=& \chi'(m)
\end{eqnarray*}
whence $\chi'$ factors through $M_{\mathfrak{S}_2}$ and provides an inverse map of $\chi$.
\end{proof}

\begin{thm} \label{Rmod=Cmodquad}
There is an equivalence of categories $\vartheta: \R \mbox{--\,}mod \to QMod^E_{\C}$ defined, using Lemma \ref{Rmodreform}, by
$$\vartheta(M)=( T_{11}(cr_2U)(E,E)\otimes_{\Lambda}M_{e} \hspace{1mm}\xrightarrow{\hat{H}}  \hspace{1mm}M_{ee}  \hspace{1mm}\xrightarrow{T}\hspace{1mm}M_{ee}  \hspace{1mm}
\xrightarrow{P}  \hspace{1mm}M_{e}
)$$
where $\hat{H}= h \circ (\overline{cr_2(t_2)} \otimes 1)$ and where $P=\overline{P} \pi$ with $\overline{P}: ({M_{ee}})_{\mathfrak{S}_2} \to M_{e}$ such that $\overline{P} \chi=p$.
\end{thm}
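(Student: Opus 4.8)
The plan is to leverage the reformulation of $\R$-modules in Lemma \ref{Rmodreform} and reduce the theorem to a dictionary between the quadruple $(M_e, M_{ee}, p, h)$ satisfying (RM1), (RM2) and the quadratic $\C$-module $(M_e, M_{ee}, \hat H, T, P)$ satisfying (QM1), (QM2). The two isomorphisms that drive the translation are $\chi$ from Lemma \ref{LoLoM} (used to define $\overline P = p\,\chi^{-1}$, hence $P = \overline P\pi$) and $\overline{cr_2(t_2)}$ from Theorem \ref{cr2t2} (used to define $\hat H = h\circ(\overline{cr_2(t_2)}\otimes 1)$). First I would record two identities on which everything rests: $P(m) = p(\overline 1\otimes\overline 1\otimes m)$, and more generally $p(\overline\alpha\otimes\overline\beta\otimes m) = P((\overline\alpha\otimes\overline\beta)m)$; the latter follows by writing $\overline\alpha\otimes\overline\beta = (\overline 1\otimes\overline 1)\cdot(\overline\alpha\otimes\overline\beta)$ and moving the wreath-product element across the tensor product by \ref{th2th4}.

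Using these, I would check that $\vartheta(M)$ is a proto-quadratic $\C$-module. The module $M_e$ is a $\Lambda$-module by restriction along the projection $\Lambda\twoheadrightarrow\overline{\overline\Lambda}$; the relation $PT = P$ holds because $P$ factors through $\pi$ and $\pi T = \pi$; and the diagonal equivariance $P((\overline\alpha\otimes\overline\alpha)m) = \alpha P(m)$ is exactly the $\overline{\overline\Lambda}$-linearity of $p$ combined with the diagonal-action formula \ref{th1th2}. That $\hat H$ is a morphism of symmetric $\overline\Lambda\otimes\overline\Lambda$-modules is immediate, since $h$ is one and $\overline{cr_2(t_2)}$ is an isomorphism of symmetric bifunctors (Theorem \ref{cr2t2}, Corollary \ref{T11-sym}), so that tensoring over $\Lambda$ with $M_e$ preserves the structure.

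The heart of the argument is the equivalence of the structural relations. For (QM1), I would use naturality of the splitting to identify $\overline{cr_2(t_2)}\big(\overline{\rho_{12}^2(\xi)}\big) = \rho^2_{(1,2)}t_2(\xi)$, so that $\hat H\big(\overline{\rho_{12}^2(\xi)}\otimes a\big) = h\big(\rho^2_{(1,2)}t_2(\xi)\otimes a\big)$; then (QM1) is precisely the case $\alpha=\beta=1_E$ of (RM1). Conversely, the general case of (RM1) follows from (QM1) via $\overline\Lambda\otimes\overline\Lambda$-linearity of $\hat H$, the naturality identity $(\overline\alpha\otimes\overline\beta)\,\overline{\rho_{12}^2(\xi)} = \overline{\rho_{12}^2((\alpha\vee\beta)\xi)}$, and the composition identities $r^2_1(\alpha\vee\beta)\xi = \alpha\, r^2_1\xi$, $r^2_2(\alpha\vee\beta)\xi = \beta\, r^2_2\xi$, applying (QM1) to $(\alpha\vee\beta)\xi$ in place of $\xi$. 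The same pattern handles (RM2)$\Leftrightarrow$(QM2): setting $\alpha=\beta=1_E$ in (RM2) gives (QM2), while the general (RM2) is recovered from (QM2) applied to $(\overline\alpha\otimes\overline\beta)m$, using $p(\overline\alpha\otimes\overline\beta\otimes m) = P((\overline\alpha\otimes\overline\beta)m)$, the module relation $(\overline{r_1\xi}\otimes\overline{r_2\xi})(\overline\alpha\otimes\overline\beta) = \overline{r_1\xi\alpha}\otimes\overline{r_2\xi\beta}$, and the symmetry law $T((\overline\alpha\otimes\overline\beta)m) = (\overline\beta\otimes\overline\alpha)Tm$ to produce the two summands on the right-hand side of (RM2).

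Finally, functoriality and the equivalence property: a morphism of $\R$-modules is a pair $(\phi_e,\phi_{ee})$ compatible with $p$, $h$ and the module structures, which under the bijections $p\leftrightarrow P$ and $h\leftrightarrow\hat H$ corresponds bijectively to a morphism of quadratic $\C$-modules; since these correspondences are natural and the object-level assignment $(M_e,M_{ee},p,h)\mapsto(M_e,M_{ee},\hat H,T,P)$ is a bijection (its inverse sets $h = \hat H\circ(\overline{cr_2(t_2)}^{-1}\otimes 1)$ and reconstructs $p$ from $P$ through $\chi$), the functor $\vartheta$ is an equivalence of categories. The step I expect to be most delicate is the backward direction of the relation correspondence, namely deriving the fully $\alpha,\beta$-parametrized relations (RM1) and (RM2) from their specializations (QM1) and (QM2), since it relies on several naturality and composition identities for $\rho^2_{(1,2)}$, the fold map, and the symmetric-module action that must be matched precisely against the ringoid composition laws \ref{th1th2}--\ref{th2th4}.
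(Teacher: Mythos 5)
Your overall strategy coincides with the paper's: translate $p\leftrightarrow P$ via $\chi$ (Lemma \ref{LoLoM}), translate $h\leftrightarrow\hat H$ via the isomorphism $\overline{cr_2(t_2)}$ of symmetric bifunctors (Theorem \ref{cr2t2}), and then match $(RMk)$ with $(QMk)$ through the key identity $\iota^2_{(1,2)}\rho^2_{(1,2)}((\alpha\vee\beta)\xi)=\iota^2_{(1,2)}U_E(\alpha\mid\beta)\rho^2_{(1,2)}(\xi)$ together with the equivariance of $h$ and $\hat H$; your ``specialize to $\alpha=\beta=1_E$, then recover the general case by applying the relation to $(\alpha\vee\beta)\xi$'' is the same computation the paper performs in one pass. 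The identities you record, in particular $p(\overline\alpha\otimes\overline\beta\otimes m)=P((\overline\alpha\otimes\overline\beta)m)$ and $\overline{cr_2(t_2)}(\overline{\rho^2_{(1,2)}(\xi)})=\rho^2_{(1,2)}t_2(\xi)$, are all correct and are exactly the ones used there.

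There is, however, one genuine gap in your inverse construction. To build an $\R$-module from a quadratic $\C$-module you must in particular equip $M_e$ with a module structure over $\End_{\R}(R_e)=\overline{\overline{\Lambda}}=T_2U_E(E)$, and to say that $p$ is ``a map of $\overline{\overline{\Lambda}}$-modules'' you already need this. A quadratic $\C$-module only hands you $M_e$ as a $\Lambda$-module; your remark that ``$M_e$ is a $\Lambda$-module by restriction along $\Lambda\twoheadrightarrow\overline{\overline{\Lambda}}$'' covers the forward direction only, and nothing in your proposal explains why, conversely, the given $\Lambda$-action factors through the quotient $\overline{\overline{\Lambda}}$. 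This is not automatic: in the paper it is Lemma \ref{Lambdabarbarmod}, whose proof uses relation $(QM1)$ to show that the structure map $\varphi\colon\Lambda\to\End(M_e)$ is a quadratic map in the sense of section \ref{sec-quad-maps}, and then invokes the universal quadratic map property of $t_2$ (Proposition \ref{univqumap}) to factor $\varphi$ through $T_2U_E(E)$. Without this step the inverse functor is not well defined, so you should add it explicitly.
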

The proof of this theorem relies on the following lemma.
\begin{lm} \label{Lambdabarbarmod}
Let $M \in PQMod^E_{\C}$. Then the $\Lambda \ti$module structure on $M_e$ factors through $\overline{\overline{\Lambda}}$.
\end{lm}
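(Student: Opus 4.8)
The plan is to reduce the statement to the universal property of the quadratization functor established in Proposition~\ref{univqumap}. Fix $a \in M_e$ and consider the normalized map
$$\varphi_a \co \C(E,E) \to M_e, \qquad \varphi_a(\alpha) = \alpha a,$$
where $\alpha a$ denotes the given $\Lambda$-action (so $\varphi_a(0)=0$). Its $\Z$-linear extension $\overline{\varphi_a} \co U(E) \to M_e$ is exactly the action map $\lambda \mapsto \lambda a$. By Proposition~\ref{univqumap}, $\overline{\varphi_a}$ factors through $t_2 \co U(E) \to T_2U(E) = \overline{\overline{\Lambda}}$ precisely when $\varphi_a$ is quadratic in the sense of Definition~\ref{qumapdef}. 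Hence it suffices to prove that $\varphi_a$ is quadratic for every $a$: then $Ker(t_2)$ annihilates each $a$, so $Ker(t_2)\cdot M_e = 0$, and since $\overline{\overline{\Lambda}}=T_2U(E)=U(E)/Ker(t_2)$ with $t_2$ the ring quotient map (compare the linear case in Proposition~\ref{Lambda-bar}), the $\Lambda$-module structure descends to a $\overline{\overline{\Lambda}}$-module structure, as claimed.

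The heart of the argument is thus the computation of the cross-effect $cr_2(\varphi_a)$, which turns out to be controlled precisely by relation $(QM1)$. For $\xi \in \C(E, E\vee E)$ I compute from Definition~\ref{crmapdef}
$$\tilde{cr_2}(\varphi_a)(\xi) = \varphi_a(\nabla^2\xi) - \varphi_a(r_1^2\xi) - \varphi_a(r_2^2\xi) = (\nabla^2\xi)a - (r_1^2\xi)a - (r_2^2\xi)a,$$
which by $(QM1)$ equals $P\big(\hat{H}(\overline{\rho^2_{(1,2)}(\xi)} \otimes a)\big)$. Both sides are additive in $\xi$ — the right-hand side because $\rho^2_{(1,2)}$, $t_{11}$, $-\otimes a$, $\hat{H}$ and $P$ are all homomorphisms — so this identity extends $\Z$-linearly to all of $U(E\vee E)$, yielding
$$\tilde{cr_2}(\varphi_a) = P \circ \hat{H} \circ \big( (t_{11}\,\rho^2_{(1,2)}) \otimes a \big).$$
Precomposing with $\iota^2_{(1,2)}$ and using the splitting $\rho^2_{(1,2)}\iota^2_{(1,2)} = \mathrm{id}$ of the short exact sequence (\ref{ses2}), I obtain for $\eta \in U(E \mid E) = cr_2U(E,E)$
$$cr_2(\varphi_a)(\eta) = P\big(\hat{H}(t_{11}(\eta) \otimes a)\big).$$
This last expression manifestly factors through $t_{11} \co cr_2U(E,E) \to T_{11}(cr_2U)(E,E)$, so $cr_2(\varphi_a)$ is bilinear and $\varphi_a$ is quadratic by Definition~\ref{qumapdef}.

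I expect the main obstacle to be bookkeeping rather than anything conceptual: one must check carefully that $(QM1)$, which is stated only for morphisms $\xi \in \C(E, E\vee E)$, propagates to the whole group $U(E\vee E)$ by $\Z$-linearity, and that restricting along $\iota^2_{(1,2)}$ cancels the retraction $\rho^2_{(1,2)}$ so that the term $\overline{\rho^2_{(1,2)}(\xi)}$ of $(QM1)$ collapses to $t_{11}(\eta)$. It is worth noting that only $(QM1)$ is used, which is exactly why the statement already holds for proto-quadratic $\C$-modules; relation $(QM2)$ plays no role here.
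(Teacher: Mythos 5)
Your proposal is correct and follows essentially the same route as the paper: the paper's proof also uses $(QM1)$ to identify the cross-effect of the action map with $P\hat{H}(t_{11}\rho^2_{(1,2)}(-)\otimes -)$, concludes that the map is quadratic in the sense of Definition \ref{qumapdef}, and then invokes Proposition \ref{univqumap}. The only cosmetic difference is that the paper works with the single map $\varphi\co\Lambda\to \End(M_e)$ rather than the uncurried family $\varphi_a\co\C(E,E)\to M_e$.
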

\begin{proof}
Let $\varphi: \Lambda \to End(M_e)$, $\varphi(\lambda)(a)=\lambda a$ for $(\lambda,a) \in \Lambda \times M_e$. Then  for $\xi \in \C(E, E \vee E)$, we have:
\begin{eqnarray*}
cr_2(\varphi) \rho^2_{(1,2)}(\xi)(a)&=&(\nabla \xi)a-(r_1 \xi)a-(r_2 \xi)a \\
&=& P \hat{H}(t_{11} \rho^2_{(1,2)} (\xi) \otimes a) \mathrm{\quad by\ }(QM1)\\
&=& \tilde{\varphi} (t_{11}  \rho^2_{(1,2)} (\xi)) (a)
\end{eqnarray*}
where $\tilde{\varphi}: T_{11}cr_2U(E,E) \to End(M_e)$ is given by
$$\tilde{\varphi}(x)(a)=P \hat{H}(x \otimes a).$$
Thus, $cr_2(\varphi)=\tilde{\varphi}t_{11}$, whence $\varphi$ is a quadratic map (cf. section \ref{sec-quad-maps}). Now the assertion follows from Proposition \ref{univqumap}.
\end{proof}
\begin{proof}[Proof of Theorem \ref{Rmod=Cmodquad}]
Let $M_e$ be a $\Lambda \ti$module and $M_{ee}$ be a symmetric $\ol{\Lambda} \otimes \ol{\Lambda} \ti$module. First note that by Lemma \ref{Lambdabarbarmod}, $M_e$ actually is a $\ol{\ol{\Lambda}} \ti$module if $M_e$ and $M_{ee}$ are part of a quadratic $\C \ti$module. In view of Lemma \ref{LoLoM}, the data of a morphism of $\ol{\ol{\Lambda}} \ti$module $p$ is equivalent with the data of a $\Lambda \ti$equivariant morphism $P: M_{ee} \to M_e$ such that $PT=P$. Moreover, a morphism of symmetric $\ol{\Lambda} \otimes \ol{\Lambda} \ti$modules $h: T_2U(E \mid E) \otimes_{\Lambda}M_e \to M_{ee}$ is equivalent with a morphism of symmetric $\ol{\Lambda} \otimes \ol{\Lambda} \ti$modules $\hat{H}: T_{11}(cr_2U)(E,E) \otimes_{\Lambda} M_e \to M_{ee}$ in view of the isomorphism of symmetric bifunctors:
$$\overline{cr_2(t_2)}: T_{11}(cr_2 U_E) \to cr_2 (T_2U_E)$$
given in Theorem \ref{cr2t2}. Thus it remains to show that for $k=1,2$ the relations $(RMk)$ and $(QMk)$ are equivalent. This is based on the following relations for $\alpha, \beta \in \C(E,E)$, $\xi \in \C(E, E \vee E)$.
\begin{eqnarray}
\iota^2_{(1,2)}  \rho^2_{(1,2)}(( \alpha \vee \beta) \xi)&=& ( \alpha \vee \beta) \xi-i_1 r_1 ( \alpha \vee \beta) \xi- i_2 r_2 ( \alpha \vee \beta) \xi \\
&=& ( \alpha \vee \beta) \xi - i_1 \alpha r_1 \xi - i_2 \beta r_2 \xi  \label{(avb)xii}\\
&=& ( \alpha \vee \beta) \xi- ( \alpha \vee \beta)i_1 r_1  \xi- ( \alpha \vee \beta) i_2 r_2 \xi \\
&=& U_E(\alpha \vee \beta) \iota^2_{(1,2)}  \rho^2_{(1,2)}(\xi)\\
&=& \iota^2_{(1,2)}  U_E(\alpha | \beta)  \rho^2_{(1,2)}(\xi) \label{(avb)xi}.
\end{eqnarray}
Now consider $(RM1)$. On the one hand, we have
$$t_2 (\nabla(\alpha \vee \beta) \xi - \alpha r_1 \xi - \beta r_2 \xi)x=t_2(\nabla(\alpha \vee \beta) \xi - r_1 (\alpha \vee \beta) \xi - r_2(\alpha \vee \beta) \xi)x\,.$$
 On the other hand:
\begin{eqnarray*}
p(\ol{\alpha} \otimes \ol{\beta} \otimes h( \rho^2_{(1,2)}t_2(\xi) \otimes x))&=& P((\ol{\alpha} \otimes \ol{\beta}) h(t_2 \rho^2_{(1,2)}(\xi) \otimes x))\\
&=&Ph(T_2U_E( \alpha \mid \beta) t_2 \rho^2_{(1,2)}(\xi) \otimes x)
\end{eqnarray*}
since $h$ is $(\ol{\Lambda} \otimes \ol{\Lambda}) \wr \mathfrak{S}_2 \ti$equivariant, and by \ref{th4th3}; thus
\begin{eqnarray*}
p(\ol{\alpha} \otimes \ol{\beta} \otimes h( \rho^2_{(1,2)}t_2(\xi) \otimes x))&=&P \hat{H}(t_{11} \rho^2_{(1,2)}((\alpha \vee \beta) \xi) \otimes x) \mathrm{\quad by\ \ref{(avb)xi}}.
\end{eqnarray*}

Hence $(RM1)$ is equivalent with $(QM1)$. To see the equivalence between $(RM2)$ and $(QM2)$, just note that:
$$h( \rho^2_{(1,2)} t_2(\xi) \otimes p( \ol{\alpha}  \otimes \ol{\beta} \otimes m))=\hat{H}(t_{11}  \rho^2_{(1,2)}(\xi) \otimes P((\ol{\alpha}  \otimes \ol{\beta} )m))$$
while
$$(\ol{r_1 \xi \alpha} \otimes \ol{r_2 \xi \beta})m+ (\ol{r_1 \xi \beta} \otimes \ol{r_2 \xi \alpha}) Tm=(\ol{r_1 \xi} \otimes \ol{r_2 \xi })((\ol{\alpha} \otimes \ol{\beta})m)+(\ol{r_1 \xi} \otimes \ol{r_2 \xi }) T ((\ol{\alpha} \otimes \ol{\beta})m).$$
\end{proof}

\subsection{Quadratic $\C$-modules obtained from quadratic functors}

We here provide an explicit construction of the composite functor $$\xymatrix{\mathbb{S}_2: Quad(\C, Ab) \ar[r]^-{\sigma} & {\mathcal R}\mbox{--\,}mod \ar[r]^-{\theta} & QMod^E_{\C}\,.}$$


\begin{lm} \label{H^F}
For $F \in Quad(\C, Ab)$ there exists a natural transformation of functors $H^F:(T_{11}(cr_2U) \otimes_{\Lambda} F(E))\Delta_{\C} \to (cr_2F) \Delta_{\C}$  such that the following natural diagram is commutative  for  $X \in \C$:

$$\xymatrix{
T_{11}(cr_2U)(X,X) \otimes_{\Lambda} F(E) \ar[rr]^{(H^F)_X} && F(X |X)\\
& U(X|X) \otimes_{\Lambda} F(E) \ar@{->>}[ul] ^{(t_{11} \otimes 1)_X} \ar[ur]_{(cr_2(u'_F))_X}.&
}$$
\end{lm}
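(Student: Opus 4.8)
The statement asks to factor the bifunctor morphism $cr_2(u'_F)$ through the epimorphism $t_{11}\otimes 1$, so the plan is to realise $(H^F)_X$ as the factorization afforded by the universal property of the bilinearization $T_{11}$. The two ingredients are: first, that the target $cr_2F$ is bilinear; and second, that both $cr_2$ and $T_{11}$ are compatible with the functor $-\otimes_\Lambda F(E)$, so that the source cross-effect is $(cr_2U)\otimes_\Lambda F(E)$ and its bilinearization is $T_{11}(cr_2U)\otimes_\Lambda F(E)$ with unit $t_{11}\otimes 1$.

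First I would record that $cr_2F$ is bilinear: since $F$ is quadratic, the partial functors $cr_2F(-,Y)$ and $cr_2F(X,-)$ have vanishing second cross-effect, these being instances of $cr_3F=0$. Next I would identify the source. Writing $G=\mathbb{T}\mathbb{S}(F)=U(-)\otimes_\Lambda F(E)$, which is reduced since $U$ is, Proposition \ref{ce-prop} applied to $U$ gives a natural splitting $U(X\vee Y)\cong U(X)\oplus U(Y)\oplus cr_2U(X,Y)$; tensoring with $F(E)$ over $\Lambda$ and using that the structure maps $G(i^2_k),G(r^2_k)$ are obtained from those of $U$ by $-\otimes 1$, I get $cr_2G(X,Y)\cong cr_2U(X,Y)\otimes_\Lambda F(E)=U(X|Y)\otimes_\Lambda F(E)$ naturally. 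Thus $cr_2(u'_F)$ is a morphism of bifunctors $(cr_2U)\otimes_\Lambda F(E)\to cr_2F$, and on the diagonal it is exactly the map $(cr_2(u'_F))_X$ of the diagram.

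Then I would show that $T_{11}$ commutes with $-\otimes_\Lambda F(E)$. Using the explicit cokernel description of the bilinearization in Proposition \ref{bilin-cross1}, $T_{11}B$ is the cokernel of a natural map $K_B$; since $-\otimes_\Lambda F(E)$ is right exact and $K_{B\otimes_\Lambda F(E)}=K_B\otimes 1$, I obtain a natural isomorphism $T_{11}\big((cr_2U)\otimes_\Lambda F(E)\big)\cong T_{11}(cr_2U)\otimes_\Lambda F(E)$ under which the unit $t_{11}$ of the former corresponds to $t_{11}\otimes 1$. In particular $t_{11}\otimes 1$ is an epimorphism, as drawn.

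Finally, the universal property of $T_{11}$ (its left adjointness to the inclusion of bilinear bifunctors) applied to the morphism $cr_2(u'_F)\colon (cr_2U)\otimes_\Lambda F(E)\to cr_2F$ into the bilinear target $cr_2F$ yields a unique bifunctor morphism $H^F\colon T_{11}(cr_2U)\otimes_\Lambda F(E)\to cr_2F$ with $H^F\circ(t_{11}\otimes 1)=cr_2(u'_F)$. Restricting to the diagonal gives the natural transformation $(H^F)_X$ and the commuting triangle of the statement, naturality in $X$ being inherited from that of $cr_2(u'_F)$ and of $t_{11}\otimes 1$. The only real subtlety, more a point to get right than a deep obstacle, is the two compatibility claims, i.e.\ that forming $cr_2$ and $T_{11}$ commutes with $-\otimes_\Lambda F(E)$; both reduce to right-exactness of the tensor product together with the functoriality of the decomposition of Proposition \ref{ce-prop} and of the cokernel presentation of Proposition \ref{bilin-cross1}.
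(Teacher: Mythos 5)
Your proof is correct and follows essentially the same route as the paper, whose own argument is just the two-line observation that $cr_2F$ is bilinear for quadratic $F$ and that $H^F$ then exists by the universal property of $t_{11}$. The extra work you do — identifying $cr_2(U(-)\otimes_\Lambda F(E))$ with $(cr_2U)\otimes_\Lambda F(E)$ via the splitting of Proposition \ref{ce-prop}, and checking that $T_{11}$ commutes with $-\otimes_\Lambda F(E)$ by right-exactness — correctly fills in the identifications the paper leaves implicit.
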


\begin{proof}
Recall that the cross-effect of a quadratic functor is a bilinear bifunctor. Hence the existence of $H^F$ follows from the universal property of $t_{11}$.

\end{proof}


\begin{prop} \label{MF}
For $F \in Quad(\C,Ab)$ we have 
$$\xymatrix{
\mathbb S_2(F) =(T_{11}(cr_2U)(E,E) \otimes_{\Lambda} F(E) \ar[r]^-{(H^F)_E} & F(E |E) \ar[r]^-{T^F}& F(E |E) \ar[r]^-{(S^F_2)_E} &F(E))\,.
}$$

\end{prop}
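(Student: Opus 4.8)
The plan is to unwind the composite $\mathbb{S}_2=\vartheta\circ\sigma$ explicitly, where $\sigma$ is the equivalence of Theorem \ref{thm-eq-quad} and $\vartheta$ that of Theorem \ref{Rmod=Cmodquad}. By Theorems \ref{thm-alpha} and \ref{eq-ringoid}, $\sigma(F)$ is the $\R$-module obtained by restricting $\Hom_{Quad(\C,Ab)}(-,F)$ to $\PP$ and transporting it along $\theta\co\PP^{op}\xrightarrow{\simeq}\R$. Hence, as an $\R$-module, $\sigma(F)$ has $M_e=\Hom_{Quad(\C,Ab)}(T_2U_E,F)$ and $M_{ee}=\Hom_{Quad(\C,Ab)}(T_1U_E\otimes T_1U_E,F)$, which the Yoneda isomorphisms $\Y$ of Proposition \ref{Yoneda-poly} and $\tilde{\Y}$ of Proposition \ref{T1T1-F} identify with $F(E)$ and $F(E\mid E)$. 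Through $\vartheta$ the three structure maps become $P=\overline{P}\pi$ (built from $p$), the involution $T$ of $M_{ee}$, and $\hat{H}=h\circ(\overline{cr_2(t_2)}\otimes 1)$. So it remains to identify, under $\Y$ and $\tilde{\Y}$, these three maps with $(S^F_2)_E$, $T^F$ and $(H^F)_E$. The common device is that each of them is $\sigma(F)$ evaluated at one distinguished morphism of $\R$, so each identification is a chase of the naturality of $\Y$ and $\tilde{\Y}$, the relation $\iota^2_{(1,2)}\circ\tilde{\Y}=\Y\circ R^*$ of Proposition \ref{T1T1-F}, and the formula $R_{E\vee E}(t_2(1_{E\vee E}))=t_1(i^2_1)\otimes t_1(i^2_2)$ of Corollary \ref{T2UEE}.

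For $P$: by Lemma \ref{LoLoM} one has $P=\sigma(F)(\overline{1}\otimes\overline{1})$ for $\overline{1}\otimes\overline{1}\in\Hom_{\R}(R_{ee},R_e)=\overline{\Lambda}\otimes\overline{\Lambda}$, which under $\theta$ is precomposition by the transformation $\nu\co T_2U_E\to T_1U_E\otimes T_1U_E$ with $\Y(\nu)=t_1(1)\otimes t_1(1)$. For $\phi\co T_1U_E\otimes T_1U_E\to F$ I would compute $\Y(\phi\circ\nu)=\phi_E(t_1(1)\otimes t_1(1))$; on the other hand, using $(S^F_2)_E=F(\nabla^2)\iota^2_{(1,2)}$ (Definition \ref{+}), the relation $\iota^2_{(1,2)}\tilde{\Y}=\Y R^*$, the formula of Corollary \ref{T2UEE}, naturality of $\phi$ along $\nabla^2$, and $\nabla^2 i^2_k=1_E$, the element $(S^F_2)_E\tilde{\Y}(\phi)$ reduces to the same $\phi_E(t_1(1)\otimes t_1(1))$. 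Thus $P=(S^F_2)_E$.

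For $T$: the generator $t\in\End_{\R}(R_{ee})=(\overline{\Lambda}\otimes\overline{\Lambda})\wr\mathfrak{S}_2$ corresponds under $\theta$ to precomposition by the switch endomorphism $s$ of $T_1U_E\otimes T_1U_E$ with $\tilde{\Y}(s)=t_1(i^2_2)\otimes t_1(i^2_1)$. Naturality of $\tilde{\Y}$ in $F$ applied to $\phi\co T_1U_E\otimes T_1U_E\to F$ gives $\tilde{\Y}(\phi\circ s)=cr_2(\phi)_{E,E}(t_1(i^2_2)\otimes t_1(i^2_1))$ and $\tilde{\Y}(\phi)=cr_2(\phi)_{E,E}(t_1(i^2_1)\otimes t_1(i^2_2))$; since $t_1(i^2_2)\otimes t_1(i^2_1)$ is the image of $t_1(i^2_1)\otimes t_1(i^2_2)$ under the involution of $cr_2(T_1U_E\otimes T_1U_E)$ and $cr_2(\phi)$ is compatible with the involutions (Proposition \ref{crissymbif}), the two right-hand sides are related by $T^F$. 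Thus $T=T^F$.

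The map $\hat{H}$ is the crux and the step I expect to be the main obstacle, because it must reconcile the ringoid action on $M_{ee}$ with $cr_2(u'_F)$ through several Yoneda identifications. Since $t_{11}$ is epic and $\overline{cr_2(t_2)}\circ t_{11}=cr_2(t_2)$ by (\ref{cr2t2-1}), it suffices to evaluate $\hat{H}$ on $t_{11}(v)\otimes a$ with $v\in cr_2U(E,E)$ and $a\in F(E)$; this value is $\sigma(F)$ applied to $cr_2(t_2)(v)\in\Hom_{\R}(R_e,R_{ee})=T_2U_E(E\mid E)$, hence, writing $a=\Y(\phi)$ with $\phi\co T_2U_E\to F$ and $\gamma_v$ for the transformation with $\tilde{\Y}(\gamma_v)=cr_2(t_2)(v)$, it equals $\tilde{\Y}(\phi\circ\gamma_v)$. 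The key bridge is that $\phi\circ t_2$ is the natural transformation $\eta_a\co U_E\to F$, $f\mapsto F(f)(a)$, which factors through the co-unit $u'_F$ of Proposition \ref{ST} as $F(f)(a)=(u'_F)(f\otimes a)$. Naturality of $\tilde{\Y}$ in $F$ then yields $\tilde{\Y}(\phi\circ\gamma_v)=cr_2(\phi)_{E,E}\,cr_2(t_2)(v)=cr_2(\eta_a)_{E,E}(v)=(cr_2(u'_F))_E(v\otimes a)$, which is $(H^F)_E(t_{11}(v)\otimes a)$ by Lemma \ref{H^F}. Surjectivity of $t_{11}$ finishes the identification $\hat{H}=(H^F)_E$ and hence the proposition. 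Throughout, the error-prone point is keeping track of the variance between $\PP^{op}$ and $\R$ and of which Yoneda isomorphism ($\Y$ versus $\tilde{\Y}$) is in play.
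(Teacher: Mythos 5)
Your proposal is correct and is precisely the verification the paper itself declares to be ``straightforward going through the various definitions involved in the construction of $\sigma$ and $\theta$'': you unwind $\mathbb{S}_2=\vartheta\circ\sigma$ via the Yoneda isomorphisms $\Y$ and $\tilde{\Y}$ and identify the images of the three distinguished morphisms of $\mathcal{R}$ with $(S_2^F)_E$, $T^F$ and $(H^F)_E$. The individual identifications (in particular the bridge $\phi\circ t_2=u'_F(-\otimes a)$ for the $\hat H$ step and the use of $R_{E\vee E}(t_2(1))=t_1(i_1^2)\otimes t_1(i_2^2)$ for $P$) all check out, so this simply supplies the details the paper omits.
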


Checking this is straightforward going through the various definitions involved in the construction of $\sigma$ and $\theta$.






The following proposition formalizes the fact that $\mathbb{S}_2$ extends the functor $\mathbb{S}_1$.

\begin{prop} \label{4.15}
The following diagram is commutative
$$\xymatrix{
Quad(\C,Ab) \ar[r]^{\mathbb{S}_2} & QMod^E_{\C}\\
Lin(\C,Ab) \ar@{^{(}->}[u] \ar[r]_{\mathbb{S}_1} & \overline{\Lambda}-Mod \ar[u]_{I_1}
}$$

where $I_1$ is the functor defined in Remark \ref{rem-I1}.
\end{prop}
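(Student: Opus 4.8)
The plan is to compare the two composites object-wise, using the explicit description of $\mathbb{S}_2$ from Proposition \ref{MF} and that of $I_1$ from Remark \ref{rem-I1}. Fix a linear reduced functor $F$, regarded as a quadratic functor through the inclusion $Lin(\C,Ab)\hookrightarrow Quad(\C,Ab)$. The key observation is that linearity forces $cr_2F=0$; in particular the middle group $M_{ee}=F(E\mid E)=cr_2F(E,E)$ of the quadratic $\C$-module $\mathbb{S}_2(F)$ is zero. Consequently the three structure maps $(H^F)_E$, $T^F$ and $(S^F_2)_E$ appearing in Proposition \ref{MF} all have source or target equal to $0$, hence vanish, so that
$$\mathbb{S}_2(F)=\big(T_{11}(cr_2U)(E,E)\otimes_{\Lambda}F(E)\xrightarrow{0}0\xrightarrow{0}0\xrightarrow{0}F(E)\big),$$
with $M_e=F(E)$.

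Next I would compute the other composite. By Definition \ref{S1}, $\mathbb{S}_1(F)=F(E)$ as a left $\overline{\Lambda}$-module, and by Remark \ref{rem-I1},
$$I_1(\mathbb{S}_1(F))=\big(T_{11}(cr_2U)(E,E)\otimes_{\Lambda}F(E)\xrightarrow{\hat H}0\xrightarrow{T}0\xrightarrow{P}F(E)\big),$$
again with all structure maps zero and $M_e=F(E)$. Thus the two quadratic $\C$-modules have identical diagrams of underlying groups and maps, and it only remains to check that the module structure on the common object $M_e=F(E)$ agrees. In $\mathbb{S}_2(F)$ this is the $\Lambda$-action $\alpha\cdot x=F(\alpha)(x)$ of Lemma \ref{F(E)}, which factors through $\overline{\Lambda}$ precisely because $F$ is linear, by Lemma \ref{F(E)-lin}; this is exactly the $\overline{\Lambda}$-module structure of $\mathbb{S}_1(F)$. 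Hence $\mathbb{S}_2(F)=I_1(\mathbb{S}_1(F))$ as quadratic $\C$-modules.

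Finally I would verify that this identification is natural in $F$: a natural transformation $F\to G$ of linear functors is sent by both composites to the underlying homomorphism $F(E)\to G(E)$ on the $M_e$-component and to the zero map on the (vanishing) $M_{ee}$-component, so the two functors agree on morphisms as well. I do not anticipate a genuine obstacle: the argument is a direct unwinding of Proposition \ref{MF} and Remark \ref{rem-I1}, and the only point requiring a cited input rather than a triviality is that the $\Lambda$-action on $F(E)$ descends to $\overline{\Lambda}$, which is Lemma \ref{F(E)-lin}.
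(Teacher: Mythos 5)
Your proof is correct and is exactly the direct unwinding of Proposition \ref{MF} and Remark \ref{rem-I1} that the paper intends — indeed the paper states Proposition \ref{4.15} without any proof, treating it as immediate from $cr_2F=0$ for linear $F$. The one point genuinely needing a citation, that the $\Lambda$-action on $F(E)$ descends to $\overline{\Lambda}$ so the two $M_e$-structures coincide, is correctly handled via Lemma \ref{F(E)-lin}.
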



\section{Quadratic tensor product} \label{section5}
The left adjoint of the functor $\mathbb{S}_2: Quad(\C, Ab) \to QMod^E_{\C}$ is given by a construction which we call the quadratic tensor product. In fact, 
 a special case of a quadratic tensor product first appeared
 in \cite{Baues}, providing a left adjoint of a functor $Quad(Ab,Ab) \to QMod(\mathbb{Z})$ is defined explicitely by generators and relations; here $ QMod(\mathbb{Z})$ is the category of quadratic $\Z$-modules. Similarly, in \cite{Baues-Pira} a left adjoint of $Quad(Gr,Gr) \to Square$ is constructed by generators and relations; here $ Square$ is the category of square groups (see also section \ref{application}). In this paper, however, we give a more conceptual construction of the quadratic tensor product,  by means of a push-out diagram, in our general setting. We expect to generalize this construction  to polynomial functors of higher degree. A description of our quadratic tensor product in terms of generators and relations is  nevertheless provided   generalizing the constructions in \cite{Baues} and \cite{Baues-Pira}. We then  compute  the quadratic tensor product $E \otimes M$ for $M \in QMod_{\C}^E$ and  the cross-effect of $- \otimes M$ which are two essential tools in
  the proof of our main theorem in section \ref{section-thm}.

\subsection{Definition}
We start  with the following straightforward lemma.
\begin{lm} \label{invol}
If $N$ is a symmetric $\Lambda \otimes \Lambda$-module with involution $T$ and $X \in \C$, the group $M=((T_1U)(X) \otimes (T_1U)(X)) \otimes_{\Lambda \otimes \Lambda} N$ is equipped with an involution $t$ defined by
$$t(x \otimes y \otimes n)=y \otimes x \otimes T(n)$$
for $x,y \in (T_1U)(X)$ and $n \in N$.
\end{lm}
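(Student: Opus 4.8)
The plan is to verify directly that the proposed formula $t(x\otimes y\otimes n)=y\otimes x\otimes T(n)$ yields a well-defined group homomorphism which is an involution. First I would address well-definedness, which is the only substantial point. The group $M=((T_1U)(X)\otimes (T_1U)(X))\otimes_{\Lambda\otimes\Lambda}N$ is a quotient of the free abelian group on symbols $x\otimes y\otimes n$ by the bilinearity relations and by the tensor relation over $\Lambda\otimes\Lambda$, namely $(x\alpha\otimes y\beta)\otimes n = (x\otimes y)\otimes (\alpha\otimes\beta)n$ for $\alpha,\beta\in\Lambda$. I would define $t$ first on the free generators by the stated formula, check it respects $\Z$-bilinearity in each slot (immediate, since swapping and applying $T$ are additive), and then verify it descends across the balancing relation of the tensor product over $\Lambda\otimes\Lambda$.

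The heart of the argument is this last compatibility. Applying $t$ to the left-hand side $(x\alpha\otimes y\beta)\otimes n$ gives $(y\beta\otimes x\alpha)\otimes T(n)$, while applying $t$ to the right-hand side $(x\otimes y)\otimes (\alpha\otimes\beta)n$ gives $(y\otimes x)\otimes T((\alpha\otimes\beta)n)$. These two expressions agree precisely because of the symmetry axiom $T((\alpha\otimes\beta)n)=(\beta\otimes\alpha)T(n)$ satisfied by the involution of a symmetric $\Lambda\otimes\Lambda$-module: indeed $(y\beta\otimes x\alpha)\otimes T(n) = (y\otimes x)\otimes(\beta\otimes\alpha)T(n) = (y\otimes x)\otimes T((\alpha\otimes\beta)n)$. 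Thus the defining relation of the quotient is preserved, and $t$ is a well-defined endomorphism of $M$.

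It then remains to check that $t$ is an involution. Computing $t^2(x\otimes y\otimes n)=t(y\otimes x\otimes T(n))=x\otimes y\otimes T^2(n)=x\otimes y\otimes n$, using $T^2=\mathrm{Id}$ from the definition of a symmetric module; hence $t^2=\mathrm{Id}$ on generators and therefore on all of $M$. Since $t$ is additive and squares to the identity, it is a $\Z$-linear involution, as claimed. The main (and essentially only) obstacle is bookkeeping the well-definedness across the two-sided tensor balancing; everything else is a routine check, so I would keep that part brief and emphasize the one line where the symmetry axiom of $T$ is used.
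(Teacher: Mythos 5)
Your proof is correct, and it supplies exactly the routine verification that the paper omits (the lemma is stated there without proof as "straightforward"): the only non-trivial point is that the swap descends through the balancing relation of $-\otimes_{\Lambda\otimes\Lambda}N$, and you correctly identify that this is precisely where the symmetry axiom $T((\alpha\otimes\beta)n)=(\beta\otimes\alpha)T(n)$ is used, together with the right $\Lambda\otimes\Lambda$-action $(x\otimes y)(\alpha\otimes\beta)=x\alpha\otimes y\beta$ on $(T_1U)(X)\otimes(T_1U)(X)$. The remaining checks (additivity, and $t^2=\mathrm{Id}$ from $T^2=\mathrm{Id}$) are handled as they should be.
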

To define the quadratic tensor product, we need to consider the coinvariants by the action of the symmetric group $\mathfrak{S}_2$. 
\begin{defi} \label{quad-t-p}{\bf (Quadratic tensor product)}
Let $M$ be a proto-quadratic $\C$-module relative to $E$ and $X \in \C$. The quadratic tensor product $X \otimes M \in Ab$ is the push-out of the following diagram of abelian groups:
$$\xymatrix{
((U(X) \otimes U(X)) \otimes_{\Lambda \otimes \Lambda} U(E |E)\otimes_{\Lambda} M_e) \oplus (U(X) \otimes M_{ee}) \ar[rr]^-{\phi=(\phi_1, t_2 \otimes P)} \ar[d]_-{\psi=(\psi_1, \pi(\delta \otimes 1))} &&(T_2U)(X) \otimes_{\Lambda} M_e \ar[d]^{\hat{\psi}}\\
(((T_1U)(X) \otimes (T_1U)(X)) \otimes_{\Lambda \otimes \Lambda} M_{ee})_{\mathfrak{S}_2} \ar[rr]_-{\hat{\phi}}& &X \otimes M
}$$
where $\psi_1=\pi(t_1 \otimes t_1 \otimes \hat{H}(t_{11} \otimes 1))$, $\delta(f)=(t_1f) \otimes (t_1f)$ for $f \in \C(E,X)$ and $\phi_1(f \otimes g \otimes x \otimes a)=t_2 S^U_2U(f|g)(x) \otimes a$ for $f, g \in \C(E,X)$, $x \in U(E |E)$ and $a \in M_e$.
\end{defi}
In the following proposition we give a description of the quadratic tensor product $X \otimes M$ by generators and relations.

\begin{prop}
Let $M$ be a proto-quadratic $\C$-module relative to $E$ and $X \in \C$. The quadratic tensor product $X \otimes M \in Ab$ is the abelian group generated by the symbols
$$f \otimes a,\ f \in \C(E,X),\ a \in M_e$$
$$[f,g] \otimes m, \ f,g \in \C(E,X),\ m \in M_{ee}$$
subject to the following relations:
\begin{enumerate}
\item $(f \beta) \otimes a = f \otimes (\beta a) \ for\ \beta \in \C(E,E)$
\item $f \otimes (a+b)=f \otimes a+ f \otimes b$

\item $(1,1,1)\xi \otimes a -(1,1,0) \xi \otimes a - (1,0,1) \xi \otimes a -(0,1,1) \xi \otimes a + (1,0,0) \xi \otimes a+ (0,1,0) \xi \otimes a + (0,0,1) \xi \otimes a =0 \ for\   \xi \in \C(E, X \vee X \vee X)
$
\item $[f \alpha, g \beta] \otimes m=[f,g] \otimes (\overline{\alpha} \otimes \overline{\beta})m \ for\ \alpha, \beta \in \C(E,E)$
\item $[f,g] \otimes (m+n)=[f,g] \otimes m+ [f,g] \otimes n$
\item $[\nabla^2_*(\xi),g]\otimes m=[r_{1*}^2(\xi),g] \otimes m+[r_{2*}^2(\xi),g ] \otimes m  \ for\ \xi \in \C(E,X \vee X)$
\item $[f,g] \otimes m= [g,f] \otimes T(m)$
\item $[f,f] \otimes m=f \otimes P(m)$
\item $(f,g) \gamma \otimes a=fr^2_1 \gamma \otimes a + g r^2_2 \gamma \otimes a+[f,g] \otimes \hat{H}(t_{11}\rho_{(1,2)}^2(\gamma) \otimes a) \ for\ \gamma \in \C(E, E \vee E)$.
\end{enumerate}
\end{prop}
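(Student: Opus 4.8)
The plan is to compute the pushout explicitly as a quotient of a direct sum, present each of the three groups involved by generators and relations, and then read off the claimed presentation. Recall first that the pushout of a diagram of abelian groups $A\xleftarrow{\psi}C\xrightarrow{\phi}B$ is $(A\oplus B)/N$, where $N$ is the subgroup generated by the elements $(\psi(c),-\phi(c))$, and the structural maps $\hat{\phi}\colon A\to X\otimes M$ and $\hat{\psi}\colon B\to X\otimes M$ send $a$ and $b$ to the classes of $(a,0)$ and $(0,b)$; in the quotient one thus has the identification $\hat{\phi}\psi(c)=\hat{\psi}\phi(c)$ for all $c\in C$. Since $(\psi,-\phi)$ is a homomorphism, whose image is exactly $N$, it suffices to impose these identifications on a generating family of $C$. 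Hence the generators of $X\otimes M$ are those of $B=(T_2U)(X)\otimes_{\Lambda}M_e$ together with those of $A=(((T_1U)(X)\otimes(T_1U)(X))\otimes_{\Lambda\otimes\Lambda}M_{ee})_{\mathfrak{S}_2}$, and its relations are the relations internal to $A$ and $B$ together with the identifications coming from $C$.

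Next I would present the two corner groups. Writing $U(X)=\mathbb{Z}[\C(E,X)]/\mathbb{Z}[0]$, the group $B$ is generated by the elements $f\otimes a$ (the image of $t_2(f)\otimes a$) for $f\in\C(E,X)$ and $a\in M_e$; there is no additivity relation in $f$ since $U(X)$ is free on the nonzero morphisms. The balancing relation over $\Lambda$ yields $(f\beta)\otimes a=f\otimes(\beta a)$, which is relation $(1)$, additivity in the second variable gives $(2)$, and the explicit description of the quadratization functor in Proposition~\ref{quadratization} (rewritten in the $(1,1,1)$-style notation for $\xi\in\C(E,X\vee X\vee X)$, tensored with $a$) gives precisely relation $(3)$. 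Similarly $A$ is generated by the elements $[f,g]\otimes m=\pi(t_1(f)\otimes t_1(g)\otimes m)$; the $\Lambda\otimes\Lambda$-balancing gives $(4)$, additivity in $m$ gives $(5)$, the linearity relation for $T_1U(X)$ from Proposition~\ref{explicit-lin} applied in the first tensor factor gives $(6)$, and passing to $\mathfrak{S}_2$-coinvariants with the involution of Lemma~\ref{invol} gives $(7)$. I would note here that the linearity relation in the second tensor factor need not be listed separately, as it follows from $(6)$ and $(7)$ using $T^2=\mathrm{Id}$.

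It then remains to translate the pushout identifications on the two summands of $C$. On the summand $U(X)\otimes M_{ee}$ one has $\psi(f\otimes m)=\pi(\delta(f)\otimes m)=[f,f]\otimes m$ and $\phi(f\otimes m)=t_2(f)\otimes P(m)$, so the identification reads $[f,f]\otimes m=f\otimes P(m)$, which is relation $(8)$. On the summand $(U(X)\otimes U(X))\otimes_{\Lambda\otimes\Lambda}U(E\mid E)\otimes_{\Lambda}M_e$ it suffices, since $\rho^2_{(1,2)}$ is surjective (by the split sequence~\ref{ses2}) and everything is $\mathbb{Z}$-linear, to treat generators with $x=\rho^2_{(1,2)}(\gamma)$ for $\gamma\in\C(E,E\vee E)$. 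Here $\psi_1$ produces $[f,g]\otimes\hat{H}(t_{11}\rho^2_{(1,2)}(\gamma)\otimes a)$, while the main computation evaluates $\phi_1$: using naturality of the cross-effect inclusion, the identity~\ref{ses1}, and the relations $\nabla^2(f\vee g)=(f,g)$ and $\nabla^2 i_k^2=\mathrm{Id}$, one finds
$$S_2^U\,U(f\mid g)(\rho^2_{(1,2)}\gamma)=(f,g)\gamma-f\,r_1^2\gamma-g\,r_2^2\gamma,$$
so that the pushout identification becomes exactly relation $(9)$.

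This last evaluation of $\phi_1$ — unwinding $S_2^U\circ U(f\mid g)$ through the inclusion $\iota^2_{(1,2)}$ and back down to morphisms $E\to X$ — is the only step requiring genuine computation, and it is where I expect the bookkeeping to be most delicate; everything else is a routine transcription of the universal properties defining $A$, $B$ and the pushout.
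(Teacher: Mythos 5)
Your proposal is correct and follows essentially the same route as the paper's (very terse) proof: present the two corner groups of the pushout by generators and relations, observe that relations (1)--(7) are exactly the internal relations of $(T_2U)(X)\otimes_\Lambda M_e$ and $(((T_1U)(X)\otimes (T_1U)(X))\otimes_{\Lambda\otimes\Lambda}M_{ee})_{\mathfrak S_2}$ via Propositions \ref{quadratization} and \ref{explicit-lin}, and identify (8) and (9) with the pushout identifications $\hat\phi\psi=\hat\psi\phi$ on the two summands of the source. Your explicit evaluation $S_2^U\,U(f\mid g)(\rho^2_{(1,2)}\gamma)=(f,g)\gamma-fr_1^2\gamma-gr_2^2\gamma$ and the remark that linearity in the second tensor factor follows from (6) and (7) are correct details that the paper leaves implicit.
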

\begin{proof}
The symbol $f \otimes a$ corresponds to a generator of $U(X) \otimes_{\Lambda} M_e$ and $[f,g] \otimes m$ corresponds to a generator of $(U(X) \otimes_{\Lambda} U(X)) \otimes_{\Lambda \otimes \Lambda}M_{ee}$. 

For the elements $f \otimes a$, relation $(1)$ corresponds to the fact that the tensor product is taken over $\Lambda$, $(2)$ translates the linearity in $M_e$, $(3)$ corresponds to the fact that the element $f \otimes a$ is in $T_2U(X) \otimes_{\Lambda} M_e$ where we use Proposition \ref{quadratization} which describes $T_2F$ as a quotient of $F$.

For the elements $[f,g] \otimes m$, $(4)$ corresponds to the fact that the tensor product is taken over $\Lambda \otimes \Lambda$, $(5)$ translates the linearity in $M_{ee}$, $(6)$ (also using $(7)$) corresponds to the fact that the element $[f,g] \otimes m$ lies in $((T_1U)(X) \otimes (T_1U)(X)) \otimes_{\Lambda \otimes \Lambda} M_{ee}$, and $(7)$ translates the fact that we take the coinvariants by the action of $\mathfrak{S}_2$.

Finally, $(8)$ and $(9)$ correspond to the fact that the diagram in Definition \ref{quad-t-p}  is a pushout.
\end{proof}

We need the following technical lemma.

\begin{lm} \label{push-out-eq}
The quadratic tensor product $X \otimes M$ is equal to the pushout $\Pi$ of the following diagram of abelian groups

\noindent \makebox[14.7cm]{ \makebox[0mm]{
\begin{minipage}{17cm}
\small
  $$\xymatrix{
((T_1UX \otimes T_1UX) \underset{\Lambda \otimes \Lambda}{\otimes} T_{11}(cr_2U)(E ,E)
 \underset{\Lambda}{\otimes} M_e)_{\mathfrak{S}_2} \oplus (UX \otimes M_{ee})\ar[rrr]^-{\overline{\phi}=(\overline{\phi'_1 \otimes 1}, t_2 \otimes P)}  \ar[d]_-{\overline{\psi}=( \overline{\psi_1}, \pi(\delta \otimes 1))} &&& T_2UX  \underset{\Lambda}{ \otimes} M_e \ar[d]^{\hat{\psi}}\\
 ((T_1UX \otimes T_1UX) \otimes_{\Lambda \otimes \Lambda} M_{ee})_{\mathfrak{S}_2} \ar[rrr]_-{\hat{\phi}} &&& \Pi}$$
\end{minipage}
}\rule[-11mm]{0mm}{3mm}}

where  $UX:=U(X)$, $T_iUX:=T_iU(X)$ for $i \in \{ 1,2\}$,  $\overline{\psi_1}=\overline{1 \otimes 1 \otimes \hat{H}}$ and $\phi'_1$ is the following composite map:
$$\xymatrix{
(T_1U(X) \otimes T_1U(X)) \otimes_{\Lambda \otimes \Lambda} T_{11}(cr_2U)(E ,E)
 \ar@{-->}[r]^-{\phi'_1 } \ar[d]_-{1 \otimes 1 \otimes \overline{cr_2(t_2)}} &T_2U(X) \\
(T_1U(X) \otimes T_1U(X))  \underset{\Lambda \otimes \Lambda}{ \otimes} cr_2(T_2U)(E,E) \ar[r]_-{u'_{cr(T_2U)}}& cr_2(T_2U)(X,X) \ar[u]_{S_2^{T_2U}}.
}$$
\end{lm}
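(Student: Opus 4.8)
The plan is to exploit the fact that the two pushout squares under comparison already agree in three of their four corners, and in one of the two source summands. Concretely, both diagrams have the \emph{same} upper right corner $B=T_2U(X)\otimes_{\Lambda}M_e$ and the \emph{same} lower left corner $C=((T_1U(X)\otimes T_1U(X))\otimes_{\Lambda\otimes\Lambda}M_{ee})_{\mathfrak{S}_2}$, and their source objects both split off the common summand $U(X)\otimes M_{ee}$ carrying the two literally identical maps $t_2\otimes P$ and $\pi(\delta\otimes 1)$. Since in $Ab$ a pushout $B\xleftarrow{\,u\,}A\xrightarrow{\,v\,}C$ is the quotient $(B\oplus C)/\mathrm{im}(u,-v)$, and the relation submodule splits as the sum of the contributions of the two source summands, I would reduce the whole statement to the single equality $\mathrm{im}(\phi_1,-\psi_1)=\mathrm{im}(\overline{\phi'_1\otimes 1},-\overline{\psi_1})\subseteq B\oplus C$, the contribution of $U(X)\otimes M_{ee}$ being the same on both sides.

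To obtain this equality I would produce a natural \emph{surjection} $q\co A_1\twoheadrightarrow A_1'$ from the old first summand $A_1=(U(X)\otimes U(X))\otimes_{\Lambda\otimes\Lambda}U(E|E)\otimes_{\Lambda}M_e$ onto the new one $A_1'=((T_1U(X)\otimes T_1U(X))\otimes_{\Lambda\otimes\Lambda}T_{11}(cr_2U)(E,E)\otimes_{\Lambda}M_e)_{\mathfrak{S}_2}$, namely $q=\pi\circ(t_1\otimes t_1\otimes t_{11}\otimes 1)$; it is well defined by the equivariance of $t_1$ and $t_{11}$ (Proposition \ref{Lambda-bar}, Corollary \ref{T11-sym}) and surjective because $t_1$, $t_{11}$, $\pi$ are surjective and $\otimes$, $(-)_{\mathfrak{S}_2}$ are right exact. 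The point is then the two factorizations $\psi_1=\overline{\psi_1}\,q$ and $\phi_1=(\overline{\phi'_1\otimes 1})\,q$: granting them, surjectivity of $q$ forces the two images to coincide, and hence $X\otimes M=\Pi$. The first factorization is immediate on generators, since $\psi_1$ and $\overline{\psi_1}=\overline{1\otimes 1\otimes\hat{H}}$ both send $f\otimes g\otimes x\otimes a$ to $\pi(t_1(f)\otimes t_1(g)\otimes\hat{H}(t_{11}(x)\otimes a))$.

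The real work, and the step I expect to be the main obstacle, is the second factorization $\phi_1=(\overline{\phi'_1\otimes 1})\,q$, which amounts to reconciling the two descriptions of the ``Hopf-like'' map (one living on $U$, the other on $T_2U$). Writing $U(f|g)=cr_2U(f,g)$ and unravelling $\phi'_1=S_2^{T_2U}\,u'_{cr(T_2U)}\,(1\otimes 1\otimes\overline{cr_2(t_2)})$, I would chase a generator through
\begin{align*}
\phi_1(f\otimes g\otimes x\otimes a)
&=t_2\big(S_2^{U}\,cr_2U(f,g)(x)\big)\otimes a\\
&=S_2^{T_2U}\big(cr_2(t_2)_{X,X}\,cr_2U(f,g)(x)\big)\otimes a\\
&=S_2^{T_2U}\big(cr_2(T_2U)(f,g)\,cr_2(t_2)_{E,E}(x)\big)\otimes a\\
&=S_2^{T_2U}\,u'_{cr(T_2U)}\big(t_1(f)\otimes t_1(g)\otimes\overline{cr_2(t_2)}\,t_{11}(x)\big)\otimes a\\
&=\phi'_1\big(t_1(f)\otimes t_1(g)\otimes t_{11}(x)\big)\otimes a,
\end{align*}
the last line being exactly $(\overline{\phi'_1\otimes 1})\,q(f\otimes g\otimes x\otimes a)$. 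The two decisive moves are the naturality of $S_2$ (Definition \ref{+}) for the transformation $t_2\co U\to T_2U$, i.e.\ $t_2\,S_2^{U}=S_2^{T_2U}\,cr_2(t_2)$, and the naturality of the bifunctor transformation $cr_2(t_2)\co cr_2U\to cr_2(T_2U)$ at the morphism $(f,g)\co(E,E)\to(X,X)$; the fourth equality uses the defining property \ref{cr2t2-1} of $\overline{cr_2(t_2)}$ together with the explicit form of the counit $u'_{cr(T_2U)}$ from Proposition \ref{S11T11} (Theorem \ref{bilin}), valid because $cr_2(T_2U)$ is bilinear. The subtlety I would watch for is that $\overline{\phi'_1\otimes 1}$ is defined on $\mathfrak{S}_2$-coinvariants, so I must also check that $\phi'_1$ respects the involutions, i.e.\ that $\overline{cr_2(t_2)}$ intertwines $\bar{T}^U$ with the involution on $cr_2(T_2U)(E,E)$ (Corollary \ref{T11-sym}) and that $S_2^{T_2U}$ is symmetric under the switch; this is what guarantees that the computation descends through $\pi$ and that $\overline{\phi'_1\otimes 1}$ is well defined in the first place.
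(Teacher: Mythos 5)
Your proposal is correct and follows essentially the same route as the paper's proof: the same surjection $q=\pi(t_1\otimes t_1\otimes t_{11}\otimes 1)$, the same two factorizations $\psi_1=\overline{\psi_1}\,q$ and $\phi_1=(\overline{\phi'_1\otimes 1})\,q$ with the identical naturality chase identifying $\phi'_1(t_1\otimes t_1\otimes t_{11})$ with $t_2S_2^U U(f|g)$, and the same final verification that $\phi'_1$ descends to $\mathfrak{S}_2$-coinvariants via $\nabla^2\tau=\nabla^2$. The only cosmetic difference is that you make the reduction explicit by writing the pushout as $(B\oplus C)/\mathrm{im}(u,-v)$ and comparing relation submodules summand by summand, which the paper leaves implicit.
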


\begin{proof}
We have the following surjection 
$$\xymatrix{
(UX \otimes UX)  \underset{\Lambda \otimes \Lambda}{ \otimes}U(E |E) \underset{ \Lambda}{ \otimes}M_e \ar[rr]^-{\pi(t_1 \otimes t_1 \otimes t_{11} \otimes 1)} &&
(((T_1U)X \otimes (T_1U)X) \underset{\Lambda \otimes \Lambda}{ \otimes} T_{11}(cr_2U)(E ,E) \underset{\Lambda}{ \otimes} M_e)_{\mathfrak{S}_2}
}$$
 which verifies
 $$\psi_1=\pi(t_1 \otimes t_1 \otimes \hat{H}(t_{11} \otimes 1))=\pi (1 \otimes 1 \otimes \hat{H}) (t_1 \otimes t_1 \otimes t_{11} \otimes 1).$$
 We first check that $ \phi_1=(\phi'_1 \otimes 1) (t_1 \otimes t_1 \otimes t_{11} \otimes 1).$ For $f,g \in \C(E,X)$ and $x \in U(E|E)$ we have:
 \begin{eqnarray*}
\phi'_1  (t_1 \otimes t_1 \otimes t_{11} )(f \otimes g \otimes x) &= &S_2^{T_2U}u'_{cr_2(T_2U)}(1 \otimes 1 \otimes \overline{cr_2(t_2)})(t_1(f) \otimes t_1(g) \otimes t_{11}(x))\\
&=& S_2^{T_2U} cr_2(T_2U)(f,g) cr_2(t_2) (x) \mathrm{\qquad by\ definition\ of\ } u'_{cr_2(T_2U)}\\
&=& (T_2U)(\nabla^2) \iota^2_{(1,2)} cr_2(T_2U)(f,g) cr_2(t_2) (x) \mathrm{\qquad by\ definition\ of\ } S_2^{T_2U} \\
&=& (T_2U)(\nabla^2) (T_2U)(f \vee g) t_2 \iota^2_{(1,2)} (x)\\
&=& t_2\  U(\nabla^2) U(f \vee g) \iota^2_{(1,2)} (x)\\
&=& t_2 \ U(\nabla^2)  \iota^2_{(1,2)}\  U(f | g)  (x)\\
&=& t_2\  S_2^{U}  U(f | g) (x).
\end{eqnarray*}
It remains to check that $\phi'_1 $ factors through the coinvariants, that is $\phi'_1( t_1(f) \otimes t_1(g) \otimes t_{11} x)=\phi'_1( t(t_1(f) \otimes t_1(g) \otimes t_{11} x))$. We have:
 \begin{eqnarray*}
\phi'_1 ( t(t_1(f) \otimes t_1(g) \otimes t_{11} x))&=&\phi'_1 (t_1(g) \otimes t_1(f) \otimes t_{11} (\iota^2_{(1,2)})^{-1} U( \tau) \iota^2_{(1,2)} x)\\
&=& t_2\  S_2^{U}  U(g | f) (\iota^2_{(1,2)})^{-1} U( \tau) (\iota^2_{(1,2)} x) \mathrm{\ by\ the\ previous\ calculation}\\
&=& t_2\  U(\nabla^2)   \iota^2_{(1,2)} U(g | f) (\iota^2_{(1,2)})^{-1} U( \tau) (\iota^2_{(1,2)} x) \\
&=& t_2\  U(\nabla^2)    U(g  \vee f) \iota^2_{(1,2)} (\iota^2_{(1,2)})^{-1} U( \tau) (\iota^2_{(1,2)} x) \\
&=& t_2\  U(\nabla^2)    U(g  \vee f)U( \tau) (\iota^2_{(1,2)} x) \\
&=& t_2\  U(\nabla^2)  U( \tau)  U(f \vee g) (\iota^2_{(1,2)} x) \\
&=& t_2\  U(\nabla^2 \tau)  U(f \vee g) (\iota^2_{(1,2)} x) \\
&=& t_2\  U(\nabla^2)   U(f \vee g) (\iota^2_{(1,2)} x) \\
&=& t_2\  U(\nabla^2)   \iota^2_{(1,2)} U(f | g) ( x) \\
&=& t_2\  S_2^{U}  U(f | g) ( x) \\
&=& \phi'_1 ( t_1(f) \otimes t_1(g) \otimes t_{11} x).
\end{eqnarray*}

\end{proof}

\subsection{The quadratic tensor product defines a quadratic functor}

The aim of this section is to prove the following result.

\begin{prop} \label{qtp-is-quad}
For $M \in PQMod^E_{\C}$ the functor given by the quadratic tensor product: $- \otimes M: \C \to Ab$ is a quadratic functor.
\end{prop}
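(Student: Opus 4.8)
The plan is to show that $-\otimes M$ is quadratic by proving directly that $cr_3(-\otimes M)=0$, exploiting the exactness of $cr_3$ (Proposition \ref{exact}) together with the pushout presentation of the quadratic tensor product. First I would record that the pushout square of Definition \ref{quad-t-p} is entirely natural in $X$: all four corners are functors $\C\to Ab$ and $\phi,\psi$ are natural transformations. Since colimits in the functor category $Func_*(\C,Ab)$ are computed pointwise, $-\otimes M$ is the pushout \emph{in this abelian functor category}, hence the cokernel of $(\phi,-\psi)\colon C\to P_1\oplus P_2$, where
$$P_1=(T_2U)(-)\otimes_{\Lambda}M_e,\qquad P_2=\big(((T_1U)(-)\otimes(T_1U)(-))\otimes_{\Lambda\otimes\Lambda}M_{ee}\big)_{\mathfrak{S}_2}$$
and $C$ is the top-left corner. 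This gives a right exact sequence of functors $C\to P_1\oplus P_2\to(-\otimes M)\to 0$.

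Next I would apply $cr_3$. Being exact (Proposition \ref{exact}), it is additive and right exact, so it carries the above sequence into the exact sequence $cr_3C\to cr_3P_1\oplus cr_3P_2\to cr_3(-\otimes M)\to 0$. Thus it suffices to prove $cr_3P_1=0$ and $cr_3P_2=0$, for then $cr_3(-\otimes M)$ is a quotient of $0$.

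The technical core is the elementary fact that post-composition with an additive functor preserves quadraticity: if $G\colon\C\to\mathrm{Mod}\ti R$ has quadratic underlying $Ab$-valued functor and $N$ is a left $R$-module, then $G(-)\otimes_R N$ is quadratic. Indeed, the splitting maps $F(i^n_k),F(r^n_k)$ realizing the decomposition of Proposition \ref{ce-prop} are $R$-linear, since the relevant right $R$-action is induced by precomposition at the source $E$ and hence commutes with the (post-composition) structure maps (for $T_1U,T_2U$ this passes to the quotient because $t_1,t_2$ are equivariant, cf.\ Proposition \ref{Lambda-bar}). So Proposition \ref{ce-prop} holds in $\mathrm{Mod}\ti R$; applying the additive functor $\otimes_R N$ to the split decomposition of $G(X_1\vee X_2\vee X_3)$, which carries no degree-$3$ summand as $cr_3G=0$, shows $(G\otimes_R N)(X_1\vee X_2\vee X_3)$ carries no $cr_3$-summand either, i.e.\ $cr_3(G\otimes_R N)=0$. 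Applying this with $G=T_2U$ (quadratic by Definition \ref{Tn-coker}) and $R=\Lambda$ yields $cr_3P_1=0$. For $P_2$, the functor $G=(T_1U)(-)\otimes(T_1U)(-)$ equals $B\Delta_{\C}$ for the bilinear bireduced bifunctor $B(X,Y)=T_1U(X)\otimes T_1U(Y)$ (bilinear because $T_1U$ is linear), hence is quadratic by Lemma \ref{diago}; so $N':=G(-)\otimes_{\Lambda\otimes\Lambda}M_{ee}$ satisfies $cr_3N'=0$ by the fact above with $R=\Lambda\otimes\Lambda$. Finally $P_2=(N')_{\mathfrak{S}_2}=coker(1-t\colon N'\to N')$, where $t$ is the natural involution of Lemma \ref{invol}; right exactness of $cr_3$ gives the exact sequence $cr_3N'\to cr_3N'\to cr_3P_2\to 0$, whence $cr_3P_2=0$.

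Combining, $cr_3(-\otimes M)=0$, so $-\otimes M$ is quadratic; reducedness is immediate since $U(0)=0$ forces every corner of the pushout to vanish at $X=0$. The two genuinely delicate points, both routine but requiring verification, are the naturality of the involution $t$ in $X$ and the $R$-linearity of the cross-effect splitting, since it is precisely these that license interchanging $cr_3$ with the constructions $\otimes_R N$ and $(-)_{\mathfrak{S}_2}$. Note that only the proto-quadratic structure of $M$ is used, which is why the statement holds for all $M\in PQMod^E_{\C}$.
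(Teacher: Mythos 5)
Your proof is correct, but it takes a genuinely different route from the paper's. The paper does not show that both receiving corners of the pushout are quadratic. Instead it first proves (Lemma \ref{i}) that $cr_2(\hat{\phi})$ is surjective: it observes that $coker(\phi\co C\to P_1)$ is a quotient of $T_1(T_2U)(-)\otimes_{\Lambda}M_e$ and hence linear, so applying the exact functor $cr_2$ to the right exact sequence $P_2\xrightarrow{\hat\phi} (-\otimes M)\to coker(\phi)\to 0$ exhibits $cr_2(-\otimes M)$ as a quotient of $cr_2(P_2)$, which is bilinear by Lemma \ref{diago} and thickness; quadraticity of $-\otimes M$ follows. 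You instead view the pushout as a quotient of $P_1\oplus P_2$ and kill $cr_3$ of each summand. The trade-off is that you must additionally prove $cr_3\bigl(T_2U(-)\otimes_{\Lambda}M_e\bigr)=0$, via your lemma that tensoring a quadratic $Mod$-$R$-valued functor with a left $R$-module preserves quadraticity; that lemma is correct (the idempotents realizing Proposition \ref{ce-prop} are composites of the maps $F(i)$, $F(r)$, hence $R$-linear, and additive functors preserve split decompositions) and is nowhere stated in the paper, though the paper implicitly invokes its degree-one analogue when it asserts that $T_1(T_2U)\otimes_{\Lambda}M_e$ is linear. What you buy is a shorter, purely formal argument that bypasses Lemma \ref{i}; what you lose is that Lemma \ref{i} is not wasted effort in the paper --- the surjectivity of $cr_2(\hat\phi)$ is reused in Theorem \ref{cross-pt-quad} to identify the cross-effect of $-\otimes M$, so with your shortcut that lemma would still have to be proved separately.
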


The proof of this proposition relies on the following lemma.

\begin{lm} \label{i}
For $M$ a proto-quadratic $\C$-module, the following natural transformation of functors from $\C$ to $Ab$  is surjective: 
$$cr_2(\hat{\phi}): cr_2((((T_1U(-) \otimes T_1U(-)) \otimes_{\Lambda \otimes \Lambda} M_{ee} )\Delta_{\C})_{\mathfrak{S}_2} ) \to cr_2(- \otimes M);$$
here $\hat{\phi}$ is the map in the push-out diagram in Definition \ref{quad-t-p}.
\end{lm}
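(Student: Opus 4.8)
The plan is to deduce the surjectivity of $cr_2(\hat{\phi})$ from the exactness of $cr_2$ (Proposition \ref{exact}) together with the elementary behaviour of cokernels in a push-out square, and only at the very end to enter any concrete computation. Being exact, the functor $cr_2\co Func_*(\C,Ab)\to BiFunc_{*,*}(\C\times\C,Ab)$ is additive and preserves cokernels and finite direct sums, hence push-outs. Since the defining diagram of $X\otimes M$ in Definition \ref{quad-t-p} is a push-out square in $Func_*(\C,Ab)$ (all four maps being natural in $X$), applying $cr_2$ yields a push-out square of bifunctors
$$\xymatrix{ cr_2(S) \ar[r]^{cr_2(\phi)} \ar[d]_{cr_2(\psi)} & cr_2(P) \ar[d]^{cr_2(\hat\psi)} \\ cr_2(G) \ar[r]_-{cr_2(\hat\phi)} & cr_2(-\otimes M) }$$
where $S,P,G$ are the three corner functors. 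In any push-out the two parallel arrows have naturally isomorphic cokernels; concretely $\mathrm{coker}(cr_2\hat\phi)\cong\mathrm{coker}(cr_2\phi)$. Hence $cr_2(\hat\phi)$ is surjective if and only if $cr_2(\phi)$ is, and as $\phi=(\phi_1,t_2\otimes P)$ it suffices to prove that $cr_2(\phi_1)$ alone surjects onto $cr_2(P)$.

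I would next identify the target. Writing $P(X)=T_2U(X)\otimes_{\Lambda}M_e$ and using that $T_2U$ is reduced quadratic, Proposition \ref{ce-prop} gives a natural splitting $T_2U(X\vee Y)\cong T_2U(X)\oplus T_2U(Y)\oplus cr_2(T_2U)(X,Y)$ whose structure maps, being induced by morphisms of $\C$, are right $\Lambda$-equivariant. Tensoring with $M_e$ over $\Lambda$ (an additive operation) therefore gives
$$cr_2(P)(X,Y)\cong cr_2(T_2U)(X,Y)\otimes_{\Lambda}M_e\cong T_{11}(cr_2U)(X,Y)\otimes_{\Lambda}M_e,$$
the last isomorphism being $\overline{cr_2(t_2)}$ of Theorem \ref{cr2t2}. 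The source functor being the diagonal of a bireduced bifunctor, its cross-effect is computed from the same $n=2$ decomposition exactly as in Lemma \ref{diago}.

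For the surjectivity of $cr_2(\phi_1)$ I would work with the equivalent push-out of Lemma \ref{push-out-eq}, whose top arrow is $\overline{\phi}=(\overline{\phi_1'\otimes 1},\,t_2\otimes P)$; by the same cokernel reduction it suffices to show that $cr_2(\overline{\phi_1'\otimes 1})$ is surjective. The decisive point is that $\phi_1'$ is, by construction, the composite $S_2^{T_2U}\circ u'_{cr_2(T_2U)}\circ(1\otimes 1\otimes \overline{cr_2(t_2)})$, in which $u'_{cr_2(T_2U)}$ is the co-unit of the adjunction of Theorem \ref{bilin} for the bilinear bifunctor $cr_2(T_2U)$, hence an isomorphism, and $\overline{cr_2(t_2)}$ is the isomorphism of Theorem \ref{cr2t2}. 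Taking the second cross-effect of the natural transformation $\phi_1'$, the folding factor $S_2^{T_2U}$ restricts on the mixed bifunctor summand to the inclusion $\iota^2_{(1,2)}$ of the cross-effect, exactly as in the proof of Proposition \ref{lm-bifunc}; what survives on that summand is the isomorphism $u'_{cr_2(T_2U)}\circ(1\otimes1\otimes\overline{cr_2(t_2)})$ tensored with $M_e$. Thus $cr_2(\overline{\phi_1'\otimes 1})$, and with it $cr_2(\phi_1)$, surjects onto $cr_2(P)(X,Y)$.

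The main obstacle I anticipate is precisely this last identification: one must verify that the second cross-effect of the folding-map factor $S_2^{T_2U}$ isolates the off-diagonal bifunctor component and identifies it, via $\iota^2_{(1,2)}$, with the genuine co-unit $u'_{cr_2(T_2U)}(X,Y)$ rather than with a proper quotient of it. This is the same mechanism already exploited in Proposition \ref{lm-bifunc} and Theorem \ref{cr2t2}, so I would organise the argument to reuse those statements verbatim, and I would keep careful track of the $\mathfrak{S}_2$-coinvariants appearing in $S$ and $G$, noting that these only introduce a further natural quotient and therefore cannot destroy surjectivity.
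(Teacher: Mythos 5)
Your proposal is correct and follows essentially the same route as the paper's proof: both arguments rest on the exactness of $cr_2$ (Proposition \ref{exact}), on comparing cokernels across the push-out square, and on the fact that $\phi_1$ factors through surjections onto the image of $S_2^{T_2U}$, so that $\mathrm{coker}(\phi_1)\cong T_1(T_2U)(-)\otimes_\Lambda M_e$. The paper finishes slightly more economically than you do: instead of identifying $cr_2(P)$ and analysing $cr_2(S_2^{T_2U})$ on the mixed summand (your anticipated obstacle), it observes that $\mathrm{coker}(\phi)$ is a quotient of the linear functor $T_1(T_2U)(-)\otimes_\Lambda M_e$, hence linear by Proposition \ref{stable}, so its second cross-effect vanishes and applying the exact functor $cr_2$ to the sequence $(\mathbb{T}_{11}(M_{ee})\Delta_{\C})_{\mathfrak{S}_2}\xrightarrow{\hat\phi} -\otimes M\to \mathrm{coker}(\phi)\to 0$ yields the surjectivity at once.
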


\begin{rem}
In Theorem \ref{cross-pt-quad} below  we give an improved version of this lemma.
\end{rem}

\begin{proof}[Proof of Lemma \ref{i}]
To simplify notation we write $\mathbb{T}_{11}(M_{ee}) \Delta_{\C}$ instead of $(((T_1U)(-) \otimes (T_1U)(-)) \otimes_{\Lambda \otimes \Lambda} M_{ee}) \Delta_{\C}$. Recall that the functor $\mathbb{T}_{11}$ is the functor defined in \ref{T11}.

By the universal property of a push-out, we obtain the existence of a map $f: X \otimes M \to coker(\phi)$ making the following diagram of abelian groups commutative:
$$\xymatrix{
(UX \otimes UX) \otimes_{\Lambda \otimes \Lambda} U(E |E)\otimes_{\Lambda} M_e \oplus UX \otimes M_{ee} \ar[r]^-{\phi} \ar[d]_-{\psi} &(T_2U)(X) \otimes_{\Lambda} M_e \ar[d]^{\hat{\psi}} \ar@{->>}[ddr]\\
(((T_1U)(X) \otimes (T_1U)(X)) \otimes_{\Lambda \otimes \Lambda} M_{ee})_{\mathfrak{S}_2} \ar[r]_-{\hat{\phi}} \ar[drr]_{0} &X \otimes M \ar@{-->}[dr]^f\\
& & coker(\phi).
}$$
We deduce the existence of a natural exact sequence
\begin{equation} \label{ses}
(\mathbb{T}_{11}(M_{ee}) \Delta_{\C})_{\mathfrak{S}_2}(X) \xrightarrow{\hat{\phi}} X \otimes M \xrightarrow{f} coker(\phi) \to 0.
\end{equation}

In the sequel  we show that the functor $coker(\phi)$ is linear.
Recall that  $\phi_1= \phi'_1 (t_1 \otimes t_1 \otimes t_{11}) \otimes 1 $  where $(\phi'_1 (t_1 \otimes t_1 \otimes t_{11}))_X$ is given by the following composition: 

\xymatrix{
(U(X) \otimes U(X)) \otimes_{\Lambda \otimes \Lambda} U(E |E) \ar[r]^-{\alpha} &cr_2(T_2U)(X,X)\ar[r]^-{(S_2^{T_2U})_X}&(T_2U)(X) 
}
where $\alpha=(u'_{cr_2(T_2U)})_{X,X}(1 \otimes 1 \otimes \overline{cr_2(t_2)})(t_1 \otimes t_1 \otimes t_{11})$.

The functor $T_2U$ is quadratic by definition of $T_2$, so $cr_2(T_2U)$ is a bilinear functor, and so is $(T_1U(-) \otimes T_1U(-)) \otimes_{\Lambda \otimes \Lambda} cr_2(T_2U)(E,E)$. Since $(u'_{cr(T_2U)})_{E,E}$ is an isomorphism according to Theorem \ref{bilin}, we deduce by Proposition \ref{car-bipoly} that $(u'_{cr(T_2U)})$ is a natural equivalence and $(u'_{cr(T_2U)})_{X,X}$ is an isomorphism. Furthermore $1 \otimes 1 \otimes \overline{cr_2(t_2)}$ and $t_1 \otimes t_1 \otimes t_{11} $ are surjective by construction.
It follows that
$$coker((\phi_1)_X)=coker((S_2^{T_2U})_X)=T_1(T_2 U)(X)$$
by Proposition \ref{Tn-coker}. 

Since $Im(\phi_1) \subset Im(\phi)$ we see that $coker(\phi)$ is a quotient of $coker(\phi_1)=T_1(T_2 U) \otimes_{\Lambda} M_e$ which is a linear functor. Thus $coker(\phi)$ is a linear functor by Proposition \ref{stable}.
But  the cross-effect functor is exact by Proposition \ref{exact},  so applying it  to sequence (\ref{ses}) shows that $cr_2(\hat{\phi})$ is pointwise surjective.

\end{proof}

\begin{proof}[Proof of Proposition \ref{qtp-is-quad}]
The functor  $\mathbb{T}_{11}(M_{ee})$ is bilinear and bireduced, so by Lemma \ref{diago}, $\mathbb{T}_{11}(M_{ee}) \Delta_{\C}$ is quadratic. By Proposition \ref{bi-stable}, we deduce that $(\mathbb{T}_{11}(M_{ee}) \Delta_{\C})_{\mathfrak{S}_2}$ is quadratic. Consequently $cr_2((\mathbb{T}_{11}(M_{ee})\Delta_{\C})_{\mathfrak{S}_2})$ is bilinear. Since $cr_2(- \otimes M)$ is a quotient of $cr_2((\mathbb{T}_{11}(M_{ee})\Delta_{\C})_{\mathfrak{S}_2})$ by Lemma \ref{i}, the functor $cr_2(- \otimes M)$ is also bilinear, so $- \otimes M$ is quadratic.

\end{proof}

Proposition \ref{qtp-is-quad} leads to the following definition.

\begin{defi}
The functor
$$\mathbb{T}_2: QMod^E_{\C} \to Quad(\C,Ab)$$
is defined as follows: for $M \in QMod^E_{\C}$ let $\mathbb{T}_2(M)= - \otimes M$,  and for a morphism of quadratic $\C-$ modules $(\phi_e, \phi_{ee}): M \to N$ let $\mathbb{T}_2(\phi_e, \phi_{ee})=\varphi$ where $\varphi_X: X \otimes M \to X \otimes N$ is given by the universal property of a push-out.
\end{defi}

\begin{rem}
The functor $\mathbb{T}_2$ extends the tensor product functor $\mathbb{T}_1$ in the following sense: it improves the approximation of a quadratic functor $F$ by $\overline{u'}: T_2U(-) \otimes_{\Lambda} F(E) \to F$ by taking into account the cross-effect of $F$ in "amalgamating" $T_2U(X) \otimes_{\Lambda} F(E)$ with the image of $F(X|X)$ under $(S^F_2)_X$. We expect that this idea can be extended to polynomial functors of higher degree.
\end{rem}

\begin{rem}
Since the functor $- \otimes M$ is quadratic by Proposition \ref{qtp-is-quad}, the computation of $(- \otimes M)(E^{\vee n})$ for $n \geq 1$ reduces to computing $(- \otimes M)(E)$ and $cr_2(- \otimes M) (E,E)$, see Proposition \ref{ce-prop}.
\end{rem}

\subsection{Computation of the quadratic tensor product $E \otimes M$}

In section \ref{section-thm}, in order to obtain the desired  equivalence between quadratic functors and quadratic $\C$-modules, we need to compute $E \otimes M$ for $M \in QMod^E_{\C}$, as follows.

\begin{prop} \label{prop}
For $M \in PQMod^E_{\C}$,  there exists a natural isomorphism of abelian groups
$$E \otimes M \xrightarrow{\simeq} M_e.$$
\end{prop}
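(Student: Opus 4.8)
The plan is to exploit the presentation of $X\otimes M$ by generators and relations given in the Proposition following Definition~\ref{quad-t-p}, specialised to $X=E$. There $E\otimes M$ is generated by symbols $f\otimes a$ (with $f\in\C(E,E)$, $a\in M_e$) and $[f,g]\otimes m$ (with $f,g\in\C(E,E)$, $m\in M_{ee}$), subject to the relations $(1)$--$(9)$. I would define a homomorphism $\Phi\colon E\otimes M\to M_e$ on generators by $\Phi(f\otimes a)=fa$ and $\Phi([f,g]\otimes m)=P\big((\overline{f}\otimes\overline{g})m\big)$, where $fa$ denotes the $\Lambda$-action on $M_e$ and $\overline{f}\otimes\overline{g}$ acts through the symmetric $\overline{\Lambda}\otimes\overline{\Lambda}$-structure of $M_{ee}$; conversely I would set $\Psi\colon M_e\to E\otimes M$, $\Psi(a)=1_E\otimes a$. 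The whole proof then reduces to checking that $\Phi$ is well defined and that $\Phi$ and $\Psi$ are mutually inverse.

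Well-definedness of $\Phi$ amounts to verifying that it annihilates each of the nine relations. Relations $(1),(2),(4),(5)$ hold by the $\Lambda$- and $\overline{\Lambda}\otimes\overline{\Lambda}$-module axioms and the additivity of $P$; relation $(6)$ holds because $\overline{\nabla^2\xi}=\overline{r_1^2\xi}+\overline{r_2^2\xi}$ in $\overline{\Lambda}=T_1U(E)$ by Proposition~\ref{explicit-lin}; relation $(7)$ follows from the symmetric-module identity $T\big((\overline{f}\otimes\overline{g})m\big)=(\overline{g}\otimes\overline{f})Tm$ together with $PT=P$; and relation $(8)$ is exactly the diagonal equivariance $P\big((\overline{f}\otimes\overline{f})m\big)=fP(m)$ of $P$. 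For relation $(3)$ I would note that the alternating sum of its seven terms, viewed in $U(E)=\Lambda$, is of the form $\big(\nabla^3_*-(\nabla^2r^3_{12})_*-(\nabla^2r^3_{13})_*-(\nabla^2r^3_{23})_*+r^3_{1*}+r^3_{2*}+r^3_{3*}\big)(\xi)$ and hence lies in $\ker\big(t_2\colon U(E)\to T_2U(E)=\overline{\overline{\Lambda}}\big)$ by Proposition~\ref{quadratization}; since the $\Lambda$-action on $M_e$ factors through $\overline{\overline{\Lambda}}$ by Lemma~\ref{Lambdabarbarmod}, this element kills $a$.

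The crux is relation $(9)$, which I would deduce from $(QM1)$ applied to $\xi=(f\vee g)\gamma\in\C(E,E\vee E)$. Here $\nabla^2\xi=(f,g)\gamma$, $r_1^2\xi=fr_1^2\gamma$ and $r_2^2\xi=gr_2^2\gamma$, so the first three terms match; for the Hopf term I would use the identity $\rho^2_{(1,2)}\big((f\vee g)\gamma\big)=U(f\mid g)\rho^2_{(1,2)}(\gamma)$, obtained by cancelling the injection $\iota^2_{(1,2)}$ in \eqref{(avb)xi}, together with the naturality of $t_{11}$ and the $\overline{\Lambda}\otimes\overline{\Lambda}$-linearity of $\hat{H}$, to get $\hat{H}\big(\overline{\rho^2_{(1,2)}((f\vee g)\gamma)}\otimes a\big)=(\overline{f}\otimes\overline{g})\hat{H}\big(\overline{\rho^2_{(1,2)}(\gamma)}\otimes a\big)$; applying $P$ shows $(QM1)$ is precisely $\Phi$ of $(9)$. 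Finally $\Phi\Psi=\mathrm{id}$ is immediate as $1_E$ acts as the unit, while $\Psi\Phi=\mathrm{id}$ follows on generators from relation $(1)$ (whence $1_E\otimes fa=f\otimes a$) and from relations $(8)$ and $(4)$ (whence $1_E\otimes P((\overline{f}\otimes\overline{g})m)=[1_E,1_E]\otimes(\overline{f}\otimes\overline{g})m=[f,g]\otimes m$); naturality in $M$ is automatic since $\Phi$ and $\Psi$ involve only the structure maps of $M$, which morphisms of proto-quadratic $\C$-modules respect. I expect the only real obstacle to be the bookkeeping in relation $(9)$, namely identifying its Hopf term with that of $(QM1)$ via the comparison \eqref{(avb)xi}.
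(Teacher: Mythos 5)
Your argument is correct, but it is organized differently from the paper's. The paper never invokes the generators-and-relations presentation: it works directly with the pushout square of Definition \ref{quad-t-p}, builds the map $(\overline{P}\,\overline{\mu_{ee}},\overline{\mu_e})$ out of the pushout via its universal property (Lemmas \ref{lm1}--\ref{lm33}), and then concludes it is an isomorphism not by exhibiting an inverse but by observing that $\pi(\delta\otimes 1)$, hence $\hat\psi$, is surjective and that the composite with $\hat\psi$ is the canonical isomorphism $\overline{\mu_e}$. Your route instead takes the presentation, defines the same map $\Phi$ on generators, and produces an explicit two-sided inverse $\Psi(a)=1_E\otimes a$, verified via relations $(1)$, $(4)$ and $(8)$. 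The underlying computations coincide almost term for term: your check of relation $(3)$ is the paper's Lemma \ref{lm1} (that $\mu_e$ kills $\ker(t_2\otimes 1)$, via Proposition \ref{T2byT11}, Proposition \ref{QM1} and Lemma \ref{Lambdabarbarmod}); your checks of $(7)$ and $(8)$ are Lemmas \ref{lm2} and \ref{lm3} together with the first computation in Lemma \ref{lm33}; and your treatment of $(9)$ via $\xi=(f\vee g)\gamma$ and the identity $\rho^2_{(1,2)}((f\vee g)\gamma)=U(f\mid g)\rho^2_{(1,2)}(\gamma)$ is exactly the second computation in Lemma \ref{lm33}. What your version buys is a self-contained, fully explicit isomorphism with no appeal to surjectivity of $\hat\psi$; what the paper's version buys is that it avoids relying on the (somewhat sketchily justified) generators-and-relations proposition and stays entirely inside the pushout formalism, which is the form reused later in the computation of the cross-effect and of the counit.
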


In order to define this isomorphism we need the following lemmas.

\begin{lm} \label{lm1}
For $M \in PQMod^E_{\C}$, there exists an isomorphism $\overline{\mu_e}: (T_2U)(E) \otimes_{\Lambda} M_e \to M_e$ making the following diagram commutative
$$\xymatrix{
U(E) \otimes_{\Lambda} M_e \ar[rr]^{\mu_e} \ar@{->>}[dr]_{t_2 \otimes 1}&&M_e\\
&(T_2U)(E) \otimes_{\Lambda} M_e \ar@{..>}[ru]_{\overline{\mu_e}}
}$$
where $\mu_e$ is the canonical isomorphism. In particular, the map  $t_2 \otimes 1$ is an isomorphism.
\end{lm}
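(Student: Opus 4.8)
The plan is to reduce the entire statement to the single assertion that $t_2\otimes 1$ is injective; since this map is automatically surjective, injectivity will make it an isomorphism and then $\overline{\mu_e}:=\mu_e\circ(t_2\otimes 1)^{-1}$ is the desired isomorphism satisfying $\overline{\mu_e}\circ(t_2\otimes 1)=\mu_e$, so that the triangle commutes by construction. Recall that $U(E)=\Lambda$, so $\mu_e\colon U(E)\otimes_{\Lambda} M_e\to M_e$, $\lambda\otimes a\mapsto \lambda a$, is the canonical isomorphism; in particular $\ker(\mu_e)=0$. Hence the only point to establish is $\ker(t_2\otimes 1)=0$.

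First I would compute $\ker(t_2\otimes 1)$. By Proposition \ref{Tn-prop} the unit $t_2\colon U\to T_2U$ is an epimorphism, so evaluating at $E$ gives a short exact sequence of abelian groups $\ker(t_2)_E\hookrightarrow U(E)\xrightarrow{(t_2)_E}(T_2U)(E)\to 0$. Applying the right-exact functor $-\otimes_{\Lambda} M_e$ yields the exact sequence $\ker(t_2)_E\otimes_{\Lambda} M_e\to U(E)\otimes_{\Lambda} M_e\xrightarrow{t_2\otimes 1}(T_2U)(E)\otimes_{\Lambda} M_e\to 0$. Thus $t_2\otimes 1$ is surjective, and $\ker(t_2\otimes 1)$ is the image of $\ker(t_2)_E\otimes_{\Lambda} M_e$ in $U(E)\otimes_{\Lambda} M_e$, that is, it is generated by the elements $\kappa\otimes a$ with $\kappa\in\ker(t_2)_E$ and $a\in M_e$.

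It then remains to check that $\mu_e$ annihilates these generators, for then $\ker(t_2\otimes 1)\subseteq\ker(\mu_e)=0$. This is the only non-formal input, and it is exactly where the proto-quadratic structure is used: by Lemma \ref{Lambdabarbarmod} the $\Lambda$-module structure on $M_e$ factors through the quotient ring $\overline{\overline{\Lambda}}=(T_2U)(E)$, the quotient map being precisely $(t_2)_E\colon\Lambda\to\overline{\overline{\Lambda}}$. Consequently $\ker(t_2)_E$, being the kernel of this ring map, acts as zero on $M_e$, so $\mu_e(\kappa\otimes a)=\kappa a=0$ for all $\kappa\in\ker(t_2)_E$ and $a\in M_e$. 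This gives $\ker(t_2\otimes 1)=0$ and completes the argument. Equivalently, one may phrase the conclusion as the standard base-change isomorphism $(T_2U)(E)\otimes_{\Lambda} M_e=\overline{\overline{\Lambda}}\otimes_{\Lambda} M_e\cong M_e$, valid because $\ker(t_2)_E$ kills $M_e$. The main (and essentially only) obstacle is thus the identification of the $\ker(t_2)_E$-action as trivial, which is supplied by Lemma \ref{Lambdabarbarmod}; everything else is formal right-exactness of the tensor product together with the factorisation of a map through a surjection with smaller kernel.
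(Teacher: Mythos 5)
Your proof is correct. It turns on the same essential point as the paper's proof, namely that $\mu_e$ annihilates $\ker(t_2\otimes 1)$, but you reach that point by a different citation: you invoke Lemma \ref{Lambdabarbarmod} (the $\Lambda$-action on $M_e$ factors through $\overline{\overline{\Lambda}}=(T_2U)(E)$ via $(t_2)_E$), so that $\ker(t_2)_E$ acts trivially and the generators $\kappa\otimes a$ of $\ker(t_2\otimes 1)$ are killed by $\mu_e$. The paper instead unwinds this on the spot: it uses Proposition \ref{T2byT11} to write $\ker(t_2)=S_2^U(\ker(t_{11}))$ and the commutative square of Proposition \ref{QM1} to get $\mu_e(S_2^U\otimes 1)=P\hat{H}(t_{11}\otimes 1)$, which vanishes on $\ker(t_{11})\otimes M_e$. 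Since Lemma \ref{Lambdabarbarmod} is itself deduced from $(QM1)$ and Proposition \ref{univqumap}, the two arguments are the same computation packaged differently; yours is slightly tidier because the needed input has already been isolated, and it makes transparent that the statement is just base change along $\Lambda\to\overline{\overline{\Lambda}}$. The one small point you (like the paper) leave implicit is that $\ker(t_2)_E$ is a right $\Lambda$-submodule of $U(E)$, which is needed for $\ker(t_2)_E\otimes_{\Lambda}M_e$ to make sense; this follows from the explicit description of $\ker(t_2)$ in Proposition \ref{quadratization}, exactly as in Proposition \ref{Lambda-bar} for $T_1$.
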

\begin{proof}
To prove that $\overline{\mu_e}$ exists we have to prove that $\mu_e(Ker(t_2 \otimes 1))=0$. By Proposition \ref{T2byT11} we have $$T_2(F)=coker(ker(cr_2F(X |X) \xrightarrow{t_{11}} (T_{11}cr_2F)(X |X)) \xrightarrow{S_2^U} F(X) ).$$
So we have $ker(t_2)=S_2^U (ker(t_{11}))$ and
$$\mu_e(ker(t_2 \otimes 1))=\mu_e(S_2^U \otimes 1)(ker(t_{11}) \otimes M_e)=P \hat{H} (t_{11} \otimes Id)
(ker(t_{11}) \otimes M_e)=0$$
where the second equality is given by Proposition \ref{QM1}.

Since $t_2 \otimes 1$ is surjective and $\mu_e$ is an isomorphism we deduce that $\overline{\mu_e}$ is an isomorphism.

As a consequence we obtain that $t_2 \otimes 1$ is also an isomorphism.
\end{proof}

\begin{lm} \label{lm2}
For $M \in PQMod^E_{\C}$, there exists a morphism $\overline{P}: (M_{ee})_{\mathfrak{S}_2} \to M_e$ making the following diagram commutative
$$\xymatrix{
 M_{ee} \ar[rr]^{P} \ar@{->>}[dr]_{\pi}&&M_e\\
& (M_{ee})_{\mathfrak{S}_2}  \ar@{..>}[ru]_{\overline{P}}
}$$

\end{lm}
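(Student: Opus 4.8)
The plan is to invoke the universal property of the coinvariants quotient, so that the entire content of the lemma reduces to one of the defining axioms of a proto-quadratic $\C$-module. Recall from the Notation preceding Lemma \ref{LoLoM} that $(M_{ee})_{\mathfrak{S}_2} = M_{ee}/(1-T)M_{ee}$, with $\pi \colon M_{ee} \to (M_{ee})_{\mathfrak{S}_2}$ the canonical projection; hence $\ker(\pi) = \mathrm{Im}(1-T) = \{\, m - Tm \mid m \in M_{ee}\,\}$. A homomorphism $\overline{P}$ satisfying $\overline{P}\,\pi = P$ therefore exists if and only if $P$ annihilates $\ker(\pi)$, that is, if and only if $P(m - Tm) = 0$ for every $m \in M_{ee}$.

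First I would note that this vanishing condition is exactly the relation $PT = P$: since $P(m - Tm) = P(m) - P(Tm) = (P - PT)(m)$, the equality $P(m-Tm)=0$ for all $m$ is equivalent to $P = PT$. But $PT = P$ is precisely one of the defining relations of a proto-quadratic $\C$-module (Definition \ref{proto-quad-C}), which $M$ satisfies by hypothesis. Consequently $P$ factors through $\pi$, yielding the desired homomorphism $\overline{P} \colon (M_{ee})_{\mathfrak{S}_2} \to M_e$ with $\overline{P}\,\pi = P$; uniqueness of $\overline{P}$ is automatic because $\pi$ is an epimorphism.

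I expect no genuine obstacle here: the statement is an immediate formal consequence of the axiom $PT = P$ built into the definition of a proto-quadratic $\C$-module, and the only point worth spelling out is the identification $\ker(\pi) = \mathrm{Im}(1-T)$, which is nothing more than the definition of the coinvariants.
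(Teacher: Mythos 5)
Your proof is correct and follows exactly the same route as the paper, which simply observes that the factorization is due to the relation $PT=P$; you have merely spelled out the standard identification $\ker(\pi)=\mathrm{Im}(1-T)$ that makes this observation work. One small point in your favour: the relation $PT=P$ is indeed imposed already in Definition \ref{proto-quad-C} (proto-quadratic $\C$-modules), so your citation is the more accurate one, the lemma being stated for $M\in PQMod^E_{\C}$.
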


\begin{proof}
This is due to the relation $PT=P$ in Definition \ref{quad-C}.
\end{proof}

\begin{lm} \label{lm3}
There exists an isomorphism 
$$\overline{\mu_{ee}}: ((T_1U)(E) \otimes (T_1U)(E)) \otimes_{\Lambda \otimes \Lambda} M_{ee})_{\mathfrak{S}_2} \to (M_{ee})_{\mathfrak{S}_2} $$ making the following diagram commutative
$$\xymatrix{
 ((T_1U)(E) \otimes (T_1U)(E)) \otimes_{\Lambda \otimes \Lambda} M_{ee} \ar[rr]^-{\mu_{ee}} \ar@{->>}[d]_{\pi}&&M_{ee} \ar@{->>}[d]_{\pi}\\
(((T_1U)(E) \otimes (T_1U)(E)) \otimes_{\Lambda \otimes \Lambda} M_{ee})_{\mathfrak{S}_2}  \ar[rr]^-{\overline{\mu_{ee}}}&& (M_{ee})_{\mathfrak{S}_2} 
}$$
where 
$\xymatrix{
 ((T_1U)(E) \otimes (T_1U)(E)) \otimes_{\Lambda \otimes \Lambda} M_{ee} \ar[r]^-{\mu_{ee}}&M_{ee} }$ is the canonical isomorphism. 
\end{lm}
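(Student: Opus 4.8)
The plan is to recognize $\mu_{ee}$ as the canonical unit isomorphism of an extension of scalars and then to check that it is equivariant for the two relevant involutions, so that it descends to $\mathfrak{S}_2$-coinvariants. First I would recall from Proposition~\ref{Lambda-bar} that $(T_1U)(E)=\overline{\Lambda}$, so that the source of $\mu_{ee}$ is $(\overline{\Lambda}\otimes\overline{\Lambda})\otimes_{\Lambda\otimes\Lambda}M_{ee}$. Since $M_{ee}$ is a $\overline{\Lambda}\otimes\overline{\Lambda}$-module, its $\Lambda\otimes\Lambda$-action factors through $t_1\otimes t_1\colon\Lambda\otimes\Lambda\to\overline{\Lambda}\otimes\overline{\Lambda}$, and likewise the right $\Lambda\otimes\Lambda$-action on $\overline{\Lambda}\otimes\overline{\Lambda}$ factors through $\overline{\Lambda}\otimes\overline{\Lambda}$; hence $(\overline{\Lambda}\otimes\overline{\Lambda})\otimes_{\Lambda\otimes\Lambda}M_{ee}\cong(\overline{\Lambda}\otimes\overline{\Lambda})\otimes_{\overline{\Lambda}\otimes\overline{\Lambda}}M_{ee}$, and the canonical isomorphism is $\mu_{ee}\bigl((x\otimes y)\otimes n\bigr)=(x\otimes y)n$ for $x,y\in\overline{\Lambda}$ and $n\in M_{ee}$.

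The key step is to verify that $\mu_{ee}$ intertwines the involution $t$ of Lemma~\ref{invol} on the source (instantiated at $X=E$ and $N=M_{ee}$, the latter viewed as a symmetric $\Lambda\otimes\Lambda$-module via $t_1\otimes t_1$) with the involution $T$ on $M_{ee}$, i.e.\ that $\mu_{ee}\circ t=T\circ\mu_{ee}$. This is the following one-line computation: for $x,y\in\overline{\Lambda}$ and $n\in M_{ee}$,
\[
\mu_{ee}\bigl(t(x\otimes y\otimes n)\bigr)=\mu_{ee}(y\otimes x\otimes Tn)=(y\otimes x)(Tn)=T\bigl((x\otimes y)n\bigr)=T\bigl(\mu_{ee}(x\otimes y\otimes n)\bigr),
\]
where the third equality is precisely the defining identity $T((r\otimes s)m)=(s\otimes r)T(m)$ of a symmetric $\overline{\Lambda}\otimes\overline{\Lambda}$-module applied to $r=x$, $s=y$, $m=n$.

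Finally I would conclude by functoriality of $\mathfrak{S}_2$-coinvariants. Because $\mu_{ee}$ is equivariant, it carries $(1-t)$-boundaries to $(1-T)$-boundaries and therefore induces a well-defined homomorphism $\overline{\mu_{ee}}$ on the quotients $(-)_{\mathfrak{S}_2}$; this is exactly the commutativity of the stated square, namely $\pi\circ\mu_{ee}=\overline{\mu_{ee}}\circ\pi$. As $\mu_{ee}$ is an isomorphism, applying the equivariance relation to $\mu_{ee}^{-1}$ shows that $\mu_{ee}^{-1}$ is equivariant as well, so it induces $\overline{\mu_{ee}^{-1}}$, a two-sided inverse of $\overline{\mu_{ee}}$; hence $\overline{\mu_{ee}}$ is an isomorphism. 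I do not expect a serious obstacle here: the only point requiring care is the equivariance of $\mu_{ee}$, which is immediate from the symmetric-module axiom, everything else being formal properties of the coinvariants functor.
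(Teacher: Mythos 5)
Your proof is correct and follows the same route as the paper, which simply observes that the canonical isomorphism $\mu_{ee}$ is compatible with the involutions and hence descends to an isomorphism on $\mathfrak{S}_2$-coinvariants. You have merely spelled out the equivariance check (via the symmetric-module axiom $T((r\otimes s)m)=(s\otimes r)T(m)$) that the paper leaves implicit.
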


\begin{proof}
This is immediate from the fact that the canonical isomorphism
$$\xymatrix{
 ((T_1U)(E) \otimes (T_1U)(E)) \otimes_{\Lambda \otimes \Lambda} M_{ee} \ar[r]^-{\mu_{ee}}&M_{ee} }$$ 
 is compatible with the involutions. 
\end{proof}

Proposition \ref{prop} is a direct consequence of the following lemma.

\begin{lm} \label{lm33}
For $M \in PQMod^E_{\C}$, there exists an isomorphism
$$(\overline{P} \  \overline{\mu_{ee}}, \overline{\mu_e}): E \otimes M \to M_e$$
such that $(\overline{P} \  \overline{\mu_{ee}}, \overline{\mu_e}) \hat{\phi}= \overline{P} \  \overline{\mu_{ee}}$ and $(\overline{P} \  \overline{\mu_{ee}}, \overline{\mu_e}) \hat{\psi}= \overline{\mu_e}$ where the maps $\overline{P}$, $\overline{\mu_{ee}}$  and $\overline{\mu_e}$ are defined in Lemma \ref{lm2},  \ref{lm3}  and  \ref{lm1}  respectively and the maps $\hat{\phi}$ and $\hat{\psi}$ appear in the pushout diagram of Proposition \ref{quad-t-p}.

\end{lm}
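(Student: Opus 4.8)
The plan is to build the map out of the pushout defining $E\otimes M$ (Definition \ref{quad-t-p} with $X=E$) by its universal property, and then to exhibit an explicit inverse. For the universal property I would feed in the map $\overline{\mu_e}\colon (T_2U)(E)\otimes_{\Lambda}M_e\to M_e$ of Lemma \ref{lm1} on the upper-right corner and the map $\overline{P}\,\overline{\mu_{ee}}$ on the lower-left corner, assembled from Lemmas \ref{lm2} and \ref{lm3}. The only thing to verify is the compatibility $\overline{\mu_e}\,\phi=\overline{P}\,\overline{\mu_{ee}}\,\psi$; once this holds, the induced map is precisely $(\overline{P}\,\overline{\mu_{ee}},\overline{\mu_e})$ and the two stated equations $(\ldots)\hat{\phi}=\overline{P}\,\overline{\mu_{ee}}$ and $(\ldots)\hat{\psi}=\overline{\mu_e}$ are exactly its defining properties. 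Since $\phi=(\phi_1,t_2\otimes P)$ and $\psi=(\psi_1,\pi(\delta\otimes 1))$ act componentwise on the two summands of the source, this compatibility splits into two independent identities.

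The summand $U(E)\otimes M_{ee}$ is the easy one. On a generator $f\otimes m$ the left-hand side is $\overline{\mu_e}(t_2(f)\otimes P(m))=f\cdot P(m)$ by Lemma \ref{lm1}, whereas the right-hand side is $\overline{P}\,\overline{\mu_{ee}}\,\pi(t_1(f)\otimes t_1(f)\otimes m)=P((\overline{f}\otimes\overline{f})m)$ by Lemmas \ref{lm2} and \ref{lm3}. These agree exactly because $P$ is $\Lambda$-linear for the diagonal action, $P((\overline{\alpha}\otimes\overline{\alpha})m)=\alpha P(m)$, one of the defining requirements of a proto-quadratic $\C$-module (Definition \ref{proto-quad-C}).

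The summand $(U(E)\otimes U(E))\otimes_{\Lambda\otimes\Lambda}U(E|E)\otimes_{\Lambda}M_e$ carries the real content, and checking it is the main obstacle. On a generator $f\otimes g\otimes x\otimes a$ the left-hand side evaluates, via Lemma \ref{lm1}, to $\big(S_2^U\,U(f|g)(x)\big)\,a$, while the right-hand side is $P\big((\overline{f}\otimes\overline{g})\,\hat{H}(t_{11}(x)\otimes a)\big)$. Using that $\hat{H}$ is $\overline{\Lambda}\otimes\overline{\Lambda}$-linear, together with naturality of $t_{11}\colon cr_2U\to T_{11}(cr_2U)$ and the definition of the $\overline{\Lambda}\otimes\overline{\Lambda}$-module structure on $T_{11}(cr_2U)(E,E)$, which give $(\overline{f}\otimes\overline{g})\,t_{11}(x)=t_{11}\big(U(f|g)(x)\big)$, the right-hand side becomes $P\hat{H}\big(t_{11}(U(f|g)(x))\otimes a\big)$. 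The two sides then coincide by the reformulation of $(QM1)$ in Proposition \ref{QM1}, applied to the element $U(f|g)(x)\in U(E|E)$, which asserts precisely that $P\hat{H}(t_{11}(-)\otimes a)=\mu_e(S_2^U(-)\otimes a)$. This reduction of the first-summand compatibility to $(QM1)$ is the crux of the argument.

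Finally, for the isomorphism I would exhibit $\hat{\psi}\circ\overline{\mu_e}^{-1}\colon M_e\to E\otimes M$ as a two-sided inverse. One composite is immediate, since $(\overline{P}\,\overline{\mu_{ee}},\overline{\mu_e})\hat{\psi}=\overline{\mu_e}$ gives $(\overline{P}\,\overline{\mu_{ee}},\overline{\mu_e})\circ\hat{\psi}\circ\overline{\mu_e}^{-1}=\mathrm{id}_{M_e}$. For the reverse composite it suffices to check it is the identity after precomposing with $\hat{\psi}$ and with $\hat{\phi}$, whose images generate $E\otimes M$. On $\hat{\psi}$ this is automatic. On $\hat{\phi}$ I would use that, through the isomorphism $\overline{\mu_{ee}}$ of Lemma \ref{lm3}, the group $(((T_1U)(E)\otimes(T_1U)(E))\otimes_{\Lambda\otimes\Lambda}M_{ee})_{\mathfrak{S}_2}$ is already generated by the diagonal elements $\pi(t_1(1)\otimes t_1(1)\otimes m')=\pi(\delta\otimes 1)(1\otimes m')$; on these the pushout relation $\hat{\phi}\,\pi(\delta\otimes 1)=\hat{\psi}\,(t_2\otimes P)$ shows $\hat{\phi}(\pi(t_1(1)\otimes t_1(1)\otimes m'))=\hat{\psi}(t_2(1)\otimes P(m'))$, and the candidate inverse composite sends the same generator to $\hat{\psi}\,\overline{\mu_e}^{-1}(P(m'))=\hat{\psi}(t_2(1)\otimes P(m'))$ as well. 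Hence both composites are identities and the map is an isomorphism.
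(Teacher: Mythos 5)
Your proposal is correct and follows essentially the same route as the paper: the compatibility $\overline{\mu_e}\,\phi=\overline{P}\,\overline{\mu_{ee}}\,\psi$ is verified summand by summand using the diagonal $\Lambda$-equivariance of $P$ on $U(E)\otimes M_{ee}$ and the $\overline{\Lambda}\otimes\overline{\Lambda}$-linearity of $\hat{H}$ together with Proposition \ref{QM1} on the other summand, exactly as in the paper. The only (cosmetic) difference is the last step: you exhibit $\hat{\psi}\circ\overline{\mu_e}^{-1}$ as an explicit two-sided inverse, whereas the paper deduces surjectivity of $\hat{\psi}$ from surjectivity of $\pi(\delta\otimes 1)$ and concludes from $(\overline{P}\,\overline{\mu_{ee}},\overline{\mu_e})\hat{\psi}=\overline{\mu_e}$ being an isomorphism; both arguments rest on the same observation that the source of $\hat{\phi}$ is covered by the diagonal elements $\pi(\bar{1}\otimes\bar{1}\otimes m')$.
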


\begin{proof}
To prove that the morphism $(\overline{P} \  \overline{\mu_{ee}}, \overline{\mu_e})$ exists, it is sufficient to prove that $\overline{P} \  \overline{\mu_{ee}} \psi=\overline{\mu_e} \phi$ by the universal property of the push-out.

For $f \in \C(E,E)$ and $m \in M_{ee}$ we have:
\begin{eqnarray*}
\bar{P} \,\overline{\mu_{ee}}\, \pi (\delta \otimes  1)(f\otimes m) &=& P \mu_{ee}
(\bar{f}\otimes \bar{f} \otimes m) \mathrm{\qquad by\ Lemmas\ \ref{lm2}\ and\ \ref{lm3}}\\
&=& P((\bar{f} \otimes \bar{f}). m) \mathrm{\qquad by\ definition\ of\ } \mu_{ee} \\
&=& fP(m)\mathrm{\qquad by\ Definition\ \ref{proto-quad-C}}\\
&=& \mu_e(f \otimes P(m)) \mathrm{\qquad by\ definition\ of\ } \mu_{e}\\
&=& \overline{\mu_e}\,(t_2 \otimes Id) (f \otimes P(m))  \mathrm{\qquad by\ Lemma\ \ref{lm1}}\\
&=& \overline{\mu_e}\,(t_2 \otimes P) (f \otimes m).  
\end{eqnarray*}

For $f, g \in \C(E,E)$, $x \in U(E|E)$, $a \in M_e$ we have:
$$\begin{array}{l}
\bar{P} \overline{\mu_{ee}} \pi (t_1 \otimes t_1 \otimes \hat{H} (t_{11} \otimes 1)) (f \otimes g \otimes x \otimes a)\\
= P \mu_{ee}(\bar{f}\otimes \bar{g} \otimes  \hat{H} (t_{11} (x)\otimes a)) \mathrm{\qquad by\ Lemmas\ \ref{lm2}\ and\ \ref{lm3}}\\
=  P((\bar{f} \otimes \bar{g}) \hat{H} (t_{11} (x)\otimes a)) \mathrm{\qquad by\ definition\ of\ } \mu_{ee} \\
=P \hat{H}((\bar{f} \otimes \bar{g}) (t_{11} (x)\otimes a)) \mathrm{\qquad since\ } \hat{H} \mathrm{\ is\ a\ morphism\ of\  } \bar{\Lambda} \otimes \bar{\Lambda} \mathrm{-modules\ by\ Definition\ \ref{proto-quad-C}}\\ 
 =P \hat{H}( t_{11} (U(f \mid g)(x)) \otimes a) \mathrm{\qquad by\ the\ structure\ of\ } \bar{\Lambda} \otimes \bar{\Lambda} \mathrm{-module\ of\ } T_{11}(cr_2U)(E,E)     \\
= P \hat{H}( t_{11} \otimes Id)( U(f \mid g)(x) \otimes a)\\ 
= \mu_e (S^U_2 \otimes Id) (U(f \mid g)(x) \otimes a) \mathrm{\qquad by\ Proposition\ \ref{QM1}}\\ 
= \overline{\mu_e}(t_2 \otimes Id)  (S^U_2 \otimes Id) (U(f \mid g)(x) \otimes a)  \mathrm{\qquad by\ Lemma\ \ref{lm1}}\ \\
= \overline{\mu_e}(t_2 S^U_2 U(f |g)(x) \otimes a).\\
\end{array}$$

Hence the morphism $(\overline{P} \  \overline{\mu_{ee}}, \overline{\mu_e}): E \otimes M \to M_e$ exists.

For $m \in M_{ee}$, we have:
$$ \overline{\mu_{ee}} \pi (\delta \otimes 1)(1 \otimes m)=\bar{m}.$$
Since $\overline{\mu_{ee}} $ is an isomorphism by Lemma \ref{lm3} we deduce that $ \pi (\delta \otimes 1)$ is surjective. Consequently $\psi$ is surjective and by general properties of push-out diagrams we obtain that $\hat{\psi}: (T_2U)(E) \otimes M_e \to E \otimes M$ is surjective. Now, since $(\overline{P} \  \overline{\mu_{ee}}, \overline{\mu_e})\hat{\psi}=\overline{\mu_e}$ is an isomorphism by Lemma \ref{lm1}, it follows that $(\overline{P} \  \overline{\mu_{ee}}, \overline{\mu_e})$ is an isomorphism.
\end{proof}

In particular, by Proposition \ref{MF}, for $F$ a quadratic functor, $\mathbb S_2(F)$ is a quadratic  $\C$-module, so we can apply the previous lemma to  $\mathbb S_2(F)$ to obtain:

\begin{lm} \label{F(E)-iso}
For $F \in Quad(\C, Ab)$ the morphism
$$((\overline{S^F_2})_E \  \overline{\mu_{ee}}, \overline{\mu_e}): E \otimes \mathbb S_2(F) \to F(E).$$
is an isomorphism.
\end{lm}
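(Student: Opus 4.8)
The plan is to recognize Lemma \ref{F(E)-iso} as nothing more than the specialization of Lemma \ref{lm33} to one particular quadratic $\C$-module, namely $M = \mathbb{S}_2(F)$. All the real work --- the construction of the comparison map and the verification that it is an isomorphism --- has already been carried out in Lemma \ref{lm33} for an arbitrary proto-quadratic $\C$-module; so here the only task is to feed in the correct module and match up the notation.

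First I would invoke Proposition \ref{MF} to describe $\mathbb{S}_2(F)$ explicitly as the quadratic $\C$-module
$$\mathbb{S}_2(F) = (T_{11}(cr_2U)(E,E)\otimes_{\Lambda}F(E)\xrightarrow{(H^F)_E} F(E\mid E)\xrightarrow{T^F} F(E\mid E)\xrightarrow{(S^F_2)_E} F(E)).$$
In the notation of Definition \ref{quad-C} this reads off as $M_e = F(E)$, $M_{ee} = F(E\mid E)$, with structure maps $\hat H = (H^F)_E$, $T = T^F$ and $P = (S^F_2)_E$. Since $F$ is quadratic, $\mathbb{S}_2(F)$ is a genuine quadratic $\C$-module, hence in particular lies in $PQMod^E_{\C}$, so the hypotheses of Lemma \ref{lm33} are met.

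Applying Lemma \ref{lm33} to $M = \mathbb{S}_2(F)$ then produces directly an isomorphism
$$(\overline{P}\,\overline{\mu_{ee}},\ \overline{\mu_e})\colon E\otimes \mathbb{S}_2(F) \xrightarrow{\simeq} M_e.$$
It remains only to identify the symbols. On the one hand $M_e = F(E)$ by the above. On the other hand, the map $\overline{P}$ is by Lemma \ref{lm2} the factorization of $P = (S^F_2)_E$ through the coinvariants $(M_{ee})_{\mathfrak{S}_2}$, which is exactly what is denoted $(\overline{S^F_2})_E$ in the statement; the maps $\overline{\mu_e}$ and $\overline{\mu_{ee}}$ of Lemmas \ref{lm1} and \ref{lm3} are unchanged. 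Thus the isomorphism supplied by Lemma \ref{lm33} is precisely $((\overline{S^F_2})_E\,\overline{\mu_{ee}},\ \overline{\mu_e})\colon E\otimes \mathbb{S}_2(F)\to F(E)$, as claimed.

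There is essentially no obstacle here: the lemma is a bookkeeping corollary of Lemma \ref{lm33}, and the one thing worth double-checking is simply that the relation $PT = P$ required to define $\overline{P}$ (via Lemma \ref{lm2}) indeed holds for $P = (S^F_2)_E$ and $T = T^F$ --- but this is guaranteed since $\mathbb{S}_2(F)$ is a (proto-)quadratic $\C$-module by Proposition \ref{MF}, so the defining axioms of Definition \ref{proto-quad-C} are already in force.
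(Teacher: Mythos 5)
Your proposal is correct and follows exactly the paper's own route: the paper likewise obtains this lemma by noting (via Proposition \ref{MF}) that $\mathbb S_2(F)$ is a quadratic $\C$-module with $M_e=F(E)$, $M_{ee}=F(E\mid E)$, $P=(S^F_2)_E$, and then specializing Lemma \ref{lm33} to $M=\mathbb S_2(F)$. Your extra remark that $PT=P$ is already guaranteed by the quadratic $\C$-module axioms is a sensible sanity check but adds nothing beyond what the paper implicitly uses.
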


\subsection{Computation of the cross-effect of $- \otimes M$}

The aim of this section is to prove the following theorem which allows to compute the cross-effect of $-\otimes M$, in Corollary \ref{gamma}.

\begin{thm} \label{cross-pt-quad}
For $\C$ a pointed theory generated by $E$ and $M$ a proto-quadratic $\C$-module relative to $E$, the following natural transformation of functors is an equivalence if and only if $M$ is a quadratic $\C$-module.
$$cr_2(\hat{\phi}): cr_2((((T_1U(-) \otimes T_1U(-)) \otimes_{\Lambda \otimes \Lambda} M_{ee} )\Delta_{\C})_{\mathfrak{S}_2} ) \to cr_2(- \otimes M)$$
Here $\hat{\phi}$ is the map in the push-out diagram in Definition \ref{quad-t-p}.
\end{thm}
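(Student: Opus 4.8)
The plan is to reduce the statement to injectivity of $cr_2(\hat\phi)$ at the pair $(E,E)$, and then to compute everything by applying the (exact) cross-effect functor to the pushout that defines $-\otimes M$.

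First I would observe that by Lemma \ref{i} the transformation $cr_2(\hat\phi)$ is already pointwise surjective, so it is an equivalence exactly when it is a pointwise monomorphism. Its source is $cr_2$ of the functor $(\mathbb{T}_{11}(M_{ee})\Delta_{\C})_{\mathfrak{S}_2}$, which is a quotient of the quadratic functor $\mathbb{T}_{11}(M_{ee})\Delta_{\C}$ (Lemma \ref{diago}) and hence quadratic by Proposition \ref{stable}; its target is $cr_2(-\otimes M)$ with $-\otimes M$ quadratic by Proposition \ref{qtp-is-quad}. Thus both are bilinear bifunctors, and by Corollary \ref{car-bipoly} it suffices to decide injectivity of $(cr_2\hat\phi)_{E,E}$.

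Since $cr_2$ is exact and additive (Proposition \ref{exact}), it carries the pushout square of Lemma \ref{push-out-eq} (a cokernel of a map into a direct sum, natural in $X$) to a pushout square of bifunctors, which I would evaluate at $(E,E)$. Writing $B=(\mathbb{T}_{11}(M_{ee})\Delta_{\C})_{\mathfrak{S}_2}$ for the source of $\hat\phi$, $D=T_2U(-)\otimes_{\Lambda}M_e$ for the target of $\hat\psi$, and $C_1,C_2$ for the two summands of the domain of $\overline\phi,\overline\psi$, the four corners are identified, using Proposition \ref{ce-prop}, Lemma \ref{diago} together with the $\mathfrak{S}_2$-coinvariant computation, Theorem \ref{cr2t2} and Theorem \ref{bilin}, as $cr_2B(E,E)\cong M_{ee}$, $cr_2D(E,E)\cong T_{11}(cr_2U)(E,E)\otimes_{\Lambda}M_e=:A_1$, $cr_2C_1(E,E)\cong A_1$ and $cr_2C_2(E,E)\cong U(E\mid E)\otimes M_{ee}$. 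Here the isomorphism $cr_2B(E,E)\cong M_{ee}$ sends $m$ to the class of $\pi(t_1i_1^2\otimes t_1i_2^2\otimes m)$, which $(cr_2\hat\phi)_{E,E}$ sends to $[i_1^2,i_2^2]\otimes m\in(E\vee E)\otimes M$.

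The heart of the argument is the computation of the four edge maps at $(E,E)$. The co-unit $u'$ of Theorem \ref{bilin} and the isomorphism $\overline{cr_2(t_2)}$ of Theorem \ref{cr2t2} are equivalences, and I expect the fold map $S_2^{T_2U}$ to descend, after passing to $\mathfrak{S}_2$-coinvariants, to the identity (the fold is invariant under the swap-with-$T$ involution); hence the top edge restricted to $C_1$ is an isomorphism $g\co A_1\xrightarrow{\simeq}A_1$, while on $C_2$ it is $x\otimes m\mapsto t_{11}(x)\otimes P(m)$ by (\ref{cr2t2-1}). The left edge restricted to $C_1$ is simply $\hat H$. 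The crux is the $C_2$-component $cr_2(\pi(\delta\otimes1))_{E,E}$: although $\delta(f)=t_1f\otimes t_1f$ kills cross-effect classes naively, its second cross-effect is the \emph{polarization}, and a direct computation (writing $t_1\zeta=t_1(i_1^2r_1^2\zeta)+t_1(i_2^2r_2^2\zeta)$ and expanding the diagonal) gives, under the identifications above, $\rho^2_{(1,2)}(\zeta)\otimes m\mapsto(\overline{r_1^2\zeta}\otimes\overline{r_2^2\zeta})(m+Tm)$. This polarization step, together with the careful bookkeeping of the $\mathfrak{S}_2$-coinvariants throughout, is the main obstacle.

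Finally, since $g$ is invertible, the resulting pushout is computed by using the $C_1$-relations to identify the summand $A_1$ with $M_{ee}$, leaving only the $C_2$-relations; a short diagram chase then shows that the map $M_{ee}=cr_2B(E,E)\xrightarrow{(cr_2\hat\phi)_{E,E}}cr_2(-\otimes M)(E,E)$ is exactly the quotient of $M_{ee}$ by the subgroup generated by the elements
$$n_\zeta(m)=\hat H(\overline{\rho^2_{(1,2)}(\zeta)}\otimes Pm)-(\overline{r_1^2\zeta}\otimes\overline{r_2^2\zeta})(m+Tm),\qquad \zeta\in\C(E,E\vee E),\ m\in M_{ee}.$$
Since $(QM2)$ is precisely the assertion that every $n_\zeta(m)$ vanishes, $(cr_2\hat\phi)_{E,E}$ is injective if and only if $M$ satisfies $(QM2)$, i.e.\ if and only if $M$ is a quadratic $\C$-module, which proves the theorem. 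As a consistency check for the ``only if'' direction one may verify directly, using relations $(4)$--$(9)$ of the presentation following Definition \ref{quad-t-p}, that $[i_1^2,i_2^2]\otimes n_\zeta(m)=0$ in $(E\vee E)\otimes M$.
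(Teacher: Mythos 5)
Your proposal is correct and follows essentially the same route as the paper: reduce to injectivity at $(E,E)$ via bilinearity of source and target together with Lemma \ref{i} and Corollary \ref{car-bipoly}, apply the exact cross-effect functor to the pushout of Lemma \ref{push-out-eq}, identify the four corners and edge maps (the polarization computation for $cr_2(\pi(\delta\otimes 1))_{E,E}$ yielding $(\overline{r_1^2\zeta}\otimes\overline{r_2^2\zeta})(m+Tm)$ is exactly the paper's Lemma \ref{DC-cross}), and conclude that the kernel of $(cr_2\hat\phi)_{E,E}$ is generated by the elements $\hat H(\overline{\rho^2_{(1,2)}(\zeta)}\otimes Pm)-(\overline{r_1^2\zeta}\otimes\overline{r_2^2\zeta})(m+Tm)$, i.e.\ vanishes precisely when $(QM2)$ holds. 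The paper organizes the last step as an explicit exact sequence with the map $k_4=(t_{11}\otimes P,-1)^t$ rather than your "invert $g$ and chase" formulation, but the content is identical.
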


The proof of this theorem relies on the following lemmas.

\begin{lm} \label{bilin2} Let $(B,T)$ be a symmetric bifunctor from $\C$ to $Ab$ such that  $B$ is  bilinear bireduced, see Definition \ref{symbifdef}. Then
 the  map given by
$$(\iota^2_{(1,2)})^{-1} \pi B(i_1^2, i_2^2) \hspace{1mm} : \hspace{1mm} B(X,Y) \hspace{1mm} \xrightarrow{\phantom{aa}} \hspace{1mm} cr_2(B \Delta_{\C})_{\mathfrak{S}_2}(X ,Y)$$
for $X, Y \in \C$, is a natural equivalence.
\end{lm}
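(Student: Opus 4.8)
The plan is to reduce the statement to the explicit description of the second cross-effect of the diagonalized functor $G := B\Delta_{\C}$ and then to interchange $cr_2$ with the $\mathfrak{S}_2$-coinvariants. By Lemma \ref{diago} the functor $G$ is quadratic with $cr_2 G(X,Y) = B(X,Y)\oplus B(Y,X)$, the two summands being the images of $B(i_1^2,i_2^2)$ and $B(i_2^2,i_1^2)$ inside $G(X\vee Y)=B(X\vee Y,X\vee Y)$. As a first step I would check that $B(i_1^2,i_2^2)$ already takes values in $cr_2 G(X,Y)$: composing with the retractions gives $B(r_k^2 i_1^2, r_k^2 i_2^2)$, which equals $B(1_X,0)$ or $B(0,1_Y)$ and hence vanishes since $B$ is bireduced. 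Thus $B(i_1^2,i_2^2)=\iota^2_{(1,2)}\circ(\mathrm{inclusion\ of\ } B(X,Y))$.

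Next I would interchange cross-effect and coinvariants. The coinvariants are formed with respect to the natural involution $t_X=T_{X,X}$ of $G$, so $G_{\mathfrak{S}_2}=\mathrm{coker}(1-t\colon G\to G)$. Since $cr_2$ is exact (Proposition \ref{exact}) it preserves this cokernel, producing a natural isomorphism $cr_2(G_{\mathfrak{S}_2})\cong(cr_2 G)_{\mathfrak{S}_2}$, where the right-hand coinvariants use the involution $cr_2(t)$ induced on $cr_2 G$. To identify $cr_2(t)$ I would invoke naturality of $T\colon B\Rightarrow B\circ V$, which for a morphism $(f,g)$ reads $T_{X',Y'}B(f,g)=B(g,f)T_{X,Y}$; taking $(f,g)=(i_1^2,i_2^2)$ gives $t_{X\vee Y}(B(i_1^2,i_2^2)(b))=B(i_2^2,i_1^2)(T_{X,Y}(b))$. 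Hence on $B(X,Y)\oplus B(Y,X)$ the induced involution is $cr_2(t)(b,c)=(T_{Y,X}(c),T_{X,Y}(b))$.

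It then remains to compute these coinvariants. Modulo the image of $1-cr_2(t)$ one has $(0,c)\equiv(T_{Y,X}(c),0)$, so $b\mapsto\overline{(b,0)}$ defines a surjection $B(X,Y)\to(cr_2 G)_{\mathfrak{S}_2}$; its injectivity follows from $T_{Y,X}T_{X,Y}=\mathrm{id}$, which holds because $(B,T)$ is symmetric, so $(b,0)\in\mathrm{im}(1-cr_2(t))$ forces $b=b'-T_{Y,X}T_{X,Y}(b')=0$. This yields a natural isomorphism $(cr_2 G)_{\mathfrak{S}_2}\cong B(X,Y)$. Finally I would match it with the map of the lemma: since $\pi$ is natural, $\pi\circ\iota^2_{(1,2)}=\iota^2_{(1,2)}\circ cr_2(\pi)$, whence $(\iota^2_{(1,2)})^{-1}\pi B(i_1^2,i_2^2)(b)=cr_2(\pi)(b,0)=\overline{(b,0)}$ under the isomorphism $cr_2(G_{\mathfrak{S}_2})\cong(cr_2 G)_{\mathfrak{S}_2}$. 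Thus the lemma's map is exactly the isomorphism just built, and naturality in $X,Y$ is inherited from that of $B(i_1^2,i_2^2)$, $\pi$ and $\iota^2_{(1,2)}$.

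The main obstacle I anticipate lies in the two middle steps: correctly transporting the involution $t$ to $cr_2(t)$ on the decomposed cross-effect through naturality of $T$, and then executing the coinvariants computation, whose crucial input is the involutivity $T^2=\mathrm{id}$ of the symmetric structure (this is exactly what makes the surjection injective). Once these are settled, the identification with the stated map is a purely formal diagram chase.
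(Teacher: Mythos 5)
Your proof is correct and follows essentially the same route as the paper: both identify $cr_2(B\Delta_{\C})(X,Y)$ with $B(X,Y)\oplus B(Y,X)$, use naturality of $T$ applied to $(i_1^2,i_2^2)$ to see that the induced involution is $(b,c)\mapsto(T_{Y,X}(c),T_{X,Y}(b))$, and then compute the $\mathfrak{S}_2$-coinvariants using $T_{Y,X}T_{X,Y}=\mathrm{id}$. The only cosmetic difference is that the paper exhibits the explicit inverse $\bar f(\overline{(x,y)})=x+T_{Y,X}(y)$, whereas you verify surjectivity and injectivity separately; your intermediate step of commuting $cr_2$ with the coinvariants via exactness of $cr_2$ is sound and replaces the paper's direct decomposition of $(B\Delta_{\C})_{\mathfrak{S}_2}(X\vee Y)$.
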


\begin{proof}
For $X,Y \in \C$, we have $B \Delta_{\C}(X \vee Y)=B(X,X) \oplus B(X,Y) \oplus B(Y,X) \oplus B(Y,Y)$ since $B$ is a bilinear functor. For $X_1=X$, $X_2=Y$ and $p,q=1,2$ we have $T_{X_1\vee X_2,X_1\vee X_2}B(i^2_p,i^2_q) =B(i^2_q,i^2_p)T_{X_p,X_q}$. Thus $(B \Delta_{\C})_{\mathfrak{S}_2}(X \vee Y)=B(X,X)_{\mathfrak{S}_2} \oplus B(Y,Y)_{\mathfrak{S}_2} \oplus (B(X,Y) \oplus B(Y,X))_{\mathfrak{S}_2}$ and $cr_2(B \Delta_{\C})_{\mathfrak{S}_2}(X,Y)= (B(X,Y) \oplus B(Y,X))_{\mathfrak{S}_2}$ where the action of $\mathfrak{S}_2$ on $B(X,Y) \oplus B(Y,X)$ is given by $t(x,y)=(T_{Y,X}(y),T_{X,Y}(x))$.
 We have $(\iota^2_{(1,2)})^{-1} \pi B(i^2_1, i^2_2)(x)=\overline{(x,0)}$. An inverse of this map is defined as follows: let $f:B(X,Y) \oplus B(Y,X) \to B(X,Y)$ be given by $f(x,y)= x+T_{Y,X}(y)$. Then $f(t(x,y))=
 T_{Y,X}(y) +  T_{Y,X} T_{X,Y}(x)=f(x,y)$.  So $f$ defines a map $\bar{f} :(B(X,Y) \oplus B(Y,X))_{\mathfrak{S}_2}\to B(X,Y)$, and one easily checks that $\bar{f}$ is the inverse of $(\iota^2_{(1,2)})^{-1} \pi B(i^2_1, i^2_2)$.
\end{proof}

\begin{lm} \label{DC-cross}
For a $\overline{\Lambda} \otimes \overline{\Lambda}$-module $A$, let $\mu_A: \mathbb{T}_{11}(A)(E,E) \to A$ denote the canonical isomorphism. Then the following diagram is commutative:
$$\xymatrix{
N \oplus U(E |E) \otimes M_{ee} \ar[rr]^-{k_1}_-{\simeq} \ar[d]^-{k_2}&& cr_2((\mathbb{T}_{11}(N) \Delta_{\C})_{\mathfrak{S}_2})(E,E) \oplus U(E |E) \otimes M_{ee} \ar[d]^-{cr_2(\overline{\psi})}\\
M_{ee} \ar[rr]_-{k_3}^-{\simeq}&& cr_2((\mathbb{T}_{11}(M_{ee}) \Delta_{\C})_{\mathfrak{S}_2})(E,E)
}$$
where: $N=T_{11}(cr_2(U))(E,E) \otimes_{\Lambda} M_e$,     $k_1=(\iota^2_{(1,2)})^{-1} \pi \mathbb{T}_{11}(N)(i^2_1, i^2_2)(\mu_N)^{-1} \oplus 1$, $k_2=(\hat{H}, k_2')$ with $k'_2 (\rho^2_{(1,2)}(\xi) \otimes m)=(r^2_1 \xi \otimes r^2_2 \xi) . (m+Tm)$ for $\xi \in \C(E, E \vee E)$ and $m \in M_{ee}$, and $k_3=(\iota^2_{(1,2)})^{-1} \pi \mathbb{T}_{11}(M_{ee})(i^2_1, i^2_2)(\mu_{M_{ee}})^{-1}$.
\end{lm}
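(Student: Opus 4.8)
The plan is to exploit that the common source $N\oplus U(E\mid E)\otimes M_{ee}$ of the two upper maps is a direct sum, that $\overline{\psi}=(\overline{\psi_1},\pi(\delta\otimes 1))$, and that $k_1=k_1^{(1)}\oplus 1$ respects this decomposition; since $cr_2$ is additive (being exact, Proposition \ref{exact}), it suffices to check $cr_2(\overline{\psi})(E,E)\circ k_1=k_3\circ k_2$ on each summand separately. On the second summand the relevant part of the target of $cr_2$ is $cr_2(U(-)\otimes M_{ee})(E,E)\cong U(E\mid E)\otimes M_{ee}$ (tensoring by the fixed group $M_{ee}$ commutes with $cr_2$, which is defined by a kernel), so there I must compare $cr_2(\pi(\delta\otimes 1))(E,E)$ with $k_3\circ k_2'$; on the first summand I must compare $cr_2(\overline{\psi_1})(E,E)\circ k_1^{(1)}$ with $k_3\circ\hat{H}$.

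First I would dispose of the $N$-summand, which is formal. Since $\hat H\co N\to M_{ee}$ is a morphism of symmetric $\bar{\Lambda}\otimes\bar{\Lambda}$-modules, Proposition \ref{symmodindfct} makes $\mathbb{T}_{11}(\hat H)\co\mathbb{T}_{11}(N)\to\mathbb{T}_{11}(M_{ee})$ a morphism of symmetric bireduced bilinear bifunctors, and by Lemma \ref{push-out-eq} one has $\overline{\psi_1}=(\mathbb{T}_{11}(\hat H)\Delta_{\C})_{\mathfrak{S}_2}$. The equivalence of Lemma \ref{bilin2} is natural not only in $(X,Y)$ but also in the bifunctor variable, because it is assembled from $B(i_1^2,i_2^2)$, $\pi$ and $\iota^2_{(1,2)}$, all of which commute with a morphism of symmetric bifunctors. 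Applying this naturality at $(E,E)$ to $\mathbb{T}_{11}(\hat H)$, and combining it with naturality of the canonical isomorphism $\mu$ in its module variable (so that $\mathbb{T}_{11}(\hat H)(E,E)\circ\mu_N^{-1}=\mu_{M_{ee}}^{-1}\circ\hat H$), yields $cr_2(\overline{\psi_1})(E,E)\circ k_1^{(1)}=k_3\circ\hat H$ at once.

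The substance is the $U(E\mid E)\otimes M_{ee}$-summand, where the formula for $k_2'$ is actually produced. Here I would compute $cr_2(\pi(\delta\otimes 1))(E,E)$ by evaluating the natural transformation $\pi(\delta\otimes 1)$ at $E\vee E$ on the element of $cr_2(U(-)\otimes M_{ee})(E,E)\subseteq U(E\vee E)\otimes M_{ee}$ representing $\rho^2_{(1,2)}(\xi)\otimes m$, namely $\bar\xi\otimes m$ with $\bar\xi=\iota^2_{(1,2)}\rho^2_{(1,2)}(\xi)=\xi-i_1^2r_1^2\xi-i_2^2r_2^2\xi$ by \ref{ses1}. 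Setting $a=t_1(i_1^2r_1^2\xi)$ and $b=t_1(i_2^2r_2^2\xi)$ and using $cr_2(t_1)=0$ (as $T_1U$ is linear) to get $t_1\xi=a+b$, the per-morphism map $\delta$ gives
\[\delta(\bar\xi)=(a+b)\otimes(a+b)-a\otimes a-b\otimes b=a\otimes b+b\otimes a,\]
so $(\delta\otimes 1)(\bar\xi\otimes m)=(a\otimes b+b\otimes a)\otimes m$. Rewriting $a=t_1(i_1^2)\,\overline{r_1^2\xi}$ and $b=t_1(i_2^2)\,\overline{r_2^2\xi}$ via the right $\bar{\Lambda}$-module structure and sliding the scalars across $\otimes_{\Lambda\otimes\Lambda}$, the summand $(b\otimes a)\otimes m$ becomes the image under the involution $t$ of $(t_1i_1^2\otimes t_1i_2^2)\otimes(\overline{r_1^2\xi}\otimes\overline{r_2^2\xi})Tm$ (using $T((s\otimes r)x)=(r\otimes s)Tx$), so after applying $\pi$ the two summands fuse into
\[\pi\bigl((t_1i_1^2\otimes t_1i_2^2)\otimes(\overline{r_1^2\xi}\otimes\overline{r_2^2\xi})(m+Tm)\bigr)=\iota^2_{(1,2)}\,k_3\bigl((\overline{r_1^2\xi}\otimes\overline{r_2^2\xi})(m+Tm)\bigr).\]
As $(\overline{r_1^2\xi}\otimes\overline{r_2^2\xi})(m+Tm)=k_2'(\rho^2_{(1,2)}(\xi)\otimes m)$, while the left-hand side is $\iota^2_{(1,2)}$ applied to $cr_2(\pi(\delta\otimes 1))(E,E)$ of our element, injectivity of $\iota^2_{(1,2)}$ gives $cr_2(\pi(\delta\otimes 1))(E,E)=k_3\circ k_2'$. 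I expect this computation to be the main obstacle: one must carry the identifications of the cross-effects as sub/quotient groups (the $\iota^2_{(1,2)},\rho^2_{(1,2)}$ bookkeeping) through the non-additive map $\delta$, and then use the coinvariants together with the symmetric-module involution to combine $a\otimes b$ and $b\otimes a$ into the decisive factor $m+Tm$; well-definedness of $k_2'$ is then automatic, since the outcome depends only on $\bar\xi$ and hence only on $\rho^2_{(1,2)}(\xi)$.
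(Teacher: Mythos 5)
Your proof is correct and follows essentially the same route as the paper: the same splitting into the two summands, the same naturality/trifunctoriality argument on the $N$-summand, and on the $U(E\mid E)\otimes M_{ee}$-summand the same computation — push the element into $U(E\vee E)\otimes M_{ee}$ via $\iota^2_{(1,2)}$, use linearity of $T_1U$ to reduce $\delta(\bar\xi)$ to $a\otimes b+b\otimes a$, fuse the two terms into the factor $m+Tm$ via the involution and the coinvariants, and identify the result with $\iota^2_{(1,2)}k_3k_2'$ through the module structure and $\mu_{M_{ee}}^{-1}$. The only cosmetic difference is that you spell out the first summand (via $\overline{\psi_1}=(\mathbb{T}_{11}(\hat H)\Delta_{\C})_{\mathfrak{S}_2}$ and naturality of the Lemma \ref{bilin2} equivalence) where the paper contents itself with a one-line appeal to naturality of $\mu$ and trifunctoriality of $\mathbb{T}_{11}(-)(-,-)$.
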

\begin{proof}
The morphisms $k_1$ and $k_3$ are isomorphisms since $\mu_{M_{ee}}$ and $\mu_N$ are the canonical isomorphisms and $(\iota^2_{(1,2)})^{-1} \pi \mathbb{T}_{11}(N)(i^2_1, i^2_2)$ and $(\iota^2_{(1,2)})^{-1} \pi \mathbb{T}_{11}(M_{ee})(i^2_1, i^2_2)$ are isomorphisms by Lemma \ref{bilin2}. The fact that $k'_2$ is welldefined follows from the fact that $k_3$ is an isomorphism and from commutativity of the diagram which we prove now.

By Lemma \ref{push-out-eq}, $cr_2(\overline{\psi})=(cr_2(\overline{1 \otimes 1 \otimes \hat{H}}), cr_2(\pi ( \delta \otimes 1)))$. 

The commutativity of the diagram on $N$ follows from the naturality of $\mu$ and by the fact that $\mathbb{T}_{11}(-) (-,-)$ is a trifunctor.

To prove the commutativity of the diagram on $ U(E |E) \otimes M_{ee} $ we consider the injection $\iota^2_{(1,2)}:  cr_2((\mathbb{T}_{11}(M_{ee}) \Delta_{\C})_{\mathfrak{S}_2})(E,E) \to (\mathbb{T}_{11}(M_{ee})\Delta_{\C})_{\mathfrak{S}_2})(E \vee E)$ and we prove that $\iota^2_{(1,2)} \, cr_2( \bar{\psi}) \, k_1= \iota^2_{(1,2)} \, k_3 \, k_2$ on $ U(E |E) \otimes M_{ee} $.

$$\begin{array}{l}
\iota^2_{(1,2)} \, cr_2(\bar{\psi})\, k_1( \rho^2_{(1,2)}(\xi) \otimes m)\\
=  \iota^2_{(1,2)} \,    cr_2(\pi ( \delta \otimes 1)) \, ( \rho^2_{(1,2)}(\xi) \otimes m)\\
= \pi ( \delta \otimes 1) \iota^2_{(1,2)} \,  ( \rho^2_{(1,2)}(\xi) \otimes m)\\
=  \pi ( \delta \otimes 1) ((\xi-i_1^2r_1^2 \xi-i_2^2r_2^2 \xi) \otimes m) \mathrm{\ by\ \ref{ses1}}\\
=  \pi ((\overline{\xi} \otimes \overline{\xi}-\overline{i_1^2r_1^2 \xi} \otimes \overline{i_1^2r_1^2 \xi}-\overline{i_2^2r_2^2 \xi} \otimes \overline{i_2^2r_2^2 \xi} )\otimes m)\\
= \pi (((\overline{i_1^2r_1^2 \xi}+\overline{i_2^2r_2^2 \xi}) \otimes (\overline{i_1^2r_1^2 \xi}+\overline{i_2^2r_2^2 \xi})-\overline{i_1^2r_1^2 \xi} \otimes \overline{i_1^2r_1^2 \xi}-\overline{i_2^2r_2^2 \xi} \otimes \overline{i_2^2r_2^2 \xi} )\otimes m)\\
\mathrm{by\ Lemma\  \ref{lem1}\ (3)\ since\ } T_1U \rm{\ is \ linear}\\
= \pi (\overline{i_1^2r_1^2 \xi} \otimes \overline{i_2^2r_2^2 \xi} \otimes m + \overline{i_2^2r_2^2 \xi} \otimes \overline{i_1^2r_1^2 \xi} \otimes m )\\
= \pi (\overline{i_1^2r_1^2 \xi} \otimes \overline{i_2^2r_2^2 \xi} \otimes m + t( \overline{i_1^2r_1^2 \xi} \otimes \overline{i_2^2r_2^2 \xi} \otimes Tm ))  \mathrm{\mbox{\ cf.\  Propositions \ref{symmodindfct} and \ref{symbifmod}}}\\
= \pi (\overline{i_1^2r_1^2 \xi} \otimes \overline{i_2^2r_2^2 \xi} \otimes m +  \overline{i_1^2r_1^2 \xi} \otimes \overline{i_2^2r_2^2 \xi} \otimes Tm ) \mathrm{\ since\ } \pi(tx)=\pi(x)\\
= \pi (\overline{i_1^2r_1^2 \xi} \otimes \overline{i_2^2r_2^2 \xi} \otimes (m+Tm)) \\
=\pi \, \mathbb{T}_{11}(M_{ee})(i^2_1, i^2_2)(\overline{r_1^2 \xi} \otimes \overline{r_2^2 \xi} \otimes (m+Tm))\\
=\pi \, \mathbb{T}_{11}(M_{ee})(i^2_1, i^2_2)(\overline{1} \otimes \overline{1} \otimes ({r_1^2 \xi} \otimes {r_2^2 \xi} ).(m+Tm))\\
=\pi \, \mathbb{T}_{11}(M_{ee})(i^2_1, i^2_2) \mu_{M_{ee}}^{-1}(({r_1^2 \xi} \otimes {r_2^2 \xi} ).(m+Tm))\\
=\iota^2_{(1,2)} \, k_3 (({r_1^2 \xi} \otimes {r_2^2 \xi} ).(m+Tm))\\
=\iota^2_{(1,2)} \, k_3 \, k_2 ( \rho^2_{(1,2)}(\xi) \otimes m)
\end{array}$$
\end{proof}

\begin{proof}[Proof of Theorem \ref{cross-pt-quad} ]
By Lemma \ref{diago} and Proposition \ref{bi-stable} the source functor is bilinear and we deduce from Proposition \ref{qtp-is-quad} that the target functor is bilinear. So according to Proposition \ref{car-bipoly} it suffices to check that $cr_2(\hat{\phi})_{(E,E)}$ is an isomorphism if and only if $M$ is a quadratic $\C$-module. By Lemma \ref{i} we know that, for $M$ a proto-quadratic $\C$ module, $cr_2(\hat{\phi})_{(E,E)}$ is surjective. So it is sufficient to prove that $cr_2(\hat{\phi})_{(E,E)}$ is injective if and only if the condition (QM2) is satisfied.

As a pushout of abelian groups can be written as a right exact sequence in an obvious way and as the cross-effect functor is exact, we deduce from Lemma \ref{push-out-eq} that the following diagram is also a pushout.
$$\xymatrix{
cr_2((\mathbb{T}_{11}(N) \Delta_{\C})_{\mathfrak{S}_2})(E,E) \oplus cr_2(U)(E,E) \otimes M_{ee} \ar[r]^-{cr_2(\overline{\phi})_{E,E}} \ar[d]_-{cr_2(\overline{\psi})_{E,E}} &cr_2(T_2U)(E,E) \otimes_{\Lambda} M_e \ar[d]^{cr_2(\hat{\psi})_{E,E}}\\
cr_2((\mathbb{T}_{11}(M_{ee}) \Delta_{\C})_{\mathfrak{S}_2})(E,E) \ar[r]_-{cr_2(\hat{\phi})_{E,E}} &cr_2(- \otimes M)(E,E).
}$$

By the previous Lemma we obtain the following commutative diagram (where, for simplicity, we omit the subscript $E,E$):

$$\xymatrix{
N \oplus U(E |E) \otimes M_{ee} \ar[r]^-{k_1}_-{\simeq} \ar[d]^-{k_2}& cr_2((\mathbb{T}_{11}(N) \Delta_{\C})_{\mathfrak{S}_2})(E,E) \oplus cr_2(U)(E,E) \otimes M_{ee} \ar[r]^-{cr_2(\overline{\phi})} \ar[d]_-{cr_2(\overline{\psi})} &cr_2(T_2U)(E,E) \otimes_{\Lambda} M_e \ar[d]^{cr_2(\hat{\psi})}\\
M_{ee} \ar[r]_-{k_3}^-{\simeq}&cr_2((\mathbb{T}_{11}(M_{ee}) \Delta_{\C})_{\mathfrak{S}_2})(E,E) \ar[r]_-{cr_2(\hat{\phi})} &cr_2(- \otimes M)(E,E).
}$$

As $k_1$ and $k_3$ are isomorphisms, we deduce that the exterior diagram is a pushout too.

By a general property of pushouts in $Ab$ we deduce that:
\begin{equation} \label{eq.5.21}
ker(cr_2(\hat{\phi}))=k_3\, ker(cr_2(\hat{\phi}) k_3)=k_3\, k_2\, ker(cr_2({\overline{\phi}}) k_1).
\end{equation}

Recall that $cr_2(\overline{\phi})=(cr_2(\overline{\phi'_1 \otimes 1}), cr_2(t_2) \otimes P)$. So, we have $$cr_2(\overline{\phi}) k_1=(cr_2(\overline{\phi'_1 \otimes 1})(\iota^2_{(1,2)})^{-1} \pi \mathbb{T}_{11}(N)(i^2_1, i^2_2)(\mu_N)^{-1}, cr_2(t_2) \otimes P).$$
In the sequel, we compute $cr_2(\overline{\phi'_1 \otimes 1})(\iota^2_{(1,2)})^{-1} \pi \mathbb{T}_{11}(N)(i^2_1, i^2_2)(\mu_N)^{-1}$. For $x \in cr_2(U)(E,E)$ and $a \in M_e$ we have:
$$\begin{array}{l}
\iota^2_{(1,2)}cr_2(\overline{\phi'_1 \otimes 1})(\iota^2_{(1,2)})^{-1} \pi \mathbb{T}_{11}(N)(i^2_1, i^2_2)(\mu_N)^{-1}(t_{11}(x) \otimes a)\\
=(\overline{\phi'_1 \otimes 1} )\iota^2_{(1,2)}(\iota^2_{(1,2)})^{-1} \pi \mathbb{T}_{11}(N)(i^2_1, i^2_2)(\mu_N)^{-1}(t_{11}(x) \otimes a)\\
=(\overline{\phi'_1 \otimes 1} ) \pi \mathbb{T}_{11}(N)(i^2_1, i^2_2)(\overline{1} \otimes \overline{1} \otimes( t_{11}(x) \otimes a))\\
=(\phi'_1 \otimes 1)(\overline{i^2_1} \otimes \overline{i^2_2} \otimes( t_{11}(x) \otimes a))\\
=\phi'_1(t_1 \otimes t_1 \otimes t_{11}) (i^2_1 \otimes i^2_2 \otimes x) \otimes a\\
=t_2 S^U_2 U(i^2_1 \mid i^2_2)(x) \otimes a \mathrm{\qquad by \  the \ proof\ of \  Lemma\ } \ref{push-out-eq} \\
=t_2\, U(\nabla^2) U(i^2_1 \vee i^2_2) \iota^2_{(1,2)}(x) \otimes a\\
=t_2\, \iota^2_{(1,2)}(x) \otimes a\\
=\iota^2_{(1,2)} cr_2(t_2)(x) \otimes a\\
=\iota^2_{(1,2)} (\overline{cr_2(t_2)} \otimes 1)(t_{11}(x) \otimes a).
\end{array}$$
We deduce that:
$$cr_2(\overline{\phi}) k_1=(\overline{cr_2(t_2)} \otimes 1, cr_2(t_2) \otimes P).$$
By Theorem \ref{cr2t2}, $\overline{cr_2(t_2)} $ is an isomorphism, whence we have the following exact sequence:
$$U(E \mid E) \otimes_{\Lambda} M_{ee}  \xrightarrow{k_4} N \oplus (U(E \mid E) \otimes_{\Lambda} M_{ee}) \xrightarrow{cr_2(\overline{\phi}) k_1} T_2U(E \mid E)  \otimes_{\Lambda} M_{e} $$
where
$$k_4=((\overline{cr_2(t_2)} \otimes 1)^{-1} (cr_2(t_2) \otimes P), -1)^t=((\overline{cr_2(t_2)})^{-1} cr_2(t_2) \otimes P, -1)^t=(t_{11} \otimes P, -1)^t.$$ Since $k_3$ is an isomorphism, we deduce from \ref{eq.5.21} and the last exact sequence that:
$$ker(cr_2(\hat{\phi}) k_3)= k_2\, ker(cr_2({\overline{\phi}}) k_1)= k_2 Im(k_4).$$
For $\xi \in \C(E, E \vee E)$ and $m \in M_{ee}$ we have:
$$k_2\, k_4(\rho^2_{(1,2)}(\xi) \otimes m)=k_2(t_{11}\rho^2_{(1,2)}(\xi) \otimes Pm, - \rho^2_{(1,2)}(\xi) \otimes m)$$
$$=\hat{H}(t_{11}\rho^2_{(1,2)}(\xi) \otimes Pm)-(r^2_1 \xi \otimes r^2_2 \xi).(m+Tm).$$

It follows that $ker(cr_2(\hat{\phi}))=0$ if and only if the condition (QM2) holds.
\end{proof}

We now are ready to compute the cross-effect of $- \otimes M$.

\begin{cor} \label{gamma}
Let $M \in QMod^E_C$,  there is a natural isomorphism of bifunctors:
$$\gamma: (T_1U(X) \otimes T_1U(Y)) \otimes_{\Lambda \otimes \Lambda}M_{ee} \xrightarrow{\simeq} cr_2(- \otimes M)(X,Y)$$
 for all $X,Y \in \langle E \rangle_{\C}$ such that:
$$(\gamma)_{X,Y}( t_1(f) \otimes t_1(g) \otimes m)=(\iota^2_{(1,2)})^{-1} \hat{\phi} \pi (t_1(i_1 f) \otimes t_1(i_2 g) \otimes m)$$ where  $f  \in \C(E,X)$, $g \in \C(E,Y)$, $m \in M_{ee}$, $i_1: X \to X \vee Y$ and $i_2: Y \to X \vee Y$.
\end{cor}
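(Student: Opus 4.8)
The plan is to obtain $\gamma$ as the composite of two natural isomorphisms already at our disposal: the one furnished by Theorem~\ref{cross-pt-quad} and the one furnished by Lemma~\ref{bilin2}. Since $M \in QMod^E_{\C}$ is a genuine quadratic $\C$-module, Theorem~\ref{cross-pt-quad} tells us that
$$cr_2(\hat{\phi}): cr_2\big((\mathbb{T}_{11}(M_{ee}) \Delta_{\C})_{\mathfrak{S}_2}\big)(X,Y) \xrightarrow{\simeq} cr_2(- \otimes M)(X,Y)$$
is a natural isomorphism, where we recall from Definition~\ref{T11} that $\mathbb{T}_{11}(M_{ee})(X,Y) = (T_1U(X)\otimes T_1U(Y))\otimes_{\Lambda\otimes\Lambda}M_{ee}$.

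First I would apply Lemma~\ref{bilin2} to the bifunctor $B=\mathbb{T}_{11}(M_{ee})$. This bifunctor is bilinear and bireduced (the values $T_1U$ being linear and reduced), and it carries the symmetric structure of Proposition~\ref{symmodindfct} arising from the involution of the symmetric $\bar{\Lambda}\otimes\bar{\Lambda}$-module $M_{ee}$. Thus Lemma~\ref{bilin2} provides a natural isomorphism
$$(\iota^2_{(1,2)})^{-1}\pi\, \mathbb{T}_{11}(M_{ee})(i_1^2,i_2^2): \mathbb{T}_{11}(M_{ee})(X,Y) \xrightarrow{\simeq} cr_2\big((\mathbb{T}_{11}(M_{ee})\Delta_{\C})_{\mathfrak{S}_2}\big)(X,Y),$$
where I use that $cr_2$ is exact (Proposition~\ref{exact}) and hence commutes with the coinvariant cokernel, so that its target coincides with the source of $cr_2(\hat{\phi})$. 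Composing, I set $\gamma_{X,Y} = cr_2(\hat{\phi})_{X,Y}\circ (\iota^2_{(1,2)})^{-1}\pi\, \mathbb{T}_{11}(M_{ee})(i_1^2,i_2^2)$, a natural isomorphism.

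It then remains only to read off the explicit formula. Starting from $t_1(f)\otimes t_1(g)\otimes m$, bifunctoriality of $\mathbb{T}_{11}(M_{ee})$ gives $\mathbb{T}_{11}(M_{ee})(i_1^2,i_2^2)(t_1(f)\otimes t_1(g)\otimes m)=t_1(i_1 f)\otimes t_1(i_2 g)\otimes m$; applying $\pi$ and then $(\iota^2_{(1,2)})^{-1}$ lands in the cross-effect, the membership being exactly what Lemma~\ref{bilin2} guarantees. Writing $z$ for the resulting element and invoking naturality of the cross-effect inclusion, namely $\iota^2_{(1,2)}\, cr_2(\hat{\phi})_{X,Y} = \hat{\phi}_{X\vee Y}\,\iota^2_{(1,2)}$, I obtain $\iota^2_{(1,2)}\gamma_{X,Y}(t_1(f)\otimes t_1(g)\otimes m)=\hat{\phi}\,\pi(t_1(i_1 f)\otimes t_1(i_2 g)\otimes m)$, which is precisely the asserted formula after applying $(\iota^2_{(1,2)})^{-1}$ on the target side.

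The substantive content is carried entirely by the two cited results, so what is left is bookkeeping: matching the two descriptions of the cross-effect of the coinvariant diagonal functor (coinvariants of $cr_2$ versus $cr_2$ of the coinvariants), tracking the involution entering through Proposition~\ref{symmodindfct}, and checking the compatibility of the various $\iota^2_{(1,2)}$ maps on source and target. I expect the main—though still routine—point to be confirming that $\pi(t_1(i_1 f)\otimes t_1(i_2 g)\otimes m)$ actually lies in the image of $\iota^2_{(1,2)}$, which is exactly the assertion of Lemma~\ref{bilin2} and therefore needs no separate argument.
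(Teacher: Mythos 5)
Your proposal is correct and follows essentially the same route as the paper: the paper also factors $\gamma_{X,Y}$ as $cr_2(\hat{\phi})\circ(\iota^2_{(1,2)})^{-1}\pi\,\mathbb{T}_{11}(M_{ee})(i_1,i_2)$, invoking Lemma \ref{bilin2} for the first factor and Theorem \ref{cross-pt-quad} for the second. Your extra remarks on reading off the explicit formula and on the membership in the image of $\iota^2_{(1,2)}$ are routine bookkeeping that the paper leaves implicit.
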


When $E$ has suitable properties  this result extends to all objects $X,Y$ which are colimits of copies of $E$, see Theorem \ref{cr-of-otM-Cgen} below.

\begin{proof}
Recall that $\mathbb{T}_{11}(M_{ee})(X,Y)=  (T_1U(X) \otimes T_1U(Y)) \otimes_{\Lambda \otimes \Lambda}M_{ee} $ by Definition \ref{T11}.
We have the following natural factorization of $(\gamma)_{X,Y}$: 
$$\mathbb{T}_{11}(M_{ee})(X,Y) \xrightarrow{(\iota^2_{(1,2)})^{-1} \pi \mathbb{T}_{11}(M_{ee})(i_1, i_2)} cr_2((\mathbb{T}_{11}(M_{ee}) \Delta_{\C})_{\mathfrak{S}_2})(X,Y) \xrightarrow{cr_2(\hat{\phi})} cr_2(- \otimes M)(X,Y). $$
Since $\mathbb{T}_{11}(M_{ee})$ is bilinear we deduce from Proposition \ref{bilin2} that 
$(\iota^2_{(1,2)})^{-1} \pi \mathbb{T}_{11}(M_{ee})(i_1, i_2)$ is a natural equivalence, and we deduce from Theorem  \ref{cross-pt-quad} that $cr_2(\hat{\phi})$ is an isomorphism. Hence $\gamma$ is a natural equivalence, too.
\end{proof}

\subsection{Preservation of filtered colimits and suitable coequalizers}

In this section we show that the quadratic tensor product has strong enough preservation properties to be determined, under suitable assumptions, by its restriction to ``free objects of finite rank'' which was determined in the preceding sections. Under even stronger, but still very general assumptions which in particular cover all pointed algebraic varieties where a group law is part of the structure, we extend the computation of the cross-effect of $-\otimes M$ in the preceding section to all objects in $\C$.

More precisely, we suppose throughout this section that $\C$ has all (even infinite) sums (or at least all sums of copies of $E$), and that $E$ is a small regular projective generator of $\C$. Recall that small means that the functor $\C(E,-):\C \to Set_*$ preserves filtered colimits, regular projective means that $E$ is projective with respect to the class of all regular epimorphisms (i.e., quotients of coequalizers in $\C$), and generator means that all objects $X$ in $\C$ are colimits of copies of $E$, or equivalently, admit a regular epimorphism 
$\xymatrix{\bigvee_{i\in I} E \ar@{->>}[r] & X
}$. Note that these assumptions are satisfied for all algebraic varieties when we take $E$ to be the free object of rank $1$.

\begin{defi}\label{Epresentdefis} A presentation of an object $X$ in $\C$ is a coequalizer diagram
 $${\xymatrix{ {\mathbb P} \co X_1 \ar@<0.4ex>[r]^-{d_0} \ar@<-0.4ex>[r]_-{d_1}& X_0\ar@{->}[r]^q & X }}$$
 in $\C$. We say that $\mathbb P$ is $E$-free if $X_0$ and $X_1$ are $E$-free, i.e.\ sums of copies of $E$, and that $\mathbb P$ is $E$-saturated if $\forall f_0,f_1 \in \C(E,X_0), (qf_0=qf_1 \Rightarrow \exists f_{01}\in \C(E,X_1)$, $f_k=d_kf_{01}$, $k=0,1)$. Moreover, $\mathbb P$ is said to be reflexive if the pair $(d_0,d_1)$ is reflexive, i.e.\ admits a common section $s_0\in \C(X_0,X_1)$ of $d_0$ and $d_1$. 
 
 \end{defi}

\begin{lm}\label{prestypes} We have the following properties for $\C$ and $E$ as above.
\begin{enumerate}

\item Any object of $\C$ admits an $E$-saturated $E$-free presentation.

\item Any $E$-saturated $E$-free presentation is reflexive.

\item If $\C$ is Mal'cev and Barr exact, any reflexive presentation is $E$-saturated.
\end{enumerate}

\end{lm}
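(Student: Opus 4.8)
The plan is to handle the three parts in turn: for (1) I would construct a saturated free presentation by hand, for (2) extract the common section directly from saturation, and for (3) invoke the structure theory of Barr exact Mal'cev categories. For (1), start from a regular epimorphism $q\co X_0\to X$ with $X_0=\bigvee_{i\in I}E$, which exists because $E$ is a generator. Index a copy of $E$ by each element of the set $S=\{(f_0,f_1)\in\C(E,X_0)^2 \mid qf_0=qf_1\}$, put $X_1=\bigvee_{(f_0,f_1)\in S}E$, and let $d_k$ restrict to $f_k$ on the summand indexed by $(f_0,f_1)$; then $E$-saturation is immediate, with $f_{01}$ the inclusion of the relevant summand. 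To see that $X_1\rightrightarrows X_0\xrightarrow{q}X$ is a coequalizer, note that $qd_0=qd_1$ holds componentwise, and write $q=\mathrm{coeq}(a,b)$ for some pair $a,b\co W\to X_0$ (possible since $q$ is a regular epimorphism). For any $g$ with $gd_0=gd_1$ and any $E$-point $w$ of $W$ the pair $(aw,bw)$ lies in $S$, so $gaw=gd_0\iota_{(aw,bw)}=gd_1\iota_{(aw,bw)}=gbw$; since $E$ generates, $ga=gb$, so $g$ factors through $q$, giving the universal property.

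For (2), let the presentation be $E$-free with $X_0=\bigvee_{i\in I}E$ and $E$-saturated. For each inclusion $\iota_i\co E\to X_0$, apply saturation to the trivially compatible pair $f_0=f_1=\iota_i$ to obtain $f_{01}^{(i)}\co E\to X_1$ with $d_k f_{01}^{(i)}=\iota_i$ for $k=0,1$. Assembling the $f_{01}^{(i)}$ over $i\in I$ through the coproduct yields $s_0\co X_0\to X_1$ with $d_0s_0=d_1s_0=\mathrm{id}_{X_0}$, so $(d_0,d_1)$ is reflexive.

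For (3), let the presentation be reflexive with common section $s_0$. Form the image factorization $X_1\xrightarrow{e}R\xrightarrow{m}X_0\times X_0$ of $(d_0,d_1)$, which exists since a Barr exact category is regular, and write $r_0,r_1$ for the projections of the relation $R$. As $e$ is epic, $\mathrm{coeq}(d_0,d_1)=\mathrm{coeq}(r_0,r_1)=q$. The identities $d_ks_0=\mathrm{id}$ show that the diagonal of $X_0\times X_0$ factors through $m$, so $R$ is a reflexive relation; being Mal'cev, $\C$ forces $R$ to be an equivalence relation, and being Barr exact, $\C$ forces $R$ to be effective, i.e.\ the kernel pair of $q$. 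Then any $f_0,f_1$ with $qf_0=qf_1$ induce $\bar f\co E\to R$ with $r_k\bar f=f_k$; since $E$ is regular projective, $\bar f$ lifts along the regular epimorphism $e$ to some $f_{01}\co E\to X_1$, and $d_kf_{01}=r_ke f_{01}=r_k\bar f=f_k$, proving saturation.

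I expect part (3) to be the main obstacle, since it rests entirely on the categorical chain ``reflexive relation $\Rightarrow$ equivalence relation (Mal'cev) $\Rightarrow$ effective, hence a kernel pair (Barr exact)'', which is precisely what makes reflexivity and saturation agree; parts (1) and (2) are essentially bookkeeping once the indexing set $S$ is set up correctly.
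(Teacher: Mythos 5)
Your proof is correct and follows essentially the same route as the paper: indexing copies of $E$ by compatible pairs for (1), applying saturation to $f_0=f_1=$ the coproduct injections for (2), and comparing $(d_0,d_1)$ with the kernel pair of $q$ via regular projectivity of $E$ for (3). The only notable variations are that in (1) you discard the original relation object and keep only the saturating summands, which obliges you to re-verify the coequalizer property directly (you do so correctly, using that $E$-points are jointly epimorphic), whereas the paper simply adjoins $E_1'$ to an existing free presentation and gets the coequalizer for free; and in (3) you spell out, via the image factorization and the chain reflexive relation $\Rightarrow$ equivalence relation $\Rightarrow$ effective, the fact that the paper delegates to a citation.
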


We point out that all pointed homological, in particular semi-abelian categories are Mal'cev and Barr exact, see \cite{BB}; so these hypothesis cover all pointed algebraic varieties where a group law is part of the structure, like groups, algebras over any reduced operad etc., as well as the  categories of compact Hausdorff-spaces, of crossed modules of groups, of $C^*$-algebras etc., see for example \cite{BB}.\medskip

\noindent\textit{Proof of Lemma \ref{prestypes}}. (1): Let $X$ be an object in $C$. As $E$ generates $\C$ there exists an $E$-free presentation
  ${\xymatrix{ {\mathbb P} \co E_1 \ar@<0.4ex>[r]^-{d_0} \ar@<-0.4ex>[r]_-{d_1}& E_0\ar@{->}[r]^q & X }}$. 
  Now let $P[q]=\{ (f_0,f_1)\in \C(E,E_0)\,|\,qf_0=qf_1\}$, $E_1' =\bigvee_{(f_0,f_1)\in P[q]}E$ and $e_{(f_0,f_1)}\co E \to E_1'$,  $(f_0,f_1)\in P[q]$, be the defining injections of the coproduct. For $k=0,1$, let $d_k'\co E_1' \to E_0$ such that $d_ke_{(f_0,f_1)} =f_k$. Then  
  ${\xymatrix{ {\mathbb P} \co X_1 \vee E_1' \ar@<0.4ex>[r]^-{(d_0,d_0')} \ar@<-0.4ex>[r]_-{(d_1,d_1')}& X_0\ar@{->}[r]^q & X }}$
is an $E$-saturated $E$-free presentation of $X$.
\medskip

\noindent(2): Let ${\xymatrix{ {\mathbb P} \co E_1 \ar@<0.4ex>[r]^-{d_0} \ar@<-0.4ex>[r]_-{d_1}& E_0\ar@{->}[r]^q & X }}$ be an $E$-saturated $E$-free presentation, with $E_0=\bigvee_{i\in I}E$ and defining injections $e_i\co E\to E_0$. Taking $f_0=f_1=e_i$ for $i\in I$ provides a map $f_{01}^i\in \C(E,E_1)$ such that $d_0f_{01}^i =d_1f_{01}^i = e_i$. Pasting these maps $f_{01}^i$ together furnishes the desired common section $s_0\co E_0\to E_1$ of $d_0$ and $d_1$.\medskip

\noindent(3): Consider the following diagram
\[\xymatrix @!0 @R=18mm @C=40mm{
X_1 \ar@<0.4ex>[r]^-{d_0} \ar@<-0.4ex>[r]_-{d_1} \ar@{.>}[rd]_{(d_0,d_1)}  & X_0\ar@{->}[r]^q  & X  \\
E \ar@{.>}[u]^{f_{01}} \ar[r]_{(f_0,f_1)}  & R[q] \ar[u]^{p_0} \ar[r]^{p_1} & X_0 \ar[u]^q
}\]
where the top line is a reflexive presentation and the right hand square is a pullback (i.e., $(p_0,p_1)$ is a kernel pair of $q$). Thus the map $(d_0,d_1)$ as indicated exists such that $p_k
(d_0,d_1)=d_k$, $k=0,1$. Moreover, we suppose that $f_0,f_1\in \C(E,X_0)$ are maps such that $qf_0=qf_1$; thus the map $(f_0,f_1)$ as indicated exists such that $p_k
(f_0,f_1)=f_k$, $k=0,1$. By hypothesis on $\C$ the map
$(d_0,d_1)$ is a regular epimorphism (cf.\ \cite{BB} or \cite{Carboni}), so by regular projectivity of $E$ there exists a lifting $f_{01}$ as indicated such that $(d_0,d_1)f_{01}=(f_0,f_1)$. It satisfies $d_kf_{01}=p_k(d_0,d_1)f_{01}=p_k(f_0,f_1)=f_k$, as desired.\hfill $\Box$\medskip

Now consider a functor $F\co \C \to \mathcal D$ to some category $\mathcal D$. Recall that one  says that $F$ preserves a certain type 
of coequalizers (or presentations) if it transforms coequalizers of this type in $\C$  into coequalizers in $\D$.

\begin{prop}\label{presofcoequ} For $F$ as above consider the following properties.

\begin{enumerate}

\item $F$ preserves $E$-saturated coequalizers.

\item $F$ preserves $E$-saturated $E$-free coequalizers.

\item $F$ preserves reflexive coequalizers.

\end{enumerate}

Then $(1) \Leftrightarrow (2) \Leftarrow (3)$, and
if $\C$ is Mal'cev and Barr exact then all three properties are equivalent.

\end{prop}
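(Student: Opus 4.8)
The plan is to organize the argument around three easy implications and one substantial one, invoking Lemma \ref{prestypes} throughout. The implication $(1)\Rightarrow(2)$ is tautological, since an $E$-saturated $E$-free coequalizer is in particular $E$-saturated. The implication $(3)\Rightarrow(2)$ is immediate from Lemma \ref{prestypes}(2): an $E$-saturated $E$-free coequalizer is a reflexive coequalizer, hence preserved by $F$. For the Mal'cev and Barr exact case it then suffices to add $(1)\Rightarrow(3)$: given a reflexive coequalizer, Lemma \ref{prestypes}(3) shows it is $E$-saturated, so property $(1)$ applies; thus under these hypotheses $(1)\Rightarrow(3)\Rightarrow(2)\Rightarrow(1)$ and all three are equivalent. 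The heart of the matter is therefore the implication $(2)\Rightarrow(1)$.

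To prove $(2)\Rightarrow(1)$, I would start from an arbitrary $E$-saturated coequalizer $\mathbb{P}\co X_1 \rightrightarrows X_0 \xrightarrow{q} X$ and resolve its middle term by $E$-free data. Set $\tilde X_0=\bigvee_{f\in\C(E,X_0)}E$ with evaluation map $\pi_0\co \tilde X_0 \to X_0$, which is a regular epimorphism because $E$ generates. Put $\tilde q=q\pi_0$ and $\tilde X_1=\bigvee_{(g_0,g_1)\in P[\tilde q]}E$, the two structure maps reading off $g_0$ and $g_1$ on each summand, exactly as in the construction of Lemma \ref{prestypes}(1). A direct check, using only that $E$ is a regular projective generator and that $q$ and $\pi_0$ are coequalizers, shows that $\mathbb{Q}\co \tilde X_1 \rightrightarrows \tilde X_0 \xrightarrow{\tilde q} X$ is an $E$-saturated $E$-free coequalizer of $X$: any map out of $\tilde X_0$ coequalizing the pair factors first through $\pi_0$ and then through $q$, both factorizations being detected on $E$-points via regular projectivity. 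Next I would use the $E$-saturation of $\mathbb{P}$ to build a comparison $\pi_1\co \tilde X_1 \to X_1$: on the summand indexed by $(g_0,g_1)$ the maps $\pi_0 g_0,\pi_0 g_1$ become equalized by $q$, so saturation provides a lift into $X_1$. This yields a morphism of forks $\mathbb{Q}\to\mathbb{P}$ over $\mathrm{id}_X$ with $q\pi_0=\tilde q$ and $d_k\pi_1=\pi_0\tilde d_k$.

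Applying $F$, the top row $F(\mathbb{Q})$ is a coequalizer by property $(2)$, and I must deduce the same for the bottom row $F(\mathbb{P})$. Given $h\co F(X_0)\to Z$ with $hF(d_0)=hF(d_1)$, the composite $hF(\pi_0)$ coequalizes $F(\tilde d_0),F(\tilde d_1)$, since $hF(\pi_0)F(\tilde d_k)=hF(d_k)F(\pi_1)$ is independent of $k$; the universal property of $F(\mathbb{Q})$ then produces $\bar h\co F(X)\to Z$ with $\bar h F(\tilde q)=hF(\pi_0)$, whence $\bar h F(q)F(\pi_0)=hF(\pi_0)$. The one genuine obstacle is to cancel $F(\pi_0)$ here, i.e.\ to know that $F(\pi_0)$ is an epimorphism, which is not available from any preservation-of-coproducts hypothesis on $F$. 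The observation that unblocks this is that $\pi_0$ is itself the coequalizer map of an $E$-saturated $E$-free presentation of $X_0$, namely $\bigvee_{P[\pi_0]}E \rightrightarrows \tilde X_0 \xrightarrow{\pi_0} X_0$, so property $(2)$ applies a \emph{second} time and makes $F(\pi_0)$ a coequalizer map, in particular an epimorphism. Cancelling it gives $\bar h F(q)=h$, and uniqueness of $\bar h$ follows because $F(q)$ is epic, being a left factor of the coequalizer map $F(\tilde q)=F(q)F(\pi_0)$. Hence $F(\mathbb{P})$ is a coequalizer, which completes $(2)\Rightarrow(1)$ and the whole proposition.
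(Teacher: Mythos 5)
Your proof is correct and follows essentially the same strategy as the paper's: the three easy implications are handled identically via Lemma \ref{prestypes}, and for $(2)\Rightarrow(1)$ both arguments replace the given $E$-saturated coequalizer by an $E$-saturated $E$-free one obtained by covering $X_0$, map it down to the original fork, apply $F$, and exploit that $F$ sends the $E$-saturated $E$-free presentation of $X_0$ to an epimorphism in order to cancel $F(\pi_0)$ (the paper's $F(q_0)$). Your use of the canonical maximal covers $\bigvee_{f\in\C(E,X_0)}E$ and $\bigvee_{P[\tilde q]}E$ makes the comparison into $X_1$ somewhat more direct than the paper's assembly of $E_1\vee E_1'\vee E_1''$ with its several auxiliary lifts, but the underlying mechanism is the same; the only place deserving more detail is the ``direct check'' that $\mathbb{Q}$ and the presentation of $X_0$ are genuinely coequalizers, which requires an auxiliary $E$-free presentation of $X_0$ exactly as in Lemma \ref{prestypes}(1).
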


\begin{proof} It is obvious that (1) implies (2), and (3) implies (2) by Lemma \ref{prestypes} (2). Moreover, if $\C$ is Mal'cev and Barr exact (1) implies (3) by Lemma \ref{prestypes} (3), so it remains to prove that (2) implies (1). Let $\mathbb P$ be an $E$-saturated coequalizer as in Definition \ref{Epresentdefis}. Then we may choose a regular epimorphism $\xymatrix{q_1\co E_1 \ar@{->>}[r]& X_1}$ with an $E$-free object $E_1$, and an $E$-saturated $E$-free presentation of $X_0$ as in the diagram
\begin{equation} \xymatrix{ 
&E_1' \ar@<0.4ex>[d]^-{ d_1' } \ar@<-0.4ex>[d]_-{d_0'}   & \\
 E_1   \ar@<0.4ex>@{.>}[r]^-{  \tilde{d}_0 } \ar@<-0.4ex>@{.>}[r]_-{ \tilde{d}_1 } \ar[d]_{q_1} & E_0 \ar[ld]^s \ar[d]^{q_0} &  \\
X_1 \ar@<0.4ex>[r]^-{d_0} \ar@<-0.4ex>[r]_-{d_1}& X_0\ar@{->}[r]^q & X
  }\end{equation}
Here the diagonal map $s$ is a map such that $d_0s=d_1s=q_0$; it is constructed on a given summand $E$ of $E_0$ by taking $f_0$ and $f_1$ to be the restriction of $q_0$ to this summand, and applying the saturation property of $\mathbb P$, as in the proof of Lemma \ref{prestypes} (2). Moreover, $\tilde{d}_0$ and $\tilde{d}_1$ are liftings such that $q_0\tilde{d}_k=d_kq_1 $, $k=0,1$; they exist since $E_1$ is regular projective. Now observe that   ${\xymatrix{  E_1 \vee E_1' \ar@<0.4ex>[r]^-{(\tilde{d}_0,d_0')} \ar@<-0.4ex>[r]_-{(\tilde{d}_1,d_1')}& E_0\ar@{->}[r]^{qq_0} & X }}$
is an $E$-free presentation of $X$; by applying the procedure in the proof of Lemma \ref{prestypes}(1) we find an $E$-free object $E_1''$ and maps $\xymatrix{E_1''  \ar@<0.4ex>[r]^-{\epsilon_0} \ar@<-0.4ex>[r]_-{\epsilon_1}& E_0}$ in $\C$ such that
${\xymatrix{   E_1 \vee E_1' \vee E_1'' \ar@<0.4ex>[r]^-{(\tilde{d}_0,d_0',\epsilon_0)} \ar@<-0.4ex>[r]_-{(\tilde{d}_1,d_1',\epsilon_1)}& E_0\ar@{->}[r]^{qq_0} & X }}$
is an $E$-saturated $E$-free presentation of $X$.
Next we construct a map $s'\in \C(E_1'',X_1)$ such that $d_ks'=q_0\epsilon_k$, $k=0,1$: on a given summand $E$ of $E_1''$ it is obtained by taking $f_k$ to be the restriction of $q_0\epsilon_k$ to that summand, and applying the saturation property of $\mathbb P$. We thus get the following diagram where $e_1'$ is the injection.
\begin{equation}\label{E-resol-dia}\xymatrix{ 
&E_1' \ar@<0.4ex>[d]^-{ d_1' } \ar@<-0.4ex>[d]_-{d_0'} \ar[dl]_-{e_1'} & \\
 E_1 \vee E_1' \vee E_1'' \ar@<0.4ex>[r]^-{ (\tilde{d}_0,d_0',\epsilon_0) } \ar@<-0.4ex>[r]_-{(\tilde{d}_1,d_1',\epsilon_1)} \ar[d]_{(q_1,sd_0',s')} & E_0\ar@{->}[r]^{qq_0} \ar[d]^{q_0} & X  \ar@{=}[d] \\
X_1 \ar@<0.4ex>[r]^-{d_0} \ar@<-0.4ex>[r]_-{d_1}& X_0\ar@{->}[r]^q & X
  }\end{equation}
It is commutative, for the left hand square in the sense that the two squares  formed by taking the  horizontal arrows with the same index   $k=0,1$, commute.

 Now we apply a functor $F\co \C\to \D$  preserving $E$-saturated $E$-free coequalizers to this diagram, and deduce that  
 ${\xymatrix{F(X_1) \ar@<0.4ex>[r]^-{F(d_0)} \ar@<-0.4ex>[r]_-{F(d_1)}& F(X_0)\ar@{->}[r]^{F(q)} & F(X) }}$
 is a coequalizer. Let $A\in\D$ and $\alpha\in \D(F(X_0),A)$ such that $\alpha F(d_0) =\alpha F(d_1)$. Then 
 \begin{eqnarray*}
 \alpha F(q_0)
 F(\tilde{d}_0,d_0',\epsilon_0) &=& \alpha F(d_0)
F(q_1,sd_0',s')\\
&=& \alpha F(d_1)F(q_1,sd_0',s')\\
&=& \alpha F(q_0)F(\tilde{d}_1,d_1',\epsilon_1)
\end{eqnarray*}
As $F$ transforms the top line and the column in (\ref{E-resol-dia}) into a coequalizers it follows that there exists a unique map $\bar{\alpha} \in \D(F(X) ,A)$ such that $\bar{\alpha} F(qq_0)  = \alpha F(q_0)$, and that $F(q_0)$ is a regular epimorphism. Thus $\bar{\alpha}F(q) = \alpha$, which implies the assertion.
 
\end{proof}

Recall that if $\mathbb P$ as above is a reflexive coequalizer in $Ab$ then $X\cong {\rm Coker}(d_0-d_1\co X_1 \to X_0) \cong {\rm Coker}(d_1'\co {\rm Ker}(d_0) \to X_0)$ where $d_1'$ denotes the restriction of $d_1$. This implies that an additive functor between abelian categories preserves reflexive coequalizers iff it is right exact, so preservation of reflexive coequalizers generally is considered as the appropriate generalization of right exactness to non-linear functors. In fact, a functor preserving  reflexive coequalizers and  filtered colimits generically is determined
by its restriction to ``free objects of finite rank''.
In particular this reduction is used in the study of quadratic functors  \cite{Baues-Pira} and of functors between categories of algebras over operads \cite{Benoit}. In the same spirit, we now state a preservation theorem for our quadratic tensor product  which shows, however, that in the most general situation reflexive coequalizers must be replaced by the more specific $E$-saturated ones; on the other hand, Proposition \ref{presofcoequ} explains why this difference
did not yet appear in practice: most ``real life'' pointed categories are at least homological and hence Mal'cev and Barr exact.

\begin{thm}\label{otMpreserves} Suppose that $\C$ is a pointed category with sums and that $E$ is a small regular projective generator of $\C$. Moreover, let $M$ be a quadratic $\C$-module relative to $E$.
Then the functors $T_nU_E$ and $-\otimes M\co \C \to Ab$, $n=1,2$,  preserve filtered colimits and $E$-saturated coequalizers, and reflexive coequalizers if $\C$ is Mal'cev and Barr exact.

\end{thm}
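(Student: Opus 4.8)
The plan is to reduce everything to two ingredients: the preservation properties of the representable functor $\C(E,-)$, and the fact that all the functors in question are assembled from it by colimit-preserving algebraic operations. By Proposition \ref{presofcoequ} it suffices to prove preservation of filtered colimits and of $E$-saturated $E$-free coequalizers; the case of reflexive coequalizers when $\C$ is Mal'cev and Barr exact then follows at once from the equivalence of the three conditions there. First I would record the base case. Smallness of $E$ gives that $\C(E,-)\co \C\to Set_*$ preserves filtered colimits. For an $E$-saturated coequalizer $X_1\rightrightarrows X_0\xrightarrow{q}X$, regular projectivity of $E$ makes $\C(E,q)$ surjective, while the saturation property says precisely that the equivalence relation generated by the image of $(\C(E,d_0),\C(E,d_1))$ is the whole kernel pair of $\C(E,q)$; hence $\C(E,-)$ carries such a coequalizer to a coequalizer of pointed sets. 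Since $U_E$ is the composite of $\C(E,-)$ with the reduced free abelian group functor $\widetilde{\mathbb Z}\co Set_*\to Ab$, a left adjoint and hence colimit-preserving, $U_E$ preserves filtered colimits and $E$-saturated coequalizers.

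Next I would treat $T_nU_E$ for $n=1,2$ via the explicit presentations $T_nU_E(X)=\mathrm{coker}\big(U_E(X^{\vee(n+1)})\xrightarrow{\Theta_n}U_E(X)\big)$ of Propositions \ref{explicit-lin} and \ref{quadratization}. For filtered colimits both $U_E$ and $X\mapsto U_E(X^{\vee(n+1)})$ preserve them (coproducts commute with colimits and the diagonal is cofinal in a filtered square, and $U_E$ preserves filtered colimits), so the cokernel does too, colimits commuting with colimits. The coequalizer case is the delicate one: here $X\mapsto U_E(X^{\vee(n+1)})$ need \emph{not} preserve the $E$-saturated coequalizer, since the induced diagram $X_1^{\vee(n+1)}\rightrightarrows X_0^{\vee(n+1)}\to X^{\vee(n+1)}$ is only reflexive, not $E$-saturated, and $\C(E,-)$ does not preserve reflexive coequalizers in general. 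The device that saves the argument is that the relation subfunctor lifts: as $q^{\vee(n+1)}$ is a regular epimorphism (a coproduct of coequalizers) and $E$ is regular projective, $U_E(q^{\vee(n+1)})$ is surjective, so naturality of $\Theta_n$ yields $\mathrm{im}(\Theta_n)_X=U_E(q)\big(\mathrm{im}(\Theta_n)_{X_0}\big)$. Combined with $\ker U_E(q)=\mathrm{im}(U_E(d_0)-U_E(d_1))$ (because $U_E$ preserves the coequalizer), a direct computation of preimages identifies the group $U_E(X_0)/\big(\mathrm{im}(\Theta_n)_{X_0}+\ker U_E(q)\big)$ both with $T_nU_E(X)$ and with $\mathrm{coeq}\big(T_nU_E(X_1)\rightrightarrows T_nU_E(X_0)\big)$, which is the assertion.

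Finally I would obtain the result for $-\otimes M$ from the pushout of Definition \ref{quad-t-p}. Each of its three corners is built from $U_E$, $T_1U_E$, $T_2U_E$ and the tensor squares $X\mapsto U_E(X)\otimes U_E(X)$ and $X\mapsto T_1U_E(X)\otimes T_1U_E(X)$ by applying right-exact additive functors ($-\otimes_\Lambda M_e$, $-\otimes_{\Lambda\otimes\Lambda}(\cdots)$, finite direct sums) and the coinvariants $(-)_{\mathfrak{S}_2}=\mathrm{coker}(1-t)$, all colimit-preserving. The functors $U_E$, $T_1U_E$, $T_2U_E$ preserve filtered colimits and $E$-saturated $E$-free coequalizers by the previous steps; for the tensor squares, filtered colimits are handled by cofinality of the diagonal, whereas for coequalizers I would use that an $E$-saturated $E$-free coequalizer is reflexive (Lemma \ref{prestypes}(2)), so that applying $U_E$ or $T_1U_E$ produces a reflexive coequalizer in $Ab$, and then invoke the standard fact that $A\mapsto A\otimes_{\mathbb Z}A$ preserves reflexive coequalizers. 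Thus every corner preserves both classes of colimit, and since the pushout is itself a colimit the same holds for $-\otimes M$. The reflexive case under the Mal'cev and Barr exact hypothesis is then read off from Proposition \ref{presofcoequ}.

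The hard part is exactly the coequalizer preservation of the quadratic pieces — the wedge $U_E(X^{\vee(n+1)})$ inside $T_nU_E$, and the tensor squares inside $-\otimes M$ — where the naive approach breaks down because $\C(E,-)$ fails to preserve reflexive coequalizers. The two remedies, lifting of relations through regular projectivity for $T_nU_E$ and the reflexive-coequalizer tensor-square lemma for $-\otimes M$, are precisely what single out $E$-saturated coequalizers, rather than merely reflexive ones, as the natural class in this generality.
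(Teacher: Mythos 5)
Your proof is correct and follows the same overall architecture as the paper's: reduce to $E$-saturated $E$-free coequalizers via Proposition \ref{presofcoequ}, establish the base case for $\C(E,-)$ (and hence $U_E=\overline{\mathbb{Z}}[\C(E,-)]$) using smallness, regular projectivity and the definition of saturation, pass to $T_nU_E$ via the cokernel presentations of Propositions \ref{explicit-lin} and \ref{quadratization}, and assemble $-\otimes M$ as a colimit (pushout) of functors already known to have the preservation properties. Where you genuinely diverge is at the two delicate sub-steps, and in both cases your treatment is more careful than the paper's. For $T_nU_E$, the paper simply asserts that $X\mapsto U_E(X^{\vee (n+1)})$ preserves $E$-saturated coequalizers ``since colimits commute among each other''; as you observe, the induced coequalizer $X_1^{\vee (n+1)}\rightrightarrows X_0^{\vee (n+1)}\to X^{\vee (n+1)}$ is not obviously $E$-saturated, so this assertion is not immediate in the stated generality. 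Your replacement --- using only that $U_E(q^{\vee (n+1)})$ is surjective (regular projectivity applied to the regular epimorphism $q^{\vee (n+1)}$) to get $\mathrm{im}(\Theta_n)_X=U_E(q)\big(\mathrm{im}(\Theta_n)_{X_0}\big)$, and then computing the cokernel of your map $\Theta_n$ modulo $\ker U_E(q)=\mathrm{im}(U_E(d_0)-U_E(d_1))$ --- rests on a strictly weaker hypothesis and fills this gap. Likewise for the tensor squares inside $-\otimes M$: the paper's phrase ``the tensor product preserves filtered colimits and coequalizers'' is, read literally for $A\mapsto A\otimes A$, false for arbitrary coequalizers of pairs in $Ab$; what is true, and what you correctly invoke, is preservation of \emph{reflexive} coequalizers, together with the observation (Lemma \ref{prestypes}(2)) that the $E$-saturated $E$-free coequalizers one feeds in are reflexive and that reflexivity survives the application of any functor. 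In short, you buy a watertight argument at the cost of two explicit computations that the paper compresses into appeals to commuting colimits; the global strategy, including the final reduction of the reflexive case to Proposition \ref{presofcoequ}, is identical.
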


\begin{proof} Let $Set_*, Set_*\mbox{-\,}\C(E,E)$ denote the categories of pointed sets and of pointed sets equipped with a right action of the monoid $\C(E,E)$, resp., the latter satisfying that $x_0a=x_0=x0$ where $x_0$ is the basepoint of $S\in Set_*\mbox{-\,}\C(E,E)$, $x\in S$ and $a\in \C(E,E)$. Consider the functors
\[\xymatrix{
Ab &  Set_* \ar[l]_-{\overline{\mathbb{Z}}[-]} & \C\ar[l]_-{\rho_E}   \ar[r]^-{\rho_E'} & Set_*\mbox{-\,}\C(E,E) \ar[r]^-{\overline{\mathbb{Z}}[-]} & Mod\mbox{\,-\,}\Lambda}\]
where $\rho_E,\rho_E'$ are both given by $\C(E,-)$, the right action of $\C(E,E)$ on $\C(E,X)$ being given by precomposition. Both 
$\rho_E$ and $\rho_E'$ preserve filtered colimits 
since $E$ is small, and preserve
$E$-saturated coequalizers since $E$ is  regular projective, and by definition of $E$-saturation. Moreover, both functors $\overline{\mathbb{Z}}[-]$ are left adjoint to the obvious forgetful functors, hence preserve colimits. Thus the composite functors $\xymatrix{
Ab & \C\ar[l]_-{U_E} \ar[r]^-{U_E'} & Mod\mbox{\,-\,}\Lambda}$,  both preserve filtered colimits and $E$-saturated coequalizers, and so do  the functors 
$\xymatrix{U_E(X^{\vee n}) & X \ar@{|->}[l] \ar@{|->}[r] & U_E'(X^{\vee n})}$ since colimits commute among each other. For the same reason, $T_1U_E = {\rm Coker}(S^F_2 \circ \rho^2_{(1,2)}\co U_E(X^{\vee 2}) \to U_E(X))$, $T_2U_E = {\rm Coker}(\nabla^3_*-(\nabla^2 r^3_{12})_*- (\nabla^2 r^3_{13})_*-(\nabla^2 r^3_{23})_*+r_{1*}^3+r_{2*}^3+r_{3*}^3\co U_E(X^{\vee 3}) \to U_E(X))$, and  $T_2U_E'$ preserve filtered colimits and $E$-saturated coequalizers,
see Propositions \ref{explicit-lin} and \ref{quadratization}. As the tensor product preserves filtered colimits and coequalizers so does the functor  $Mod\mbox{\,-\,}\overline{\Lambda} \to {\mathbb Z}[\mathfrak{S}_2]\mbox{\,-\,}Mod$ sending $A$ to $(A\otimes A)\otimes_{\overline{\Lambda}\otimes \overline{\Lambda}} N$, $N=M_{ee}$ or $N=T_{11}(cr_2U_E)(E,E)$, and also the functor ${\mathbb Z}\otimes_{{\mathbb Z}[\mathfrak{S}_2]}- \co {\mathbb Z}[\mathfrak{S}_2]\mbox{\,-\,}Mod \to Ab$. Thus $-\otimes M$ is a functorial pushout (hence colimit) of functors which preserve filtered colimits and $E$-saturated coequalizers, hence so does $-\otimes M$ as colimits commute among each other. The statement about reflexive coequalizers then follows from Proposition \ref{presofcoequ}.
\end{proof}

The preservation properties just proved allow for the desired reduction to the case of free objects of finite rank:

\begin{prop}\label{redtofinsums} Let $\D$ be any category, and let $\varphi\co F \to G$ be a morphism between functors $F,G\co \C \to \D$ which both preserve filtered colimits and $E$-saturated $E$-free coequalizers.
Then $\varphi$ is an isomorphism iff $\varphi_{E^{\vee n}}$ is an isomorphism for all $n\ge 1$. Similarly, let $\psi\co B \to C$ be a morphism between bifunctors $B,C\co \C \times \C \to \D$ which both preserve filtered colimits and $E$-saturated $E$-free coequalizers in each variable (i.e.\ the functors $B(X,-)$ and $B(-,X)$ do for all $X\in \C$, the same for $C$).
Then $\psi$ is an isomorphism iff $\psi_{(E^{\vee n},E^{\vee m})}$ is an isomorphism for all $n,m\ge 1$.

\end{prop}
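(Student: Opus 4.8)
The plan is to dispose of the ``only if'' directions at once — if $\varphi$ (resp.\ $\psi$) is a natural isomorphism then every component $\varphi_{E^{\vee n}}$ (resp.\ $\psi_{(E^{\vee n},E^{\vee m})}$) is an isomorphism — and to concentrate on the converse. So assume $\varphi_{E^{\vee n}}$ is an isomorphism for all $n\ge 1$. I would run the argument in two stages: first extend from the finite sums $E^{\vee n}$ to arbitrary sums $\bigvee_{i\in I}E$ using preservation of filtered colimits, and then descend from these $E$-free objects to an arbitrary object $X$ using preservation of $E$-saturated $E$-free coequalizers. These are precisely the two hypotheses imposed on $F$ and $G$.

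For the first stage I would write an arbitrary sum $\bigvee_{i\in I}E$ as the filtered colimit, over the directed poset of nonempty finite subsets $I_0\subseteq I$, of the finite subsums $\bigvee_{i\in I_0}E\cong E^{\vee |I_0|}$, with the canonical inclusions as transition maps. Since $F$ and $G$ preserve filtered colimits, $\varphi_{\bigvee_I E}$ is identified with $\operatorname{colim}_{I_0}\varphi_{E^{\vee |I_0|}}$, a colimit of isomorphisms, and is therefore itself an isomorphism. (The degenerate case of the empty sum, namely the zero object $0$, is handled separately: since $\C$ is pointed, $0$ is a retract of $E$, so $\varphi_0$ is a retract of the isomorphism $\varphi_E$ and hence itself an isomorphism.)

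For the second stage I would invoke Lemma \ref{prestypes}(1) to choose an $E$-saturated $E$-free presentation
\[\xymatrix{ X_1 \ar@<0.4ex>[r]^-{d_0} \ar@<-0.4ex>[r]_-{d_1}& X_0\ar@{->}[r]^q & X }\]
with $X_0,X_1$ sums of copies of $E$. Applying $F$ and $G$, which preserve such coequalizers, produces coequalizer diagrams in $\D$, and naturality of $\varphi$ turns these into a morphism of coequalizer diagrams. By the first stage $\varphi_{X_0}$ and $\varphi_{X_1}$ are isomorphisms; since a morphism of coequalizer diagrams that is a componentwise isomorphism on the parallel pair induces an isomorphism on the coequalizers, $\varphi_X$ is an isomorphism. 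As $X$ was arbitrary, $\varphi$ is a natural isomorphism.

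The bifunctor statement then follows by applying the functor case twice. Fixing $Y=E^{\vee m}$, the functors $B(-,E^{\vee m})$ and $C(-,E^{\vee m})$ preserve filtered colimits and $E$-saturated $E$-free coequalizers, and $\psi_{(-,E^{\vee m})}$ is an isomorphism on every $E^{\vee n}$; the functor case then yields that $\psi_{(X,E^{\vee m})}$ is an isomorphism for all $X$ and all $m$. Fixing now an arbitrary $X$ and running the functor case for $B(X,-)$, $C(X,-)$ and $\psi_{(X,-)}$, whose components at the $E^{\vee m}$ are by the previous step known to be isomorphisms, shows that $\psi_{(X,Y)}$ is an isomorphism for all $Y$, completing the proof. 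I expect the only genuine subtlety to be the setup of the first stage: the whole force of the statement is that infinite coproducts of copies of $E$ are filtered colimits of their finite subcoproducts, so that the two preservation hypotheses together suffice; once this is in place, the coequalizer descent and the two-variable bookkeeping are purely formal.
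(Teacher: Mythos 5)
Your proposal is correct and follows essentially the same route as the paper's proof: first pass from finite to arbitrary sums of copies of $E$ via filtered colimits over finite subsets, then descend to arbitrary objects via an $E$-saturated $E$-free presentation supplied by Lemma \ref{prestypes}, and finally handle the bifunctor case by two successive applications of the one-variable statement. The only differences are that you spell out details the paper leaves implicit (the empty-sum case and the fact that a componentwise isomorphism of coequalizer diagrams induces an isomorphism of coequalizers), both of which are correct.
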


\begin{proof} This is standard: let $I$ be a set and $P$ be the set of its finite subsets, and write $E_I=\bigvee _{i\in I} E$. Then $E_I=  {\rm colim}_{J\in P} E_J$ is a filtered colimit whence $\varphi_{E_I}$ is a colimit of isomorphisms, hence an isomorphism. It now follows that $\varphi_X$ is an isomorphism for all $X\in \C$ by using an $E$-saturated $E$-free presentation of $X$, which exists by Lemma \ref{prestypes}. As to bifunctors, the result for functors just proved successively implies that first $\psi_{(X,E^{\vee m})}$, then 
$\psi_{(X,Y)}$ is an isomorphism for all $X,Y\in \C$, $m\ge 1$.
\end{proof}


Now we wish to establish similar reduction properties for cross-effects. For this we must check that  suitable preservation properties of a functor are inherited by its cross-effects:

\begin{prop}\label{crpreserve} If a functor $F\co \C \to Ab$ preserves filtered colimits and reflexive coequalizers then so do the functors $cr_n(X_1,\ldots,X_n,-)\co \C\to Ab$ for $n\ge 1$ and $X_1,\ldots,X_n$ $\in \C$.
\end{prop}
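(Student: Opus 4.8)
The plan is to realize each one-variable cross-effect functor as a natural direct summand of a composite that manifestly preserves the relevant colimits, and then to argue that direct summands inherit the preservation property. By the symmetry of the cross-effect we may assume the free variable is the last one, so it suffices to treat $cr_nF(X_1,\ldots,X_{n-1},-)$, which I would handle by induction on $n$.

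First I would record two elementary preservation facts. Filtered categories and the reflexive-pair category are \emph{connected}, so any constant functor $\C\to Ab$ preserves filtered colimits and reflexive coequalizers, the colimit of a constant diagram over a connected category being its value. Likewise, for fixed $X\in\C$ the coproduct functor $X\vee-\co\C\to\C$ preserves all connected colimits; in particular it sends a reflexive pair to a reflexive pair and a filtered diagram to a filtered diagram, and commutes with their colimits. Consequently, whenever a functor $H\co\C\to Ab$ preserves filtered colimits and reflexive coequalizers, so does the composite $H(X\vee-)$. I would also state the direct-summand lemma: if $G\oplus K$ and $K$ both preserve a given colimit, then so does $G$; this is immediate since the comparison map for $G\oplus K$ is the direct sum of those for $G$ and $K$, colimits commuting with finite direct sums in $Ab$.

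The base case $n=1$ uses the natural splitting $F(Y)\cong cr_1F(Y)\oplus F(0)$ coming from the decomposition $F(X)\simeq cr_1F(X)\rtimes F(0)$, which in $Ab$ is a direct sum. The complementary summand is the constant functor at $F(0)$; since $F$ and this constant functor both preserve filtered colimits and reflexive coequalizers, the lemma gives the same for $cr_1F$. For the inductive step, I would fix $X_1,\ldots,X_{n-1}$ and set $\Phi=cr_{n-1}F(X_1,\ldots,X_{n-2},-)$, which preserves the two kinds of colimits by the induction hypothesis. By symmetry of the cross-effect there is a natural isomorphism $cr_nF(X_1,\ldots,X_{n-1},Y)\cong cr_2\Phi(X_{n-1},Y)$, so it suffices to treat $cr_2\Phi(X_{n-1},-)$. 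The split short exact sequence (\ref{ses2}) yields a decomposition, natural in $Y$, of the form $\Phi(X_{n-1}\vee Y)\cong cr_2\Phi(X_{n-1},Y)\oplus\Phi(X_{n-1})\oplus\Phi(Y)$; the three ambient functors $\Phi(X_{n-1}\vee-)$, the constant $\Phi(X_{n-1})$, and $\Phi$ all preserve filtered colimits and reflexive coequalizers, so peeling the summands off via the lemma shows that $cr_2\Phi(X_{n-1},-)$ does too.

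I expect no serious obstacle here; the argument is essentially bookkeeping once the direct-summand picture is in place. The only points requiring care are verifying that $X\vee-$ preserves the specific connected colimits in play and that applying it keeps reflexive pairs reflexive and filtered diagrams filtered, and checking that the decomposition (\ref{ses2}) — which holds for an \emph{arbitrary} functor to $Ab$, reducedness not being needed since $(i^2_{1*},i^2_{2*})$ splits $(r^2_{1*},r^2_{2*})$ — is natural in the variable that moves, so that the summand lemma applies functorially.
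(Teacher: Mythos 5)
Your argument is correct, and it reaches the conclusion by a genuinely different route from the paper's, at least for the reflexive-coequalizer half. The paper also reduces to $cr_2F(X,-)$ by induction and also exploits the split exact sequence (\ref{ses2}), but it then treats the two preservation properties separately: filtered colimits are handled by computing the kernel directly and using that filtered colimits commute with kernels in $Ab$, while reflexive coequalizers require a $3\times 3$ diagram with split-exact columns and exact rows, a modification of the left-hand column, and the snake lemma. Your observation that $cr_2\Phi(X,-)$ is a \emph{natural direct summand} of $\Phi(X\vee -)$ with complement $\Phi(X)\oplus\Phi(-)$, combined with the elementary facts that $X\vee-$ preserves connected colimits, that constant functors preserve them, and that preservation of a colimit passes to direct summands (comparison maps decompose as direct sums, and colimits commute with finite direct sums in $Ab$), handles both cases uniformly and eliminates the diagram chase entirely. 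What the paper's snake-lemma argument buys is independence from the splitting being a \emph{direct sum} decomposition, which would matter for group-valued functors; in the abelian setting your argument is cleaner. One small inaccuracy: your parenthetical claim that (\ref{ses2}) splits for an \emph{arbitrary} functor to $Ab$ is not right, since $(r^2_{1*},r^2_{2*})^t\circ(i^2_{1*},i^2_{2*})=\mathrm{id}$ uses $F(r_1^2 i_2^2)=F(0)=0$, i.e.\ reducedness; for the constant functor the map $(r^2_{1*},r^2_{2*})^t$ is the diagonal and is not even surjective. This does not damage your proof, because the functor $\Phi$ to which you apply the splitting in the inductive step is a cross-effect and hence (multi)reduced, and your base case already strips off the non-reduced part via $F\cong cr_1F\oplus F(0)$ — but the parenthetical should be corrected or simply replaced by the remark that $\Phi$ is reduced.
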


\begin{proof} By induction it suffices to prove the case $n=1$. Let $X\in \C$.\medskip

\noindent \textit{Filtered colimits:} Let $\mathcal I$ be a filtered category and $D\co \mathcal I \to \C$ be a diagram  admitting a colimit in $\C$. Then $X\vee {\rm colim}\, D={\rm colim}\,  D_1$ where $D_1\co \mathcal I\to \C$ is given by $D_1(i)=X\vee D(i)$ for $i\in \mathcal I$ and
$D_1(f)=1\vee D(f)$ for a morphism $f$ in $\mathcal I$. Similarly we define diagrams $D_2,D_3\co \mathcal I \to\C$ such that $D_2(i)=F(X) \oplus FD(i)$ and $D_3(i)=F(X|D(i))$. Writing $r(i) =(r_{1*}^2, r^2_{2*})\co F(X\vee D(i)) \to F(X)\times FD(i)$ we have the following commutative diagram.
\[\xymatrix{
F(X\vee {\rm colim}\,D) \ar[r]^-{(r_{1*}^2, r^2_{2*})}   & F(X) \times F({\rm colim}\,D)  \\
F({\rm colim}\,D_1) \ar[u]^{\cong} & F(X) \oplus {\rm colim}\,(F\circ D) \ar[u]^{\cong}\\
{\rm colim}\,(F\circ D_1) \ar[u]^{\cong} \ar[r]^-{{\rm colim}\,r(i)} & {\rm colim}\,D_2\ar[u]^{\cong}
}\]
Hence 
\begin{eqnarray*}
F(X|{\rm colim}\,D) &\cong& {\rm Ker}({\rm colim}_{i\in \mathcal I}\,r(i))\\
&\cong& {\rm colim}_{i\in \mathcal I} \,{\rm Ker}(r(i))\quad\mbox{since $\mathcal I$ is filtered}\\
&\cong& {\rm colim}\, D_3,
\end{eqnarray*}
as asserted.\medskip

\noindent\textit{Reflexive coequalizers:} Let 
${\xymatrix{  Y_1 \ar@<0.4ex>[r]^-{d_0} \ar@<-0.4ex>[r]_-{d_1}& Y_0\ar@{->}[r]^q & Y }}$
be a reflexive coequalizer in $\C$ with commun splitting $s_0$ of $d_0$ and $d_1$. Then 
${\xymatrix{ X\vee  Y_1 \ar@<0.4ex>[r]^-{1 \vee d_0} \ar@<-0.4ex>[r]_-{1 \vee d_1}& X\vee Y_0\ar@{->}[r]^{ 1 \vee q} & X\vee Y }}$
also is a reflexive coequalizer in $\C$ with commun splitting $1 \vee s_0$ of $1 \vee d_0$ and $1 \vee d_1$. Consider the following commutative diagram in $Ab$.
\[\xymatrix @!0 @R=18mm @C=40mm{
F(X|Y_1) \ar[r]^{\alpha} \ar@{^{(}->}[d]^{\iota^2_{(1,2)}} & 
F(X|Y_0) \ar[r]^{F(1|q)} \ar@{^{(}->}[d]^{\iota^2_{(1,2)}} &
F(X|Y)\ar@{^{(}->}[d]^{\iota^2_{(1,2)}} & \\
F(X\vee Y_1) \ar[r]^{\beta} \ar@{->>}[d]^{(r^2_{1*}, r^2_{2*})} & 
F(X\vee Y_0) \ar[r]^{F(1\vee q)} \ar@{->>}[d]^{(r^2_{1*}, r^2_{2*})} &
F(X\vee Y)\ar@{->>}[d]^{(r^2_{1*}, r^2_{2*})} \ar[r] & 0\\
F(X)\times F(Y_1) \ar[r]^{\gamma} & 
F(X)\times  F(Y_0) \ar[r]^{1\times  F(q)}  &
F(X)\times  F(Y) \ar[r] & 0
}\]
\rule{0mm}{7mm} where $\alpha = F(1|d_0) - F(1|d_1)$, 
$\beta = F(1\vee d_0) - F(1\vee d_1)$,
$\gamma = (1\times F(d_0)) - (1\times F(d_1)) = 0\times (F(d_0) -F(d_1))$. The columns are exact by (\ref{ses2}), and the second and third row are exact since $F$ preserves reflexive coequalizers. Now replacing $F(X)\times F(Y_1)$ by its image under $\gamma$ the left half of the above diagram becomes
\[\xymatrix @!0 @R=18mm @C=52mm{
F(X|Y_1) \oplus F(X) \oplus {\rm Ker}(d_{0*} -d_{1*}) \ar[r]^-{\tilde{\alpha}} \ar@{^{(}->}[d]^{(\iota^2_{(1,2)},F(i_1),F(i_2)j)} & 
F(X|Y_0)  \ar@{^{(}->}[d]^{(\iota^2_{(1,2)}}  \\
F(X\vee Y_1) \ar[r]^{\beta} \ar@{->>}[d]^{\gamma(r^2_{1*}, r^2_{2*})} & 
F(X\vee Y_0)   \ar@{->>}[d]^{(r^2_{1*}, r^2_{2*})} \\
{\rm Im}(\gamma) \ar@{^{(}->}[r]^{} & 
F(X)\times  F(Y_0) 
}\]
\rule{0mm}{7mm}with $j\co {\rm Ker}(d_{0*} -d_{1*}) \hookrightarrow F(Y_1)$. 
But $\beta F(i_1)=F(i_1)-F(i_1)=0$, and $\beta 
F(i_2)j = (F(i_2d_0) - F(i_2d_1))j = F(i_2)(
F( d_0) - F( d_1))j = 0$, so ${\rm Im}(\tilde{\alpha}) = 
{\rm Im}(\alpha)$. Thus the snake lemma provides an exact sequence $F(X|Y_1) \xrightarrow{\alpha} F(X|Y_0) \xrightarrow{F(1|q)}
F(X|Y) \to 0$, as asserted.
\end{proof}

We now are ready to extend the computation of the cross-effect of the quadratic tensor product from $E$-free objects of finite rank to \textit{all} objects in $\C$, as follows.

\begin{thm}\label{cr-of-otM-Cgen} Suppose that $\C$ is a pointed Mal'cev and Barr exact category with sums, and that $E$ is a small regular projective generator of $\C$. Moreover, let $M$ be a quadratic $\C$-module relative to $E$. Then the natural isomorphism
$$\xymatrix{\gamma: (T_1U(X) \otimes T_1U(Y)) \otimes_{\Lambda \otimes \Lambda}M_{ee}  \hspace{2mm} \ar[r]^-{\simeq} & \hspace{2mm}cr_2(- \otimes M)(X,Y)}$$
in Corollary \ref{gamma} is valid for all objects $X,Y$ in $\C$.
\end{thm}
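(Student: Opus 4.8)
The plan is to deduce the general statement from the finite-rank case, Corollary \ref{gamma}, by invoking the reduction principle of Proposition \ref{redtofinsums}. View $\gamma$ as a morphism between the bifunctors $B=\mathbb{T}_{11}(M_{ee})$, given by $B(X,Y)=(T_1U(X)\otimes T_1U(Y))\otimes_{\Lambda\otimes\Lambda}M_{ee}$, and $C=cr_2(-\otimes M)$. Since $E^{\vee n}\in\langle E\rangle_{\C}$, Corollary \ref{gamma} already tells us that $\gamma_{(E^{\vee n},E^{\vee m})}$ is an isomorphism for all $n,m\ge 1$. Thus, according to Proposition \ref{redtofinsums}, it will suffice to check that both $B$ and $C$ preserve filtered colimits and $E$-saturated $E$-free coequalizers in each variable.

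For the source bifunctor I would argue as follows. Fixing $Y$, the functor $B(-,Y)$ factors as $T_1U(-)$ followed by the functor $A\mapsto (A\otimes T_1U(Y))\otimes_{\Lambda\otimes\Lambda}M_{ee}$. By Theorem \ref{otMpreserves} the first functor preserves filtered colimits and $E$-saturated coequalizers, while the second is a composite of tensor functors, each a left adjoint and hence preserving all colimits. The identical reasoning applies to $B(X,-)$, so $B$ has the required preservation properties in each variable.

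The target bifunctor is handled by combining several earlier results. Theorem \ref{otMpreserves} gives that $-\otimes M$ preserves filtered colimits and reflexive coequalizers (here I use that $\C$ is Mal'cev and Barr exact). Feeding $F=-\otimes M$ into Proposition \ref{crpreserve} shows that $cr_2(-\otimes M)(X,-)=C(X,-)$ preserves filtered colimits and reflexive coequalizers for every $X$; and since $cr_2$ is a symmetric bifunctor by Proposition \ref{crissymbif}, there is a natural isomorphism $C(-,Y)\cong C(Y,-)$, so the same holds in the first variable. Finally, because $\C$ is Mal'cev and Barr exact, Proposition \ref{presofcoequ} lets me replace reflexive coequalizers by $E$-saturated ones, which in particular covers the $E$-saturated $E$-free coequalizers required.

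Having verified the preservation hypotheses for both bifunctors and the isomorphism on free objects of finite rank, Proposition \ref{redtofinsums} then yields that $\gamma_{(X,Y)}$ is an isomorphism for all $X,Y\in\C$. I expect the main obstacle to be purely one of bookkeeping: ensuring the various notions of coequalizer line up (handled by the equivalences of Proposition \ref{presofcoequ} under the Mal'cev and Barr exact hypotheses) and transferring the cross-effect preservation from the last variable, where Proposition \ref{crpreserve} states it, to both variables via the symmetry of $cr_2$.
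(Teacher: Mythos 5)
Your proposal is correct and follows essentially the same route as the paper's proof: both verify the preservation hypotheses of Proposition \ref{redtofinsums} for the two bifunctors via Theorem \ref{otMpreserves}, Proposition \ref{presofcoequ} and Proposition \ref{crpreserve}, and then conclude from the finite-rank case of Corollary \ref{gamma}. Your explicit appeal to the symmetry of $cr_2$ to transfer preservation from the last variable to both is a detail the paper leaves implicit, but it is the same argument.
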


\begin{proof} The functor $-\otimes M$ preserves $E$-saturated   coequalizers by Theorem
\ref{otMpreserves}, hence reflexive coequalizers by Proposition \ref{presofcoequ} and by hypothesis on $\C$. Thus by Proposition \ref{crpreserve} the bifunctor $cr_2(-\otimes M)$ preserves reflexive coequalizers in both variables, in particular $E$-saturated $E$-free  coequalizers by Lemma \ref{prestypes} (2), and so does the bifunctor $(T_1U(-) \otimes T_1U(-)) \otimes_{\Lambda \otimes \Lambda}M_{ee}$, see the proof of Theorem \ref{otMpreserves}. So by Proposition \ref{redtofinsums} the natural map $\gamma$ is an isomorphism for all $X,Y\in \C$ since it is so when $X=E^{\vee n}$, $Y=E^{\vee m}$, $n,m\ge 1$, by Proposition \ref{gamma}.
\end{proof}

\section{Equivalence between quadratic functors and quadratic $\C$-modules} \label{section-thm}
The aim of this section is to prove the following theorem which is the main result of this paper.

Let $\mbox{\textit{QUAD}}(\C,Ab)$ denote the category of reduced quadratic functors from $\C$ to $Ab$ which preserve filtered colimits and $E$-saturated  coequalizers; recall from Proposition \ref{presofcoequ} that here $E$-saturated  coequalizers can be replaced by $E$-saturated $E$-free coequalizers, and by reflexive coequalizers if $\C$ is Mal'cev and Barr exact.

\begin{thm} \label{thm} Let $\C$ be a pointed category with finite sums.\medskip

\noindent(A) The functors 
$$\xymatrix{
Quad(\C,Ab) \ar@<-1ex>[r]_-{\mathbb{S}_2}  & QMod_{\C}^E \ar@<-1ex>[l]_-{\mathbb{T}_2}  }$$
where  $\mathbb{S}_2$ is defined in section 5.3 and $\mathbb{T}_2(M)=- \otimes M$, form a pair of adjoint functors extending $\mathbb{T}_1$ and $\mathbb{S}_1$ given in Proposition \ref{S1T1} (see also Proposition \ref{4.15}).\medskip

\noindent(B)  If $\C=\langle E \rangle_{\C}$, the functors $\mathbb{S}_2$ and $\mathbb{T}_2$ form a pair of adjoint equivalences.\medskip

\noindent(C) If $\C$ has sums and if $E$ is a small regular projective generator of $\C$, then the functors
$$\xymatrix{
\mbox{\textit{QUAD}}(\C,Ab) \ar@<-1ex>[r]_-{\mathbb{S}_2'}  & QMod_{\C}^E \ar@<-1ex>[l]_-{\mathbb{T}_2'}  }$$
form a pair of adjoint equivalences where $\mathbb{T}_2'$ is given by $\mathbb{T}_2$ which actually takes values in $\mbox{\textit{QUAD}}(\C,Ab)$ by Theorem \ref{otMpreserves}, and where ${\mathbb{S}_2'}$ is the restriction of ${\mathbb{S}_2}$.

\end{thm}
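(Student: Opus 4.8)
The plan is to treat the three parts in order, establishing the adjunction (A) by an explicit unit–counit construction and then reading off the equivalences (B) and (C) from the computations of $E\otimes M$ and $cr_2(-\otimes M)$ already carried out in section \ref{section5}.

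For part (A), I would first produce the counit $\epsilon_F\co -\otimes\mathbb{S}_2(F)\to F$ for $F\in Quad(\C,Ab)$. Using the generators-and-relations description of the quadratic tensor product (the proposition following Definition \ref{quad-t-p}), I define $(\epsilon_F)_X$ on generators by $f\otimes a\mapsto F(f)(a)$ and $[f,g]\otimes m\mapsto (S^F_2)_X\, cr_2F(f,g)(m)$, where $a\in F(E)=\mathbb{S}_2(F)_e$ and $m\in F(E|E)=\mathbb{S}_2(F)_{ee}$. The heart of the matter is to check that this assignment respects the relations $(1)$–$(9)$ of that presentation: relations $(1),(2),(4),(5)$ are the tensor/additivity relations, $(3)$ holds because $F$ is quadratic (Proposition \ref{quadratization}), $(6),(7)$ follow from the bilinearity and the symmetry of $cr_2F$, $(8)$ follows from the naturality of $S^F_2$ since $P=(S^F_2)_E$ for $\mathbb{S}_2(F)$, and crucially $(9)$ encodes the generalized distributivity law $(QM1)$ read through the definition of $H^F$ in Lemma \ref{H^F} and Proposition \ref{MF}. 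Naturality in $X$ together with Proposition \ref{qtp-is-quad} then makes $\epsilon_F$ a morphism of quadratic functors. Dually I would build the unit $\eta_M\co M\to\mathbb{S}_2(-\otimes M)$ out of the isomorphisms $E\otimes M\cong M_e$ (Proposition \ref{prop}) and $cr_2(-\otimes M)(E,E)\cong M_{ee}$ (Corollary \ref{gamma}), checking that it commutes with $\hat H,T,P$. The two triangle identities then reduce to the defining formulas, yielding $\mathbb{T}_2\dashv\mathbb{S}_2$. That this adjunction extends $(\mathbb{T}_1,\mathbb{S}_1)$ is the content of Proposition \ref{4.15} on the $\mathbb{S}$-side, while on the $\mathbb{T}$-side one observes that for $I_1(M)$ (Remark \ref{rem-I1}) the vanishing of the $ee$-component collapses the pushout of Definition \ref{quad-t-p} to $T_1(T_2U)(-)\otimes_\Lambda M\cong T_1U(-)\otimes_{\overline{\Lambda}}M=\mathbb{T}_1(M)$.

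For part (B), with $\C=\langle E\rangle_\C$, it remains to show that the unit and counit of (A) are isomorphisms. The unit $\eta_M$ is an isomorphism at once: its two components are the isomorphisms of Proposition \ref{prop} and Corollary \ref{gamma}, the latter being an isomorphism at $(E,E)$ precisely because $M\in QMod^E_{\C}$ satisfies $(QM2)$, via Theorem \ref{cross-pt-quad}. For the counit, since both $-\otimes\mathbb{S}_2(F)$ and $F$ are quadratic I may apply Proposition \ref{car-poly} and test $\epsilon_F$ only on $(\epsilon_F)_E$ and $cr_2(\epsilon_F)_{(E,E)}$: the former is the isomorphism of Lemma \ref{F(E)-iso}, and the latter is an isomorphism again by Corollary \ref{gamma}. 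Hence $\epsilon_F$ is an isomorphism and $(\mathbb{S}_2,\mathbb{T}_2)$ is a pair of adjoint equivalences. (Alternatively, one may note that $\mathbb{S}_2$ is a composite of equivalences by Theorems \ref{thm-eq-quad} and \ref{Rmod=Cmodquad}, and deduce that the counit is an isomorphism from the invertibility of the unit by a triangle identity, $\mathbb{S}_2$ reflecting isomorphisms.)

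For part (C) I would reduce to the theory $\T=\langle E\rangle_\C$. The data defining $QMod^E_{\C}$ involve only morphisms between finite sums of copies of $E$, so $QMod^E_{\C}=QMod^E_{\T}$, and restriction along $\T\hookrightarrow\C$ identifies $\mathbb{S}_2(F)$ with $\mathbb{S}_2(F|_\T)$ while $(-\otimes M)|_\T$ is the quadratic tensor product formed over $\T$. Since $\mbox{\textit{QUAD}}(\C,Ab)$ is full in $Quad(\C,Ab)$ and $\mathbb{T}_2'$ lands in it by Theorem \ref{otMpreserves}, the adjunction of (A) restricts to $(\mathbb{T}_2',\mathbb{S}_2')$. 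The unit $\eta_M$ is once more an isomorphism by Proposition \ref{prop} and Corollary \ref{gamma}. For the counit $\epsilon_F$ with $F\in\mbox{\textit{QUAD}}(\C,Ab)$, both $-\otimes\mathbb{S}_2(F)$ and $F$ preserve filtered colimits and $E$-saturated $E$-free coequalizers (by Theorem \ref{otMpreserves} and the definition of $\mbox{\textit{QUAD}}$), so by Proposition \ref{redtofinsums} it suffices to check that $(\epsilon_F)_{E^{\vee n}}$ is an isomorphism for every $n$. But the restriction $\epsilon_F|_\T$ is exactly the counit of the equivalence of (B) applied to $F|_\T$, hence an isomorphism; therefore $\epsilon_F$ is an isomorphism on all of $\C$, and $(\mathbb{S}_2',\mathbb{T}_2')$ is a pair of adjoint equivalences. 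The \emph{main obstacle} is the well-definedness verification in (A), namely that the generator assignment for the counit respects relation $(9)$ and so descends from the pushout: this is the single point where $(QM1)$, the factorization of $H^F$ through $t_{11}$, and the identification $\overline{cr_2(t_2)}$ of Theorem \ref{cr2t2} must all be combined; once the adjunction is in hand, parts (B) and (C) are formal consequences of the cross-effect and $E\otimes M$ computations already established.
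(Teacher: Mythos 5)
Your proposal is correct and follows essentially the same route as the paper: the counit is built from $\overline{u'_F}$, $\overline{S^F_2}$ and $\overline{u'_{cr_2(F)}}$ via the pushout (your generators-and-relations verification of relations (1)--(9) is just the element-level form of the paper's check that $(\overline{S^F_2})(\overline{u'_{cr_2(F)}})\psi=(\overline{u'_F})\phi$), the unit comes from Proposition \ref{prop} and Corollary \ref{gamma}, part (B) is reduced to $(\epsilon_F)_E$ and $cr_2(\epsilon_F)_{E,E}$ via Proposition \ref{car-poly}, and part (C) follows from Proposition \ref{redtofinsums}. The only cosmetic deviation is that you verify the triangle identities directly where the paper checks the universal properties of unit and counit (Propositions \ref{epsilon-counit} and \ref{unit-2}); both are equivalent.
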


To prove this theorem we start by constructing the co-unit and prove that it is an isomorphism if $\C=\langle E \rangle_{\C}$, thanks to the computations of $E \otimes M$ given in Proposition \ref{prop} and of the cross-effect of $- \otimes M$ given in Corollary \ref{gamma}.
Next we construct  the unit of this adjunction and prove that it is an isomorphism. Assertion (C) then follows from Proposition \ref{redtofinsums}.

\subsection{The co-unit of the adjunction}
The co-unit of the adjunction of Theorem \ref{thm} is a natural map of quadratic functors $\epsilon: - \otimes \mathbb S_2(F) \to F$. To define this map we need a series of lemmas.

\begin{lm} \label{lm4}
For $F \in Quad(\C, Ab)$, there exists a natural transformation $\overline{u'_F}: (T_2U)(-) \otimes_{\Lambda} F(E) \to F(-)$ such that we have the following natural commutative diagram
$$\xymatrix{
U(X) \otimes_{\Lambda} F(E)  \ar[rr]^{(u'_F)_X} \ar@{->>}[dr]_{(t_2 \otimes 1)_X}&&F(X)\\
&(T_2U)(X) \otimes_{\Lambda} F(E) \ar@{..>}[ru]_{(\overline{u'_F})_X}
}$$
for $X \in \C$ and $u'_F$ the co-unit of the adjunction between $\mathbb{S}$ and $\mathbb{T}$ given in Proposition \ref{ST}.
\end{lm}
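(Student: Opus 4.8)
The plan is to show directly that the co-unit $u'_F\co \mathbb{T}\mathbb{S}(F)=U(-)\otimes_{\Lambda}F(E)\to F$ from Proposition \ref{ST} annihilates the kernel of $t_2\otimes 1$, so that the dotted factorization $\overline{u'_F}$ exists and is unique. Since $t_2\co U\to T_2U$ is a pointwise epimorphism by Proposition \ref{Tn-prop} and $-\otimes_{\Lambda}F(E)$ is right exact, the map $(t_2\otimes 1)_X$ is surjective; moreover, applying $-\otimes_{\Lambda}F(E)$ to the defining right exact sequence $cr_3U(X,X,X)\xrightarrow{(S^U_3)_X}U(X)\xrightarrow{(t_2)_X}T_2U(X)\to 0$ of Definition \ref{Tn-coker} yields
$$\ker\big((t_2\otimes 1)_X\big)=\mathrm{Im}\big((S^U_3)_X\otimes 1_{F(E)}\big).$$
Hence it suffices to prove that $(u'_F)_X\circ\big((S^U_3)_X\otimes 1\big)=0$ for every $X\in\C$.

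First I would identify the source $cr_3U(X,X,X)\otimes_{\Lambda}F(E)$ with $cr_3(\mathbb{T}\mathbb{S}(F))(X,X,X)$. By Proposition \ref{ce-prop} the coproduct $U(X\vee X\vee X)$ splits naturally as a direct sum of cross-effects of $U$, and this is an isomorphism of right $\Lambda$-modules since the $\Lambda$-action is by precomposition and commutes with the summand projections; as the additive functor $-\otimes_{\Lambda}F(E)$ preserves direct sums we obtain $cr_k(\mathbb{T}\mathbb{S}(F))\cong cr_kU\otimes_{\Lambda}F(E)$. Because $S_3$ is built from the cross-effect inclusion followed by $U(\nabla^3)$, under this identification $(S^U_3)_X\otimes 1$ is precisely $(S^{\mathbb{T}\mathbb{S}(F)}_3)_X$. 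Now $S_3$ is a natural transformation of endofunctors $cr_3(-)\Delta^3_{\C}\Rightarrow\mathrm{Id}$ (Definition \ref{+}), natural also in the functor variable, so applying naturality to $u'_F\co\mathbb{T}\mathbb{S}(F)\to F$ gives
$$(u'_F)_X\circ (S^{\mathbb{T}\mathbb{S}(F)}_3)_X=(S^F_3)_X\circ\big(cr_3(u'_F)\big)_{(X,X,X)}.$$
Since $F$ is quadratic we have $cr_3F=0$ (Definition \ref{poly}), whence $S^F_3=0$ and the right-hand side vanishes, which is exactly the required identity.

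Combining the two paragraphs, $(u'_F)_X$ factors uniquely through $(t_2\otimes 1)_X$, defining $(\overline{u'_F})_X$; naturality of $\overline{u'_F}$ then follows from naturality of $u'_F$ and of $t_2\otimes 1$ together with surjectivity of the latter, by a routine diagram chase. I expect the only delicate point to be the identification $cr_3(\mathbb{T}\mathbb{S}(F))\cong cr_3U\otimes_{\Lambda}F(E)$ compatibly with $S_3$: this rests on the fact that the cross-effects of the standard projective functor $U$ are \emph{direct summands} (Proposition \ref{ce-prop}), so that the merely right exact functor $-\otimes_{\Lambda}F(E)$ nevertheless commutes with them here. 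Everything else is formal manipulation of the adjunction co-unit and right-exactness of the tensor product.
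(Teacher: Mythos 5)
Your proof is correct and follows essentially the same route as the paper, whose entire proof is the one-line remark that the factorization is immediate from the universal property of $t_2$ (i.e.\ the adjunction of Proposition \ref{Tn-prop} applied to the map $u'_F$ into the quadratic functor $F$). What you have done is unpack that universal property explicitly — identifying $\ker(t_2\otimes 1)$ with the image of $S^U_3\otimes 1$ via right exactness, matching it with $S_3^{\mathbb{T}\mathbb{S}(F)}$ through the direct-sum decomposition of Proposition \ref{ce-prop}, and killing it by $cr_3F=0$ — which is a sound and complete elaboration of the same idea.
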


\begin{proof}
This is immediate from the universal property of $t_2$.
\end{proof}

\begin{lm} \label{lm5}
For $F \in Quad(\C, Ab)$, there exists a morphism $(\overline{S^F_2})_X: F(X|X)_{\mathfrak{S}_2} \to F(X)$ making the following diagram commutative
$$\xymatrix{
F(X |X) \ar[rr]^{(S^F_2)_X} \ar@{->>}[dr]_{\pi}&&F(X)\\
& F(X|X)_{\mathfrak{S}_2}  \ar@{..>}[ru]_{(\overline{S^F_2})_X}
}$$
for $X \in \C$.
\end{lm}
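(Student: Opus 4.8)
The plan is to show that $(S^F_2)_X$ is constant on the cosets defining the coinvariants, so that it descends along the projection $\pi$. By the Notation preceding Lemma \ref{LoLoM} one has $F(X|X)_{\mathfrak{S}_2} = F(X|X)/(1-T^F_{X,X})F(X|X)$, where $T^F_{X,X}$ is the involution of $cr_2F(X,X)$ furnished by Proposition \ref{crissymbif}; so the universal property of this quotient reduces the whole statement to the single identity $(S^F_2)_X = (S^F_2)_X \, T^F_{X,X}$, i.e.\ to the vanishing of $(S^F_2)_X$ on the subgroup $(1-T^F_{X,X})F(X|X)$.

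First I would unravel the two maps. By Definition \ref{+} we have $(S^F_2)_X = F(\nabla^2)\,\iota^2_{(1,2)}$, while by Proposition \ref{crissymbif} the involution is $T^F_{X,X} = (\iota^2_{(1,2)})^{-1} F(\tau_{X,X})\, \iota^2_{(1,2)}$. Composing and cancelling the inner $\iota^2_{(1,2)}$ against its inverse gives
$$(S^F_2)_X\, T^F_{X,X} = F(\nabla^2)\, F(\tau_{X,X})\, \iota^2_{(1,2)} = F(\nabla^2 \tau_{X,X})\, \iota^2_{(1,2)}.$$
The key step is then the elementary categorical identity $\nabla^2 \tau_{X,X} = \nabla^2$: since the switch $\tau_{X,X}$ interchanges the two injections of $X\vee X$ and $\nabla^2 i^2_1 = \nabla^2 i^2_2 = 1_X$, both sides agree after precomposition with $i^2_1$ and $i^2_2$, hence coincide by the universal property of the coproduct. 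This yields $(S^F_2)_X\, T^F_{X,X} = F(\nabla^2)\,\iota^2_{(1,2)} = (S^F_2)_X$, so $(S^F_2)_X$ annihilates $(1-T^F_{X,X})F(X|X)$ and factors uniquely as $(S^F_2)_X = (\overline{S^F_2})_X\,\pi$, giving the dotted arrow in the diagram.

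Finally I would record naturality in $X$. Both $\pi$ and $(S^F_2)_X$ are natural, and $T^F$ is a natural transformation by Proposition \ref{crissymbif}, so the assignment $X \mapsto F(X|X)_{\mathfrak{S}_2}$ is functorial with $\pi$ a natural epimorphism; the induced factorization $(\overline{S^F_2})_X$ is then automatically natural by uniqueness of factorizations through the epimorphism $\pi$. I do not expect any real obstacle here: the entire lemma rests on the folding-versus-switch identity $\nabla^2\tau_{X,X}=\nabla^2$, and the only points demanding care are the correct cancellation of $\iota^2_{(1,2)}$ with its inverse and the invocation of the right universal property of the coinvariant quotient.
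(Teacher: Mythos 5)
Your proof is correct and follows exactly the paper's own argument: reduce the factorization through the coinvariants to the identity $(S^F_2)_X T^F_{X,X} = (S^F_2)_X$, and verify it by unwinding $T^F_{X,X} = (\iota^2_{(1,2)})^{-1}F(\tau_{X,X})\iota^2_{(1,2)}$ together with $\nabla^2\tau_{X,X}=\nabla^2$. The additional remark on naturality is a harmless (and correct) supplement.
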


\begin{proof}
To prove that $\overline{S^F_2}$ exists we have to prove that $S^F_2T^F=S^F_2$ where $T^F$ is the involution of $F(-|-)$. Recall that $T^F$ is defined by the following commutative diagram
$$\xymatrix{
F(X |X) \ar@{^{(}->}[r]^{\iota^2_{(1,2)}} \ar[d]_{T^F}&F(X \vee X) \ar[d]^{F(\tau)}\\
F(X |X) \ar@{^{(}->}[r]^{\iota^2_{(1,2)}} & F(X \vee X) 
}$$
where $\tau$ is the canonical switch. Thus:
\begin{eqnarray*}
(S^F_2)_X T^F&=&F(\nabla^2) \iota^2_{(1,2)} T^F=F(\nabla^2 \tau)\iota^2_{(1,2)}=F(\nabla^2)\iota^2_{(1,2)}=(S^F_2)_X.
\end{eqnarray*}
\end{proof}

\begin{lm} \label{lm6}
For $F \in Quad(\C, Ab)$, there exists a natural transformation of functors:
$$\overline{u'_{cr(F)}}: (((T_1U)(-) \otimes (T_1U)(-)) \otimes_{\Lambda \otimes \Lambda} F(E |E)) \Delta_{\C})_{\mathfrak{S}_2} \to (cr_2F(-,-)  \Delta_{\C})_{\mathfrak{S}_2} $$ where $\Delta_{\C}: \C \times \C \to \C$ is the diagonal functor, making the following diagram naturally commutative.
$$\xymatrix{
 ((T_1U)(X) \otimes (T_1U)(X)) \otimes_{\Lambda \otimes \Lambda} F(E|E) \ar[rr]^-{(u'_{cr_2(F)})_{X,X}} \ar@{->>}[d]_{\pi}&&F(X |X)\ar@{->>}[d]_{\pi}\\
(((T_1U)(X) \otimes (T_1U)(X)) \otimes_{\Lambda \otimes \Lambda} F(E|E))_{\mathfrak{S}_2}  \ar[rr]^-{(\overline{u'_{cr_2(F)}})_{X,X}}&& (F(X|X))_{\mathfrak{S}_2} 
}$$
Here $X \in \C$, and
$\xymatrix{
 ((T_1U)(X) \otimes (T_1U)(X)) \otimes_{\Lambda \otimes \Lambda} F(E|E) \ar[r]^-{(u'_{cr_2(F)})_{X,X}}& F(X|X) }$ is the co-unit of the adjunction in Proposition \ref{S11T11}. 
\end{lm}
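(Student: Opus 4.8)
The plan is to produce the map $\overline{u'_{cr_2(F)}}$ by factoring $\pi\circ(u'_{cr_2(F)})_{X,X}$ through the projection onto the coinvariants of the source. Concretely, I would show that the component $(u'_{cr_2(F)})_{X,X}$ of the co-unit of Proposition \ref{S11T11} is \emph{equivariant} for the involution $t$ on its source (the one supplied by Lemma \ref{invol}) and the involution $T^F$ on its target $F(X\mid X)$; once this is established, composing with $\pi$ and using $\pi T^F=\pi$ gives the factorization and hence defines the dotted arrow, while the naturality of the resulting map is inherited from that of $u'_{cr_2(F)}$ and $\pi$.

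First I would fix the relevant structures. Since $F$ is quadratic, $cr_2F$ is bilinear, so $F(E\mid E)=cr_2F(E,E)$ is a symmetric $\Lambda\otimes\Lambda$-module with involution $T^F$ by Corollary \ref{ThochF}. The involution on the source $((T_1U)(X)\otimes(T_1U)(X))\otimes_{\Lambda\otimes\Lambda}F(E\mid E)$ is then exactly the one of Lemma \ref{invol} for $N=F(E\mid E)$, namely $t(t_1(f)\otimes t_1(g)\otimes x)=t_1(g)\otimes t_1(f)\otimes T^F(x)$ for $f,g\in\C(E,X)$ and $x\in F(E\mid E)$. The computation on generators then proceeds from the defining formula $(u'_{cr_2(F)})_{X,X}(t_1(f)\otimes t_1(g)\otimes x)=cr_2F(f,g)(x)=F(f\mid g)(x)$ of the co-unit. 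On the one hand,
\[ (u'_{cr_2(F)})_{X,X}\big(t(t_1(f)\otimes t_1(g)\otimes x)\big)=F(g\mid f)(T^F(x)). \]
On the other hand, by Proposition \ref{crissymbif} the pair $(cr_2F,T^F)$ is a symmetric bifunctor, i.e.\ $T^F\colon cr_2F\to cr_2F\circ V$ is natural; applying naturality to the morphism $(f,g)\colon(E,E)\to(X,X)$ of $\C\times\C$ gives $T^F_{X,X}\circ cr_2F(f,g)=cr_2F(g,f)\circ T^F_{E,E}$, that is,
\[ T^F_{X,X}\big(F(f\mid g)(x)\big)=F(g\mid f)(T^F(x)). \]
Comparing the two displays yields $(u'_{cr_2(F)})_{X,X}\circ t=T^F_{X,X}\circ(u'_{cr_2(F)})_{X,X}$, which is the desired equivariance.

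With equivariance in hand, the conclusion is formal: since $\pi\,T^F_{X,X}=\pi$ by definition of the coinvariants, one gets $\pi\circ(u'_{cr_2(F)})_{X,X}\circ t=\pi\circ(u'_{cr_2(F)})_{X,X}$, so $\pi\circ(u'_{cr_2(F)})_{X,X}$ factors through the projection $\pi$ of the source onto its $\mathfrak{S}_2$-coinvariants. This produces $(\overline{u'_{cr_2(F)}})_{X,X}$ and makes the stated square commute, and naturality in $X$ follows from that of $u'_{cr_2(F)}$ and $\pi$. I do not expect any real obstacle here: the argument is entirely formal once the symmetric structures are identified, and the only point requiring genuine care is matching the involution $t$ of Lemma \ref{invol} on the tensor side with $T^F$ on the cross-effect side and reading off the correct instance of the naturality of $T^F$.
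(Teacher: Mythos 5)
Your proposal is correct and follows essentially the same route as the paper: both establish that $(u'_{cr_2(F)})_{X,X}$ intertwines the involution $t$ of Lemma \ref{invol} on the source with $T^F$ on $F(X\mid X)$, and then pass to coinvariants via $\pi T^F=\pi$. The only cosmetic difference is that you invoke the naturality of $T^F$ from Proposition \ref{crissymbif} directly, whereas the paper re-derives that naturality square by unwinding $T^F=(\iota^2_{(1,2)})^{-1}F(\tau)\iota^2_{(1,2)}$ and commuting $F(\tau)$ past $F(f\vee g)$.
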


\begin{proof}
For $x, y \in U(X)$ and $m \in F(E \mid E)$ we have:
\begin{eqnarray*}
T^Fu'_{cr_2(F)}(t_1(f) \otimes t_1(g) \otimes m)&=&T^F(cr_2F(f,g))(m)\\
&=& (\iota^2_{(1,2)})^{-1} F(\tau) \iota^2_{(1,2)} cr_2F(f,g)(m)\\
&=& (\iota^2_{(1,2)})^{-1} F(\tau) F(f \vee g) \iota^2_{(1,2)} (m)\\
&=& (\iota^2_{(1,2)})^{-1} F(g \vee f)  F(\tau) \iota^2_{(1,2)} (m)\\
&=& cr_2F(g,f) (\iota^2_{(1,2)})^{-1}   F(\tau) \iota^2_{(1,2)} (m)\\
&=& cr_2F(g,f) (T^F (m))\\
&=& u'_{cr_2(F)}(t_1(g) \otimes t_1(f) \otimes T^F(m))\\
&=& u'_{cr_2(F)}(t(t_1(f) \otimes t_1(g) \otimes m))\\
\end{eqnarray*}
Thus $u'_{cr_2(F)}$ is $\mathfrak{S}_2$-equivariant and hence passes to coinvariants.
\end{proof}

We deduce the following proposition.

\begin{prop} \label{epsilon}
For $F \in Quad(\C, Ab)$, there exists a natural map $\epsilon: - \otimes \mathbb S_2(F) \to F$ given by: 
$$ \epsilon_X=((\overline{S^F_2})_X \ (\overline{u'_{cr_2(F)}})_X, (\overline{u'_F})_X): X \otimes \mathbb S_2(F) \to F(X)$$
for $X \in \C$ such that 
$\epsilon_X \hat{\phi}= (\overline{S^F_2})_X \ (\overline{u'_{cr_2(F)}})_X$
 and 
 $\epsilon_X\hat{\psi}= (\overline{u'_F})_X$; here the natural maps $\overline{u'_F}$,  $\overline{S^F_2}$ and  $\overline{u'_{cr_2(F)}}$ are defined in Lemmas \ref{lm4}, \ref{lm5} and \ref{lm6} respectively and the maps $\hat{\phi}$ and $\hat{\psi}$ appear in the pushout diagram of Proposition \ref{quad-t-p}.
\end{prop}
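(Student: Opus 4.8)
The plan is to construct $\epsilon_X$ by the universal property of the pushout that presents $X \otimes \mathbb{S}_2(F)$ in Definition \ref{quad-t-p}. Recall that for $M=\mathbb{S}_2(F)$ we have $M_e=F(E)$, $M_{ee}=F(E|E)$, $\hat{H}=(H^F)_E$, $P=(S^F_2)_E$ and $T=T^F$ by Proposition \ref{MF}. To define a map out of the pushout into $F(X)$ I must supply one map on the top corner $(T_2U)(X)\otimes_{\Lambda}F(E)$ and one on the left corner $((T_1U(X)\otimes T_1U(X))\otimes_{\Lambda\otimes\Lambda}F(E|E))_{\mathfrak{S}_2}$, and then check that they agree after precomposition with $\phi$ and $\psi$, respectively. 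For these I take $(\overline{u'_F})_X$ (Lemma \ref{lm4}) and $(\overline{S^F_2})_X\,(\overline{u'_{cr_2(F)}})_X$ (Lemmas \ref{lm5} and \ref{lm6}); these are exactly the two components named in the statement, and they automatically yield the asserted formulas $\epsilon_X\hat{\phi}=(\overline{S^F_2})_X(\overline{u'_{cr_2(F)}})_X$ and $\epsilon_X\hat{\psi}=(\overline{u'_F})_X$. Thus the whole proof reduces to verifying the single identity $(\overline{u'_F})_X\,\phi=(\overline{S^F_2})_X\,(\overline{u'_{cr_2(F)}})_X\,\psi$ on each of the two direct summands of the source, together with naturality of $\epsilon$ in $X$.

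Naturality in $X$ is immediate, since every constituent ($\overline{u'_F}$, $\overline{S^F_2}$, $\overline{u'_{cr_2(F)}}$, $\hat{\phi}$, $\hat{\psi}$) is natural. On the summand $U(X)\otimes M_{ee}$ the compatibility is short: on a generator $f\otimes m$ the map $\phi$ restricts to $t_2\otimes P$, so the top route gives $F(f)(P(m))=F(f)((S^F_2)_E(m))$, whereas $\psi$ restricts to $\pi(\delta\otimes 1)$, sending $f\otimes m$ to $\pi(t_1(f)\otimes t_1(f)\otimes m)$, and the left route gives $(S^F_2)_X(F(f|f)(m))$. These agree by the naturality of $S^F_2\colon(cr_2F)\Delta_{\C}\to F$ applied to $f\colon E\to X$, which reads $F(f)\,(S^F_2)_E=(S^F_2)_X\,cr_2F(f,f)$.

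The remaining, and main, step is the compatibility on the summand $(U(X)\otimes U(X))\otimes_{\Lambda\otimes\Lambda}U(E|E)\otimes_{\Lambda}F(E)$. Starting from a generator $f\otimes g\otimes x\otimes a$, the top route gives $(u'_F)_X(S^U_2\,U(f|g)(x)\otimes a)$. I would rewrite $S^U_2=U(\nabla^2)\iota^2_{(1,2)}$, push $U(\nabla^2)$ outside using naturality of the co-unit $u'_F$, then pull $\iota^2_{(1,2)}$ out using the fact that the cross-effect inclusion commutes with any natural transformation (so $(u'_F)_{X\vee X}\,\iota^2_{(1,2)}=\iota^2_{(1,2)}\,cr_2(u'_F)_{X,X}$), and finally apply naturality of $cr_2(u'_F)$ with respect to $(f,g)$ to arrive at $F(\nabla^2)\iota^2_{(1,2)}\,F(f|g)\big((cr_2(u'_F))_E(x\otimes a)\big)$. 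The left route gives $(S^F_2)_X\,F(f|g)\big(\hat{H}(t_{11}(x)\otimes a)\big)$, and since $\hat{H}=(H^F)_E$ the defining diagram of $H^F$ in Lemma \ref{H^F} identifies $\hat{H}(t_{11}(x)\otimes a)=(cr_2(u'_F))_E(x\otimes a)$; as $S^F_2=F(\nabla^2)\iota^2_{(1,2)}$, the two routes coincide. This interlocking of four naturality statements with the defining property of the generalized Hopf invariant $H^F$ is the delicate point of the argument; once it is in place, the universal property of the pushout produces $\epsilon_X$ with the required properties.
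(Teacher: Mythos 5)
Your proposal is correct and follows essentially the same route as the paper: construct $\epsilon_X$ via the universal property of the pushout, reduce to checking $(\overline{u'_F})_X\phi=(\overline{S^F_2})_X(\overline{u'_{cr_2(F)}})_X\psi$ on the two summands, handle the summand $U(X)\otimes M_{ee}$ by naturality of $S^F_2$, and handle the main summand by combining the defining triangle of $H^F$ from Lemma \ref{H^F} with naturality of $u'_F$ and $cr_2(u'_F)$ and the identity $\nabla^2(f\vee g)\iota^2_{(1,2)}=$ (folded composite). The paper's proof is the same chain of identities written out as one long computation from the left route to the top route, whereas you let the two routes meet in the middle at $F(\nabla^2)\iota^2_{(1,2)}F(f|g)(cr_2(u'_F))_E(x\otimes a)$; the content is identical.
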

\begin{proof}
To prove the existence of $\epsilon_X$ it is sufficient to show that $(\overline{S^F_2})_X \ (\overline{u'_{cr_2(F)}})_X \psi=(\overline{u'_F})_X \phi$ where $\psi$ and $\phi$ are the maps in the pushout diagram of Proposition \ref{quad-t-p}.

For $f \in \C(E,X)$ and $ m \in F(E|E)$ we have:
$$\begin{array}{ll}
(\overline{S^F_2})_X \ (\overline{u'_{cr_2(F)}})_X\ \pi\ (\delta \otimes 1)(f \otimes m)&= (\overline{S^F_2})_X \  \pi ({u'_{cr_2(F)}})_X\ (\delta \otimes 1)(f \otimes m) \mathrm{\ by\ Lemma\  \ref{lm6}}\\
& =({S^F_2})_X \ ({u'_{cr_2(F)}})_X\ (\delta \otimes 1)(f \otimes m) \mathrm{\ by\ Lemma\  \ref{lm5}}\\
& =({S^F_2})_X \ ({u'_{cr_2(F)}})_X\ (t_1(f) \otimes t_1(f) \otimes m) \mathrm{\ by\ definition\  of\ } \delta\\
& =({S^F_2})_X \ cr_2F(f,f)(m) \mathrm{\ by\ definition\  of\ } u'_{cr_2(F)} \\
&= F(\nabla^2)\  \iota^2_{(1,2)}\ cr_2F(f,f)(m) \mathrm{\ by\ definition\  of\ } S^F_2\\
&=F(\nabla^2)\ F(f \vee f)\  \iota^2_{(1,2)}(m) \mathrm{\ by\ definition\  of\ } cr_2F(f,f)\\
&=F(\nabla^2 (f \vee f))\  \iota^2_{(1,2)}(m) \mathrm{\ by\ functoriality}\\
&=F(f\ \nabla^2) \iota^2_{(1,2)}( m) \mathrm{\ since\ } \nabla^2 (f \vee f)=f\ \nabla^2\\
&=F(f)\ F(\nabla^2) \iota^2_{(1,2)}( m)\mathrm{\ by\ functoriality} \\
&=({u'_F})_X(f \otimes F(\nabla^2) \iota^2_{(1,2)}( m)) \mathrm{\ by\ definition\ of}\  u'_F \\
&=(\overline{u'_F})_X(t_2 \otimes Id)(f \otimes F(\nabla^2) \iota^2_{(1,2)}( m)) \mathrm{\ by\ Lemma\ \ref{lm4}} \\
&= (\overline{u'_F})_X(t_2 \otimes Id)(f \otimes S^F_2( m)) \mathrm{\ by\ definition\  of\ } S^F_2\\
&=(\overline{u'_F})_X(t_2 \otimes S^F_2)(f \otimes m)\\
\end{array}$$

For $f,g \in \C(E,X)$, $\xi \in \C(E,E \vee E)$, $x=(\iota^2_{(1,2)})^{-1}(\xi -i^2_{1*} r^2_{1*}(\xi) -i^2_{2*} r^2_{2*}(\xi))$ ($x \in U(E \mid E)$ by \ref{ses1}) and $a \in F(E)$ we have:
$$\begin{array}{ll}
(\overline{S^F_2})_X \ (\overline{u'_{cr_2(F)}})_X\ \psi_1(f \otimes g \otimes x \otimes  a)\\
=(\overline{S^F_2})_X \ (\overline{u'_{cr_2(F)}})_X\  \pi (t_1 \otimes t_1 \otimes (H^F)_E (t_{11} \otimes 1)) (f \otimes g \otimes x \otimes a)\\
=({S^F_2})_X  ({u'_{cr_2(F)}})_X\  (\bar{f} \otimes \bar{g} \otimes (H^F)_E (t_{11}  \otimes 1)(x\otimes a)) \mathrm{\quad by\ Lemmas\ \ref{lm5}\ and\ \ref{lm6}}\\
=({S^F_2})_X ({u'_{cr_2(F)}})_X\  (\bar{f} \otimes \bar{g} \otimes (cr_2(u'_F))_E (x\otimes a)) \mathrm{\quad by\ Lemma\  \ref{H^F}}\\
=({S^F_2})_X cr_2F(f,g) (cr_2(u'_F))_E (x\otimes a)\mathrm{\quad by\ definition\ of\ } {u'_{cr_2(F)}}\\
= F(\nabla^2) \iota^2_{(1,2)} cr_2F(f,g) (cr_2(u'_F))_E (x\otimes a)\\
= F(\nabla^2)  F(f \vee g) (u'_F)_E (\iota^2_{(1,2)}x\otimes a)\\
= F(\nabla^2)  F(f \vee g)  F(\iota^2_{(1,2)}x)( a) \mathrm{\quad by\ definition\ of\ } u'_F\\
= F(\nabla^2 (f \vee g)\iota^2_{(1,2)}x)( a) \mathrm{\quad by\ functoriality}\\
= (u'_F)_X (\nabla^2 (f \vee g)\iota^2_{(1,2)}x \otimes a) \mathrm{\quad by\ definition\ of\ } u'_F\\
= (u'_F)_X (U(\nabla^2) U(f \vee g)\iota^2_{(1,2)}x \otimes a)\\
= (u'_F)_X (U(\nabla^2) \iota^2_{(1,2)} cr_2U(f, g)x \otimes a)\\
= (\overline{u'_F})_X(t_2 \otimes 1) (U(\nabla^2) \iota^2_{(1,2)} cr_2U(f, g)x \otimes a) \mathrm{\quad by\ Lemma\ \ref{lm4}}\\
= (\overline{u'_F})_X(t_2 S^U_2 cr_2U(f, g)(x )\otimes a) \\
= (\overline{u'_F})_X \phi_1 (f \otimes g \otimes x \otimes a).
\end{array}$$
\end{proof}

In the following proposition we consider the cross-effect of  $\epsilon$.

\begin{prop} \label{ce-epsilon}
Let $F: \C \to Ab$ be a quadratic functor. Then the natural map:
$$cr_2(\epsilon)_{X,Y}: cr_2(- \otimes \mathbb S_2(F)) (X,Y) \to cr_2F(X,Y)$$
is an isomorphism for all $X,Y \in \langle E \rangle _{\C}$.
\end{prop}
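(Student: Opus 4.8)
The plan is to reduce the claim to objects of the form $E^{\vee n}$ and then to the computations already carried out for the cross-effect. Since both source and target bifunctors are bilinear bireduced (the target $cr_2F$ because $F$ is quadratic, and the source $cr_2(-\otimes \mathbb{S}_2(F))$ because $-\otimes \mathbb{S}_2(F)$ is quadratic by Proposition \ref{qtp-is-quad}), Corollary \ref{car-bipoly} tells us it suffices to check that $cr_2(\epsilon)_{(E,E)}$ is an isomorphism; naturality then propagates the result to all $X,Y\in\langle E\rangle_{\C}$.

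First I would identify the two sides at $(E,E)$ using the computations already available. On the source side, Corollary \ref{gamma} furnishes the natural isomorphism
$$\gamma: (T_1U(X)\otimes T_1U(Y))\otimes_{\Lambda\otimes\Lambda}M_{ee} \xrightarrow{\simeq} cr_2(-\otimes M)(X,Y)$$
for $M=\mathbb{S}_2(F)$, where by Proposition \ref{MF} we have $M_{ee}=F(E|E)$. On the target side, the co-unit $u'_{cr_2(F)}$ of the adjunction in Proposition \ref{S11T11} provides the comparison map $(T_1U(E)\otimes T_1U(E))\otimes_{\Lambda\otimes\Lambda}F(E|E)\to cr_2F(E,E)$, which is an isomorphism at $(E,E)$ by Theorem \ref{bilin}. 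So the plan is to show that the composite $cr_2(\epsilon)_{(E,E)}\circ\gamma_{(E,E)}$ coincides (up to the canonical identifications $\mu$) with $u'_{cr_2(F)}$ evaluated at $(E,E)$, which will immediately give that $cr_2(\epsilon)_{(E,E)}$ is an isomorphism.

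The computational core is to trace an element $t_1(f)\otimes t_1(g)\otimes m$ through $\gamma_{(E,E)}$ and then through $cr_2(\epsilon)_{(E,E)}$. By the formula for $\gamma$ in Corollary \ref{gamma}, this element maps to $(\iota^2_{(1,2)})^{-1}\hat{\phi}\pi(t_1(i_1f)\otimes t_1(i_2g)\otimes m)$; by naturality of the cross-effect and the defining relation $\epsilon_X\hat{\phi}=(\overline{S^F_2})_X(\overline{u'_{cr_2(F)}})_X$ from Proposition \ref{epsilon}, I would push this forward and use the commutative diagram in Lemma \ref{lm6} together with the description of $cr_2F$ as a retract of $F(-\vee-)$ via $\iota^2_{(1,2)},\rho^2_{(1,2)}$. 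The key identity to verify is that applying $cr_2(\epsilon)_{(E,E)}$ recovers exactly $cr_2F(f,g)(m)=u'_{cr_2(F)}(t_1(f)\otimes t_1(g)\otimes m)$, which follows by unraveling the definitions of $S^F_2$, $T^F$ and the folding maps, much as in the second half of the proof of Proposition \ref{epsilon}.

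The main obstacle I anticipate is bookkeeping rather than conceptual: one must carefully match the involution-coinvariant structure appearing on the source (the $\mathfrak{S}_2$-coinvariants built into the quadratic tensor product and into $\gamma$) with the symmetric-bifunctor involution $T^F$ on the target, and confirm that the maps $\overline{S^F_2}$ and $\overline{u'_{cr_2(F)}}$ of Lemmas \ref{lm5} and \ref{lm6}, which only exist after passing to coinvariants, interact correctly with $\hat{\phi}$ restricted to the image of $cr_2((\mathbb{T}_{11}(M_{ee})\Delta_{\C})_{\mathfrak{S}_2})$. Once the diagram chase showing $cr_2(\epsilon)_{(E,E)}\gamma_{(E,E)}=\mu^{-1}u'_{cr_2(F)}$ is complete, invertibility of $\gamma$ and of $u'_{cr_2(F)}$ at $(E,E)$ finishes the argument, and Corollary \ref{car-bipoly} extends it to all $X,Y\in\langle E\rangle_{\C}$.
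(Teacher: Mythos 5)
Your proposal is correct and its computational core is the same as the paper's: both arguments come down to checking that $cr_2(\epsilon)_{(E,E)}$ precomposed with the comparison map induced by $\hat{\phi}$ (your $\gamma_{(E,E)}$, the paper's left vertical isomorphism followed by $cr_2(\hat{\phi})$) equals the counit $u'_{cr_2(F)}$, which is invertible by Theorem \ref{bilin} since $cr_2F$ is bilinear. The one genuine difference is how the argument is closed. You invoke Corollary \ref{gamma} (hence the full strength of Theorem \ref{cross-pt-quad}, i.e.\ the fact that $\mathbb S_2(F)$ satisfies $(QM2)$) to know in advance that $\gamma$ is an isomorphism, so that invertibility of $cr_2(\epsilon)_{(E,E)}$ follows immediately from the identity $cr_2(\epsilon)_{(E,E)}\circ\gamma_{(E,E)}=u'_{cr_2(F)}$. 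The paper instead uses only the cheaper Lemma \ref{i} (surjectivity of $cr_2(\hat{\phi})$) and reads the commutative square in the other direction: invertibility of $u'_{cr_2(F)}$ forces $cr_2(\hat{\phi})$ to be injective, hence bijective, hence $cr_2(\epsilon)$ is an isomorphism --- so for this particular $M=\mathbb S_2(F)$ the paper gets the conclusion of Corollary \ref{gamma} as a byproduct rather than as an input. Both routes are legitimate and non-circular (Theorem \ref{cross-pt-quad} is established earlier and independently); yours is slightly heavier in what it cites, the paper's slightly more self-contained. Your final reduction via Corollary \ref{car-bipoly} to the case $(E,E)$ and the caveat about matching the $\mathfrak{S}_2$-coinvariant structure with the involution $T^F$ are both exactly the points the paper's element-by-element diagram chase takes care of.
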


\begin{proof}
For $X, Y$ arbitrary objects of $\C$ we have the following diagram whose right hand square commutes by definition of $cr_2(\epsilon)_{X,Y}$:

$$\xymatrix{
cr_2(\mathbb{T}_{11} (F(E|E)) \Delta_{\C})_{\mathfrak{S}_2}(X,Y) \ar[r]^-{cr_2(\hat{\phi})} & cr_2(- \otimes \mathbb S_2(F))(X,Y) \ar@{^{(}->}[r]^{\iota^2_{(1,2)}}   \ar[d]^{cr_2(\epsilon)_{X,Y}}& (- \otimes \mathbb S_2(F))(X \vee Y) \ar[d]^{(\epsilon)_{X \vee Y}}\\
\mathbb{T}_{11} (F(E|E))(X,Y)   \ar[u]^{(\iota^2_{(1,2)})^{-1} \pi \mathbb{T}_{11}(F(E|E))(i_1, i_2)}_{\simeq} \ar[r]_{u'_{cr_2F}}& cr_2F(X,Y)  \ar@{^{(}->}[r]^{\iota^2_{(1,2)}} & F(X \vee Y).
}$$
In the following, we prove that the left hand square is commutative. 
Since $\iota^2_{(1,2)}$ is injective, it is sufficient to prove that 
$$\iota^2_{(1,2)} cr_2(\epsilon)_{X,Y} cr_2(\hat{\phi}) (\iota^2_{(1,2)})^{-1} \pi \mathbb{T}_{11}(F(E|E))(i_1, i_2)=\iota^2_{(1,2)}  u'_{cr_2F}.$$
For $f \in \C(E,X)$, $g \in \C(E,Y)$ and $m \in F(E|E)$ we have:

$$\begin{array}{l}
\iota^2_{(1,2)} cr_2(\epsilon)_{X,Y} cr_2(\hat{\phi}) (\iota^2_{(1,2)})^{-1} \pi \mathbb{T}_{11}(F(E|E))(i_1, i_2) (\bar f \otimes \bar g \otimes m) \\
=(\epsilon)_{X \vee Y} \iota^2_{(1,2)} cr_2(\hat{\phi}) (\iota^2_{(1,2)})^{-1} \pi (\overline{i_1f} \otimes \overline{i_2 g} \otimes m)\\
=(\epsilon)_{X \vee Y} \hat{\phi} \pi (\overline{i_1f} \otimes \overline{i_2 g} \otimes m)\\
=(\overline{S^F_2})_{X \vee Y} (\overline{u'_{cr(F)}})_{X \vee Y}  \pi (\overline{i_1f} \otimes \overline{i_2 g} \otimes m) \mathrm{ \quad by\ definition\ of\ } \epsilon \mathrm{\ given\ in\ Proposition\  \ref{epsilon}}\\
=({S^F_2})_{X \vee Y} ({u'_{cr(F)}})_{X \vee Y}  (\overline{i_1f} \otimes \overline{i_2 g} \otimes m) \\
=({S^F_2})_{X \vee Y} cr_2F(i_1f,i_2g)(m) \mathrm{ \quad by\ definition\ of\ } {u'_{cr(F)}} \mathrm{\ given\ in\ Proposition\  \ref{S11T11}}\\
=F(\nabla^2) \iota^2_{(1,2)} cr_2F(i_1f,i_2g)(m) \mathrm{ \quad by\ definition\ of\ } S^F_2\\
=F(\nabla^2)  F(i_1f \vee i_2g)\iota^2_{(1,2)}(m) \\
=F(f \vee g)\iota^2_{(1,2)}(m)  \mathrm{\quad since\ } \nabla^2 (i_1f \vee i_2g)=f \vee g\\
=\iota^2_{(1,2)} cr_2F(f, g)(m) \\
=\iota^2_{(1,2)} {u'_{cr_2(F)}} (\overline{f} \otimes \overline{g} \otimes m) \mathrm{ \quad by\ definition\ of\ } {u'_{cr_2(F)}} .
\end{array}$$

Now let $X,Y \in \langle E \rangle _{\C}$.
Since $F$ is quadratic, $cr_2(F)$ is bilinear, so by Theorem \ref{bilin}, $(u'_{cr_2(F)})_{X,Y}$ is an isomorphism. By commutativity of the left hand square this implies that $cr_2(\hat{\phi})$ is injective and by Lemma \ref{i} we know that $cr_2(\hat{\phi})$ is surjective. Thus $cr_2(\hat{\phi})$ is an isomorphism, hence so is $cr(\epsilon)_{X,Y}$.

\end{proof}





\begin{prop} \label{epsilon-eq}
If $\C=\langle E \rangle_{\C}$, for $F \in Quad(\C, Ab)$ the natural map $\epsilon: - \otimes M^F \to F$ is a natural equivalence.
\end{prop}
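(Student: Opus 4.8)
The plan is to prove that $\epsilon\colon -\otimes M^F \to F$ is a natural equivalence by invoking Proposition \ref{car-poly}, the characterization of isomorphisms between polynomial functors on a theory $\C=\langle E\rangle_{\C}$. Both source and target are quadratic: $F$ is quadratic by hypothesis, and $-\otimes M^F$ is quadratic by Proposition \ref{qtp-is-quad}. Thus by Proposition \ref{car-poly} (condition $(4)\Leftrightarrow(1)$), it suffices to check that $cr_k(\epsilon)_{E,\ldots,E}$ is an isomorphism for $k=1,2$.

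For $k=1$, the first cross-effect evaluated at $E$ is just the value on $E$ of a reduced functor, so $cr_1(\epsilon)_E$ is essentially $\epsilon_E\colon E\otimes M^F \to F(E)$. Here I would use the explicit computation of $E\otimes M$ from Proposition \ref{prop} together with Lemma \ref{F(E)-iso}, which identifies $\epsilon_E = ((\overline{S^F_2})_E\,\overline{\mu_{ee}},\overline{\mu_e})$ with the isomorphism $E\otimes \mathbb S_2(F) \xrightarrow{\simeq} F(E)$. Indeed $\epsilon_E$ is built from exactly the same data $(\overline{S^F_2})_E$ and $\overline{u'_F}$ as the isomorphism in Lemma \ref{F(E)-iso}; one checks that $\overline{u'_F}$ restricted to $E$ coincides with $\overline{\mu_e}$ (both being the canonical isomorphism $(T_2U)(E)\otimes_{\Lambda}F(E)\to F(E)$ under $M_e=F(E)$), and that $(\overline{u'_{cr_2(F)}})_E$ matches $\overline{\mu_{ee}}$ after applying $(\overline{S^F_2})_E$. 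This pins down $\epsilon_E$ as the isomorphism of Lemma \ref{F(E)-iso}.

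For $k=2$, I would directly appeal to Proposition \ref{ce-epsilon}, which already establishes that $cr_2(\epsilon)_{X,Y}$ is an isomorphism for all $X,Y\in\langle E\rangle_{\C}$, in particular for $X=Y=E$. Combining the cases $k=1$ and $k=2$, condition $(4)$ of Proposition \ref{car-poly} is verified, and we conclude that $\epsilon$ is a natural equivalence.

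The main obstacle I anticipate is purely bookkeeping rather than conceptual: one must carefully match the three pieces of the co-unit $\epsilon$ (defined in Proposition \ref{epsilon} via the maps $\overline{u'_F}$, $\overline{S^F_2}$, $\overline{u'_{cr_2(F)}}$ and the pushout structure maps $\hat\phi,\hat\psi$) against the three pieces of the isomorphism in Lemma \ref{F(E)-iso} (namely $\overline{\mu_e}$, $\overline{P}$, $\overline{\mu_{ee}}$), verifying that they agree on $E$ under the identifications $M_e=F(E)$, $M_{ee}=F(E\mid E)$, $P=(S^F_2)_E$, and $\hat H=(H^F)_E$ coming from Proposition \ref{MF}. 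Once these identifications are traced through the relevant commutative diagrams, the equality $\epsilon_E = ((\overline{S^F_2})_E\,\overline{\mu_{ee}},\overline{\mu_e})$ is immediate and the appeal to Proposition \ref{car-poly} closes the argument.
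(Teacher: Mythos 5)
Your proposal is correct and follows essentially the same route as the paper's own proof: both reduce to checking $\epsilon_E$ and $cr_2(\epsilon)_{E,E}$ via Proposition \ref{car-poly} (using Proposition \ref{qtp-is-quad} for quadraticity of the source), identify $\epsilon_E$ with the isomorphism $((\overline{S^F_2})_E\,\overline{\mu_{ee}},\overline{\mu_e})$ of Lemma \ref{F(E)-iso}, and invoke Proposition \ref{ce-epsilon} for the second cross-effect. The bookkeeping you flag is exactly the content of the paper's one-line identification of the components of $\epsilon_E$ with those of Lemma \ref{F(E)-iso}.
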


\begin{proof}
Since $F$ is quadratic by hypothesis and $- \otimes M^F$ is quadratic by Proposition \ref{qtp-is-quad}, it is sufficient to prove that $(\epsilon)_E$ and $cr_2(\epsilon)_{(E,E)}$ are isomorphisms according to Proposition \ref{car-poly}.

By Lemma \ref{F(E)-iso} we know that $(\epsilon)_E$  is an isomorphism since 
$$((\overline{S^F_2})_E \ (\overline{u'_{cr(F)}})_E, (\overline{u'_F})_E)=((\overline{S^F_2})_E \ \overline{\mu_{ee}}, \overline{\mu_e}).$$

By Proposition \ref{ce-epsilon} we know that $cr_2(\epsilon)_{(E,E)}$ is an isomorphism.
\end{proof}

\begin{prop} \label{epsilon-counit}
The natural map $\epsilon: - \otimes M^F \to F$ is the counit of the adjunction given in Theorem \ref{thm}.
\end{prop}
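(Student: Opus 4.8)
The goal is to show that the natural transformation $\epsilon\colon \mathbb{T}_2\mathbb{S}_2 \to Id_{Quad(\C,Ab)}$ satisfies the universal property of the counit of an adjunction $\mathbb{T}_2\dashv\mathbb{S}_2$: for every $M\in QMod^E_{\C}$, $F\in Quad(\C,Ab)$ and every morphism $g\colon \mathbb{T}_2 M=-\otimes M\to F$ there should be a unique morphism $h\colon M\to\mathbb{S}_2 F$ of quadratic $\C$-modules with $g=\epsilon_F\circ\mathbb{T}_2(h)$. The plan is to produce the unit $\eta\colon Id_{QMod^E_{\C}}\to\mathbb{S}_2\mathbb{T}_2$ and verify the two triangle identities; the assertion then follows from the standard characterization of an adjunction by its (co)unit and the triangle identities, with $\epsilon$ being the counit by construction.

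First I would construct $\eta_M\colon M\to\mathbb{S}_2\mathbb{T}_2(M)=\mathbb{S}_2(-\otimes M)$. By Proposition \ref{MF} the target is the quadratic $\C$-module with $e$-part $(-\otimes M)(E)=E\otimes M$, with $ee$-part $cr_2(-\otimes M)(E,E)$, and with structure maps $H^{(-\otimes M)}$, $T^{(-\otimes M)}$, $S^{(-\otimes M)}_2$. Proposition \ref{prop} gives a natural isomorphism $E\otimes M\cong M_e$ and Corollary \ref{gamma} a natural isomorphism $cr_2(-\otimes M)(E,E)\cong (T_1U(E)\otimes T_1U(E))\otimes_{\Lambda\otimes\Lambda}M_{ee}\cong M_{ee}$; I define $\eta_M$ to be the pair of the inverses of these two isomorphisms. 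The first point to check is that $\eta_M$ is a morphism of quadratic $\C$-modules, i.e.\ that its two components intertwine $(\hat H,T,P)$ of $M$ with $(H^{(-\otimes M)},T^{(-\otimes M)},S^{(-\otimes M)}_2)$; this is carried out by tracing the explicit formulas of Proposition \ref{prop} and Corollary \ref{gamma} through the defining pushout of Definition \ref{quad-t-p}, and naturality of $\eta$ in $M$ follows from the naturality of those isomorphisms.

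Next I would verify the triangle identity $\mathbb{S}_2(\epsilon_F)\circ\eta_{\mathbb{S}_2 F}=id_{\mathbb{S}_2 F}$ componentwise. On the $e$-part, $\mathbb{S}_2(\epsilon_F)_e=(\epsilon_F)_E\colon E\otimes\mathbb{S}_2 F\to F(E)$, which by Lemma \ref{F(E)-iso} is exactly the isomorphism of Proposition \ref{prop} for $M=\mathbb{S}_2 F$; composing with its inverse $\eta^e_{\mathbb{S}_2 F}$ yields the identity of $F(E)=(\mathbb{S}_2 F)_e$. On the $ee$-part, $\mathbb{S}_2(\epsilon_F)_{ee}=cr_2(\epsilon_F)_{E,E}$, which by Proposition \ref{ce-epsilon} is an isomorphism compatible with the isomorphism of Corollary \ref{gamma}, so that composing with $\eta^{ee}_{\mathbb{S}_2 F}$ again gives the identity of $cr_2 F(E,E)$.

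The main work, and the principal obstacle, is the remaining triangle identity $\epsilon_{\mathbb{T}_2 M}\circ\mathbb{T}_2(\eta_M)=id_{\mathbb{T}_2 M}$, an equality of endomorphisms of $-\otimes M$, where $\mathbb{T}_2(\eta_M)=-\otimes\eta_M$ and $\epsilon_{\mathbb{T}_2 M}\colon\mathbb{T}_2\mathbb{S}_2\mathbb{T}_2(M)\to\mathbb{T}_2 M$. I would evaluate the composite on the two families of generators of $X\otimes M$, namely the symbols $f\otimes a$ and $[f,g]\otimes m$ coming from the corners $(T_2U)(X)\otimes_{\Lambda}M_e$ and $((T_1U(X)\otimes T_1U(X))\otimes_{\Lambda\otimes\Lambda}M_{ee})_{\mathfrak{S}_2}$ of the pushout of Definition \ref{quad-t-p}. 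Concretely this means unwinding the description of $\epsilon$ in Proposition \ref{epsilon} (its restrictions $\epsilon_X\hat\psi=\overline{u'_F}$ and $\epsilon_X\hat\phi=\overline{S^F_2}\,\overline{u'_{cr_2 F}}$, applied to $F=-\otimes M$) together with the identification of $\eta_M$ on each corner, and then invoking the pushout universal property to conclude that the two composites agree on all of $X\otimes M$, naturally in $X$. Once both triangle identities hold, $(\mathbb{T}_2,\mathbb{S}_2,\eta,\epsilon)$ is an adjunction with counit $\epsilon$, which is the assertion; in particular, for $\C=\langle E\rangle_{\C}$ this combines with Proposition \ref{epsilon-eq} to give the adjoint equivalence of Theorem \ref{thm}.
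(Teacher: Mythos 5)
Your route is correct but genuinely different from the paper's. The paper proves the counit property directly: given $\alpha\colon -\otimes M\to F$, it uses the fact that $\epsilon$ is already known to be a natural \emph{isomorphism} (Proposition \ref{epsilon-eq}) to rewrite the required equation $\epsilon\circ\mathbb{T}_2(\beta)=\alpha$ as $\mathbb{T}_2(\beta)=\epsilon^{-1}\alpha$, and then exhibits the unique solution explicitly, $\beta_e=\alpha_E\circ(\overline{P}\,\overline{\mu_{ee}},\overline{\mu_e})^{-1}$ and $\beta_{ee}=cr_2(\alpha)_{E,E}\circ\gamma_{E,E}\circ\mu_{ee}^{-1}$ --- which is of course $\mathbb{S}_2(\alpha)\circ\eta_M$ in disguise. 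You instead assemble the unit and verify the two triangle identities. The trade-off: the paper's argument gets existence and uniqueness of $\beta$ almost for free from invertibility of $\epsilon$, at the cost of only working where Proposition \ref{epsilon-eq} applies; your argument does not presuppose that $\epsilon$ is invertible, but the burden shifts entirely onto the triangle identity $\epsilon_{\mathbb{T}_2M}\circ\mathbb{T}_2(\eta_M)=\mathrm{id}$, which you correctly identify as the main work and which must genuinely be checked on both pushout corners (it does not follow formally from the first triangle identity together with $\eta$ and $\epsilon$ being isomorphisms). Two small cautions: your verification of the first triangle identity via Lemma \ref{F(E)-iso} and Proposition \ref{ce-epsilon}, and your identification of $\eta_M$, both rest on computations valid only on $\langle E\rangle_{\C}$, so your proof has exactly the same scope restrictions as the paper's; and the claim that $\eta_M$ is a morphism of quadratic $\C$-modules (compatibility with $\hat H$, $T$, $P$) is asserted rather than carried out, which is the same level of terseness as the paper's ``one can check.''
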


\begin{proof}
For $M \in QMod^E_{\C}$ and $\alpha: - \otimes M \to F$ we want to prove that there exists a unique $\beta: M \to M^F$ such that $\epsilon (- \otimes M)(\beta)=\alpha$. Since $\epsilon$ is a natural equivalence the latter relation implies that: $(- \otimes M)(\beta)=\epsilon^{-1} \alpha$. One can check that $\beta$ such that $\beta_e=\alpha_e (\overline{P} \overline{\mu_{ee}}, \overline{\mu_e})^{-1}$ and $\beta_{ee}=cr_2(\alpha)_{E,E} \gamma_{E,E} (\mu_{ee})^{-1}$ is the unique solution.
\end{proof}

\subsection{The unit of the adjunction}

\begin{defi}
Let $\eta: Id_{QMod^E_{\C}} \to \mathbb{S}_2 \mathbb{T}_2$ be the natural map such that, for $M \in QMod^E_{\C}$, $\eta_M: M \to \mathbb{S}_2 \mathbb{T}_2(M)$ is given by $(\eta_M)_e=( \overline{P} \overline{\mu_{ee}}, \overline{\mu_e})^{-1}$ and $(\eta_M)_{ee}=(\gamma)_{E,E} \mu_{ee}^{-1}$ where $\mu_{ee}$ is the canonical isomorphism and $\gamma$ is the natural equivalence of Corollary \ref{gamma}.
\end{defi}

\begin{prop} \label{unit-1}
The natural map $\eta: Id_{QMod^E_{\C}} \to \mathbb{S}_2 \mathbb{T}_2$ is a natural equivalence.
\end{prop}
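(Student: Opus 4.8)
The plan is to read off from the computations already carried out that both components of $\eta_M$ are isomorphisms, and then to reduce the statement to the (routine) verifications that $\eta_M$ is a morphism of quadratic $\C$-modules and that it is natural in $M$. First I would record that each $\eta_M$ is componentwise an isomorphism. Indeed, $(\eta_M)_e$ is by definition the inverse of the map $(\overline{P}\,\overline{\mu_{ee}}, \overline{\mu_e})\co E\otimes M \to M_e$, which is an isomorphism of $\Lambda$-modules by Proposition \ref{prop} (i.e.\ Lemma \ref{lm33}); and $(\eta_M)_{ee}=\gamma_{E,E}\,\mu_{ee}^{-1}$ is the composite of the canonical isomorphism $\mu_{ee}^{-1}$ with the isomorphism $\gamma_{E,E}$ of Corollary \ref{gamma}, the latter being available since $E=E^{\vee 1}\in\langle E\rangle_{\C}$ and $M$ is quadratic. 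Note that only the evaluation at $E$ intervenes, so no hypothesis on $\C$ beyond finite sums is used, in accordance with the fact that Proposition \ref{unit-1} is stated for general $\C$.

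Granting that $\eta_M$ lies in $QMod^E_{\C}$, the conclusion is then formal: a morphism $(\phi_e,\phi_{ee})$ of quadratic $\C$-modules both of whose components are isomorphisms is itself an isomorphism, with inverse $((\phi_e)^{-1},(\phi_{ee})^{-1})$, since the latter pair automatically commutes with the structure maps $\hat{H}$, $T$ and $P$ once the original pair does. Likewise, naturality of $\eta$ in $M$ will follow from the naturality of the isomorphisms of Proposition \ref{prop} and Corollary \ref{gamma} and of the canonical maps $\mu_{ee}$: each naturality square for a morphism $(\phi_e,\phi_{ee})\co M\to N$ splits, on the $e$- and $ee$-parts respectively, into squares that commute by those results together with the definition of $\mathbb{S}_2\mathbb{T}_2(\phi)$ as $\mathbb{S}_2$ applied to the induced map $-\otimes M \to -\otimes N$.

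The real content, and the step I expect to be the main obstacle, is therefore to check that $\eta_M$ is indeed a morphism of quadratic $\C$-modules, i.e.\ that $(\eta_M)_e$ and $(\eta_M)_{ee}$ intertwine the structure maps $(\hat{H}^M,T^M,P^M)$ of $M$ with those of $\mathbb{S}_2\mathbb{T}_2(M)=\mathbb{S}_2(-\otimes M)$. Using Proposition \ref{MF} I would identify the target structure maps as $\bigl((H^{-\otimes M})_E,\,T^{-\otimes M},\,(S^{-\otimes M}_2)_E\bigr)$ and then establish the three resulting commuting squares by a diagram chase through the presentation of $E\otimes M$ by generators and relations following Definition \ref{quad-t-p}. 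Concretely, the $P$-square should come from relation $(8)$ ($[f,f]\otimes m=f\otimes P(m)$) together with the definitions of $\overline{P}$ and $\overline{\mu_e}$; the $T$-square from relation $(7)$ ($[f,g]\otimes m=[g,f]\otimes T(m)$) and the compatibility of $\mu_{ee}$ with the involutions (Lemma \ref{lm3}); and the $\hat{H}$-square from relation $(9)$, the characterization of $H^{-\otimes M}$ in Lemma \ref{H^F}, and the explicit formula for $\gamma$ in Corollary \ref{gamma}, where the relations $(QM1)$ and $(QM2)$ serve to match the image of $\hat{H}^M$ with that of $(H^{-\otimes M})_E$. Each of these is a lengthy but entirely routine substitution in the spirit of the proofs of Lemma \ref{lm33} and Corollary \ref{gamma}, presenting no genuine difficulty once the explicit formulae for $\gamma$, $\overline{\mu_{ee}}$, $\overline{\mu_e}$ and $\overline{P}$ are inserted.
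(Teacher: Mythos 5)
Your proof is correct and follows the paper's own argument: the paper's proof of this proposition is exactly the one-line observation that $(\eta_M)_e$ and $(\eta_M)_{ee}$ are isomorphisms by Lemma \ref{lm33} and Corollary \ref{gamma}. The additional verifications you flag (that $\eta_M$ is a morphism of quadratic $\C$-modules and that $\eta$ is natural) are likewise left implicit in the paper, so your sketch of them is a reasonable supplement rather than a deviation.
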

\begin{proof}
By Lemma \ref{lm33} and Corollary \ref{gamma}, $(\eta_M)_{e}$ and $(\eta_M)_{ee}$ are isomorphisms.
\end{proof}

\begin{prop} \label{unit-2}
The natural map $\eta: Id_{QMod^E_{\C}} \to \mathbb{S}_2 \mathbb{T}_2$ is the unit of the adjunction in Theorem \ref{thm}.
\end{prop}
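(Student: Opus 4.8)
The plan is to deduce Proposition~\ref{unit-2} formally from the description of the counit $\epsilon$ already obtained, with no further computation. Recall that $\mathbb{T}_2$ is left adjoint to $\mathbb{S}_2$, so that the unit $\eta'$ of the adjunction is the natural transformation $Id_{QMod^E_{\C}}\to \mathbb{S}_2\mathbb{T}_2$ whose component at $M$ is the image of the identity $Id_{\mathbb{T}_2 M}=Id_{-\otimes M}$ under the adjunction bijection
$$\phi_{M,F}\co \Hom_{Quad(\C,Ab)}(\mathbb{T}_2 M,F)\xrightarrow{\;\cong\;}\Hom_{QMod^E_{\C}}(M,\mathbb{S}_2 F),$$
evaluated at $F=\mathbb{T}_2 M$; this is the general identity $\eta'_M=\phi_{M,\mathbb{T}_2 M}(Id_{\mathbb{T}_2 M})$ relating the unit and the hom-set bijection of an adjunction.

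First I would make this bijection explicit using the proof of Proposition~\ref{epsilon-counit}. There it is shown that $\beta\mapsto \epsilon_F\circ \mathbb{T}_2(\beta)$ is a bijection $\Hom_{QMod^E_{\C}}(M,\mathbb{S}_2 F)\xrightarrow{\cong}\Hom_{Quad(\C,Ab)}(\mathbb{T}_2 M,F)$; its inverse is exactly $\phi_{M,F}$, and the proof furnishes the explicit formula sending $\alpha\co \mathbb{T}_2 M\to F$ to the pair $\beta=(\beta_e,\beta_{ee})$ with
$$\beta_e=\alpha_E\,(\overline{P}\,\overline{\mu_{ee}},\overline{\mu_e})^{-1},\qquad \beta_{ee}=cr_2(\alpha)_{E,E}\,\gamma_{E,E}\,(\mu_{ee})^{-1},$$
where I write $\alpha_E$ for the $E$-component denoted $\alpha_e$ there. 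The uniqueness statement in \ref{epsilon-counit} guarantees that this assignment really is the inverse of $\beta\mapsto \epsilon_F\circ\mathbb{T}_2(\beta)$, hence is the adjunction bijection $\phi_{M,F}$ itself.

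Then I would specialize to $F=\mathbb{T}_2 M=-\otimes M$ and $\alpha=Id_{\mathbb{T}_2 M}$. Since $(Id_{\mathbb{T}_2 M})_E=Id_{E\otimes M}$ and $cr_2(Id_{\mathbb{T}_2 M})_{E,E}=Id_{cr_2(-\otimes M)(E,E)}$, the two components collapse to
$$\beta_e=(\overline{P}\,\overline{\mu_{ee}},\overline{\mu_e})^{-1}=(\eta_M)_e,\qquad \beta_{ee}=\gamma_{E,E}\,(\mu_{ee})^{-1}=(\eta_M)_{ee},$$
which are precisely the two components defining $\eta_M$. Therefore $\eta'_M=\phi_{M,\mathbb{T}_2 M}(Id_{\mathbb{T}_2 M})=\eta_M$ for all $M$, so $\eta$ is the unit of the adjunction.

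The whole argument is essentially bookkeeping, since all the substance, namely the identifications $E\otimes M\cong M_e$ (Proposition~\ref{prop}) and $cr_2(-\otimes M)(E,E)\cong M_{ee}$ (Corollary~\ref{gamma}), together with the construction of the bijection $\phi_{M,F}$, has already been carried out. The only point needing care is to invoke the general identity $\eta'_M=\phi_{M,\mathbb{T}_2 M}(Id)$ together with the fact that the explicit formula from \ref{epsilon-counit} is indeed $\phi_{M,F}$; as an alternative, entirely equivalent route one may instead verify the triangle identity $\epsilon_{\mathbb{T}_2 M}\circ \mathbb{T}_2(\eta_M)=Id_{\mathbb{T}_2 M}$ directly and appeal to the uniqueness of $\beta$ in \ref{epsilon-counit} for $\alpha=Id_{\mathbb{T}_2 M}$.
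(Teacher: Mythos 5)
Your proof is correct, and it is a genuinely (if mildly) different route from the paper's. The paper proves the statement by verifying the universal property of the unit directly: for every $\alpha\colon M \to \mathbb{S}_2 F$ it exhibits $\beta = \epsilon_F\circ\mathbb{T}_2(\alpha)$ and checks that it is the unique natural transformation with $\mathbb{S}_2(\beta)\circ\eta_M=\alpha$. You instead take the adjunction as already determined by the couniversal property of $\epsilon$ established in Proposition \ref{epsilon-counit}, observe that the explicit formula given there ($\beta_e=\alpha_e(\overline{P}\,\overline{\mu_{ee}},\overline{\mu_e})^{-1}$, $\beta_{ee}=cr_2(\alpha)_{E,E}\,\gamma_{E,E}\,\mu_{ee}^{-1}$) is the inverse of $\beta\mapsto\epsilon_F\circ\mathbb{T}_2(\beta)$, i.e.\ the hom-set bijection, and evaluate it at $\alpha=Id_{\mathbb{T}_2M}$, where it visibly returns the two components defining $\eta_M$. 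Your version has the advantage of requiring no further computation, and it also makes precise something the paper leaves slightly implicit: it identifies $\eta$ as the unit of \emph{the} adjunction whose counit is $\epsilon$ (rather than merely showing $\eta$ satisfies a unit-type universal property), which is exactly what the statement asserts; your alternative via the triangle identity $\epsilon_{\mathbb{T}_2M}\circ\mathbb{T}_2(\eta_M)=Id$ plus uniqueness in \ref{epsilon-counit} accomplishes the same thing. The paper's direct check, on the other hand, records the unit's universal property in the form in which it is typically used. Both arguments rest on the same substance, namely Proposition \ref{prop}, Corollary \ref{gamma} and Proposition \ref{epsilon-counit}.
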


\begin{proof}
For $F \in Quad(\C, Ab)$ and $\alpha: M \to M^F$ a morphism in $QMod^E_{\C}$, we want to prove that there exists a unique natural map $\beta: - \otimes M \to F$ such that $\mathbb{S}_2(\beta) \eta_M= \alpha$. Let $\beta$ be the composition $\epsilon (- \otimes \alpha)$. We check that $\beta$ is the unique solution of $\mathbb{S}_2(\beta) \eta_M= \alpha$.
\end{proof}

Together with Propositions \ref{unit-1}, \ref{epsilon-counit} and \ref{epsilon-eq} this achieves the proof of the assertions (A) and (B) of Theorem \ref{thm}; together with Proposition \ref{redtofinsums} these then imply assertion (C).
\section{Application to theories of cogroups}
In this section, we apply Theorem \ref{thm} to the particular case where $E$ has a cogroup structure in $\C$.
%
This is the case when $\C$ is a theory of cogroups $\mathcal{T}$ with generator $E$, or when $\C$ is an algebraic variety where a group law is part of the structure, and $E$ is the free object of rank $1$. The first case (which is generic in view of Theorem \ref{thm}) is considered in
\cite{Baues-Pira} where the authors  
define, in Proposition $3.6$, a functor:
$$u: Quad(\mathcal{T}, Gr) \to Square$$
to the category $Square$ of so-called square groups,  sending $F \in Quad(\mathcal{T}, Gr) $ to the square group $u(F)(E) = (F(E) \xrightarrow{H} F(E \mid E) \xrightarrow{P} F(E))$, see Definition \ref{square-g} below. They prove that $u$ is an equivalence of categories when $\mathcal{T}$ is the category $\langle \Z \rangle_{Gr}$ of free groups of finite rank. This is no longer true for more general theories of cogroups, even when restricting to quadratic functors with values in $Ab$. In fact, we here show how for general $\mathcal{T}$ the structure of square group has to be enriched in order to obtain an equivalence with quadratic functors from $\mathcal{T}$ to $Ab$. In particular, a second map relating $F(E)$ to $F(E\mid E)$ has to be added to the picture; in the case where $\mathcal{T}=\langle \Z \rangle_{Gr}$ this turns out to provide a new interpretation of the map $\Delta$ associated with a square group in \cite{Baues-Pira}.
Recall that a theory of cogroups is a pointed category $\mathcal{T}$ with finite sums such that each object $X$ has the structure of a cogroup given by the maps $\mu: X \to X \vee X$ and $\tau: X \to X$. Then, for $X,Y \ \in \mathcal{T}$, $\T(X,Y)$ is a group with $f\bullet g=(f,g) \mu$ and $f^{-1}=f \tau$. The identity element of $\bullet$ is the null map $0$.
Moreover, for $Z\in \T$ and $h\in \T(Y,Z)$ the map $h_*: \T(X,Y) \to \T(X,Z)$ is a homomorphism of groups.\medskip

\noindent{\bf Notation.} For brevity,
 we write in this section $i_1$, $i_2$, $r_1$ and $r_2$ instead of  $i_1^2$, $i_2^2$, $r^2_1$ and $r^2_2$, respectively.\medskip

For a theory of cogroups $\T$ we have a simpler description of a  quadratic $\T$-module relative to $E$ given in the following theorem. In particular, note that the functor $U$ disappears from the picture.

 \begin{thm} \label{proto-quad-cogroup}
Let $\T$ be a theory of cogroups.
The category of quadratic $\T$-modules relative to $E$ is equivalent to the category of diagrams of group homomorphisms:
$$M= \left(
\vcenter{
\xymatrix{
M_e \ar[dr]^{H_1} & \\
 & M_{ee}  \ar[r]^-{P} & M_e\\
T_{11}cr_2(\T(E, -))(E,E)\otimes_{\Lambda}coker(P) \ar[ur]_{H_2}
} } \right)$$
where
\begin{itemize}
\item $M_{e}$ is a left $\Lambda$-module;
\item $M_{ee}$ is a left $ \bar{\Lambda} \otimes \bar{\Lambda}$-module;
\item $P: M_{ee} \to M_e$ is a homomorphism of $\Lambda$-modules with respect to the diagonal action of $\Lambda$ on $M_{ee}$;
\item
$H_1$ is a homomorphism of abelian groups;

\item $H_2$ is a homomorphism of   $ \bar{\Lambda} \otimes \bar{\Lambda}$-modules,

\end{itemize}

 satisfying the following relations for $a \in M_{e}$, $m\in M_{ee}$, $\gamma\in cr_2(\T(E, -))(E,E)$,
$\alpha, \beta \in \mathcal{T}(E,E)$:
$$\begin{array}{lc}
(T1) & PH_1P = 2P\\
(T2) & H_1P((\overline{\alpha} \otimes \overline{\beta})m) - (\overline{\beta} \otimes \overline{\alpha})H_1P(m) = (\overline{\alpha} \otimes \overline{\beta})m - (\overline{\beta} \otimes \overline{\alpha})m\\
(T3) & H_1PH_1(a) = 2H_1(a)+ H_2(t_{11}([i_{2}, i_{1}]) \otimes \bar{a})\\
(T4) &  H_1PH_2(t_{11}\gamma \otimes \bar{a}) = H_2(t_{11}(\gamma\bullet \tau\gamma) \otimes \bar{a}) \\
(T5) &  (\nabla^2 \gamma \bullet \alpha \bullet \beta)a =\alpha a + \beta a + P((\bar{\alpha} \otimes \bar{\beta})H_1(a)) + PH_2(t_{11}\gamma \otimes \bar{a})\\
 (T6) &  H_1(\alpha a)=H_2(t_{11}h(\alpha)\otimes \bar{a})+(\overline{\alpha} \otimes \overline{\alpha}) H_1(a)
\end{array}$$
where $[i_{2}, i_{1}]=i_2\bullet i_1\bullet (i_2)^{-1}\bullet (i_1)^{-1}$, $\tau$ is  the canonical switch of $E\vee E$ and 
$h: \T(E,E) \to \T(E, E \mid E)$ is given by 
$$h(\alpha)=(\iota^2_{(1,2)})^{-1} (((i_1 \bullet i_2) \alpha ) \bullet (i_2 \alpha)^{-1} \bullet (i_1 \alpha)^{-1}).$$
\end{thm}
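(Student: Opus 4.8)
The plan is to establish the equivalence by reducing the general quadratic $\T$-module structure of Definition \ref{quad-C} to the simpler cogroup data, exploiting the comultiplication $\mu\co E \to E\vee E$ and its resulting cogroup structure on all hom-sets $\T(E,X)$. The central device is the decomposition of the map $\hat H$ (equivalently $\tilde H\co U(E\vee E)\otimes_\Lambda M_e \to M_{ee}$ from the Remark following Definition \ref{quad-C}) into the two pieces $H_1$ and $H_2$ announced in the introduction. First I would use the cogroup structure to split the domain $T_{11}(cr_2U)(E,E)$: since $E$ is a cogroup, every $\xi\in \T(E,E\vee E)$ can be written multiplicatively in terms of $\bullet$, and the element $\mu'=(\iota^2_{(1,2)})^{-1}(\mu - i_1 - i_2)\in cr_2U(E,E)$ singles out a canonical generator. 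Concretely I expect that $cr_2(\T(E,-))(E,E)$ and the group-theoretic cross-effects give a direct sum decomposition of $T_{11}(cr_2U)(E,E)\otimes_\Lambda M_e$ into a ``diagonal'' summand carrying $H_1$ (defined essentially by $H_1(a)=\hat H(\overline{\mu'}\otimes a)$, a homomorphism of abelian groups by restriction) and a complementary summand $T_{11}cr_2(\T(E,-))(E,E)\otimes_\Lambda\mathrm{coker}(P)$ carrying $H_2$. The passage to $\mathrm{coker}(P)$ in the source of $H_2$ is forced by relation $(QM1)$, which by Remark \ref{rem-proto-quad} makes $\mathrm{coker}(P)$ a $\bar\Lambda$-module.

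Next I would translate each of the six relations. Relations $(T5)$ and $(T6)$ should come directly from $(QM1)$ applied to general $\xi=(\alpha\vee\beta)\mu\bullet(\text{cross terms})$: writing a general $\xi$ in cogroup normal form as a product of $i_1\alpha$, $i_2\beta$ and a ``pure commutator'' part indexed by $\gamma\in cr_2(\T(E,-))(E,E)$, the distributivity generalization displayed in the Remark after Definition \ref{quad-C} expands into precisely $(T5)$, while $(T6)$ records how $H_1$ fails to be $\bar\Lambda$-linear, the defect being absorbed by $H_2$ through the map $h$. Relation $(T2)$ is the transcription of the symmetry of $\hat H$ as a morphism of \emph{symmetric} $\bar\Lambda\otimes\bar\Lambda$-modules combined with the formula $T=H_1P-1$ (valid here by the Remark, taking $\xi=\mu$ in $(QM2)$), since in the cogroup case the involution $T$ is no longer independent data but is recovered from $H_1$ and $P$. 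Relations $(T1)$, $(T3)$ and $(T4)$ should then fall out of $(QM2)$ once $T$ is eliminated: $(T1)$ is $PT=P$ rewritten via $T=H_1P-1$, giving $P(H_1P-1)=P$ hence $PH_1P=2P$; $(T3)$ comes from evaluating the defining relation for $T$ on $H_1(a)$ and using $(T6)$ for the commutator generator $t_{11}([i_2,i_1])$; and $(T4)$ expresses compatibility of $H_2$ with $T$ on its own summand, the term $\gamma\bullet\tau\gamma$ encoding the action of the switch $\tau$ under the cogroup inverse.

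I would set up the equivalence of categories functorially: given a quadratic $\T$-module I produce the cogroup diagram by the decomposition above, and conversely, given the cogroup data $(M_e,M_{ee},P,H_1,H_2)$ I reconstruct $T$ by $T=H_1P-1$ and reconstruct $\hat H$ on the split domain by declaring it to be $H_1$ on the diagonal part and $H_2$ on the complementary part, extended $\bar\Lambda\otimes\bar\Lambda$-linearly. The verification that this reconstructed $\hat H$ is well-defined on $T_{11}(cr_2U)(E,E)\otimes_\Lambda M_e$ (i.e.\ respects relations $(H_3)$--$(H_5)$ of the Remark) and is a morphism of symmetric modules, and that $(QM1)$, $(QM2)$ hold, is exactly the content of $(T1)$--$(T6)$ read backwards; I would check that the two constructions are mutually inverse and natural.

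The main obstacle will be pinning down the precise splitting of $T_{11}(cr_2U)(E,E)\otimes_\Lambda M_e$ and showing that the two summands are respectively the domains of $H_1$ and $H_2$ as stated, \emph{including} the replacement of $M_e$ by $\mathrm{coker}(P)$ in the $H_2$ summand. This requires a careful analysis of the bilinearized cross-effect $T_{11}(cr_2U)$ in the cogroup case, using Proposition \ref{bilin-cross} together with the group-theoretic description of $cr_2$ of the representable functor $\T(E,-)$ analogous to Proposition \ref{T11crId}; one must verify that the cogroup comultiplication induces a natural direct-sum decomposition compatible with the $\bar\Lambda\otimes\bar\Lambda$-action and the involution, and that the diagonal generator $\overline{\mu'}$ is genuinely complementary to the $cr_2(\T(E,-))$-part modulo the image of $P$. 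Establishing this decomposition cleanly, and tracking the symmetric-module structure through it so that $(T2)$ and $(T4)$ emerge correctly, is where the real work lies; the relations $(T1)$, $(T3)$, $(T5)$, $(T6)$ are comparatively mechanical once the decomposition and the formula $T=H_1P-1$ are in hand.
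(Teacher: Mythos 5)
Your overall strategy coincides with the paper's: split $\hat{H}$ into $H_1$ and $H_2$ via a decomposition of $T_{11}(cr_2U)(E,E)$, eliminate $T$ through $T=H_1P-1$, and translate the axioms into $(T1)$--$(T6)$; you also correctly locate the hard step in that decomposition. But that step is exactly what is missing, and it is not obtained from the comultiplication in the way you suggest. The paper gets it by passing through the augmentation ideal: the isomorphism $\Xi\co I(\T(E,X))\to U(X)$ reduces everything to computing $cr_2(IF)$ for a group-valued reduced functor $F$, which Lemma \ref{prop-cog-1} (on $IG$ for $G=HK$ with unique decomposition) identifies as $I(F(X\mid Y))\oplus(IF(X)\otimes IF(Y))\oplus\bigl(I(F(X\mid Y))\otimes I(F(X)\times F(Y))\bigr)$ (Proposition \ref{cr2IF}); the third summand is diagonalizable in each variable and hence killed by $T_{11}$ (Proposition \ref{T1BD=0}), and Proposition \ref{ideal-lin} converts what survives into $T_{11}\T(E,E\mid E)\oplus(T_1\T(E,E)\otimes T_1\T(E,E))$. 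Crucially this is a splitting of abelian groups only: the right $\Lambda$-action mixes the two summands via the map $h$ (Corollary \ref{T11crXi}), and it is precisely this twisting --- not relation $(QM1)$ --- that forces $(T6)$, because $\hat{H}$ must be well defined on the tensor product over $\Lambda$. Deriving $(T6)$ from $(QM1)$, as you propose, would fail.

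Two further attributions need correcting. The factorization of $H_2$ through $coker(P)$ is not forced by $(QM1)$: in cogroup normal form $(QM2)$ decouples into $H_2'(t_{11}\gamma\otimes Pm)=0$ and $H_1P(m)=m+Tm$ (Proposition \ref{QM12}); the first of these gives the passage to $coker(P)$ and the second gives $T=H_1P-1$. And $(T3)$, $(T4)$ do not ``fall out of $(QM2)$'': they translate the requirement that $\hat{H}$ be a morphism of \emph{symmetric} modules, i.e.\ its compatibility with the involution computed on the split domain (Proposition \ref{Hsplits}), combined with the elimination of $T$. With these corrections, and with the augmentation-ideal computation actually supplied, your outline matches the paper's proof.
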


Note that the map $h$
describes the deviation of endomorphisms of $E$ from being morphisms of cogroups.

The plan of the proof of Theorem \ref{proto-quad-cogroup} is as follows: we first compute, in several steps,
the symmetric bifunctor $T_{11}(cr_2U)$. The result is then used to split up the map $\hat{H}$ in a given quadratic $\T$-module into the two maps $H_1$ and $H_2$ occuring in Theorem \ref{proto-quad-cogroup}, and to translate the properties of the map 
$\hat{H}$ in terms of $H_1$ and $H_2$, which leads to the relations $(T1)$-$(T6)$.

As a main tool  we need some elementary facts about augmentation ideals of group rings.

\subsection{Augmentation ideals: recollections and action of the linearization functor}
 Recall that the augmentation ideal $IG$ of a group $G$ is the kernel of the augmentation map $\epsilon: \mathbb{Z}[G] \to \mathbb{Z}$  of the group ring $\mathbb{Z}[G]$. $IG$ is a free $\mathbb{Z}$-module generated by the elements $g-1$ for $g \in G\setminus \{1\}$. For a subset $H$ of $G$ we denote by $IH$ the subgroup of $IG$ generated by the elements of the form $h-1$ for $h \in H$.
 The key ideas of this section are to use the natural isomorphism $\Xi: I(\T(E,X)) \to U(X)$ such that $\Xi(f-1) =f$ and the trivial but useful formula $a.b-1=(a-1)+(b-1)+(a-1).(b-1)$ in $IG$.
Moreover, we need the following elementary result which is wellknown.

\begin{prop} \label{prop-cog-2}
For $G \in Gr$ we have a natural isomorphism of abelian groups:
$$\theta: IG / (IG) ^2\xrightarrow{\simeq} G/[G,G]$$
such that for $g \in G$, $\theta(\overline{g-1})=\overline{g}$.
\end{prop}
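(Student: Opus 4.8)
The goal is to establish a natural isomorphism $\theta\colon IG/(IG)^2 \xrightarrow{\simeq} G/[G,G]$ with $\theta(\overline{g-1}) = \overline{g}$, so the plan is to construct maps in both directions and check they are mutually inverse homomorphisms.

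First I would verify that $\theta$ is a well-defined homomorphism. The abelian group $IG$ is free on the elements $g-1$ for $g \in G\setminus\{1\}$, so I would define a map $IG \to G/[G,G]$ on these generators by $g-1 \mapsto \overline{g}$, extending $\mathbb{Z}$-linearly; since the target is abelian this is a legitimate group homomorphism. The key computation is the behaviour on products: using the identity $ab-1 = (a-1) + (b-1) + (a-1)(b-1)$ in $IG$ (the ``trivial but useful formula'' highlighted in the preceding section), the element $(a-1)(b-1)$ lies in $(IG)^2$, so modulo $(IG)^2$ one gets $\overline{ab-1} = \overline{a-1} + \overline{b-1}$. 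This shows that the composite $IG \to G/[G,G]$ sends $(IG)^2$ to zero, hence factors through $\theta\colon IG/(IG)^2 \to G/[G,G]$, and simultaneously shows $\theta$ is compatible with the group structures.

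Next I would construct the inverse. I would define $\psi\colon G \to IG/(IG)^2$ by $\psi(g) = \overline{g-1}$. The same product identity shows $\psi(ab) = \overline{ab-1} = \overline{a-1} + \overline{b-1} = \psi(a) + \psi(b)$, so $\psi$ is a homomorphism into an abelian group; therefore it kills the commutator subgroup $[G,G]$ and descends to $\overline{\psi}\colon G/[G,G] \to IG/(IG)^2$. Checking $\theta\,\overline{\psi} = \mathrm{id}$ and $\overline{\psi}\,\theta = \mathrm{id}$ is then immediate on generators, since both composites fix the classes $\overline{g}$ and $\overline{g-1}$ respectively. Naturality in $G$ follows because both $\theta$ and its inverse are defined by formulas ($g-1 \mapsto \overline{g}$ and $g \mapsto \overline{g-1}$) that are manifestly compatible with group homomorphisms $G \to G'$.

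Since this is a well-known classical fact, I do not expect any genuine obstacle; the only point requiring a moment's care is confirming that $(a-1)(b-1) \in (IG)^2$ so that the product formula collapses correctly modulo $(IG)^2$, and that $\overline{1-1} = \overline{0}$ is consistent with $\overline{1} = \overline{e}$ being the identity of $G/[G,G]$. I would therefore present the argument compactly, emphasizing the product identity as the single computational engine driving both the well-definedness of $\theta$ and the construction of its inverse.
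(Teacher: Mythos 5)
Your proof is correct and is the standard argument; the paper itself omits any proof, simply labelling the proposition ``wellknown''. Your use of the identity $ab-1=(a-1)+(b-1)+(a-1)(b-1)$ to get well-definedness of $\theta$ in one direction and the homomorphism property of $g\mapsto \overline{g-1}$ in the other is exactly the device the paper flags just beforehand as its key tool, so your argument matches the intended one.
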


\subsection{Computation of $cr_2U(X,Y)$ }
\label{cogroups-1}
In a first step we compute the cross-effect of the functor $U$, which by the isomorphism $\Xi$ above comes down to computing the cross-effect of the composite functor $I\T(E,-)$ where $I: Gr \to Ab$ is given by taking the augmentation ideal. More generally, we have the following result.

\begin{prop} \label{cr2IF}
Let $F: \C \to Gr$ be a reduced functor. There is 
 an isomorphism of bifunctors $\T\times \T \to Ab$:
$$\Theta:  I(F(X \mid Y)) \oplus (IF(X) \otimes  IF(Y) )\oplus ( I(F(X \mid Y)) \otimes I(F(X) \times F(Y) ))   \xrightarrow{\simeq} cr_2(IF)(X, Y)$$
given by
$$\Theta(x-1,0,0)=x-1$$
$$\Theta(0,(y-1) \otimes (z-1),0)=(i_{1*}y-1). (i_{2*}z-1)$$
$$\Theta(0,0,(u-1) \otimes (v-1))=(u-1).(sv-1)$$
where $s: F(X) \times F(Y)  \to  F(X \vee Y)$ is the map defined by 
$s(x,y)=i_{1*}x . i_{2*}y$, \mbox{ the point 
  $.$} denoting the group structure on $F(X \vee Y)$.
 \end{prop}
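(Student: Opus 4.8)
The plan is to reduce everything to a decomposition of the augmentation ideal of a semidirect product of groups. Write $I\co Gr \to Ab$ for the augmentation ideal functor, so that $IF = I\circ F$ and $cr_2(IF)(X,Y) = \ker\big((IF(r_1),IF(r_2))^t\co IF(X\vee Y)\to IF(X)\oplus IF(Y)\big)$, the cross-effect now being a plain kernel since we are in $Ab$. First I would record the semidirect product decomposition of $G:=F(X\vee Y)$. Setting $N:=F(X\mid Y)=cr_2F(X,Y)$, $A:=F(X)$, $B:=F(Y)$ and $Q:=A\times B$, the map $p:=(F(r_1),F(r_2))\co G\to Q$ is a surjection with kernel $N$ (normal in $G$ by the Proposition following the definition of cross-effects), and $s\co Q\to G$, $s(x,y)=i_{1*}x\cdot i_{2*}y$, is a section: indeed $F(r_1)\,i_{2*}=F(r_1i_2)=F(0)$ is trivial because $F$ is reduced, and symmetrically, so $p\circ s=\mathrm{id}_Q$. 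Hence $G=N\rtimes s(Q)$ is an internal semidirect product and $(n,q)\mapsto n\cdot s(q)$ is a bijection $N\times Q\xrightarrow{\simeq}G$.

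The heart of the argument is the induced abelian group decomposition of $IG$. Since $N\times Q\to G$ is a bijection of pointed sets sending the unit to the unit, it yields an isomorphism of abelian groups $\mathbb{Z}[G]\cong \mathbb{Z}[N]\otimes\mathbb{Z}[Q]$ (only the additive structure is used; the twist in the multiplication is irrelevant) under which the augmentation $\epsilon_G$ corresponds to $\epsilon_N\otimes\epsilon_Q$. Writing $\mathbb{Z}[N]=\mathbb{Z}\oplus IN$ and $\mathbb{Z}[Q]=\mathbb{Z}\oplus IQ$ and deleting the common $\mathbb{Z}$-summand therefore gives a natural splitting $IG\cong \iota_*(IN)\oplus s_*(IQ)\oplus(IN\otimes IQ)$, where $\iota\co N\hookrightarrow G$ is the inclusion and the summands are spanned respectively by $n-1$, by $s(q)-1$, and --- using the identity $ab-1=(a-1)+(b-1)+(a-1)(b-1)$ from the previous subsection --- by the products $(\iota(n)-1)(s(q)-1)$. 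Applying the analogous decomposition once more to $IQ=I(A\times B)\cong IA\oplus IB\oplus(IA\otimes IB)$, with $IA\otimes IB$ spanned by $(i_{1*}a-1)(i_{2*}b-1)$, exhibits all the generators appearing in the definition of $\Theta$.

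It then remains to identify the kernel. Computing $I(p)$ on the three summands of $IG$ gives $I(p)(\iota(n)-1)=0$, $I(p)(s(q)-1)=q-1$ and $I(p)\big((\iota(n)-1)(s(q)-1)\big)=0$, so $I(p)$ annihilates $\iota_*(IN)\oplus(IN\otimes IQ)$ and maps $s_*(IQ)$ isomorphically onto $IQ$. Since $(IF(r_1),IF(r_2))$ is $I(p)$ followed by the projection $IQ\twoheadrightarrow IA\oplus IB$ with kernel $IA\otimes IB$, its kernel is exactly $\iota_*(IN)\oplus(IN\otimes IQ)\oplus s_*(IA\otimes IB)$. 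Matching these three summands with their spanning sets shows that this kernel coincides with the image of $\Theta$ and that $\Theta$ is precisely the resulting isomorphism; naturality in $X$ and $Y$ is automatic because every map involved ($i_k$, $r_k$, $s$, $\iota$, $p$) is natural and $I$ and $\otimes$ are functorial.

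The main obstacle, and the only place demanding care, is the passage in the second step from the bijection $N\times Q\cong G$ to the explicit additive splitting of $IG$: one must check that the multiplicative twist of the semidirect product plays no role at the level of abelian groups, that the isomorphism $\mathbb{Z}[G]\cong\mathbb{Z}[N]\otimes\mathbb{Z}[Q]$ intertwines the augmentations, and above all that the three summands are correctly identified with the stated generators via the product formula $ab-1=(a-1)+(b-1)+(a-1)(b-1)$. Once this bookkeeping is in place, the identification of the cross-effect as a kernel and the verification that $\Theta$ is an isomorphism are routine.
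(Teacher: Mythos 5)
Your proof is correct and follows essentially the same route as the paper: the paper's Lemma \ref{prop-cog-1} on unique decompositions $g=h_gk_g$ is precisely your additive isomorphism $\mathbb{Z}[G]\cong\mathbb{Z}[N]\otimes\mathbb{Z}[Q]$ induced by the bijection $(n,q)\mapsto n\cdot s(q)$, applied twice, and the only cosmetic difference is that you identify $cr_2(IF)(X,Y)$ as the kernel of $(IF(r_1),IF(r_2))^t$ by computing $I(p)$ on each summand, whereas the paper passes to the quotient description (\ref{cr2asquot}). One small caveat: since $s$ is only a section of pointed sets and not a group homomorphism, $F(X\vee Y)$ is not literally an internal semidirect product $N\rtimes s(Q)$; but as you yourself observe, only the underlying pointed bijection $N\times Q\to G$ is used, so this does not affect the argument.
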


The proof requires the following elementary fact.

\begin{lm} \label{prop-cog-1}
Let $G$ be a group and $H, K$ be two subsets of $G$ containing $1$ such that  each $g \in G$ admits a unique decomposition $g=h_gk_g$ with $h_g \in H$ and $k_g \in K$. Then the map:
$$\phi: IH \oplus IK \oplus (IH \otimes IK) \to IG$$
given by
$\phi(h-1,0,0)=h-1$, $\phi(0, k-1, 0)=k-1$ and $\phi(0,0,(h'-1) \otimes (k'-1))=(h'-1).(k'-1)$
is an isomorphism of $\mathbb{Z}$-modules.
\end{lm}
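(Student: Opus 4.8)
The plan is to reduce the statement to an observation about free $\mathbb{Z}$-modules: I would exhibit explicit bases on source and target and check that $\phi$ becomes unitriangular with respect to them, so that an inverse can be written down by hand.

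First I would record the relevant free module structures. Since $H$ contains $1$, the elements $h-1$ for $h \in H\setminus\{1\}$ form a \emph{subset} of the canonical $\mathbb{Z}$-basis $\{g-1 : g \in G\setminus\{1\}\}$ of $IG$; hence $IH$ is free on $H\setminus\{1\}$, and likewise $IK$ is free on $K\setminus\{1\}$. Consequently the source is a free $\mathbb{Z}$-module with basis the disjoint union of $H\setminus\{1\}$, $K\setminus\{1\}$ and $(H\setminus\{1\})\times(K\setminus\{1\})$, via $e_h=(h-1,0,0)$, $f_k=(0,k-1,0)$, $g_{h,k}=(0,0,(h-1)\otimes(k-1))$. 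On the target side I would use the hypothesis: the assignment $(h,k)\mapsto hk$ is a bijection $(H\times K)\setminus\{(1,1)\}\xrightarrow{\sim}G\setminus\{1\}$, which partitions the canonical basis of $IG$ into the three families $\{h-1\}_{h\neq 1}$ (images of pairs $(h,1)$), $\{k-1\}_{k\neq 1}$ (images of $(1,k)$) and $\{hk-1\}_{h,k\neq 1}$ (images of $(h,k)$ with both entries nontrivial).

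The core is then a one-line computation. Using the identity $(h-1)(k-1)=(hk-1)-(h-1)-(k-1)$ in $\mathbb{Z}[G]$, together with the fact that the unique decomposition of $hk$ is exactly $(h,k)$, I get $\phi(e_h)=h-1$, $\phi(f_k)=k-1$ and $\phi(g_{h,k})=(hk-1)-(h-1)-(k-1)$. Ordering the bases so that the $H$- and $K$-families precede the product family, $\phi$ is block upper triangular with identity diagonal blocks; equivalently, the assignment $h-1\mapsto e_h$, $k-1\mapsto f_k$, $hk-1\mapsto g_{h,k}+e_h+f_k$ defines a two-sided inverse, so $\phi$ is an isomorphism of $\mathbb{Z}$-modules.

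I expect essentially no obstacle here: the only point that truly uses the hypothesis, and hence deserves explicit verification, is that the three target families are pairwise disjoint and jointly exhaust $G\setminus\{1\}$ — this is precisely the existence and uniqueness of the decomposition $g=h_gk_g$. Everything else is bookkeeping with bases. This lemma then feeds into the proof of Proposition \ref{cr2IF} by applying it to $G=F(X\vee Y)$ with $H=i_{1*}F(X)$ and $K=i_{2*}F(Y)$ (whose unique-factorization property comes from the retractions $r_1,r_2$) and transporting the result through the isomorphism $\Xi$.
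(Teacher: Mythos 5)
Your proof is correct and takes essentially the same route as the paper: the paper simply writes down the inverse $\psi(g-1)=(h_g-1,\,k_g-1,\,(h_g-1)\otimes(k_g-1))$ directly from the unique decomposition $g=h_gk_g$, which is exactly the inverse your basis/unitriangularity bookkeeping produces. The one point the paper also flags explicitly --- that the unique decomposition of $h\in H$ is $h\cdot 1$ and of $k\in K$ is $1\cdot k$ --- is the same disjointness check you correctly identify as the only place the hypothesis is genuinely used.
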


\begin{proof}
We define 
$$\psi: IG \to IH \oplus IK \oplus (IH \otimes IK) $$
by $$\psi(g-1)=(h_g-1,k_g-1,(h_g-1) \otimes (k_g-1)).$$ One readily checks that $\psi$ is the inverse of $\phi$, using the fact that the unique decomposition of $h \in H \subset G$ (resp. $k \in K \subset G$) in $HK$ is $h.1$ (resp. $1.k$).
\end{proof}

\noindent{\it Proof of Proposition \ref{cr2IF}.}
For $X, Y \in \C$ we have a short exact sequence:

$$1 \to F(X \mid Y) \to F(X \vee Y) \xrightarrow{(r_{1*}, r_{2*})^t} F(X) \times F(Y) \to 1.$$

Since $i_{1*}$ and $i_{2*}$ are group morphisms we have $s(1,1)=1$ and since $F$ is reduced, $s$ is a set-theoretic section of $(r_{1*}, r_{2*})^t$ natural in $X$ and $Y$.
Hence the subsets $H=F(X \mid Y)$ and $K=s(F(X) \times F(Y))$ of the group $F(X \vee Y)$ satisfy the conditions of Lemma \ref{prop-cog-1} and we have an isomorphism of bifunctors:
$$ I(F(X \vee Y)) \simeq I(F(X \mid Y)) \oplus I(s(F(X) \times F(Y))) \oplus ( I(F(X \mid Y)) \otimes I(s(F(X) \times F(Y)))).$$

Since the map $s$ is a bijection of $F(X) \times F(Y)$ onto the set $K$, applying Proposition \ref{prop-cog-1} to the product group  $F(X) \times F(Y)$ provides an isomorphism of bifunctors:
\begin{eqnarray*}
I(F(X \vee Y)) &\simeq& I(F(X \mid Y)) \oplus (IF(X) \oplus IF(Y) \oplus IF(X) \otimes  IF(Y) )\\
&&\oplus ( I(F(X \mid Y)) \otimes I(F(X) \times F(Y) ))
\end{eqnarray*}
Now the assertion follows from  (\ref{cr2asquot}).\hfill$\Box$\medskip

The following application of Proposition \ref{cr2IF} illustrates the power of the linearization functor $T_1$.

\begin{prop} \label{ideal-lin}
Let $F:\C \to Gr$ be a reduced functor. Then we have a natural isomorphism:
$$\theta^F: T_1(IF) \to T_1F$$ 
defined by $\theta^F(t_1(x-1))=t_1(x)$ for $X \in \C$ and $x \in F(X)$.
\end{prop}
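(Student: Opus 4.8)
The plan is to construct $\theta^F$ together with an explicit inverse, reducing the whole statement to a single vanishing lemma that is fed by the cross-effect computation of Proposition \ref{cr2IF}.

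First I would build $\theta^F$ formally. The $\mathbb Z$-linear extension of the unit $t_1\co F\to T_1F$ restricts to a natural homomorphism $\lambda\co IF\to T_1F$ with $\lambda(x-1)=t_1(x)$ for $x\in F(X)$; this is well defined since $IF(X)$ is the free abelian group on the $x-1$, and $t_1(1)=0$ because $T_1F$ is abelian (Lemma \ref{lin-ab}). As $T_1F$ is linear, Proposition \ref{Tn-prop} shows $\lambda$ factors uniquely through the unit $t_1\co IF\to T_1(IF)$, giving a natural $\theta^F\co T_1(IF)\to T_1F$ with $\theta^F(t_1(x-1))=t_1(x)$. So existence and naturality of $\theta^F$ are automatic, and the content lies entirely in invertibility.

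To produce the inverse I would define $\mu_X\co F(X)\to T_1(IF)(X)$ by $\mu_X(x)=t_1(x-1)$ and check it is a group homomorphism. Using the formula $ab-1=(a-1)+(b-1)+(a-1)(b-1)$ in $IF(X)$, one gets $\mu_X(ab)-\mu_X(a)-\mu_X(b)=t_1\big((a-1)(b-1)\big)$, so homomorphy is equivalent to the key lemma $(IF(X))^2\subseteq\ker\big(t_1\co IF(X)\to T_1(IF)(X)\big)=\mathrm{Im}(S_2^{IF})$ (the equality of kernel and image being Proposition \ref{Tn-coker}). This is where I expect the only real work. I would realize $(a-1)(b-1)$ as a value of $S_2^{IF}$: by Proposition \ref{cr2IF} the element $w=(i_{1*}a-1)(i_{2*}b-1)=\Theta(0,(a-1)\otimes(b-1),0)$ lies in $cr_2(IF)(X,X)$ — concretely $IF(r_1)(w)=0$ and $IF(r_2)(w)=0$ since $r_1i_2$ and $r_2i_1$ are zero maps and $F$ is reduced. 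Since $S_2^{IF}=IF(\nabla^2)\,\iota^2_{(1,2)}$ and $IF(\nabla^2)$ is induced by a group homomorphism, hence multiplicative on augmentation ideals, the relations $\nabla^2 i_1=\nabla^2 i_2=1_X$ yield $S_2^{IF}(w)=(a-1)(b-1)$. Thus $(IF(X))^2\subseteq\mathrm{Im}(S_2^{IF})$ and $\mu_X$ is a homomorphism.

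Finally, $\mu\co F\to T_1(IF)$ is natural with linear target, so Proposition \ref{Tn-prop} factors it as $\bar\mu\,t_1$ for a natural $\bar\mu\co T_1F\to T_1(IF)$ with $\bar\mu(t_1(x))=t_1(x-1)$. Then $\theta^F\bar\mu(t_1(x))=t_1(x)$ and $\bar\mu\theta^F(t_1(x-1))=t_1(x-1)$, and these identities suffice because the units $t_1$ for $F$ and for $IF$ are epimorphisms (Proposition \ref{Tn-prop}): the $t_1(x)$ generate $T_1F(X)$, while the $t_1(x-1)$ generate $T_1(IF)(X)$ as the $x-1$ generate $IF(X)$. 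Hence $\theta^F$ is a natural isomorphism.
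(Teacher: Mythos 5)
Your proof is correct, but it takes a genuinely different route from the paper's. The paper computes $T_1(IF)(X)$ head-on as the cokernel of $S_2^{IF}\Theta$, using the full three-summand decomposition of $cr_2(IF)(X,X)$ from Proposition \ref{cr2IF} to identify $\ker(t_1)$ exactly as $\mathrm{Im}(I(S_2^F))+(IF(X))^2$, and then invokes the classical isomorphism $IG/(IG)^2\cong G^{ab}$ (Proposition \ref{prop-cog-2}) together with $T_1F\cong (T_1F)^{ab}$ (Lemma \ref{lin-ab}) to finish. You instead build $\theta^F$ and a candidate inverse $\bar\mu$ purely from the universal property of $t_1$ and check they are mutually inverse on generators; the only computational input you need is the \emph{containment} $(IF(X))^2\subseteq\mathrm{Im}(S_2^{IF})$, which is exactly one of the three computations appearing in the paper's proof (the image of the summand $IF(X)\otimes IF(X)$), and for which you correctly verify by hand that $(i_{1*}a-1)(i_{2*}b-1)$ lies in $cr_2(IF)(X,X)$ and maps to $(a-1)(b-1)$ under $S_2^{IF}$. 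Your argument is more economical in that it needs neither Proposition \ref{prop-cog-2} nor the full strength of the isomorphism $\Theta$; what it gives up is the explicit presentation of $T_1(IF)(X)$ as $IF(X)/(\mathrm{Im}(I(S_2^F))+(IF(X))^2)$, which the paper's computation yields as a by-product. Both proofs establish the same formula $\theta^F(t_1(x-1))=t_1(x)$, yours by construction and the paper's by tracing the chain of isomorphisms.
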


\begin{proof}
By the isomorphism $\Theta$ in Proposition \ref{cr2IF} we have
 $$T_1(IF)(X)=coker(cr_2(IF)(X,X) \xrightarrow{S_2^{IF(-)}} IF(X))$$
 $$=coker( I(F(X \mid X)) \oplus (IF(X) \otimes  IF(X) )\oplus ( I(F(X \mid X)) \otimes I(F(X) \times F(X) ) \xrightarrow{S_2^{IF(-)} \Theta} IF(X))$$
 $$S_2^{IF(-)}\Theta(x-1,0,0)=S_2^{IF(-)}(x-1)=I(S_2^F)(x-1)$$
$$S_2^{IF(-)}\Theta(0,(y-1) \otimes (z-1),0)=S_2^{IF(-)}((i_{1*}y-1). (i_{2*}z-1))=IF(\nabla^2)((i_{1*}y-1). (i_{2*}z-1))$$
$$=(\nabla^2_*i_{1*}y-1). (\nabla^2_*i_{2*}z-1)
=(y-1).(z-1) \in (IF(X))^2$$
$$S_2^{IF(-)}\Theta(0,0,(u-1) \otimes (v-1))=IF(\nabla^2)((u-1).(sv-1))=(\nabla^2 u-1).(\nabla^2sv-1) \in (IF(X))^2.$$
By Proposition \ref{prop-cog-2} we obtain:
$$T_1(IF)(X)=IF(X)/(Im(I(S^F_2))+(IF(X))^2) \simeq F(X)^{ab}/ ab(Im(S^F_2))$$
$$\simeq (F(X)/Im(S^F_2))^{ab}\simeq (T_1F(X))^{ab} \simeq T_1F(X) \mbox{\ by Proposition \ref{lin-ab}}$$
where $-^{ab}: Gr \to Ab$ is the abelianization functor and $ab: F(X) \to F(X)^{ab}$. 
\end{proof}

\subsection{Computation of $T_{11}(cr_2U)(X,Y)$ }
In order to compute $T_{11}(cr_2U)(E,E)$ as a right $\Lambda$-module we apply the results of  section \ref{cogroups-1} to the reduced functor $\T(E,-): \C \to Gr$.
\begin{nota}
In the sequel we use the following abbreviations:
$$\T(E,X \mid Y):= cr_2(\T(E,-)) (X,Y)\:;$$
$$T_1\T(E,X):=T_1( \T(E,-))(X)\:;\quad T_1I\T(E,X):=T_1(I \T(E,-))(X);$$
$$T_{11}  \T(E,X \mid Y):=T_{11}( cr_2(\T(E,-)))(X,Y)\:; \quad T_{11}I \T(E,X \mid Y):=T_{11}(I cr_2(\T(E,-)))(X,Y).$$
\end{nota}

\begin{prop} \label{Ypsilonprop} There is 
a binatural isomorphism:
\begin{equation}  \label{Upsilon}
 \Upsilon : T_{11} \T(E,X\mid Y) \oplus (  T_1\T(E,X)  \otimes T_1\T(E,Y) ) \xrightarrow{\simeq}T_{11}cr_2(I\T(E,-))(X, Y)
\end{equation}
such that
$$\Upsilon(t_{11} \xi, t_1f \otimes t_1g)=t_{11} (\iota^2_{(1,2)})^{-1}((\xi-1)+(i_1f-1) \bullet (i_2g-1)).$$ 
Moreover, the right action of $\Lambda$ on
$T_{11}cr_2(I\T(E,-))(X, Y)$ induced by precomposition in $\T$ is given on the above components by
\begin{equation}\label{1stpiece} \Upsilon(t_{11} \xi,0).\alpha=\Upsilon(t_{11}(\xi \alpha),0)
\end{equation}
\begin{equation} \label{2ndpiece}
\Upsilon(0,t_1f \otimes t_1g) . \alpha=\Upsilon(t_{11}((f\vee g)h(\alpha)), t_1(f\alpha) \otimes t_1(g \alpha))
\end{equation}
for $\alpha \in \T(E,E)$.
\end{prop}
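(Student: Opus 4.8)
The plan is to assemble the isomorphism $\Upsilon$ from the decomposition of the cross-effect $cr_2(I\T(E,-))$ established in Proposition \ref{cr2IF}, and then to apply the bilinearization functor $T_{11}$ termwise. First I would apply Proposition \ref{cr2IF} to the reduced functor $F=\T(E,-)\co \C\to Gr$; this gives a natural isomorphism
$$\Theta\co I(\T(E,X\mid Y))\oplus (I\T(E,X)\otimes I\T(E,Y))\oplus (I(\T(E,X\mid Y))\otimes I(\T(E,X)\times \T(E,Y)))\xrightarrow{\simeq} cr_2(I\T(E,-))(X,Y).$$
Now I would apply $T_{11}$ to this decomposition. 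The key observation is that $T_{11}$ is an additive functor on bifunctors, so it distributes over the direct sum, and the third summand — being a tensor product of a bireduced bifunctor with a bifunctor that is already a product $\T(E,X)\times \T(E,Y)$ — should bilinearize to something that, after the identification $I\T\cong U$ via $\Xi$ and an application of Example \ref{TFoG} together with Proposition \ref{ideal-lin}, contributes only to the piece $T_1\T(E,X)\otimes T_1\T(E,Y)$. Concretely I expect the first summand $I(\T(E,X\mid Y))$ to bilinearize to $T_{11}\T(E,X\mid Y)$ (using $\theta^{\T(E,-)}$ of Proposition \ref{ideal-lin} applied to the bifunctor variable-wise), the second summand $I\T(E,X)\otimes I\T(E,Y)$ to bilinearize to $T_1\T(E,X)\otimes T_1\T(E,Y)$ by Example \ref{TFoG} and Proposition \ref{ideal-lin}, while the mixed third summand dies or merges into these two after bilinearization because it involves a factor $I(\T(E,X)\times\T(E,Y))$ whose diagonal/product structure is killed by $T_1$ in each variable. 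The explicit formula for $\Upsilon$ on generators then follows by tracking $\Theta$ through $t_{11}$ and the formula $\Theta(x-1,0,0)=x-1$, $\Theta(0,(y-1)\otimes(z-1),0)=(i_{1*}y-1)\cdot(i_{2*}z-1)$, yielding
$$\Upsilon(t_{11}\xi, t_1f\otimes t_1g)=t_{11}(\iota^2_{(1,2)})^{-1}((\xi-1)+(i_1f-1)\bullet(i_2g-1)),$$
where $\bullet$ is the product in $\T(E,X\vee Y)$.

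For the second part, computing the right $\Lambda$-action induced by precomposition, I would unwind how $\alpha\in\T(E,E)=\C(E,E)$ acts on a generator. Precomposition with $\alpha$ sends a morphism $\xi$ to $\xi\alpha$, which on the first component \eqref{1stpiece} is immediate: $\Upsilon(t_{11}\xi,0)\cdot\alpha=\Upsilon(t_{11}(\xi\alpha),0)$, since the inclusion piece $I(\T(E,X\mid Y))$ is strictly functorial in the source variable $E$. The subtle formula is \eqref{2ndpiece}: here the element $(i_1f-1)\bullet(i_2g-1)$ precomposed with $\alpha$ becomes $(i_1f\alpha-1)\bullet(i_2g\alpha-1)$ in $\T(E,X\vee Y)$, but the map $\alpha$ is \emph{not} a cogroup morphism in general, so $(i_1\bullet i_2)\alpha\neq i_1\alpha\bullet i_2\alpha$; the deviation is precisely measured by the map $h(\alpha)=(\iota^2_{(1,2)})^{-1}(((i_1\bullet i_2)\alpha)\bullet(i_2\alpha)^{-1}\bullet(i_1\alpha)^{-1})$ introduced just before Theorem \ref{proto-quad-cogroup}. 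The plan is to expand $(i_1f\alpha-1)\bullet(i_2g\alpha-1)$ using the additivity relation $ab-1=(a-1)+(b-1)+(a-1)(b-1)$ in the group ring, compare it with $(i_1f-1)\bullet(i_2g-1)$ precomposed, and collect the correction term; the correction, after pushing through $(f\vee g)_*$ and applying $t_{11}$, should be exactly $t_{11}((f\vee g)h(\alpha))$ living in the $T_{11}\T(E,X\mid Y)$ summand, while the ``main'' term gives $t_1(f\alpha)\otimes t_1(g\alpha)$.

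The main obstacle I anticipate is the bookkeeping in the second part: correctly identifying that the non-cogroup-morphism defect of $\alpha$ contributes to the \emph{first} summand $T_{11}\T(E,X\mid Y)$ rather than staying in the tensor piece, and verifying that this contribution is precisely $(f\vee g)h(\alpha)$ after all the cross-effect projections and the passage to $T_{11}$. This requires care in computing in the augmentation ideal $I(\T(E,X\vee Y))$ modulo the relations defining $T_{11}cr_2$, keeping track of which quadratic terms $(a-1)(b-1)$ survive and which are annihilated by bilinearization. A secondary subtlety is checking binaturality of $\Upsilon$ in both $X$ and $Y$, which should follow formally from naturality of $\Theta$, of $\Xi$, and of the units $t_1,t_{11}$, but must be stated. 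The first part (constructing $\Upsilon$ and its defining formula) I expect to be essentially routine once $\Theta$ and Proposition \ref{ideal-lin} are in hand.
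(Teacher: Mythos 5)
Your proposal follows essentially the same route as the paper's proof: apply Proposition \ref{cr2IF} with $F=\T(E,-)$, kill the mixed third summand under $T_{11}$ because it decomposes into pieces diagonalizable in $X$ or in $Y$ (Proposition \ref{T1BD=0}), identify the surviving summands via Proposition \ref{ideal-lin} and Example \ref{TFoG}, and for the action compute the deviation $\omega=((i_1f\bullet i_2g)\alpha)\bullet(i_2g\alpha)^{-1}\bullet(i_1f\alpha)^{-1}=(f\vee g)_*h(\alpha)$, with the residual cross term $(\omega-1)\bullet((i_1f\alpha)\bullet(i_2g\alpha)-1)$ landing in the summand annihilated by $t_{11}$. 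The one phrase to tighten is the claim that precomposition sends $(i_1f-1)\bullet(i_2g-1)$ to $(i_1f\alpha-1)\bullet(i_2g\alpha-1)$ — it does not (that is exactly the point), but your subsequent correction via $h(\alpha)$ shows you already know this.
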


\begin{proof} Taking $F=\T(E,-)$ in Proposition \ref{cr2IF} gives an isomorphism of bifunctors
$$I(\T(E,X\mid Y)) \oplus(  I(\T(E,X))  \otimes  I(\T(E,Y)) ) \oplus (I(\T(E,X \mid Y)) \otimes I(\T(E,X) \times \T(E,Y)))$$
$$  \xrightarrow{\Theta} cr_2(I\T(E,-))(X, Y).$$
We have the following binatural isomorphisms 
\begin{eqnarray*}
I(\T(E,X \mid Y)) \otimes I (\T(E,X) \times \T(E,Y)) &\simeq&  (I(\T(E,X \mid Y)) \otimes I( \T(E,X)))\\
&&\oplus  (I(\T(E,X \mid Y)) \otimes I( \T(E,Y))) \\
&& \oplus  (I(\T(E,X \mid Y)) \otimes I( \T(E,X)) \otimes I( \T(E,Y))).
\end{eqnarray*}
Thus
$I(\T(E,X \mid Y)) \otimes I (\T(E,X) \times \T(E,Y))$ is a sum of  bifunctors which are diagonalizable as functors in $X$ or in $Y$. So
Proposition \ref{T1BD=0} implies that $T_{11}(I(\T(E,X \mid Y)) \otimes I(\T(E,X) \times \T(E,Y)))=0$.
  Using Proposition \ref{ideal-lin} and Example \ref{TFoG} the isomorphism $T_{11}(\Theta)$ thus becomes the desired isomorphism $\Upsilon$.

The structure of right $\Lambda$-module on $T_{11} I\T(E,X\mid Y)$ is induced by the inclusion $\T(E,X\mid Y) \to \T(E, X \vee Y) $; this implies relation (\ref{1stpiece}). To prove relation (\ref{2ndpiece})
let  $f \in   \T(E,X) $ and $g \in \T(E,Y) $. Then:
\begin{eqnarray*}
\iota^2_{(1,2)} \Theta (0, (f-1) \otimes (g-1), 0).\alpha&=&((i_{1}f-1) \bullet (i_{2}g-1)). \alpha\\
&=&((i_{1}f\bullet i_{2}g-1)-(i_{1}f-1)-(i_{2}g-1)). \alpha\\
&=&((i_{1}f\bullet i_{2}g)\circ \alpha-1)-(i_{1}f\circ \alpha-1)-(i_{2}g\circ \alpha-1)\\
&=&(\omega \bullet (i_{1}f  \alpha) \bullet (i_{2}g \alpha)-1)-(i_{1}f \alpha-1)-(i_{2}g \alpha-1)
\end{eqnarray*}
where $\omega=((i_{1}f\bullet i_{2}g)\circ \alpha) \bullet (i_{2}g \circ \alpha)^{-1} \bullet (i_{1}f  \circ \alpha)^{-1}.$ Hence:

$$\begin{array}{lll}
\iota^2_{(1,2)} \Theta (0, (f-1) \otimes (g-1), 0). \alpha\\
= (\omega-1)+((i_{1}f  \alpha) \bullet (i_{2}g  \alpha)-1)+(\omega-1) \bullet ((i_{1}f  \alpha) \bullet (i_{2}g \alpha)-1)
-(i_{1}f \alpha-1)\\ \phantom{=} -(i_{2}g \alpha-1)\\
= (\omega-1)+(i_{1}f  \alpha-1) \bullet (i_{2}g  \alpha-1)
+(\omega-1) \bullet ((i_{1}f  \alpha) \bullet (i_{2}g \alpha)-1).
\end{array}$$
Now observe that  
\begin{equation}\label{vorschaufeln}
(i_1f) \bullet (i_2g) = (f \vee g)_*(i_1  \bullet  i_2).
\end{equation}
Thus $\omega=(f\vee g)_*((i_{1} \bullet i_{2} )\alpha) \bullet 
(f\vee g)_*(i_{2}\alpha)^{-1}  \bullet  (f\vee g)_*(i_{1}\alpha)^{-1}
=(f\vee g)_*(h(\alpha))$.
It follows that
$\omega=\T(E,f\mid g)(h(\alpha))\in \T(E,X\mid Y)$. Hence 
$$(\omega-1) \bullet ((i_{1}f  \alpha) \bullet (i_{2}g \alpha)-1) \in \Theta\big( \T(E,X \mid Y) \otimes I(\T(E,X) \times \T(E,Y))\big),$$ and we deduce that
\begin{eqnarray*}
\Upsilon(0,t_1f \otimes t_1g) . \alpha&=&t_{11}(\Theta(0,(f-1) \otimes (g-1), 0). \alpha)\\
&=&\Upsilon(t_{11}((f\vee g)h(\alpha)), t_1(f\alpha) \otimes t_1(g \alpha)),
\end{eqnarray*}
as desired.
\end{proof}

Using the natural  isomorphism  $\Xi: I(\T(E,X)) \to U(X)$ we finally obtain:

\begin{cor} \label{T11crXi}
We have a natural isomorphism of right $\Lambda$-modules:
$$T_{11}cr_2(\Xi) \Upsilon: T_{11} \T(E,X\mid Y) \oplus (  T_1\T(E,X) \otimes T_1\T(E,Y))  \rightarrow T_{11}(cr_2U)(X,Y)$$
such that:
$$T_{11}cr_2(\Xi) \Upsilon(t_{11} \xi, t_1f \otimes t_1g)=t_{11}( \xi + \rho^2_{(1,2)}(i_1f \bullet i_2g))$$
and where the action of $\Lambda$ on the domain of $T_{11}cr_2(\Xi) \Upsilon$ is given by:
$$(t_{11} \xi, t_1f \otimes t_1g). \alpha=(t_{11}((\xi \alpha) \bullet (f\vee g)h(\alpha)), t_1(f \alpha) \otimes t_1(g \alpha)).$$
\end{cor}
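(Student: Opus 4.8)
The plan is to obtain the stated isomorphism by transporting the isomorphism $\Upsilon$ of Proposition~\ref{Ypsilonprop} along the natural isomorphism $\Xi\co I\T(E,-)\to U$ recalled in section 8.1. First I would observe that since $cr_2$ and $T_{11}$ are functors, applying them to the natural isomorphism $\Xi$ yields a natural isomorphism of bifunctors $T_{11}cr_2(\Xi)\co T_{11}cr_2(I\T(E,-))\to T_{11}(cr_2U)$; composing with $\Upsilon$ then gives a natural isomorphism $T_{11}cr_2(\Xi)\,\Upsilon$ with exactly the asserted source and target. Moreover, on all three bifunctors the right $\Lambda$-action is induced by precomposition in $\T$, and $\Xi$ commutes with precomposition by naturality; hence $T_{11}cr_2(\Xi)$ is $\Lambda$-equivariant, so that it only remains to compute the explicit formula and to read off the induced $\Lambda$-action on the source.

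For the formula I would use two compatibilities: the kernel description of cross-effects, giving $\iota^2_{(1,2)}\,cr_2(\Xi)=\Xi_{X\vee Y}\,\iota^2_{(1,2)}$, and the naturality of the unit $t_{11}$, giving $T_{11}cr_2(\Xi)\,t_{11}=t_{11}\,cr_2(\Xi)$. Writing $z=(\iota^2_{(1,2)})^{-1}\bigl((\xi-1)+(i_1f-1).(i_2g-1)\bigr)$ for the element produced by $\Upsilon$, the task is to evaluate $\Xi_{X\vee Y}\bigl((\xi-1)+(i_1f-1).(i_2g-1)\bigr)$. Since $\Xi$ is only $\mathbb Z$-linear, I would first expand the ring product in $I\T(E,X\vee Y)$ as $(i_1f-1).(i_2g-1)=(i_1f\bullet i_2g-1)-(i_1f-1)-(i_2g-1)$, using that the group multiplication on $\T(E,X\vee Y)$ is $\bullet$, and then apply $\Xi(h-1)=h$ termwise; this yields $\Xi_{X\vee Y}\iota^2_{(1,2)}(z)=\xi+\bigl((i_1f\bullet i_2g)-i_1f-i_2g\bigr)$ in $U(X\vee Y)$.

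It then remains to recognize the bracketed term as $\iota^2_{(1,2)}\rho^2_{(1,2)}(i_1f\bullet i_2g)$. By the splitting~(\ref{ses1}) one has $\iota^2_{(1,2)}\rho^2_{(1,2)}=Id-i^2_{1*}r^2_{1*}-i^2_{2*}r^2_{2*}$, and since postcomposition is a group homomorphism (section 7) together with $r_1i_1=1,\ r_1i_2=0$ and symmetrically, one computes $r^2_{1*}(i_1f\bullet i_2g)=f$ and $r^2_{2*}(i_1f\bullet i_2g)=g$, whence $i^2_{1*}r^2_{1*}(i_1f\bullet i_2g)=i_1f$ and $i^2_{2*}r^2_{2*}(i_1f\bullet i_2g)=i_2g$. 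Injectivity of $\iota^2_{(1,2)}$ then gives $cr_2(\Xi)(z)=\xi+\rho^2_{(1,2)}(i_1f\bullet i_2g)$, and applying $t_{11}$ together with the compatibility above produces the claimed value of $T_{11}cr_2(\Xi)\,\Upsilon$. The induced $\Lambda$-action formula is obtained by adding the two cases~(\ref{1stpiece}) and~(\ref{2ndpiece}) of Proposition~\ref{Ypsilonprop} for a general element $(t_{11}\xi,t_1f\otimes t_1g)$, using that $t_{11}$ is a homomorphism of groups into an abelian target (by Lemma~\ref{lin-ab}), so that $t_{11}(\xi\alpha)+t_{11}((f\vee g)h(\alpha))=t_{11}\bigl((\xi\alpha)\bullet(f\vee g)h(\alpha)\bigr)$.

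The only genuinely delicate step is the second paragraph: one must resist treating $\Xi$ as multiplicative and instead expand the product in the group ring before linearizing, keeping careful track of which occurrence of ``$\bullet$'' denotes the group law on morphism sets and which denotes the addition induced after applying $t_{11}$. Everything else is a formal consequence of the functoriality and naturality already available.
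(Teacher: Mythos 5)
Your proposal is correct and follows essentially the same route as the paper: transport $\Upsilon$ from Proposition \ref{Ypsilonprop} along $T_{11}cr_2(\Xi)$, expand $(i_1f-1)\bullet(i_2g-1)$ in the augmentation ideal as $(i_1f\bullet i_2g-1)-(i_1f-1)-(i_2g-1)$ before applying the merely $\mathbb Z$-linear $\Xi$, and identify the result via the splitting (\ref{ses1}). The paper's proof is terser (it does not spell out the $\Lambda$-equivariance of $T_{11}cr_2(\Xi)$ or the summation of (\ref{1stpiece}) and (\ref{2ndpiece}) into the stated action formula), but your additional details, including the use of abelianness of the target to rewrite $t_{11}(\xi\alpha)+t_{11}((f\vee g)h(\alpha))$ as $t_{11}((\xi\alpha)\bullet(f\vee g)h(\alpha))$, are accurate.
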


\begin{proof}
It suffices to combine Proposition \ref{Ypsilonprop} with the isomorphism $T_{11}cr_2(\Xi)$. In fact,
\begin{eqnarray*}
T_{11}cr_2(\Xi) \Upsilon(t_{11} \xi, t_1f \otimes t_1g)&=&T_{11}cr_2(\Xi)t_{11} (\iota^2_{(1,2)})^{-1}((\xi-1)+(i_1f-1) \bullet (i_2g-1))\\
&=&t_{11} (\iota^2_{(1,2)})^{-1} \Xi((\xi-1)+(i_1f-1) \bullet (i_2g-1))\\
&=&t_{11} (\iota^2_{(1,2)})^{-1} \Xi((\xi-1)+(i_1f \bullet i_2g-1)-(i_1f-1)-(i_2g-1))\\
&=&t_{11} (\iota^2_{(1,2)})^{-1} (\xi+i_1f \bullet i_2g-i_1f-i_2g)\\
&=&t_{11}(\xi+\rho^2_{12}(i_1f \bullet i_2g)) \mathrm{\ by\ (\ref{ses1})}
\end{eqnarray*}

\end{proof}


\subsection{The action of the involution on $T_{11}(cr_2U)(X,Y)$} \label{involution}
Since $\hat{H}$ is a morphism of symmetric $\overline{\Lambda} \otimes \overline{\Lambda}$-modules we have to understand the action of the involution on the two components of $T_{11}(cr_2U)(X,Y)$ according to Corollary \ref{T11crXi}.

For $\xi\in \T(E,X\mid Y)$ we have the relation
$$\label{TUxi} \bar{T}^U t_{11}(\xi) = t_{11}T^{\T(E,-)} (\xi)$$
in $T_{11}(cr_2U)(X,Y)$ by Proposition \ref{crissymbif}.

For $f,g\in \T(E,X)$ we have
\begin{eqnarray*}
I(\tau) \iota_{(1,2)}^2\Theta (0,(f-1) \otimes (g-1),0)&=&I(\tau) ((i_1f-1) \bullet (i_2g-1))\\
&=&(i_2f-1) \bullet (i_1g-1)\\
&=&(i_2f \bullet i_1g-1)-(i_2f-1)-(i_1g-1)\\
&=&([i_2f, i_1g] \bullet i_1g \bullet i_2f-1)-(i_2f-1)-(i_1g-1)\\
&=& ([i_2f, i_1g]-1) + ( i_1g \bullet i_2f-1)+\\
&&([i_2f, i_1g]-1) \bullet ( i_1g \bullet i_2f-1)-(i_2f-1)-(i_1g-1)\\
&=&([i_2f, i_1g]-1) +  (i_1g-1) \bullet( i_2f-1)+\\
& &([i_2f, i_1g]-1) \bullet ( i_1g \bullet i_2f-1)
\end{eqnarray*}

But $[i_2f, i_1g]\in \T(E,X\mid Y)$, hence we obtain:
\begin{eqnarray*}
\bar{T}^U\Upsilon(0,t_1f\otimes t_1g) &=& t_{11}
(\iota_{(1,2)}^2)^{-1} I(\tau) \iota_{(1,2)}^2\Theta (0,(f-1) \otimes (g-1),0)\\
&=& \Upsilon(t_{11}[i_2f, i_1g],t_1g\otimes t_1f)
\end{eqnarray*}
Combining this relation with (\ref{TUxi}) and Corollary \ref{T11crXi} we obtain
\begin{equation}
\bar{T}^U t_{11} \big( \xi + \rho_{(1,2)}^2((i_1f)\bullet (i_2g)) \big) =  t_{11} \left(T^{\T(E,-)}(\xi) + [i_2f,i_1g] + \rho_{(1,2)}^2((i_1g)\bullet (i_2f)) \right)
\end{equation}


\subsection{The homomorphism $\hat{H}$}
By definition of a quadratic $\C$-module the map 
$${\hat{H}}: T_{11}(cr_2(U))(E,E)\otimes_{\Lambda}M_e \rightarrow M_{ee} $$ is a morphism of symmetric $\overline{\Lambda} \otimes \overline{\Lambda}$-modules. In the case of cogroups we have the following equivalent description of $\hat{H}$:

\begin{prop}\label{Hsplits}
For a theory of cogroups $\mathcal{T}$, the morphism of symmetric $\overline{\Lambda} \otimes \overline{\Lambda}$-modules $\hat{H}$ is equivalent to the following data:
\begin{enumerate}
\item a morphism of abelian groups:
$$H_1: M_e \to M_{ee}$$
\item a morphism of symmetric  $\overline{\Lambda} \otimes \overline{\Lambda}$-modules
$$H'_2: T_{11}\T(E,E\mid E) \otimes_{\Lambda} M_e \to M_{ee}$$
\end{enumerate}
satisfying the following relations for $\alpha \in \T(E,E)$ and $a \in M_e$:
\begin{equation}\label{H1alphaa}
H_1(\alpha a)=H'_2(t_{11}h(\alpha) \otimes a) + (\overline{\alpha} \otimes \overline{\alpha}) H_1(a)
\end{equation}
\begin{equation}\label{TH1}
TH_1(a)= H_1(a)+H'_2(t_{11}([i_{2}, i_{1}]) \otimes a).
\end{equation}
\end{prop}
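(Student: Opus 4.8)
The plan is to read off $H_1$ and $H'_2$ from $\hat H$ using the explicit description of the bifunctor $T_{11}(cr_2U)$ established in Corollary~\ref{T11crXi}, and then to reverse the construction. Evaluating that corollary at $X=Y=E$ exhibits $N:=T_{11}(cr_2U)(E,E)$ as the direct sum of $N_1:=T_{11}\T(E,E\mid E)$ and a copy $N_2$ of $\overline{\Lambda}\otimes\overline{\Lambda}$; since the isomorphism of Corollary~\ref{T11crXi} is binatural, both summands are $\overline{\Lambda}\otimes\overline{\Lambda}$-submodules for the bilinearization action of Corollary~\ref{T11-sym}, and $N_2$ is free of rank one generated by the image of $t_1(1)\otimes t_1(1)$. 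First I would identify this generator with $t_{11}(\mu')$, where $\mu'=(\iota^2_{(1,2)})^{-1}(\mu-i_1-i_2)$: in a theory of cogroups $\mu=i_1\bullet i_2$ and $r_k\mu=1$, so $\iota^2_{(1,2)}\rho^2_{(1,2)}(\mu)=\mu-i_1-i_2$ by (\ref{ses1}), whence $\rho^2_{(1,2)}(\mu)=\mu'$ and the generator of $N_2$ is $t_{11}(\mu')$.

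Given $\hat H\co N\otimes_\Lambda M_e\to M_{ee}$, I would set $H_1(a):=\hat H(t_{11}(\mu')\otimes a)$, which is additive in $a$, and let $H'_2:=\hat H\circ(j\otimes 1)$ for $j\co N_1\hookrightarrow N$, a morphism of symmetric $\overline{\Lambda}\otimes\overline{\Lambda}$-modules since $j$ is one ($N_1$ is stable under $\bar T^U$ by Section~\ref{involution}). Relation (\ref{H1alphaa}) then comes from sliding $\alpha$ across the tensor: $H_1(\alpha a)=\hat H(t_{11}(\mu')\cdot\alpha\otimes a)$, and the action formula of Corollary~\ref{T11crXi} with $\xi=0$, $f=g=1$ gives $t_{11}(\mu')\cdot\alpha=j\,t_{11}(h(\alpha))+(\bar\alpha\otimes\bar\alpha)t_{11}(\mu')$ (using $(1\vee1)h(\alpha)=h(\alpha)$), so $\overline{\Lambda}\otimes\overline{\Lambda}$-linearity of $\hat H$ yields $H_1(\alpha a)=H'_2(t_{11}h(\alpha)\otimes a)+(\bar\alpha\otimes\bar\alpha)H_1(a)$. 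Relation (\ref{TH1}) comes from the involution: feeding $t_{11}(\mu')$ into the formula of Section~\ref{involution} gives $\bar T^U t_{11}(\mu')=j\,t_{11}([i_2,i_1])+t_{11}(\mu')$, and applying $\hat H(-\otimes a)$ together with the fact that $\hat H$ is a morphism of symmetric modules (Definition~\ref{proto-quad-C}) produces $TH_1(a)=H_1(a)+H'_2(t_{11}[i_2,i_1]\otimes a)$.

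For the converse I would start from data $(H_1,H'_2)$ satisfying (\ref{H1alphaa}) and (\ref{TH1}) and define $\hat H$ on the two summands by $\hat H(t_{11}\xi\otimes a)=H'_2(t_{11}\xi\otimes a)$ on $N_1$ and $\hat H((\bar f\otimes\bar g)t_{11}(\mu')\otimes a)=(\bar f\otimes\bar g)H_1(a)$ on $N_2$, extended additively. Because $N=N_1\oplus N_2$ as $\overline{\Lambda}\otimes\overline{\Lambda}$-modules with $N_2$ free of rank one, this prescription is automatically additive and $\overline{\Lambda}\otimes\overline{\Lambda}$-linear, so the only genuine verifications are compatibility with the $\Lambda$-balancing of $\otimes_\Lambda M_e$ and with the involution. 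Here the point is that the splitting $N=N_1\oplus N_2$ is \emph{not} one of right $\Lambda$-modules: by Corollary~\ref{T11crXi} right multiplication by $\alpha$ carries the generator of $N_2$ into $N_1$ through the term $h(\alpha)$. Checking $\hat H(n\alpha\otimes a)=\hat H(n\otimes\alpha a)$ on the generator $t_{11}(\mu')$ then reduces, via exactly this formula, to relation (\ref{H1alphaa}), while checking $\hat H(\bar T^U n\otimes a)=T\hat H(n\otimes a)$ on $t_{11}(\mu')$ reduces, via Section~\ref{involution}, to relation (\ref{TH1}); on $N_1$ both are automatic since $H'_2$ is already $\Lambda$-balanced and symmetric-linear. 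The two assignments are mutually inverse by construction, and a routine check shows morphisms correspond, giving the claimed equivalence.

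The main obstacle will be the careful disentangling of the two module structures carried by $N$: the right $\Lambda$-action along which one tensors with $M_e$, and the $\overline{\Lambda}\otimes\overline{\Lambda}$-action along which $\hat H$ is required to be linear. The decomposition of Corollary~\ref{T11crXi} respects the latter but not the former, and the whole content of relations (\ref{H1alphaa}) and (\ref{TH1}) is to repair this discrepancy; keeping track of which splitting is in force, and confirming that the $\mathbb Z$-linear splitting passes correctly to the quotient $N\otimes_\Lambda M_e$, is the delicate bookkeeping.
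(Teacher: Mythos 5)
Your proposal is correct and follows essentially the same route as the paper: you define $H_1(a)=\hat H(t_{11}\rho^2_{(1,2)}(i_1\bullet i_2)\otimes a)$ and $H'_2$ as the restriction of $\hat H$ to the first summand of Corollary \ref{T11crXi}, derive (\ref{H1alphaa}) from the right $\Lambda$-action formula there and (\ref{TH1}) from the involution computation of Section \ref{involution}, exactly as the paper does. The only difference is that you make explicit the well-definedness of the converse construction over $\otimes_\Lambda$ (the fact that the splitting is not one of right $\Lambda$-modules and that (\ref{H1alphaa}) repairs this), a point the paper dismisses as "easily verified"; this is a welcome clarification but not a different argument.
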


\begin{proof} 
Using Corollary \ref{T11crXi} and relation (\ref{vorschaufeln}) one easily verifies that when dropping the symmetry conditions, the map $\hat{H}$ is equivalent with the maps $H_1$ and $H'_2$ as in (1) and (2) satisfying relation (\ref{H1alphaa}). In fact, given $\hat{H}$ we may define
\begin{equation}\label{H1def}
H_1(a) = \hat{H}(t_{11}\rho_{(1,2)}^2(i_1\bullet i_2) \otimes a)
\end{equation}
\begin{equation}\label{H2def}
H'_2(t_{11}\xi \otimes a) = \hat{H}(t_{11}\xi \otimes a).
\end{equation}
Conversely, given $H_1$ and $H'_2$ the associated map $\hat{H}$ is determined by the relations (\ref{H2def}) and 
$$\hat{H}(t_{11}\rho_{(1,2)}((i_1\alpha)\bullet (i_2\beta)) \otimes a) = (\bar{\alpha} \otimes \bar{\beta})H_1(a)$$
for $\alpha, \beta \in \T(E,E)$.
It remains to check that $\hat{H}$ commutes with the respective involutions if and only if relation (\ref{TH1}) holds. In fact,
$$\begin{array}{lll}
\hat{H}\bar{T}^U (t_{11} ( \xi + \rho_{(1,2)}^2((i_1\alpha)\bullet (i_2\beta)) ) \otimes a )\\
=  \hat{H} \left(t_{11} \left(T^{\T(E,-)}(\xi) + [i_2\alpha,i_1\beta] + \rho_{(1,2)}^2((i_1\beta)\bullet (i_2\alpha)) \right)\otimes a\right)  \hspace{1mm}\mbox{by (\ref{vorschaufeln})}\\
=  \hat{H} \left(t_{11} \left(T^{\T(E,-)}(\xi) + [i_2\alpha,i_1\beta] + cr_2U(\beta,\alpha)\rho_{(1,2)}^2(i_1\bullet i_2) \right)\otimes a\right)\\
= H'_2\left(t_{11} \left(T^{\T(E,-)}(\xi) + [i_2\alpha,i_1\beta]\right)\otimes a\right) +(\bar{\beta} \otimes\bar{\alpha})H_1(a) \\
= H'_2\left(t_{11} T^{\T(E,-)}(\xi) \otimes a\right) +  (\bar{\beta} \otimes\bar{\alpha})H'_2 ([i_2 ,i_1]\otimes a) +
(\bar{\beta} \otimes\bar{\alpha})H_1(a) 
\end{array}$$
since $[i_2\alpha,i_1\beta]=(\beta \vee \alpha)_*[i_2 ,i_1]=
\T(E, \beta\mid\alpha)([i_2 ,i_1])$ and $H'_2$ is $\bar{\Lambda} \otimes \bar{\Lambda}$-linear.
On the other hand, 
\begin{eqnarray*}
T\hat{H}(t_{11} ( \xi + \rho_{(1,2)}^2((i_1\alpha)\bullet (i_2\beta)) ) \otimes a ) 
&=& T\hat{H}(t_{11} ( \xi + cr_2U(\alpha,\beta)\rho_{(1,2)}^2( i_1  \bullet  i_2) ) \otimes a ) \\
&=&TH'_2 (t_{11}    \xi \otimes a ) +   
T((\bar{\alpha} \otimes\bar{\beta})H_1(a) )\\
&=& H'_2\left(t_{11} T^{\T(E,-)}(\xi)\otimes a\right) + (\bar{\beta} \otimes\bar{\alpha})TH_1(a) .
\end{eqnarray*}
Thus if relation (\ref{TH1}) holds, $\hat{H}$ commutes with the respective involutions; the converse is also true as we may take $\alpha=\beta=1_E$.
\end{proof}

\subsection{The conditions $(QM1)$ and $(QM2)$}
In this section we translate the conditions $(QM1)$ and $(QM2)$ to the case of cogroups. We begin by the remark that for each $\xi \in  \T(E,E \vee E)$, there exists $\gamma \in \T(E,E \mid E)$ such that $\xi=\gamma \bullet  i_1r_1 \xi \bullet i_2 r_2 \xi $. In fact, $(r_{1*},r_{2*})^t(\xi)=(r_1 \xi, r_2 \xi)=(r_{1*},r_{2*})^t(i_1r_1 \xi \bullet i_2 r_2 \xi)$ hence $\xi \bullet (i_1r_1 \xi \bullet i_2 r_2 \xi)^{-1} \in \T(E,E \mid E)$. Thus any element of $\T(E,E \vee E)$ can be written in the form $\gamma \bullet i_1 \alpha \bullet i_2 \beta$ for $\alpha, \beta \in \T(E,E)$ and $\gamma \in \T(E,E \mid E)$.

\begin{prop} \label{QM1-cogroup}
For a theory of cogroups $\mathcal{T}$, the condition $(QM1)$ is equivalent to the following equation:
$$(\nabla^2  \gamma \bullet \alpha \bullet \beta ).a=\alpha a+\beta a+P((\bar{\alpha} \otimes \bar{\beta}) H_1(a))+PH'_2(t_{11}\gamma\otimes a)$$
for $\alpha, \beta \in \T(E,E)$, $a \in M_e$ and $\gamma \in \T(E,E \mid E)$. 
\end{prop}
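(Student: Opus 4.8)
The plan is to start from the general condition $(QM1)$ as stated in Definition \ref{proto-quad-C}, namely
$$(\nabla^2 \xi)a=(r_1^2 \xi)a+ (r_2^2 \xi)a+P\big( \hat{H}(\overline{\rho_{12}^2(\xi)} \otimes a)\big),$$
and specialize it using the normal form for elements of $\T(E,E\vee E)$ established just before the statement: every $\xi$ can be written as $\xi=\gamma \bullet i_1\alpha \bullet i_2\beta$ with $\alpha,\beta\in \T(E,E)$ and $\gamma\in \T(E,E\mid E)$. First I would substitute this decomposition into both sides of $(QM1)$ and simplify the scalar actions on the left. Since $\mathcal T$ is a theory of cogroups, $\nabla^2_*\colon \T(E,E\vee E)\to \T(E,E)$ is a group homomorphism (as postcomposition with $\nabla^2$), so $\nabla^2\xi = \nabla^2\gamma \bullet \alpha \bullet \beta$ because $\nabla^2 i_1=\nabla^2 i_2=1_E$. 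Likewise $r_1^2\xi=\alpha$ and $r_2^2\xi=\beta$ since $r_1^2\gamma=r_2^2\gamma=0$ (as $\gamma$ lies in the cross-effect $\T(E,E\mid E)$) and $r_k^2 i_k=1$, $r_k^2 i_{3-k}=0$. This already reduces the left-hand side to $(\nabla^2\gamma\bullet\alpha\bullet\beta)a$ and the first two terms on the right to $\alpha a+\beta a$, matching the target equation.

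The remaining work is to identify $P\big(\hat{H}(\overline{\rho_{12}^2(\xi)} \otimes a)\big)$ with $P\big((\bar\alpha\otimes\bar\beta)H_1(a)\big)+PH_2'(t_{11}\gamma\otimes a)$. Here I would compute $\rho_{12}^2(\xi)=\rho^2_{(1,2)}t_{11}$-image of $\xi$ and use the additivity of $\rho^2_{(1,2)}$ together with relation (\ref{ses1}) to split $t_{11}\rho^2_{(1,2)}(\gamma\bullet i_1\alpha\bullet i_2\beta)$ into the piece coming from $\gamma$ and the piece coming from $i_1\alpha\bullet i_2\beta$. The key input is Corollary \ref{T11crXi} and the definitions (\ref{H1def}), (\ref{H2def}) of $H_1$ and $H_2'$ in Proposition \ref{Hsplits}: under the isomorphism $T_{11}cr_2(\Xi)\Upsilon$ the element $t_{11}(\xi+\rho^2_{(1,2)}((i_1\alpha)\bullet(i_2\beta)))$ decomposes so that $\hat H$ evaluated on the $\gamma$-component gives $H_2'(t_{11}\gamma\otimes a)$, while $\hat H$ on $\rho^2_{(1,2)}((i_1\alpha)\bullet(i_2\beta))$ gives $(\bar\alpha\otimes\bar\beta)H_1(a)$ by the $\bar\Lambda\otimes\bar\Lambda$-linearity of $\hat H$ and relation (\ref{vorschaufeln}). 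Applying $P$ and using its additivity then yields exactly the asserted expression.

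The one subtlety I would watch carefully is the passage from $\xi$ to its image in the cross-effect quotient: although $\xi$ itself is not additive in $\alpha,\beta,\gamma$, its class $t_{11}\rho^2_{(1,2)}(\xi)$ in $T_{11}(cr_2U)(E,E)$ is, precisely because the quadratic terms $(i_1\alpha-1)\bullet(i_2\beta-1)$-type contributions and the deviation terms $h(\alpha)$ (measuring failure of cogroup-morphism property) are absorbed into the bilinearization, as recorded in Corollary \ref{T11crXi}. So the main obstacle is bookkeeping: tracking how the group-theoretic product $\gamma \bullet i_1\alpha\bullet i_2\beta$ translates, after applying $\rho^2_{(1,2)}$ and $t_{11}$, into the additive sum of a "diagonal" piece $\rho^2_{(1,2)}((i_1\alpha)\bullet(i_2\beta))$ and the cross-effect piece $\gamma$, with no cross-terms surviving in $T_{11}$. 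Once this decomposition is justified via Corollary \ref{T11crXi}, the equivalence is a direct application of the definitions of $H_1$ and $H_2'$ and the $\Lambda$-linearity of $P$, so I would present the proof as a short computation rather than a lengthy induction. I would also note that the converse direction is immediate, since every $\xi$ arises in this normal form, so the specialized equation for all $\alpha,\beta,\gamma$ recovers $(QM1)$ for all $\xi$.
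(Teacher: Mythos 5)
Your proposal is correct and follows essentially the same route as the paper's proof: substitute the normal form $\xi=\gamma\bullet i_1\alpha\bullet i_2\beta$, simplify the left-hand side using $\nabla^2 i_k=1_E$ and $r^2_k\gamma=0$, and use Corollary \ref{T11crXi} to decompose $t_{11}\rho^2_{(1,2)}(\xi)$ into the components $(t_{11}\gamma,\,t_1\alpha\otimes t_1\beta)$ so that $\hat H$ evaluates to $H_2'(t_{11}\gamma\otimes a)+(\bar\alpha\otimes\bar\beta)H_1(a)$. The paper carries out the decomposition concretely in the augmentation ideal via $\Xi^{-1}$ and the identity $ab-1=(a-1)+(b-1)+(a-1)(b-1)$, but this is exactly the bookkeeping you flag, and your argument covers all the essential points including the converse.
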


\begin{proof}
For $\xi \in \T(E, E \vee E)$ such that $\xi=\gamma \bullet  i_1 \alpha \bullet i_2 \beta $ and $a \in M_e$, we have the following equations in $U(E \vee E)$:
\begin{eqnarray*}
\iota^2_{(1,2)}\circ \rho^2_{(1,2)}(\xi)&=&\iota^2_{(1,2)}\circ \rho^2_{(1,2)}(\gamma \bullet i_1\alpha \bullet i_2 \beta)\\
&=&\gamma \bullet  i_1\alpha \bullet i_2 \beta -i_1r_1(\gamma \bullet i_1\alpha \bullet i_2 \beta)-i_2r_2(\gamma \bullet i_1\alpha \bullet i_2 \beta) \mathrm{\ by\ \ref{ses1}}\\
&=&\gamma \bullet i_1\alpha \bullet i_2 \beta -0 \bullet i_1\alpha \bullet 0-0 \bullet 0 \bullet i_2 \beta\\
&=&\gamma \bullet i_1\alpha \bullet i_2 \beta-i_1\alpha - i_2 \beta.
\end{eqnarray*}
Hence:
$$\begin{array}{ll}
(\Xi)^{-1}(\iota^2_{(1,2)}\circ \rho^2_{(1,2)}(\xi))\\
=(\gamma \bullet i_1\alpha  \bullet i_2 \beta-1)-(i_1\alpha-1) -(i_2 \beta -1) \\
= ( \gamma-1)+(i_1\alpha \bullet i_2 \beta-1)+(\gamma -1) \bullet (i_1\alpha \bullet i_2 \beta-1) -(i_1\alpha-1) -(i_2 \beta -1) \\ 
=(\gamma-1)+(i_1\alpha -1) \bullet (i_2 \beta-1)+(\gamma -1) \bullet (i_1\alpha \bullet i_2 \beta-1)
\end{array}$$
and:$$(\Upsilon)^{-1}  (T_{11}cr_2(\Xi))^{-1}(t_{11}{\rho^2_{12}}(\xi))=(t_{11}{\gamma}, t_1{\alpha} \otimes t_1{\beta}).$$
We deduce that:
\begin{eqnarray}
\hat{H}(t_{11}{\rho^2_{12}}(\xi) \otimes a) &=& \hat{H}(T_{11}cr_2(\Xi) \Upsilon(t_{11}{\gamma}, t_1{\alpha} \otimes t_1{\beta}) \otimes a)\nonumber\\
&=&\hat{H}(t_{11}(\gamma+\rho^2_{(1,2)}(i_1 \alpha \bullet i_2 \beta)) \otimes a)    \mathrm{\ by \ Corollary\ \ref{T11crXi}}\nonumber\\
&=& H'_2(t_{11}{\gamma} \otimes a)+ ( t_1{\alpha} \otimes t_1{\beta}) H_1(a).\label{PhatH}
\end{eqnarray}
On the other hand 
\begin{eqnarray*}
(\nabla^2 \xi)a-(r^2_1 \xi)a -(r^2_2 \xi) a&=&(\nabla^2(\gamma \bullet i_1 \alpha \bullet i_2 \beta))a-(r^2_1(\gamma \bullet i_1 \alpha \bullet i_2 \beta))a -(r^2_2(\gamma \bullet i_1 \alpha \bullet i_2 \beta)) a\\
&=& (\nabla^2 \gamma \bullet \alpha \bullet \beta  )a-(0\bullet \alpha \bullet  0)a-(0\bullet 0 \bullet \beta)a\\
&=&(\nabla^2 \gamma  \bullet \alpha \bullet \beta )a-\alpha a- \beta a.
\end{eqnarray*}
\end{proof}

\begin{prop}\label{QM12}
For a theory of cogroups $\mathcal{T}$, the condition $(QM2)$ is equivalent to the following equations for $m \in M_{ee}$ and $\gamma \in \T(E,E \mid E)$:
$$(QM2-1)\quad H'_2(t_{11}{\gamma} \otimes Pm)=0$$
$$(QM2-2) \quad H_1(Pm)=m+Tm.$$
. 
\end{prop}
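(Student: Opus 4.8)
The strategy is to make condition $(QM2)$ completely explicit in the cogroup setting, reusing the computation already performed in the proof of Proposition \ref{QM1-cogroup}. First I would invoke the remark preceding that proposition: every $\xi \in \T(E, E\vee E)$ can be written in the form $\xi = \gamma \bullet i_1\alpha \bullet i_2\beta$ with $\gamma \in \T(E, E\mid E)$ and $\alpha, \beta \in \T(E,E)$, and this assignment identifies $\T(E, E\vee E)$ with the product set $\T(E,E\mid E) \times \T(E,E) \times \T(E,E)$. Writing $\xi$ in this form, the identity (\ref{PhatH}) established in the proof of Proposition \ref{QM1-cogroup} gives, for any $a \in M_e$,
$$\hat{H}(t_{11}\rho_{12}^2(\xi) \otimes a) = H'_2(t_{11}\gamma \otimes a) + (\overline{\alpha} \otimes \overline{\beta})H_1(a),$$
and I will apply this with $a = Pm$, so that the left-hand side of $(QM2)$ is rewritten in terms of $H_1$ and $H'_2$.

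Next I would compute the right-hand side of $(QM2)$. Since $r_1^2$ and $r_2^2$ act by postcomposition and postcomposition is a group homomorphism $\T(E, E\vee E) \to \T(E,E)$ in a theory of cogroups, we have $r_1^2\xi = (r_1^2\gamma)\bullet(r_1^2 i_1\alpha)\bullet(r_1^2 i_2\beta)$. Now $\gamma \in \T(E,E\mid E) = \ker((r^2_{1*}, r^2_{2*})^t)$, so $r_1^2\gamma = r_2^2\gamma = 0$; together with $r_1^2 i_1 = 1_E$ and $r_1^2 i_2 = 0$ this yields $r_1^2\xi = 0\bullet\alpha\bullet 0 = \alpha$, and symmetrically $r_2^2\xi = \beta$. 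Hence $\overline{r_1^2\xi} = \overline{\alpha}$ and $\overline{r_2^2\xi} = \overline{\beta}$ in $\overline{\Lambda}$, and $(QM2)$ becomes, for all $\gamma, \alpha, \beta$ and all $m \in M_{ee}$,
$$H'_2(t_{11}\gamma \otimes Pm) + (\overline{\alpha} \otimes \overline{\beta})H_1(Pm) = (\overline{\alpha} \otimes \overline{\beta})(m + Tm). \quad (*)$$

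Finally I would split $(*)$ into the two asserted relations using the independence of the parameters. For the implication $(QM2) \Rightarrow (QM2\text{-}1),(QM2\text{-}2)$, specialising $\alpha = \beta = 1_E$ makes $\overline{\alpha} \otimes \overline{\beta}$ act as the identity of $M_{ee}$ and reduces $(*)$ to $H'_2(t_{11}\gamma \otimes Pm) + H_1(Pm) = m + Tm$; specialising further to $\gamma = 0$, where $t_{11}(0) = 0$ forces the $H'_2$-term to vanish, gives exactly $(QM2\text{-}2)$, and subtracting $(QM2\text{-}2)$ from the previous identity leaves $H'_2(t_{11}\gamma \otimes Pm) = 0$, i.e. $(QM2\text{-}1)$. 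Conversely, assuming $(QM2\text{-}1)$ and $(QM2\text{-}2)$, the left-hand side of $(*)$ equals $0 + (\overline{\alpha}\otimes\overline{\beta})(m+Tm)$, which is the right-hand side, so $(QM2)$ holds. I anticipate no real obstacle; the only point requiring care is the justification that $\gamma$ may be varied independently of $\alpha,\beta$ so that the single relation $(*)$ genuinely decouples, and this is precisely the product decomposition of $\T(E, E\vee E)$ recalled at the outset.
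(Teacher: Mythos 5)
Your proposal is correct and follows essentially the same route as the paper: write $\xi=\gamma\bullet i_1\alpha\bullet i_2\beta$, apply the identity (\ref{PhatH}) with $a=Pm$, compute $r_1^2\xi=\alpha$ and $r_2^2\xi=\beta$, and then decouple the resulting single relation using the independence of $\gamma,\alpha,\beta$. The only cosmetic difference is in the specialisations chosen (the paper takes $\alpha=0$ to isolate $(QM2\text{-}1)$ directly, whereas you take $\alpha=\beta=1_E$ and subtract), which is immaterial.
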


\begin{proof}
For $m \in M_{ee}$ and $\xi \in \T(E, E \vee E)$ such that $\xi= \gamma \bullet  i_1 \alpha \bullet i_2 \beta$ as above, we have:
\begin{eqnarray*}
\hat{H}(t_{11}\rho^2_{(1,2)}(\xi) \otimes Pm)&=&H'_2(t_{11}{\gamma} \otimes Pm)+ (t_1{\alpha} \otimes t_1{\beta}) H_1(Pm)
\end{eqnarray*}
by (\ref{PhatH}). On the other hand:
$$(t_1{r_1(\xi)} \otimes t_1{r_2(\xi)})(m+Tm)=(t_1{\alpha} \otimes t_1{\beta})(m+Tm).$$
Hence the condition $(QM2)$ is equivalent to the condition:
$$H'_2(t_{11}{\gamma} \otimes Pm)+ (t_1{\alpha} \otimes t_1{\beta}) H_1(Pm)
=(t_1{\alpha} \otimes t_1{\beta})(m+Tm).$$
Since $\alpha$, $\beta$ and $\gamma$ are independent, taking $\alpha=0$ we deduce $(QM2-1)$ and taking $\gamma=0$ and $\alpha=\beta=1_E$ we deduce $(QM2-2)$. The converse is clear.
\end{proof}

\subsection{Proof of Theorem  \ref{proto-quad-cogroup}}
This now is an easy combination of Corollary \ref{T11crXi} and the Propositions \ref{Hsplits} and \ref{QM12}: let $M$ be a quadratic $\C$-module relative to $E$. First use Proposition  \ref{Hsplits} to replace the map $\hat{H}$ by $H_1$ and $H_2'$, then $H_2'$ by $H_2$ using relation (QM2-1). Now the main feature is that relation (QM2-2) implies that $T$ is determined by $H_1$ and $P$, as
$$ T=H_1P - 1.$$
Using this, the relation
$PT=P$ becomes (T1), which, in the converse proof, implies that $T$ is an involution. Next (T2) translates the relation $T((\alpha \otimes \beta)m)= (\beta \otimes \alpha)T(m)$. Relation (\ref{TH1}) becomes (T3), while (T4) translates the fact that $H_2$ is compatible with the respective involutions.
Finally, (T5) is relation (QM1) and (T6) is (\ref{H1alphaa}).

\subsection{Application: quadratic functors from free groups of finite rank to $Ab$} \label{application}
In this section we apply Theorems \ref{thm} and  \ref{proto-quad-cogroup} to the category $\C=Gr$, or equivalently, to the theory $\T=\langle \mathbb{Z} \rangle_{Gr}$ of free groups of finite rank. Baues and Pirashvili described   quadratic functors from $\T$ to $Gr$ and $Ab$ in terms of simpler data in \cite{Baues-Pira}. We start by recalling the simplified version of this description given in \cite{BJP}.

\begin{defi}[\cite{Baues-Pira}] \label{square-g}
A square group is a diagram 
$$M=(M_{e} \xrightarrow{H} M_{ee} \xrightarrow{P} M_e)$$
where $M_e$ is a group and $M_{ee}$ is an abelian group. Both groups are written additively. Moreover $P$ is a homomorphism and $H$ is a quadratic map, that is, the cross effect
$$(a \mid b)_H=H(a+b)-H(a)-H(b)$$
is linear in $a,b \in M_e$. In addition the following identities are satisfied for all $x, y \in M_e$ and $a,b \in M_{ee}$:
$$(Pa\mid y)_H=0=(x\mid Pb)_H;$$
$$P(x \mid y)_H=-x-y+x+y;$$
$$PHP(a)=2P(a).$$
\end{defi}

\begin{thm}[\cite{Baues-Pira}]
The category of quadratic functors from $\langle \mathbb{Z} \rangle_{Gr}$ to $Ab$ (or of quadratic endofunctors of $Gr$ preserving filtered colimits and reflexive coequalizers) is equivalent to the category of square groups with $H$ linear.
\end{thm}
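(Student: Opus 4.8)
The plan is to derive this statement as a specialization of our general Theorem~\ref{thm} together with the cogroup description in Theorem~\ref{proto-quad-cogroup}, applied to the theory $\T=\langle\Z\rangle_{Gr}$ of finitely generated free groups, whose generator $E=\Z$ is a cogroup (indeed the free group of rank one). Since $Gr$ is a pointed algebraic variety with $\Z$ as free object of rank one --- in particular a small regular projective generator --- and is moreover semi-abelian, hence Mal'cev and Barr exact, part (B) of Theorem~\ref{thm} identifies $Quad(\langle\Z\rangle_{Gr},Ab)$ with the category $QMod^\Z_{\langle\Z\rangle_{Gr}}$ of quadratic $\C$-modules, while part (C) gives the same target (note that quadratic $\C$-modules depend only on the full subcategory $\langle\Z\rangle_{Gr}$) for the category of quadratic functors $Gr\to Ab$ preserving filtered colimits and reflexive coequalizers, i.e.\ endofunctors of $Gr$ with abelian values. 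Both source categories are thus equivalent, and it remains only to match $QMod^\Z$, written in the cogroup form of Theorem~\ref{proto-quad-cogroup}, with square groups having linear $H$.

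First I would compute the invariants attached to $E=\Z$. Using the isomorphism $U_\Z\cong I$, Proposition~\ref{ideal-lin} and Proposition~\ref{T1Id} give $\overline{\Lambda}=T_1U_\Z(\Z)\cong\Z^{ab}=\Z$, so that both $M_{ee}$ and $coker(P)$ are merely abelian groups with the standard integral action, and the involution datum on $M_{ee}$ carries no extra information. Proposition~\ref{T11crId} yields $T_{11}cr_2(\T(\Z,-))(\Z,\Z)\cong\Z^{ab}\otimes\Z^{ab}\cong\Z$, generated by $\lambda=t_{11}[i_1,i_2]$; here the crucial point, via Corollary~\ref{T11crXi}, is that the right $\Lambda$-action is \emph{linear}, $\lambda\cdot[n]=t_{11}\big([i_1,i_2]\circ n\big)=t_{11}\big([i_1,i_2]^{\,n}\big)=n\lambda$ (and \emph{not} quadratic), so that the domain $T_{11}cr_2(\T(\Z,-))(\Z,\Z)\otimes_\Lambda coker(P)$ of $H_2$ collapses to $coker(P)$. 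Finally I would evaluate the deviation map $h$ on the endomorphism $n\co\Z\to\Z$: the identity $(xy)^ny^{-n}x^{-n}\equiv[y,x]^{\binom n2}$ modulo $\gamma_3$ in the free group shows $h(n)=\binom n2\lambda$ in $T_{11}cr_2$.

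With these computations in hand, the cogroup-form relations of Theorem~\ref{proto-quad-cogroup} simplify drastically. Relation $(QM2\text{-}2)$ (equivalently the cogroup remark after Definition~\ref{quad-C}) forces $T=H_1P-1$, and relation $(T3)$ forces $H_2(\lambda\otimes\bar a)=H_1PH_1(a)-2H_1(a)$; using $(T1)$ in the form $PH_1P=2P$ one checks that $\Delta:=H_1PH_1-2H_1$ annihilates $\mathrm{im}(P)$, so that $H_2$ is well defined on $coker(P)$ and is \emph{determined} by the pair $(H_1,P)$ --- this is the new map denoted $\Delta$ in \cite{Baues-Pira}. I would then verify that, once $T$ and $H_2$ are thus eliminated, the remaining relations become either automatic or identical to the square group axioms: $(T2)$ holds trivially since $\overline{\Lambda}\otimes\overline{\Lambda}$ acts by commuting scalars; $(T4)$ reduces to $H_1P\Delta=0$, which follows from $PH_1P=2P$; $(T5)$, by induction using $1\bullet1=2$, exhibits the $\Lambda$-action as the quadratic action $n\cdot a=na+\binom n2 PH_1(a)$; and $(T6)$ is then an automatic consequence of the linearity of $H_1$ together with the definition of $\Delta$. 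What survives is exactly an abelian group $M_e$, an abelian group $M_{ee}$, and homomorphisms $H_1\co M_e\to M_{ee}$ and $P\co M_{ee}\to M_e$ subject only to $PH_1P=2P$; setting $H=H_1$ this is precisely a square group with $H$ linear in the sense of Definition~\ref{square-g}, the two further square group axioms being automatic because $H$ linear forces the cross-effect $(x\mid y)_H$ to vanish and hence (via $P(x\mid y)_H=-x-y+x+y$) $M_e$ to be abelian.

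The main obstacle I anticipate is the careful bookkeeping in this last reduction: I must pin down the precise generator and sign of $T_{11}cr_2(\T(\Z,-))(\Z,\Z)\cong\Z$ so that $(T3)$ correctly reproduces $\Delta=H_1PH_1-2H_1$ rather than its negative, and I must confirm that the $\Lambda$-action on $M_e$ reconstructed from a square group via $n\cdot a=na+\binom n2PH_1(a)$ is genuinely a monoid action, i.e.\ that $(nm)\cdot a=n\cdot(m\cdot a)$ --- this is exactly the point where $PH_1P=2P$ is needed, and it guarantees that the passage from square groups back to quadratic $\C$-modules is well defined and inverse to the forgetful comparison. The verification that this correspondence is functorial and compatible with morphisms of square groups, while routine, must also be recorded in order to conclude an equivalence of categories.
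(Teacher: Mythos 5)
Your proposal is correct and follows essentially the same route as the paper: specialize Theorems \ref{thm} and \ref{proto-quad-cogroup} to $\T=\langle\Z\rangle_{Gr}$, compute $\overline{\Lambda}\cong\Z$ and $T_{11}cr_2(\T(\Z,-))(\Z,\Z)\cong\Z$ with linear right $\Lambda$-action so that the domain of $H_2$ collapses to $coker(P)$, show that $T$, $H_2$ and the $\Lambda$-action on $M_e$ are determined by $(H_1,P)$ via $T=H_1P-1$, $H_2=\Delta=H_1PH_1-2H_1$ and $[n]a=na+\binom{n}{2}PH_1(a)$, and check that the six conditions reduce to $PH_1P=2P$. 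The sign and generator bookkeeping you flag, as well as the derivation of $(T6)$ from the others, is exactly what the paper settles with the Hall--Petrescu computation $t_{11}(h([n]))=\binom{n}{2}t_{11}[i_2,i_1]$ (Lemma \ref{H-P}).
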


We reprove this theorem by specializing our general results to the case $\T=\langle \mathbb{Z} \rangle_{Gr}$, as follows.

\begin{proof} Since $\langle \mathbb{Z} \rangle_{Gr}$ is a theory of cogroups we can apply Theorem \ref{proto-quad-cogroup}.
Let $M$ be a diagram as in this theorem. Condition $(T1)$ shows that 
$$Square(M)=(M_{e} \xrightarrow{H_1} M_{ee} \xrightarrow{P} M_e)$$
is a square group such that $H_1$ is linear. So we have to prove that the remaining structure of $M$ is determined and well defined by $Square(M)$ when one requires  the conditions $(T2)$ - $(T6)$ to hold.

First of all,
\begin{eqnarray*}
\overline{\Lambda}&=& (T_1U)(\Z) \mathrm{\ by\ Proposition\ \ref{Lambda-bar}}\\
&\simeq &(T_1I(Gr(\Z,-)))(\Z)\\
&\simeq& T_1(Gr(\Z,-))(\Z) \mathrm{\ by\ Proposition\ \ref{ideal-lin}}\\
&\simeq&T_1(Id_{Gr})(\Z)\\
&\simeq& \Z \mathrm{\ by\ Proposition\ \ref{T1Id}}.
\end{eqnarray*}
We deduce that $\overline{\Lambda} \otimes \overline{\Lambda} \simeq \Z$ hence condition $(T_2)$ is trivially satisfied. 

The isomorphism of endofunctors of $Gr$:
$$\nu: Gr(\Z,-) \to Id_{Gr},\quad \nu_G(f) =f(1)$$
induces an isomorphism of bifunctors
$$T_{11}(cr_2\nu) : T_{11}cr_2(Gr(\Z,-)) \to T_{11}cr_2(Id_{Gr}) $$
such that 
\begin{equation}\label{T11crnu}
T_{11}(cr_2\nu)_{G,H}(t_{11}(\xi)) = t_{11}(\xi(1))
\end{equation}
 for $\xi \in cr_2Gr(\Z,-)(G,H)$.
So by Proposition \ref{T11crId},
\begin{equation}\label{TcrGrZZ} T_{11}cr_2(Gr(\Z,-))(\Z, \Z) \simeq \Z;
\end{equation}
and $T_{11}cr_2(Gr(\Z,-))(\Z, \Z)$ is generated by the commutator $t_{11}[ i_2, i_1]$.  We deduce that  condition $(T3)$ means that $H_2$ is determined by $Square(M)$ and satisfies
$$ PH_2=0 \quad\mbox{and}\quad H_2(t_{11}\gamma \otimes \bar{a}) = 0$$
by $(T1)$. However, starting out with the square group  
$Square(M)$, condition $(T3)$ a priori only gives rise to a  
map $$\tilde{H_2} : T_{11}cr_2(Gr(\Z,-))(\Z, \Z) \otimes_{\Z} coker(P) \to M_{ee};$$
in order to check that it factors through the tensor product over $\Lambda$ we must first consider the action of $\Lambda$ on $M_e$.

Let $n: \Z \to \Z$ be the homomorphism such that $1 \mapsto n$. Consider condition $(T5)$. By (\ref{TcrGrZZ})
we have $\gamma =k[ i_2, i_1]$, $k\in\Z$, whence $\nabla^2\gamma =k[Id,Id]=0$. As we know that $PH_2=0$, condition $(T5)$ is equivalent to the relation
\begin{equation} \label{eq-gr-1}
([n] \bullet [m]) a=[n] a+[m] a+P((\overline{[n]} \otimes \overline{[m]})H_1(a)).
\end{equation}
which by induction is equivalent to
\begin{equation} \label{[n]a}
[n]a = na + {n \choose 2}PH_1(a).
\end{equation}
So $(T5)$ means that the action of $\Lambda$ on $M_e$ is determined by $Square(M)$, via (\ref{[n]a}); the property to be an action is a formal consequence of the identity
$${nm\choose 2} = m{n \choose 2}+n{m \choose 2}+2 {n \choose 2}{m \choose 2}$$
for $n,m\in\Z$. 

By (\ref{[n]a}), $[n] \overline{a}=n \overline{a}$ in $coker(P)$. On the other hand, for $\gamma \in cr_2(Gr(\Z, \Z \mid \Z))$ we have:
$$\gamma([n +m])=\gamma([n] \bullet [m])=\gamma([n]) \bullet \gamma([m]).$$
Since $\gamma([1])=\gamma$ we deduce that $\gamma([n])=(\gamma([1]))^{\bullet n}=\gamma^{\bullet n}$. Hence the right action of $\Lambda$ on $T_{11}cr_2(Gr(\Z,-))(\Z, \Z)$  is given by 
$$t_{11}(\gamma). [n]=t_{11}(\gamma [n])=t_{11}(\gamma^{\bullet n})=n  t_{11}(\gamma).$$
We deduce that:
$$T_{11}cr_2(Gr(\Z,-))(\Z, \Z) \otimes_{\Lambda} coker(P) = T_{11}cr_2(Gr(\Z,-))(\Z, \Z) \otimes_{\Z} coker(P)\simeq coker(P).$$ 
So finally we obtain that $H_2=\tilde{H}_2$ is welldefined.

The fact that $P$ is $\Lambda$-equivariant is equivalent to condition $(T1)$ by (\ref{[n]a}).

Condition $(T4)$ is trivially satisfied
since $PH_2=0$ and $\gamma \bullet \tau\gamma = (k[i_2,i_1]) \bullet (k[i_1,i_2])=0$.

So it remains to show that condition $(T6)$ is a consequence of the others. For this we need the following lemma:

\begin{lm} \label{H-P}
Let $G$ and $H$ be two groups. Then for $g\in G$, $h\in H$ and $n\in \Z$, the following identity holds in $T_{11}cr_2(Id_{Gr})(G,H)$:
$$t_{11}\Big(((i_1g)(i_2h))^n (i_2h)^{-n}(i_1g)^{-n}\Big)
= t_{11}([i_2h,i_1g]^{{n \choose 2}}).$$
\end{lm}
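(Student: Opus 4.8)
Write $x=i_1g$ and $y=i_2h$, so that the left-hand element is $w_n=(xy)^ny^{-n}x^{-n}$, and put $c=[i_1g,i_2h]=[x,y]$, whence $[i_2h,i_1g]=[y,x]=c^{-1}$. Everything lives in the cartesian subgroup $C:=cr_2(Id_{Gr})(G,H)=\ker(G\vee H\to G\times H)$, which by \cite{MKS} (cf.\ the proof of Proposition \ref{T11crId}) is generated by the commutators $[i_1a,i_2b]$, $a\in G$, $b\in H$; note $w_n,c\in C$ because the images of $x$ and $y$ in $G\times H$ commute. Since $T_{11}cr_2(Id_{Gr})(G,H)$ is abelian, $t_{11}\co C\to T_{11}cr_2(Id_{Gr})(G,H)$ is a group homomorphism into an abelian group, and the whole computation will be run through it. The proof rests on two ingredients: invariance of $t_{11}$ under conjugation by $G\vee H$, and the classical commutator collection formula.

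The key point, which I expect to be the main obstacle, is conjugation invariance: I claim $t_{11}({}^{s}\xi)=t_{11}(\xi)$ for all $\xi\in C$ and $s\in G\vee H$. This cannot be obtained from naturality, since conjugation by $i_1g_0$ is an inner automorphism of $G\vee H$ not of the form $\phi\vee\psi$. It suffices to treat $s=i_1g_0$ on a generator $\xi=[i_1a,i_2b]$: the general statement then follows from the homomorphism property of $t_{11}$, from the symmetric treatment of $s=i_2h_0$, and from the fact that $i_1G\cup i_2H$ generates $G\vee H$. For this I will use the first-variable relation built into the bilinearization (Proposition \ref{bilin-cross1}, resting on Lemma \ref{lem1}): for $\zeta\in cr_2(Id_{Gr})(G\vee G,H)$ one has $t_{11}\,cr_2(Id_{Gr})(\nabla^2,Id)(\zeta)=t_{11}\,cr_2(Id_{Gr})(r_1,Id)(\zeta)+t_{11}\,cr_2(Id_{Gr})(r_2,Id)(\zeta)$. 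Applying it to $\zeta=[\,j_1g_0\,j_2a\,j_1g_0^{-1}\,,\,j_1g_0\,j_3b\,j_1g_0^{-1}\,]$, with $j_1,j_2\co G\to G\vee G$ and $j_3\co H\to(G\vee G)\vee H$ the injections, a direct evaluation of the three structure maps gives $cr_2(Id_{Gr})(\nabla^2,Id)(\zeta)={}^{i_1g_0}[i_1a,i_2b]$, $cr_2(Id_{Gr})(r_1,Id)(\zeta)=1$ and $cr_2(Id_{Gr})(r_2,Id)(\zeta)=[i_1a,i_2b]$, so the relation reads $t_{11}({}^{i_1g_0}[i_1a,i_2b])=t_{11}([i_1a,i_2b])$, as wanted. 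A consequence is that $t_{11}$ kills $\gamma_3(\langle x,y\rangle)$: each generator $[\eta,s]$ with $\eta\in\gamma_2(\langle x,y\rangle)\subseteq C$ satisfies $t_{11}([\eta,s])=t_{11}(\eta)-t_{11}({}^{s}\eta)=0$.

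Finally I will invoke the collection formula $(xy)^n=x^ny^n[y,x]^{\binom{n}{2}}\omega_n$ with $\omega_n\in\gamma_3(\langle x,y\rangle)$, valid for all $n\in\Z$ (an immediate induction modulo $\gamma_3$, where $[y,x]$ is central and $y^nx\equiv xy^n[y,x]^n$). Since the image of $\langle x,y\rangle$ in $G\times H$ is abelian, $\gamma_2(\langle x,y\rangle)\subseteq C$, so $[y,x]^{\binom{n}{2}}\omega_n\in C$ and $w_n=(xy)^ny^{-n}x^{-n}={}^{x^ny^n}\big([y,x]^{\binom{n}{2}}\omega_n\big)$. Applying $t_{11}$ and using, in turn, conjugation invariance to drop the conjugation by $x^ny^n$, the homomorphism property, the vanishing on $\gamma_3$ to discard $\omega_n$, and Proposition \ref{T11crId} in the form $t_{11}([i_2h,i_1g])=-t_{11}([i_1g,i_2h])$, I obtain $t_{11}(w_n)=\binom{n}{2}\,t_{11}([y,x])=t_{11}([i_2h,i_1g]^{\binom{n}{2}})$, which is the asserted identity.
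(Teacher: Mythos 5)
Your proof is correct, but it takes a genuinely different route from the paper's. The paper proves the identity by transporting it into the class-two nilpotent quotient $(G\vee H)/\gamma_3(G\vee H)$: it sets up a commutative square relating $t_{11}$ to the canonical projection $p$, applies the Hall--Petrescu formula there, and then pulls the identity back along the map $c\colon G^{ab}\otimes H^{ab}\to (G\vee H)/\gamma_3(G\vee H)$, whose injectivity is a nontrivial cited result (\cite{JP}). You instead stay inside $T_{11}cr_2(Id_{Gr})(G,H)$ and show directly that $t_{11}$ is invariant under conjugation by $G\vee H$, hence kills $\gamma_3(\langle i_1g,i_2h\rangle)$, after which the same collection formula finishes the argument; the conjugation invariance is extracted from the relations defining the bilinearization via a well-chosen element of $cr_2(Id_{Gr})(G\vee G,H)$. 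What your route buys is self-containedness: no appeal to the injectivity of $c$, only to relations already built into $T_{1,1}$ (plus the free generation of the cartesian subgroup by the commutators $[i_1a,i_2b]$ from \cite{MKS}, which the paper also uses). What the paper's route buys is brevity, at the price of the external reference. One point to tighten: Proposition \ref{bilin-cross1} is stated for $Ab$-valued bifunctors, whereas $cr_2(Id_{Gr})$ takes values in $Gr$; the relation you actually need is that $im\big(S_2^{cr_2(Id_{Gr})(-,H)}\big)$ dies under $t_{11}$, which holds by the very definition of $T_{1,1}$ for group-valued bifunctors, and to apply it you should replace your $\zeta$ by $\zeta\cdot cr_2(Id_{Gr})(j_2r_2,Id)(\zeta)^{-1}$, which lies in $cr_2\big(cr_2(Id_{Gr})(-,H)\big)(G,G)$ precisely because $cr_2(Id_{Gr})(r_1,Id)(\zeta)=1$ --- the same manoeuvre the paper performs in the proof of Proposition \ref{T11crId}. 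With that adjustment your computation $t_{11}({}^{i_1g_0}[i_1a,i_2b])=t_{11}([i_1a,i_2b])$ goes through and the rest of the argument is sound.
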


\begin{proof}
Consider the following diagram where $\gamma_3(G)=[[G,G], G]$, the isomorphism $\Gamma_{11}'$ is defined in the proof of Proposition \ref{T11crId}, $p$ is the canonical projection, and  the map $c$ is given by
$c(\bar{g}\otimes \bar{h}) = [i_1g,i_2h]$.

$$\xymatrix{
cr_2(Id_{Gr})(G,H) \ar@{->>}[d]^-{t_{11}} \ar@{^{(}->}[rr]_-{inc} & & G \vee H  \ar@{->>}[d]^-{p}\\
T_{11} cr_2(Id_{Gr})(G,H) \ar[r]_-{\Gamma'_{11}} & G^{ab} \otimes H^{ab} \ar[r]_-{c} & (G \vee H)/\gamma_3(G \vee H)
}$$

The diagram commutes as is easily checked on the canonical generators
$t_{11}[i_1g,i_2h]$ of  $T_{11}cr_2(Id_{Gr})(G,H)$, see Proposition \ref{T11crId}. Thus
\begin{eqnarray*} 
(c\Gamma_{11}')t_{11}\Big((( i_1g)(i_2h))^n (i_2h)^{-n}(i_1g)^{-n}\Big) &=& p\Big((( i_1g) (i_2h))^n (i_2h)^{-n}(i_1g)^{-n}\Big)\\
&=& p([i_2h,i_1g]^{{n \choose 2}})\ \mbox{by the Hall-Petrescu formula}\\
&=&(c\Gamma_{11}')t_{11}([i_2h,i_1g]^{{n \choose 2}}).
\end{eqnarray*}
But the map $c$ is injective, see \cite[Proposition $1.2$]{JP}.
\end{proof}

This implies that 
$$ t_{11}(h([n])) = {n \choose 2}t_{11}[i_2,i_1];$$
in fact,
\begin{eqnarray*}
T_{11}(cr_2\nu)(t_{11}(h([n])) &=& 
t_{11} ( h([n])(1))\ \mbox{by (\ref{T11crnu})} \\
&=& t_{11}\Big( ((i_1\bullet i_2)[n]) \bullet (i_2[n])^{-1}  \bullet (i_1[n])^{-1})(1) \Big)\\
&=& t_{11}\Big( ((i_1\bullet i_2)(n))    i_2(n)^{-1}    i_1(n)^{-1}  \Big) \\
&=&  t_{11}\Big( ((i_1\bullet i_2)(1))^n    i_2(1)^{-n}    i_1(1)^{-n}  \Big)\\
&=& t_{11}( ((i_1(1) i_2(1))^n    i_2(1)^{-n}    i_1(1)^{-n}) \\
&=& t_{11}([i_2(1),i_1(1)]^{{n \choose 2}}) \mathrm{\ by\ Lemma\ \ref{H-P}} \\
&=& t_{11}([i_2,i_1]^{\bullet{n \choose 2}}(1)) \\
&=& T_{11}(cr_2\nu)(t_{11}([i_2,i_1]^{\bullet{n \choose 2}})) \\
&=& T_{11}(cr_2\nu)({n \choose 2}t_{11}([i_2,i_1]  )). 
\end{eqnarray*}
Thus
\begin{eqnarray*}
H_2(t_{11}h([n]) \otimes \overline{a}) + n^2 H_1(a)  
&=& {n \choose 2} H_2(t_{11} [i_2,i_1] \otimes \overline{a}) + n^2 H_1(a)\\
&=&  {n \choose 2}(H_1PH_1(a)-2H_1(a))+ n^2 H_1(a) \mathrm{\ by\ condition}\  (T3)\\
&=&  {n \choose 2}H_1PH_1(a)+nH_1(a)\\
&=& H_1([n]a) \mathrm{\ by\ (\ref{[n]a})},
\end{eqnarray*}
as desired.

\end{proof}

\begin{rem}
In the definition of square group given in  \cite{Baues-Pira} the authors consider the map:
$\Delta(a)=HPH(a)+H(2a)-4H(a)$. When $H$ is linear we have $\Delta =HPH -2H$. So the map $\Delta$ corresponds to our map $H_2$ according to condition $(T3)$.
\end{rem}

\begin{rem}
The case of the theory of free groups of finite rank is very simple compared to  a general theory of cogroups since a quadratic $Gr$-module relative to $\Z$ must satisfy only the single condition $(T1)$ instead of the six conditions $(T1)$ - $(T6)$ in the general situation.
\end{rem}

\small{\centerline{\sc\bf Acknowledgement:}
We are indebted to Teimuraz Pirashvili for suggesting the alternative approach of quadratic functors on theories, based on the Yoneda lemma, which became chapter 4 of the present version of the paper.

En outre, le premier auteur tient \`a remercier vivement son ami Jean Wielchuda pour son accueil chaleureux et son excellente cuisine lors de ses trois s\'ejours \`a Strasbourg.}


\bibliographystyle{amsplain}
\bibliography{biblio-poly}

\def\cprime{$'$}
\providecommand{\bysame}{\leavevmode\hbox to3em{\hrulefill}\thinspace}
\providecommand{\MR}{\relax\ifhmode\unskip\space\fi MR }
\providecommand{\MRhref}[2]{%
  \href{http://www.ams.org/mathscinet-getitem?mr=#1}{#2}
}
\providecommand{\href}[2]{#2}
\begin{thebibliography}{10}

\bibitem{Baues}
Hans-Joachim Baues, \emph{Quadratic functors and metastable homotopy}, J. Pure
  Appl. Algebra \textbf{91} (1994), no.~1-3, 49--107. \MR{MR1255923
  (94j:55022)}

\bibitem{BDFP}
Hans-Joachim Baues, Winfried Dreckmann, Vincent Franjou, and Teimuraz
  Pirashvili, \emph{Foncteurs polynomiaux et foncteurs de {M}ackey non
  lin\'eaires}, Bull. Soc. Math. France \textbf{129} (2001), no.~2, 237--257.
  \MR{MR1871297 (2002j:18004)}

\bibitem{BJP}
Hans-Joachim Baues, M.~Jibladze, and T.~Pirashvili, \emph{Quadratic algebra of
  square groups}, Adv. Math. \textbf{217} (2008), no.~3, 1236--1300.
  \MR{MR2383899}

\bibitem{Baues-Pira}
Hans-Joachim Baues and Teimuraz Pirashvili, \emph{Quadratic endofunctors of the
  category of groups}, Adv. Math. \textbf{141} (1999), no.~1, 167--206.
  \MR{MR1667150 (2000b:20073)}

\bibitem{Bor2}
Francis Borceux, \emph{Handbook of categorical algebra. 2}, Encyclopedia of
  Mathematics and its Applications, vol.~51, Cambridge University Press,
  Cambridge, 1994, Categories and structures. \MR{MR1313497 (96g:18001b)}

\bibitem{BB}
Francis Borceux and Dominique Bourn, \emph{Mal'cev, protomodular, homological
  and semi-abelian categories}, Mathematics and its Applications, vol. 566,
  Kluwer Academic Publishers, Dordrecht, 2004. \MR{MR2044291 (2005e:18001)}

\bibitem{Carboni}
A.~Carboni, G.~M. Kelly, and M.~C. Pedicchio, \emph{Some remarks on
  {M}al\cprime tsev and {G}oursat categories}, Appl. Categ. Structures
  \textbf{1} (1993), no.~4, 385--421. \MR{MR1268510 (95c:18003)}

\bibitem{EML}
Samuel Eilenberg and Saunders Mac~Lane, \emph{On the groups {$H(\Pi,n)$}. {II}.
  {M}ethods of computation}, Ann. of Math. (2) \textbf{60} (1954), 49--139.
  \MR{MR0065162 (16,391a)}

\bibitem{Benoit}
Benoit Fresse, \emph{Modules over operads and functors}, Lecture Notes in
  Mathematics, vol. 1967, Springer-Verlag, Berlin, 2009. \MR{MR2494775}

\bibitem{Manfred-multicalculus}
Manfred Hartl, \emph{Calculus of functors in several variables}, (in
  preparation).

\bibitem{Q3}
\bysame, \emph{Some successive quotients of group ring filtrations induced by
  {$N$}-series}, Comm. Algebra \textbf{23} (1995), no.~10, 3831--3853.
  \MR{MR1348267 (96j:20009)}

\bibitem{QMaps}
\bysame, \emph{Quadratic maps between groups}, Georgian Math. J. \textbf{16}
  (2009), no.~1, 55--74. \MR{MR2527615}

\bibitem{JP}
Mamuka Jibladze and Teimuraz Pirashvili, \emph{Quadratic envelope of the
  category of class two nilpotent groups}, Georgian Math. J. \textbf{13}
  (2006), no.~4, 693--722. \MR{MR2309253 (2007k:20114)}

\bibitem{JMCahiers1}
B.~Johnson and R.~McCarthy, \emph{A classification of degree {$n$} functors.
  {I}}, Cah. Topol. G\'eom. Diff\'er. Cat\'eg. \textbf{44} (2003), no.~1,
  2--38. \MR{MR1961524 (2004b:18016)}

\bibitem{JMCahiers2}
\bysame, \emph{A classification of degree {$n$} functors. {II}}, Cah. Topol.
  G\'eom. Diff\'er. Cat\'eg. \textbf{44} (2003), no.~3, 163--216. \MR{MR2003579
  (2004e:18015)}

\bibitem{MKS}
Wilhelm Magnus, Abraham Karrass, and Donald Solitar, \emph{Combinatorial group
  theory}, second ed., Dover Publications Inc., Mineola, NY, 2004,
  Presentations of groups in terms of generators and relations. \MR{MR2109550
  (2005h:20052)}

\bibitem{Pa}
I.~B.~S. Passi, \emph{Polynomial maps on groups}, J. Algebra \textbf{9} (1968),
  121--151. \MR{MR0231915 (38 \#241)}

\bibitem{Pira-russe}
T.~I. Pirashvili, \emph{Polynomial functors}, Trudy Tbiliss. Mat. Inst.
  Razmadze Akad. Nauk Gruzin. SSR \textbf{91} (1988), 55--66. \MR{MR1029007
  (91f:18004)}

\bibitem{Pira}
Teimuraz Pirashvili, \emph{Polynomial approximation of {${\rm Ext}$} and {${\rm
  Tor}$} groups in functor categories}, Comm. Algebra \textbf{21} (1993),
  no.~5, 1705--1719. \MR{MR1213983 (94d:18020)}

\bibitem{Pira-Dold}
\bysame, \emph{Dold-{K}an type theorem for {$\Gamma$}-groups}, Math. Ann.
  \textbf{318} (2000), no.~2, 277--298. \MR{MR1795563 (2001i:20112)}

\bibitem{Popescu}
N.~Popescu, \emph{Abelian categories with applications to rings and modules},
  Academic Press, London, 1973, London Mathematical Society Monographs, No. 3.
  \MR{MR0340375 (49 \#5130)}

\end{thebibliography}

\end{document}